\documentclass[11pt]{article}
\usepackage{inputenc,amsfonts,amsmath,amssymb}
\usepackage[all]{xy}
\usepackage{amsthm}
\usepackage[colorlinks=true, linkcolor=blue, citecolor=black!50!green, urlcolor=blue]{hyperref}
\usepackage[left=2.8cm, right=2.8cm, top=2cm]{geometry}
\usepackage{extarrows}
\usepackage{graphicx,subcaption}
\usepackage{wrapfig}
\usepackage{multirow}
\usepackage{tikz-cd}
\usepackage{sectsty}
\usepackage{authblk}
\usepackage{aliascnt}

\numberwithin{equation}{section}
\sectionfont{\normalfont\fontsize{15}{15}\selectfont}

\usepackage{aliascnt}
\newcommand{\thdef}[2]{
	\newaliascnt{#1}{theorem}  
	\newtheorem{#1}[#1]{#2}
	\aliascntresetthe{#1}  
	\newtheorem*{#1*}{#2}
	
    \expandafter\newcommand\expandafter{\csname #1autorefname\endcsname}{#2}
    
}

\def \CC {{\mathbb C}}
\def \ZZ {{\mathbb Z}}

\def \TT {{\mathbb T}}

\def \QQ {{\mathbb Q}}

\def \calA  {{\mathcal{A}}}           

\def \calL  {{\mathcal{L}}}
\def \calQ  {{\mathcal{Q}}}
\def \calP {{\mathcal{P}}}

\def \calS {{\mathcal{S}}}
\def \calT {{\mathcal{T}}}

\def \sL {{\scriptscriptstyle L}}

\def \t  {\mathfrak{t}}

\def \hs {\hspace{.2in}}
\def \hand {\hs \mbox{and} \hs}

\def \btheta {{\boldsymbol{\theta}}}

\def \TT {{\mathbb{T}}}

\def \mfX {\mathfrak{X}}

\def \ep {\varepsilon}
\def \bfu {{\bf u}}
\def \mH {{\mathsf{H}}}

\def \bnu {\boldsymbol{\nu}}

\def \bsigma {{\boldsymbol{\sigma}}}

\def \bsigma {{\boldsymbol{\sigma}}}

\def \blam {{\boldsymbol{\lambda}}}

\def \bth {{\btheta}}

\def \by {{\bf y}}

\def \bfk {{\bf k}}
\def \bfy {{\bf y}}
\def \GY {{\scriptscriptstyle {\rm GY}}}
\def \T {{\mathbb{T}}}
\def \TT {{\mathbb{T}}}
\def \bk {{\bf k}}
\def \calU {{\mathcal{U}}}
\def \FF {{\mathbb{F}}}

\usepackage{tikz-cd,pgfplots}
\usetikzlibrary{angles,quotes}
\usetikzlibrary{calc}

\def \taub {{\tau_\bullet}}
\def \taubi {{\tau_\bullet^{-1}}}
\def \sA {{\scriptstyle A}}
\def \bida {{\bf i}^\dagger}

\def \bfi {{\bf i}}
\def \TTA {{\TT_\sA}}
\def \ida {i^\dagger}
\def \Srn {{\mathfrak{S}}_{r, n}}
\def \Trn {{\mathfrak{T}}_{r, n}}
\def \mH {{\mathsf{H}}}

\def \kto {{k_\tau^{(1)}}}
\def \ktt {{k_\tau^{(2)}}}

\def \jto {{j_\tau^{(1)}}}
\def \jtt {{j_\tau^{(2)}}}

\def \whm {\widehat{m}}

\newtheorem{theorem}{Theorem}[subsection]

\newtheorem{theoremalpha}{Theorem}[subsection]

\thdef{lemma}{Lemma}
\thdef{proposition}{Proposition}
\thdef{corollary}{Corollary}
\thdef{conjecture}{Conjecture}
\theoremstyle{definition}
\thdef{definition}{Definition}
\thdef{example}{Example}
\thdef{remark}{Remark}
\thdef{lemma-definition}{Lemma-Definition}
\thdef{lemma-notation}{Lemma-Notation}
\thdef{notation}{Notation}
\thdef{assumption-notation}{Assumption-Notation}
\thdef{notation-lemma}{Notation-Lemma}
\thdef{definition-lemma}{Definition-Lemma}
\thdef{notation-remark}{Notation-Remark}
\thdef{definition-remark}{Definition-Remark}
\thdef{definition-notation}{Definition-Notation}
\thdef{remark-definition}{Remark-Definition}
\thdef{notation-definition}{Notation-Definition}

\title{Mutation matrices from Poisson CGL extensions}

\author{Zihang Liu \, and \, Jiang-Hua Lu \, and \, Yipeng Mi}

\AtEndDocument{%
  \par
  \medskip
  \begin{tabular}{@{}l@{}}%
    \small
    \textsc{Department of Mathematics, the Hong Kong University, Pokfulam Road, Hong Kong.}\\
    \textit{E-mail address}: \texttt{\href{zhliu22@connect.hku.hk}{zhliu22@connect.hku.hk}}
  \end{tabular}
  \par
  \medskip
  \begin{tabular}{@{}l@{}}%
    \small
    \textsc{Department of Mathematics, the Hong Kong University, Pokfulam Road, Hong Kong.}\\
    \textit{E-mail address}: \texttt{\href{jhlu@maths.hku.hk}{jhlu@maths.hku.hk}}
  \end{tabular}
    \par
  \medskip
  \begin{tabular}{@{}l@{}}%
    \small
    \textsc{Department of Mathematics, the Hong Kong University, Pokfulam Road, Hong Kong}\\ 
    {(address while research on this paper was carried out).}\\
    \textit{E-mail address}: \texttt{\href{mypkyle@gmail.com}{mypkyle@gmail.com}}
  \end{tabular}
  }
\date{}

\begin{document}

\maketitle
\begin{abstract}
Symmetric Poisson CGL extensions form a particular class of 
polynomial Poisson algebras that are shown by K. Goodearl and M. Yakimov to admit  compatible cluster structures.
In this paper, we give explicit formulas for a family of mutation matrices
in the Goodearl-Yakimov theory via matrix products as well as by entry-wise description.
\end{abstract}

\tableofcontents
\addtocontents{toc}{\protect\setcounter{tocdepth}{1}}

\section{Introduction and statements of main results}
\subsection{Introduction} Among the systematical examples of commutative and associative algebras admitting (meaningful) 
cluster structures \cite{FZ:I} are a class of polynomial Poisson algebras studied by K. Goodearl and M. Yakimov
in \cite{GY:PNAS, GY:Poi-CGL}. 
 
Let ${\bf k}$ be a field of characteristic $0$. A length $n$ iterated Poisson-Ore extension (of ${\bf k}$)
is a polynomial Poisson algebra $(\bk[x_1, \ldots, x_n], \{\, , \, \})$
such that 
\begin{equation}\label{eq:xxx-1}
\{\bk[x_{1}, \ldots, x_{j-1}], \; x_j\} \subset x_j \bk[x_{1}, \ldots, x_{j-1}] + \bk[x_{1}, \ldots, x_{j-1}], \hs 
\forall \; 2 \leq j \leq n.
\end{equation}
Such an iterated Poisson-Ore extension is said to be {\it symmetric} if it also satisfies
\begin{equation}\label{eq:zzz-0}
\{x_j, \; \bk[x_{j+1}, \ldots, x_n]\} \subset x_j \bk[x_{j+1}, \ldots, x_n] + \bk[x_{j+1}, \ldots, x_n], \hs \forall \; 1 \leq j \leq n-1.
\end{equation}
Given a split ${\bf k}$-torus $\TT$, a {\it $\TT$-Poisson CGL extension} (reps. {\it symmetric 
$\TT$-Poisson CGL extension}), as defined in \cite{GY:PNAS, GY:Poi-CGL} and named after 
G. Cauchon, K. Goodearl, and E. Letzter, is an iterated Poisson-Ore extension 
(resp.  symmetric iterated Poisson-Ore extension)
together with a compatible $\TT$-action  and certain nilpotency condition. See $\S$\ref{ss:main-intro}
for details.

In a remarkable theory developed in \cite{GY:PNAS, GY:Poi-CGL}, K. Goodearl and M. Yakimov showed that 
a presentation  of a Poisson algebra $(R, \{\, ,\, \})$ 
as a {\it symmetric} $\TT$-Poisson CGL extension 
gives rise to (under some mild conditions on scalars and Poisson CGL generators, see \autoref{thm:GY-Mtau} for detail)
a seed\footnote{Mutation matrices are denoted as 
$\widetilde{B}$ in \cite{GY:Poi-CGL}.}  $(\bfy, M)$ in the fraction field ${\rm Frac}(R)$ of $R$ such that
\begin{equation}\label{eq:RAU}
R = \overline{\calA}(\bfy,  M) = \overline{\calU}(\bfy,  M) \subset {\rm Frac}(R),
\end{equation}
where  $\overline{\calA}(\bfy,  M)$ and $\overline{{\calU}}(\bfy,  M)$ are respectively the 
cluster ${\bf k}$-algebra and the upper cluster ${\bf k}$-algebra defined
by $(\bfy,  M)$ with no frozen variables inverted (see \autoref{de:cluster-A}).
The seed $(\bfy,  M)$ is  a {\it $\TT$-Poisson} in 
 the sense of 
M. Gekhtman, M. Shapiro, and A. Vainstein
\cite{GSV:book}, i.e., 
the extended cluster $\bfy^\prime$ in any seed $(\bfy^\prime, M^\prime)$
mutation equivalent to $(\bfy,  M)$ has log-canonical Poisson bracket 
with respect to $\{\, , \, \}$ and consists of $\TT$-weight vectors. 
We denote by $\Sigma_{\GY}$ the mutation equivalence class of seeds in ${\rm Frac}(R)$ containing
$(\bfy, M)$.

In the theory of cluster algebras, it is often desirable to have a good and easily 
workable description of the mutation matrices,
as each one of them  completely determines the cluster structure up to isomorphisms.
For a  {\it symmetric} $\TT$-Poisson CGL extension 
$(R,\, \{\, , \, \})$ of length $n$,
Goodearl and Yakimov constructed in 
\cite{GY:Poi-CGL} (again under some mild conditions on scalars and Poisson CGL generators) 
a family of seeds $({\bf y}_\tau, M_\tau)$ in $\Sigma_{\GY}$,
 where
$\tau$ lies in a certain subset $\Xi_n$ of the symmetric group $S_n$.
The mutation matrix  $M_\tau$ for each $\tau \in \Xi_n$ 
is characterized in \cite[Theorem 11.1]{GY:Poi-CGL} as the unique solution to a certain system of linear equations, which we refer to as {\it GSV Equations}
(after M. Gekhtman, M. Shapiro, and A. Vainstein),
that are defined by
the Poisson coefficient matrix of $\bfy_\tau$ and the $\TT$-characters of the elements in $\bfy_\tau$. 
The existence and uniqueness of $M_\tau$ are proved in \cite[Theorem 11.1]{GY:Poi-CGL} by a rather involved induction procedure. 

In this paper, for any length $n$ symmetric Poisson CGL extension $R$ and for each $\tau \in \Xi_n$, we 
solve the GSV Equations on $M_\tau$ explicitly, and we do so by first proving 
some general results on Poisson CGL extensions that are not necessarily symmetric.

More precisely, for  an arbitrary $\TT$-Poisson CGL extension $(R, \{\,, \, \})$ of length $n$, {\it not necessarily symmetric}, 
we use the sequence ${\bf y} = (y_1, \ldots, y_n)$ of {\it homogeneous Poisson prime elements} of $R$ 
defined in \cite{GY:Poi-CGL} to  
formulate the GSV Equations on a matrix $M$ (see \eqref{eq:GSV-intro}).  
Then

1) by some elementary linear algebra arguments, we show that 
a solution $M$ to the GSV Equations, if exists, must be given by a certain explicit matrix product, implying in particular the 
uniqueness of such an $M$;

2) by a computation on the Poisson bracket $\{\, ,\, \}$, we describe 
an explicit integer solution $M$ to the GSV Equations via the expansion of certain elements of $R$ as Laurent polynomials of $\bfy$;

3)  under some mild conditions on scalars (to ensure that $M$ is skew-symmetrizable) and normality of 
$R$ (\autoref{de:normal}), we show that  the upper cluster algebra 
defined by $(\bfy, M)$ (with no frozen variables inverted) coincides with the polynomial ring $R$.

When the $\TT$-Poisson CGL $R$ is symmetric, it is shown in \cite{GY:Poi-CGL} that 
the same Poisson algebra $(R, \{\, , \, \})$ becomes a (in general no longer symmetric) 
$\TT$-Poisson CGL extension $R_\tau$ in the coordinates $(x_{\tau(1)}, \ldots, x_{\tau(n)})$
for each $\tau\in\Xi_n$.
Applying our aforementioned results on (the not necessarily symmetric) $\TT$-Poisson CGL extension
$R_\tau$, we arrive at  
not only the existence and uniqueness but also explicit formulas of $M_\tau$ as matrix products.

A classification of symmetric $\TT$-Poisson CGL extensions is recently given in  
\cite{Lu-Mykola:deformation} in terms of their
log-canonical terms and the second $\TT$-invariant Poisson 
cohomology of the log-canonical terms. In particular, it is shown in \cite{Lu-Mykola:deformation} that  there is a collection of 
non-positive integers, called {\it Cartan integers}, associated to the log-canonical term of 
any symmetric $\TT$-Poisson CGL extension $R$. As an application of our explicit formulas, we show that 
the non-zero entries of $M_\tau$, for every $\tau \in \Xi_n$, are either $\pm 1$
or $\pm a$, where $a$ is a negative Cartan integer associated to the log-canonical term of $R$. 

The results in this paper are in particular applicable to 
examples of symmetric Poisson CGL extensions from Lie theory, namely those constructed
\cite{Lu-Mykola:deformation} 
from any symmetrizable generalized Cartan matrix $A$ and any finite sequence ${\bf i}$ of
indices in $\{1, \ldots, r\}$, where $r$ is the size of $A$. In such cases, we give the precise relation 
between the mutation matrices $M_\tau$ and 
the Berenstein-Fomin-Zelevinsky mutation matrices associated to 
signed words in $\{1, \ldots, r\}$, which, by the work of A. Contu, F. Qin and Q. Wei
\cite{CQW:i-boxes}, are the same as  the Kashiwara-Kim exchange matrices from ${\bf i}$-boxes 
\cite{KKOP:24, KK:i-boxes}.

We remark that in this paper we are only concerned with the mutation matrices 
$M_\tau$ for $\tau \in \Xi_n$ in the Goodearl-Yakimov theory, and we make use of many results from
\cite{GY:Poi-CGL}. In particular, we refer to \cite{GY:Poi-CGL} for the proofs 
of the mutation equivalence of the seeds $(\bfy_\tau, M_\tau)$ and the equalities in \eqref{eq:RAU}. 
We now give more details of our results.

\subsection{Statements of main results}\label{ss:main-intro}
Throughout the paper, we fix a field $\bk$ of characteristic $0$, and let  $\bk^\times= {\bf k}\backslash\{0\}$.
Let $\TT$ be a split $\bfk$-torus with Lie algebra $\t$ and 
character lattice  $X(\T)$. We regard $X(\T)$ as a sub-lattice in 
$\t^*$ by identifying $\chi \in X(\T)$ with its differential at the identity element of $\TT$. 
For any integers $a \leq b$, let $[a, b]$ be the set of all integers $j$ such that $a \leq j \leq b$.

We first recall the definition of $\TT$-Poisson CGL extensions from 
\cite{GY:Poi-CGL}.

\begin{definition}\label{de:CGL-intro}\cite[$\S$5.1]{GY:Poi-CGL}
{\rm
A $\TT$-Poisson CGL extension of length $n$ is the polynomial algebra
${\bf k}[x_1, \ldots, x_n]$ together with a $\TT$-action by $\bfk$-algebra automorphisms
and a $\TT$-invariant Poisson bracket $\{\, ,\, \}$, 
such that

1) each $x_j$, for $j \in [1, n]$, is a $\TT$-weight vector with $\TT$-weight $\chi_j \in X(\TT)$;

2) there exist $h_1, \ldots, h_n \in \t$ with 
$\chi_j(h_j) \in \bfk^\times$ for each $j \in [1, n]$ such that 
\begin{equation}\label{eq:xj-xk-intro}
\{x_i, x_j\} = -\chi_i(h_j) x_i x_j -\delta_j(x_i),\hs 1 \leq i < j \leq n,
\end{equation}
where $\delta_j$ is a locally nilpotent derivation of the algebra $\bfk[x_1, \ldots, x_{j-1}]$.
We sometimes write
\begin{equation}\label{eq:R-x-chi-h}
R = (\bk[x_1, \ldots, x_n], \, \{\, , \,\}) \hs \mbox{or} \hs
R = (\bk[x_1, \ldots, x_n], \, \{\, , \,\})_{(\chi_1, \ldots, \chi_n; h_1, \ldots, h_n)}
\end{equation}
to indicate that $R$ is a $\TT$-Poisson CGL extension in the ordered set  $(x_1, \ldots, x_n)$ 
of polynomial generators, also called {\it CGL generators},  of $R$, and to indicate the
$\TT$-action  on $R$ via the $\TT$-weights of the generators $x_1, \ldots, x_n$ 
and the choice of $(h_1, \ldots, h_n)$.
\hfill $\diamond$
}
\end{definition}

Let $R$ be a  $\TT$-Poisson CGL extension of length $n$ as in  
\autoref{de:CGL-intro}. Set 
\begin{equation}\label{eq:lambda-j}
\lambda_j = \chi_j(h_j) \in \bk^\times, \hs j \in [1, n].
\end{equation}
Goodearl and Yakimov showed in \cite[Theorem 5.5 and Corollary 5.11]{GY:Poi-CGL} 
(see \autoref{thm:5.5} for details) 
that there exists a
{\it successor map} 
\[
s: \;\; [1, n] \longrightarrow [2, n] \sqcup \{+\infty\}
\]
and a unique sequence
$\bfy = (y_1, \ldots,y_n)$, recursively determined by
$y_{j} = x_j$ if $j \notin {\rm Im} (s)$, and 
\begin{equation}\label{eq:y-k-intro}
y_{s(j)} = x_{s(j)} y_{j} - \frac{\delta_{s(j)}(y_{j})}{\lambda_{s(j)}} 
\in \bk[x_1, \ldots, x_{s(j)}], \hs j \in [1, n], \; s(j) \neq +\infty.
\end{equation}
The sequence $\bfy$ is called the {\it sequence of homogeneous 
Poisson prime elements} of the Poisson CGL extension $R$. It follows from \eqref{eq:y-k-intro} that 
$\bfy$ is a set of free transcendental generators of ${\rm Frac}(R)
=\bk(x_1, \ldots, x_n)$ and that
$R \subset \bk [y_1^{\pm 1}, \ldots, y_n^{\pm 1}]$. 
Moreover, $\bfy$ is log-canonical with respect to the Poisson bracket 
$\{\, , \, \}$, and each $y_j$ is a $\TT$-weight vector 
with $\TT$-weight $\chi_{y_j}$. Set
\[
{\bf q} = (q_{i, j})_{i, j \in [1, n]} \hs \mbox{and} \hs
\chi_\bfy = (\chi_{y_1}, \,\ldots, \,\chi_{y_n}),
\]
where $\{y_i, y_j\} =q_{i, j} y_iy_j$ for all $i, j \in [1, n]$. Let 
\[
{\rm ex} = \{j \in [1, n]: s(j) \neq +\infty\} \subset [1, n],
\]
and denote by
${\rm Mat}_{n \times {\rm ex}}(\bk)$, resp. 
${\rm Mat}_{n \times {\rm ex}}(\ZZ)$, the set of all 
matrices with entries in $\bk$, resp. in $\ZZ$, whose rows are labeled by $i \in [1, n]$ and 
columns by $j \in {\rm ex}$.
Introduce the  {\it diagonal matrix} (see convention in $\S$\ref{ss:nota-intro})
\begin{equation}\label{eq:Lambda-intro}
\Lambda =(\lambda_{s(j)}e_j)_{j \in {\rm ex}} 
\in {\rm Mat}_{n \times {\rm ex}}(\bk),
\end{equation}
where $\{e_1, \ldots, e_n\}$ is the standard $\ZZ$-basis of
$\ZZ^n$ of {\it column} vectors. 

\begin{definition}\label{de:GSV-intro}
For a $\TT$-Poisson CGL extension $R$ of length $n$, the  {\it GSV Equations} for $R$ are the 
equations for 
$M \in {\rm Mat}_{n \times {\rm ex}}(\bk)$ given by
(see \eqref{eq:GSV} for a slightly more general definition) 
\begin{equation}\label{eq:GSV-intro}
{\bf q} M = -\Lambda \hs \mbox{and} \hs \chi_\bfy M = 0.
\end{equation}
\end{definition}

A solution $M \in {\rm Mat}_{n \times {\rm ex}}(\bk)$,  to the GSV Equations, if exists,
is not necessarily integral nor skew-symmetrizable. 
Introduce the $n \times n$ lower triangular matrix
\[
E =(e_1-e_{s(1)}, \, e_2-e_{s(2)}, \, \ldots, \, e_n-e_{s(n)}),
\]
where $e_{+\infty} := 0$. Setting
$\nu_{i, j} = \chi_i(h_j) + \chi_j(h_i)$ for $1\leq i < j \leq n$, introduce also 
\[
\bnu = \left(\begin{array}{ccccc} \lambda_1 & \nu_{1,2} & \nu_{1,3} & \cdots &\nu_{1, n}\\
0 & \lambda_2 & \nu_{2,3} & \cdots & \nu_{2, n}\\
0 & 0 & \lambda_3 & \cdots & \nu_{3, n}\\
\cdots & \cdots & \cdots & \cdots &\cdots\\
0 & 0 & 0 & \cdots & \lambda_n\end{array}\right).
\]
Using elementary linear algebra arguments, we show in \autoref{cor:M-unique} that a
solution $M \in {\rm Mat}_{n \times {\rm ex}}(\bk)$ to the GSV Equations, if exists,
is unique and is necessarily given by
\begin{equation}\label{eq:M-intro-000}
M = E^t \bnu^{-1} E \Lambda.
\end{equation}
Our main results on arbitrary (not necessarily symmetric) $\TT$-Poisson CGL extensions are
now summarized as follows.

\begin{theoremalpha}\label{thm:A}
Let $R$ be any  $\TT$-Poisson CGL extension of length $n$  as in  
\autoref{de:CGL-intro}.

1) (\autoref{pr:bsj-1} and \autoref{thm:M-1}) The matrix 
$M$ in \eqref{eq:M-intro-000} 
has integer entries and is a solution, and the only solution, to the GSV Equations \eqref{eq:GSV-intro} in 
${\rm Mat}_{n \times {\rm ex}}(\bk)$. Moreover,
for each $j \in {\rm ex}$ the element 
\[
c_{s(j)}:=\frac{\delta_{s(j)}(y_j)}{\lambda_{s(j)}} \in \bk[x_1, \ldots, x_{s(j)-1}]\subset \bk[y_1^{\pm 1}, \ldots, y_{s(j)-1}^{\pm 1}]
\]
has, in its expansion as a Laurent polynomial in $\bfy$, a unique Laurent monomial term $b_{s(j)}$ in $\bfy$
that does not contain any power of $y_j$, and one has
\[
\frac{b_{s(j)}}{y_{s(j)}} = \iota_{s(j)} \bfy^{M_j},
\]
for some $\iota_{s(j)} \in \bk^\times$, 
where $M_j$ is the $j^{\rm th}$-column of  $M =E^t \bnu^{-1} E \Lambda
\in {\rm Mat}_{n \times {\rm ex}}(\ZZ)$;

2) (\autoref{thm:M-1}) If $\lambda_{s(j)}/\lambda_{s(k)} \in \QQ$ for all $j, k \in {\rm ex}$, 
then there exists a diagonal
$\varepsilon \in {\rm Mat}_{{\rm ex} \times {\rm ex}}(\ZZ)$
with diagonal entries $\pm 1$  such that $M\ep$ is skew-symmetrizable, and  
$(\bfy, M\varepsilon)$ is a $\TT$-Poisson seed in ${\rm Frac}(R)$. 

3) (\autoref{thm:M-2}) Assume that $M\ep$ is skew-symmetrizable as in 2).
If, in addition, $R$ is normal in the sense that $\iota_{s(j)} = 1$ 
for every $j \in {\rm ex}$, 
then 
\[
R = \overline{\calU}(\bfy, M\varepsilon),
\]
where 
$\overline{\calU}(\bfy, M\varepsilon)$ is the upper cluster algebra defined by $(\bfy, M\ep)$ with no frozen variables
inverted.
\end{theoremalpha}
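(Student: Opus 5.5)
We prove the three parts in order. Part 1) is the linear-algebra core, part 2) adds the sign normalization, and part 3) is a Laurent-phenomenon argument.

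\textbf{Step 1: express ${\bf q}$ and $\chi_\bfy$ in terms of $E$ and $\bnu$.} The recursion \eqref{eq:y-k-intro} shows that $y_j$ is, up to lower-order terms, the monomial $x_j x_{p(j)} x_{p^2(j)}\cdots$, where $p=s^{-1}$ is the predecessor map. The $\TT$-weights and the log-canonical brackets of $\bfy$ are read off from these leading monomials. Let $F$ be the $n\times n$ unipotent upper-triangular $0/1$ matrix whose $j$-th row records the chain $\{j, p(j), p^2(j), \dots\}$. Then $F^{-1}=E^t$, since $E$ has columns $e_j-e_{s(j)}$, each chain telescopes, and $E$ is lower unitriangular. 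Write $\chi_{\bf x}=(\chi_1,\ldots,\chi_n)$ and $\chi_\bfy=(\chi_{y_1},\ldots,\chi_{y_n})$. We get
\[
\chi_\bfy=\chi_{\bf x}F^t,\hs {\bf q}=F\,{\bf q}^{(0)}F^t ,
\]
where ${\bf q}^{(0)}$ is skew with $(i,j)$ entry $-\chi_i(h_j)$ for $i<j$. The formula for ${\bf q}$ holds because leading terms of log-canonical elements bracket by the log-canonical part of \eqref{eq:xj-xk-intro}. We also use the identity $\chi_i(h_j)-\chi_j(h_i)=\nu_{i,j}-2\chi_j(h_i)$ to rewrite ${\bf q}^{(0)}$ as $-\bnu+D$, where $D$ collects the terms $\chi_j(h_i)$ for $i\le j$. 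This step should give $M=E^t\bnu^{-1}E\Lambda$ as in \autoref{cor:M-unique}: pair the equation ${\bf q}M=-\Lambda$ with the weight equation $\chi_\bfy M=0$ through the vectors $h_i$, so that the $D$-part is killed and only $\bnu$ survives. The same computation gives uniqueness, because $E$ and $\bnu$ are invertible. This mirrors what is assumed from \autoref{cor:M-unique}.

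\textbf{Step 2: existence and integrality via $b_{s(j)}$ (the main obstacle).} Fix $j\in{\rm ex}$ and use $x_{s(j)}y_j=y_{s(j)}+c_{s(j)}$. The Poisson bracket of $y_k$ with both sides must be consistent, since each $y_k$ is log-canonical and $\TT$-homogeneous. We expand $c_{s(j)}$ as a Laurent polynomial in $y_1,\ldots,y_{s(j)-1}$. We then show that exactly one monomial avoids $y_j$. For this, specialize by the Poisson prime ideal $(y_j)$: in \cite{GY:Poi-CGL}, $y_j$ is Poisson prime in $\bk[x_1,\ldots,x_{s(j)-1}]$, and $c_{s(j)}$ reduces modulo $y_j$ to a homogeneous Poisson-normal element. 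In the quotient torus, normal elements that are log-canonical weight vectors are monomials, which gives uniqueness of $b_{s(j)}$.

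Next, comparing $\TT$-weights gives $\chi_\bfy$ of $b_{s(j)}/y_{s(j)}$ equal to $0$. Comparing brackets with each $y_k$ modulo $y_j$ gives ${\bf q}$ applied to its exponent equal to $-\lambda_{s(j)}e_j$. The term $e_j$ appears because $\{y_j,x_{s(j)}\}$ contributes $\lambda_{s(j)}$. Hence the exponent vector solves the GSV Equations, is integral, and by Step 1 equals $M_j$. The delicate point is justifying the uniqueness of the $y_j$-free monomial; we expect to use that $\delta_{s(j)}$ lowers $y_j$-degree together with the normality of $y_j$ in $R_{s(j)-1}$.

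\textbf{Step 3: parts 2) and 3).} For 2), compute ${\bf q}$-compatibility: $M^t{\bf q}M=-M^t\Lambda$ restricted to ${\rm ex}\times{\rm ex}$ is skew. Rationality of the ratios $\lambda_{s(j)}/\lambda_{s(k)}$ then gives a diagonal rational symmetrizer up to signs; the signs are chosen so the symmetrizer is positive, which defines $\varepsilon$. The seed is $\TT$-Poisson by \cite{GSV:book}, since the GSV compatibility holds. For 3), the inclusion $R\subseteq\overline{\calU}$ follows from showing $x_k\in\overline{\calU}$ by induction on $k$. Each one-step mutation $y_j'=(\text{monomial}+\text{monomial})/y_j$ equals $x_{s(j)}$-type expressions via $x_{s(j)}=(y_{s(j)}+c_{s(j)})/y_j$, with normality $\iota=1$ matching the exchange binomial. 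The reverse inclusion uses the standard argument that $R$ is the intersection of the Laurent rings in $\bfy$ and the adjacent clusters. This holds because $R$ is a UFD in which the $y_j$ are prime, and the frozen $y_k$ are not inverted.
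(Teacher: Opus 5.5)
Your architecture matches the paper's, and Step 1 is its linear-algebra argument up to two slips. First, the $F$ you describe (row $j$ recording $j,p(j),p^2(j),\ldots$), which is the one satisfying ${\bf q}=F{\bf q}^{(0)}F^t$ and $\chi_\bfy=\chi_{\bf x}F^t$, is lower triangular with $F^{-1}=E$, not $E^t$. Second, ${\bf q}^{(0)}=-\bnu+\bsigma$ with the \emph{full} matrix $\bsigma=(\chi_k(h_i))_{i,k}$, including the entries below the diagonal. That full matrix is exactly what the weight equation $\chi_{\bf x}F^tf=\kappa$, evaluated at $h_1,\ldots,h_n$, controls; a triangular part of it would not be killed.

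The genuine gap is Step 2, which you leave open. "Normal homogeneous elements of a quotient torus are monomials" is not a general fact. It holds here only because the system ${\bf q}f=-g$, $\chi_\bfy f=\kappa$ has at most one solution (your Step 1). To use this, and to get ${\bf q}M_j=-\lambda_{s(j)}e_j$ at all, you need $\{b_{s(j)},y_l\}=g_l\,b_{s(j)}y_l$ for \emph{every} $l\in[1,n]$. Reducing modulo $y_j$ loses the $l=j$ equation, and your quotient of $\bk[x_1,\ldots,x_{s(j)-1}]$ does not see $l\ge s(j)$.

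The missing computation goes as follows. From $c_{s(j)}=y_jx_{s(j)}-y_{s(j)}$ and $\{x_{s(j)},y_l\}=(q_{s(j),l}-q_{j,l})x_{s(j)}y_l+\delta_{s(j)}(y_l)$ (with no $\delta$-term when $l\ge s(j)$), one gets $\{c_{s(j)},y_l\}=q_{s(j),l}c_{s(j)}y_l+\delta_{s(j)}(y_l)y_j$.
\begin{itemize}
\item For $l\ge s(j)$, and for $l=j$ via $\delta_{s(j)}(y_j)=\lambda_{s(j)}c_{s(j)}$, this makes $c_{s(j)}$ itself an eigenvector. Hence so is its $y_j$-free part $b_{s(j)}$, with $g_j=q_{s(j),j}+\lambda_{s(j)}$.
\item For the remaining $l$, compare $y_j$-constant terms in $\calT^\prime_{s(j)-1}[y_j]$.
\end{itemize}
Every exponent in the support of $b_{s(j)}$ then solves the same uniquely solvable system, so $b_{s(j)}$ is a single monomial. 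You also never show $b_{s(j)}\neq 0$. This needs primality of $y_{s(j)}$ in $R_{s(j)}$: otherwise $y_j\mid c_{s(j)}$ in $R_{s(j)-1}$, and $y_{s(j)}=y_j(x_{s(j)}-c_{s(j)}/y_j)$ would factor.

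Part 3 has two further gaps.

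For $R\subseteq\overline{\calU}(\bfy,M\varepsilon)$, Laurentness of the $x_k$ in $\bfy$ and in the adjacent clusters is not enough, since $\overline{\calU}$ is an intersection over the whole mutation class. You need the theorem that $\overline{\calU}(\bfy,M\varepsilon)=\calL(\bfy)\cap\bigcap_{j\in{\rm ex}}\calL(\bfy[j])$ (for non-inverted frozen variables, \cite[Theorem 3.11]{GSV:2018}). Its full-rank hypothesis holds because ${\bf q}M\varepsilon=-\Lambda\varepsilon$.

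For $\overline{\calU}\subseteq R$ via the Starfish Lemma, you must prove that $y_j'\in R$ and that $y_j'$ is coprime to $y_j$; the phrase ``$x_{s(j)}$-type expressions'' does not do this. Write $c_{s(j)}=y_ja_{s(j)}+b_{s(j)}$ with $a_{s(j)}\in\calT^\prime_{s(j)-1}[y_j]$. Using $\iota_{s(j)}=1$, write $b_{s(j)}=b^+/b^-$ with $b^\pm$ coprime monomials in the $y_i$, $i\neq j$. Then $y_j'=b^-x_{s(j)}-b^-a_{s(j)}$, and the key point is that $b^-a_{s(j)}\in R_{s(j)-1}$. To see this, write it in lowest terms as $\phi/\bfy^f$ in the UFD $R_{s(j)-1}$ with $f_j=0$. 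Then $y_j\,b^-a_{s(j)}=b^-c_{s(j)}-b^+\in R_{s(j)-1}$ forces $f=0$. Coprimality follows because the $j$-th entry of $\deg_{\bf x}(y_j')=e_{s(j)}+F^t\deg_\bfy(b^-)$ is $0$, while that of $\deg_{\bf x}(y_j)$ is $1$.
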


We now turn to symmetric Poisson CGL extensions.

\begin{definition}\label{de:sym-CGL-intro}\cite[$\S$6.1]{GY:Poi-CGL}
{\rm
A $\TT$-Poisson CGL extension $R = ({\bf k}[x_1, \ldots, x_n], \{\, , \, \})$ as in 
\autoref{de:CGL-intro} is said to be {\it symmetric} if, in addition, there exists
$(h_1^\ast, \ldots, h_n^\ast)\in \t^n$ satisfying
\[
\chi_j(h_j^\ast) \in \bk^\times, \;\;\; \forall\; j \in [1, n], 
\hs
\mbox{and}\hs \chi_i(h_j) = -\chi_j(h_i^\ast), \;\;\; \forall\; 1 \leq i< j \leq n,
\]
and if $\delta_j(x_i) \in \bk[x_{i+1}, \ldots, x_{j-1}]$ for all $1 \leq i< j \leq n$.
}
\end{definition}

For a $\TT$-Poisson CGL extension $R$ of length $n$ that is symmetric, 
we extend the diagonal matrix $\Lambda \in 
{\rm Mat}_{n \times {\rm ex}}(\bk)$  in \eqref{eq:Lambda-intro} to the $n \times n$ diagonal 
matrix $\overline{\Lambda}$ by setting the entry of $\overline{\Lambda}$ at $(j, j)$ 
to be $\lambda_j$ for $j \notin {\rm ex}$, and we introduce
\begin{equation}\label{eq:Q-intro}
Q = \bnu^{-1} \overline{\Lambda} \in {\rm Mat}_{n \times n}(\bk).
\end{equation}
The matrix $M = E^t \bnu^{-1} E\Lambda \in {\rm Mat}_{n \times {\rm ex}}(\ZZ)$ then
takes the more symmetric form (see $\S$\ref{ss:nota-sym}) 
\begin{equation}\label{eq:M-EQE-intro}
M = E^t Q E_{n \times{\rm ex}},
\end{equation}
where $E_{n \times {\rm ex}}$ is the sub-matrix of $E$ formed by the columns of $E$ indexed by
$j \in {\rm ex}$.
The formula for $M$ in \eqref{eq:M-EQE-intro}  has recently been proved 
in \cite{Lu-Mykola:deformation}, in which\footnote{All the results in \cite{Lu-Mykola:deformation}, proved 
over the complex numbers, hold over any field $\bk$ of characteristic $0$.}
symmetric $\TT$-Poisson CGL extensions 
are classified by their log-canonical terms and the second $\TT$-invariant Poisson cohomology
of their log-canonical terms. In particular, 
for each $j \in {\rm ex}$, the $j^{\rm th}$ column $QEe_j$
of the matrix $QE_{n \times {\rm ex}}$ is shown to be 
a $(\bk^\times)^n$-weight in the second $\TT$-invariant
Poisson cohomology space of the log-canonical term of  $\{\, , \, \}$, explicitly given as 
\begin{equation}\label{eq:QEj-intro}
QEe_j = -e_j-e_{s(j)} + {\rm deg}_{\bf x} (\{x_j, x_{s(j)}\}_{\rm tail}),
\end{equation}
where $\{x_j, x_{s(j)}\}_{\rm tail} = -\delta_{s(j)}(x_j)$, and 
${\rm deg}_{\bf x}(\phi)$ for a non-zero 
$\phi \in R$ is the  ${\bf x} = (x_1, \ldots, x_n)$-exponent of the leading monomial term of $\phi$ 
with respect to the reverse lexicographic order on $\ZZ^n$ (see $\S$\ref{ss:two-gradings}). 
It is also shown in \cite{Lu-Mykola:deformation} that the entries of 
the vectors ${\rm deg}_{\bf x} (\{x_j, x_{s(j)}\}_{\rm tail})$ for
$j \in {\rm ex}$ are all expressed in terms of 
a well-defined collection of non-positive
{\it Cartan integers} associated to the log-canonical term of $\{\, , \, \}$
(see $\S$\ref{ss:bth} for more detail).
We review the identity \eqref{eq:QEj-intro} in \autoref{thm:sym-M-1} and give a proof using results 
from \cite{GY:Poi-CGL}.

Using the symmetric property of $R$, Goodearl and Yakimov introduced  
a subset $\Xi_n$ of the permutation group $S_n$ and constructed,
for each $\tau \in \Xi_n$, 
first a presentation $R_\tau$ of the (same) Poisson algebra $R$ as 
a (not necessarily symmetric) $\TT$-Poisson CGL extension
 (see $\S$\ref{ss:reordering} for detail) and then a $\TT$-Poisson pre-seed 
$ ({\bf y}_\tau, M_\tau)$ (see \autoref{de:pre-seed})
in ${\rm Frac}(R)$ associated to $R_\tau$. 
Under certain conditions on the entries of $\Lambda$ and certain normality condition on $R$, 
see $\S$\ref{ss:y-tau-M-tau} and in particular \autoref{thm:GY-Mtau}, it is shown in 
\cite[Theorem 11.1]{GY:Poi-CGL} that 
\[
\{({\bf y}_\tau, M_\tau): \tau \in \Xi_n\}
\]
is a family of mutation equivalent 
 $\TT$-Poisson seeds in ${\rm Frac}(R)$.  
In particular, $(\bfy, M) = (\bfy_{\rm id}, M_{\rm id})$, where ${\rm id}\in S_n$ is the identity element.

We now state our  main results on 
the mutation matrices $M$ and $M_\tau$ for $\tau \in \Xi_n$, 
obtained by 
applying \autoref{thm:A} to $\TT$-Poisson CGL extension $R_\tau$.

\begin{theoremalpha}\label{thm:B}
Let $R=(\bk[x_1, \ldots, x_n],\, \{\, , \, \})$ be any symmetric $\TT$-Poisson CGL extension.

1) (\autoref{thm:sym-M-1}) For each $j \in {\rm ex}$, the $j^{\rm th}$ column $M_j$ of $M$ is given by 
\[
M_j = -e_{s(j)} + \deg_{\bf y}(c_{s(j)}) = \deg_{\bf y} \left(\frac{c_{s(j)}}{y_{s(j)}}\right),
\]
where $\deg_{\bf y}(f)$ for a Laurent polynomial $f$ in $\bfy$ is defined in $\S$\ref{ss:two-gradings};

2) (\autoref{thm:M-3}) For each $\tau \in \Xi_n$, the matrix $M_\tau \in {\rm Mat}_{n \times {\rm ex}}(\ZZ)$ 
is given by
\[
M_\tau = E^t \tau_\bullet Q \tau_\bullet^t E
=(E^{-1} \tau_\bullet^{-1}E)^t M 
(E^{-1} \tau_\bullet^{-1}E)_{{\rm ex} \times {\rm ex}},
\]
where $\tau_\bullet$ is a permutation matrix associated to $\tau$ and $R$ (see $\S$\ref{ss:y-tau-M-tau}), 
and $(E^{-1} \tau_\bullet^{-1}E)_{{\rm ex} \times {\rm ex}}$ denotes the 
${\rm ex} \times {\rm ex}$ sub-matrix of
$E^{-1} \tau_\bullet^{-1}E$;

3) (\autoref{thm:M-tau-cases}) For each $\tau \in \Xi_n$, 
the non-zero entries of $M_\tau$ are either $\pm 1$ or $\pm a$ for a negative Cartan integer $a$
associated to the log-canonical part of $\{\, , \, \}$. See \autoref{thm:M-tau-cases} for detailed description of
the entries of $M_\tau$.
\end{theoremalpha}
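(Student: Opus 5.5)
We would deduce all three parts of \autoref{thm:B} from \autoref{thm:A} applied to suitable $\TT$-Poisson CGL presentations of $R$. The symmetric structure enters mainly through the fact that the tail $\delta_{s(j)}(x_j)$ lies in $\bk[x_{j+1}, \ldots, x_{s(j)-1}]$, together with the relation $\chi_i(h_j) = -\chi_j(h_i^\ast)$.

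For part 1), \autoref{thm:A} already gives $M_j = -e_{s(j)} + (\text{exponent of } b_{s(j)})$. So it suffices to show that the $\bfy$-exponent of the distinguished monomial $b_{s(j)}$ equals $\deg_\bfy(c_{s(j)})$, i.e.\ that $b_{s(j)}$ is the leading term of $c_{s(j)}$ in the order of $\S$\ref{ss:two-gradings}.
\begin{itemize}
\item First compare the two gradings. Since $y_k = x_k y_{p(k)} - c_k$ with $p = s^{-1}$, the leading $\bf x$-monomial of $y_k$ is the product of $x_i$ over the $s$-chain ending at $k$. Telescoping the columns $e_i - e_{s(i)}$ of $E$ should give $\deg_{\bf x}(\bfy^a) = (E^t)^{-1} a$, that is, $\deg_\bfy = E^t \deg_{\bf x}$ on Laurent monomials.
\item Then combine this with $M = E^tQE_{n\times{\rm ex}}$ and \eqref{eq:QEj-intro}: $M_j = E^t\bigl(-e_j - e_{s(j)} + \deg_{\bf x}(\{x_j,x_{s(j)}\}_{\rm tail})\bigr)$. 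Since $\delta_{s(j)}(y_j)$ and $\delta_{s(j)}(x_j)$ differ by $y_{p(j)}$-type factors whose $\bf x$-degrees cancel the $-e_j$ term after applying $E^t$, this should rewrite as $-e_{s(j)} + \deg_\bfy(c_{s(j)})$.
\item The symmetric condition is what guarantees that the leading monomial avoids $y_j$. We would check this using $c_{s(j)} \in \bk[x_{j+1}, \ldots, x_{s(j)-1}]$, which makes it coincide with $b_{s(j)}$.
\end{itemize}

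For part 2), we would exploit uniqueness. By \autoref{thm:A} applied to $R_\tau$, $M_\tau$ is the unique solution of the GSV Equations ${\bf q}_\tau M = -\Lambda_\tau$ and $\chi_{\bfy_\tau} M = 0$. So it suffices to check that the candidate $E^t\tau_\bullet Q\tau_\bullet^t E$ satisfies these equations.
\begin{itemize}
\item From \cite{GY:Poi-CGL}, the elements of $\bfy_\tau$ are obtained from products of elements of $\bfy$ along the reordered chains. We would encode this as $\deg_\bfy(\bfy_\tau) = E^{-1}\tau_\bullet^{-1}E$ (up to transpose), following the grading comparison of part 1).
\item This gives ${\bf q}_\tau = T^t{\bf q}T$ and $\chi_{\bfy_\tau} = \chi_\bfy T$ with $T = E^{-1}\tau_\bullet^{-1}E$.
\item The GSV Equations for $R_\tau$ then reduce to those for $R$, together with the identity $T\Lambda_\tau = \Lambda T_{{\rm ex}\times{\rm ex}}$. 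That identity should follow because $\lambda$-values are preserved under reordering: the symmetric hypothesis gives $\lambda_j^\ast = -\lambda_j$-type relations from \cite{GY:Poi-CGL}.
\item The second displayed equality, $M_\tau = T^t M\, T_{{\rm ex}\times{\rm ex}}$, is then matrix bookkeeping: insert $EE^{-1}$ and use that $T$ maps the non-exchangeable columns appropriately.
\end{itemize}
We expect the main obstacle to be exactly this compatibility of $\tau_\bullet$ with $E$ and with ${\rm ex}$. I would first verify it for $\tau$ an elementary step in $\Xi_n$ (a single adjacent reordering as in $\S$\ref{ss:reordering}) and then iterate.

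For part 3), we would read off the entries from $M_\tau = E^t\tau_\bullet Q\tau_\bullet^tE$. By \eqref{eq:QEj-intro}, the columns of $QE$ are $-e_j - e_{s(j)}$ plus a $\deg_{\bf x}$-vector whose entries are Cartan integers (\cite{Lu-Mykola:deformation}, $\S$\ref{ss:bth}). Multiplying by $E^t$ and the permutation matrices produces differences along $s$-chains, and we would do a case analysis on the relative positions of $j, s(j), i, s(i)$ under $\tau$. The expectation is that the telescoping leaves at most one Cartan integer, or a single $\pm1$, in each entry. The delicate point is ruling out sums of two distinct Cartan integers, which should use the interlacing structure of chains in $\Xi_n$.
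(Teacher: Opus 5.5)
\textbf{Part 1.} Your reduction of 1) to showing ${\rm lt}_{\bfy}(c_{s(j)}) = b_{s(j)}$ is the right one. The step that carries all the weight, however, is not proved, and the justification you give is false. It is not true that $c_{s(j)}\in\bk[x_{j+1},\ldots,x_{s(j)-1}]$. That holds for $c_{[j,s(j)]}=\delta_{s(j)}(x_j)/\lambda_{s(j)}$, whereas $c_{s(j)}=\delta_{s(j)}(y_j)/\lambda_{s(j)}$ has ${\bf x}$-leading term divisible by ${\bf x}^{\overline{e}_{p(j)}}$ (\autoref{le:deg-csj}), so it involves $x_{p(j)}$ whenever $p(j)\neq-\infty$.

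What is needed is ${\rm lt}_{\bf x}(c_{s(j)})=\lambda_{s(j)}^{-1}{\bf x}^{\overline{e}_{p(j)}}\,{\rm lt}_{\bf x}(\delta_{s(j)}(x_j))$, and ``differ by $y_{p(j)}$-type factors'' does not give it. By Leibniz, $\delta_{s(j)}(y_j)=y_{p(j)}\delta_{s(j)}(x_j)+\delta_{s(j)}(y_{p(j)})\,x_j-\delta_{s(j)}(c_j)$. Nothing in your argument puts the last two terms below the first in the reverse lexicographic order. They can involve variables up to $x_{s(j)-1}$, and \autoref{thm:5.5} says nothing about $\delta_{s(j)}(y_{p(j)})$ because $y_{p(j)}\notin\calP_{s(j)-1}$.

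The paper fills this step with input that exists only because $R$ is symmetric:
\begin{itemize}
\item the interval prime elements $y_{[i,s^m(i)]}$;
\item the almost cluster mutation identity \eqref{eq:u-yy-cc}, $u_{[j_0,s(j)]}=y_{[s(j_0),j]}c_{s(j)}-y_jc_{[s(j_0),s(j)]}$;
\item \cite[Corollary 8.11]{GY:Poi-CGL} (\autoref{le-cor-811}): ${\rm lt}_{\bf x}(u_{[j_0,s(j)]})$ contains no $x_l$ with $l\in L(j_0)$.
\end{itemize}
This forces ${\rm lt}_{\bf x}(c_{s(j)})=x_{j_0}{\rm lt}_{\bf x}(c_{[s(j_0),s(j)]})$, and induction along the chain finishes. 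Quoting \eqref{eq:QEj-intro} from \cite{Lu-Mykola:deformation} does not remove this step, which is exactly \autoref{le:deg-csj} (the paper in fact deduces \eqref{eq:QEj-intro} from it). It only computes $\deg_\bfy(b_{s(j)})$; you still must show that $\deg_\bfy(c_{s(j)})$ equals it.

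\textbf{Part 2.} The strategy (transport the GSV Equations for $\bfy_\tau$ to those for $\bfy$ by a congruence, then use uniqueness) is viable, but your congruence is wrong. The $y_{\tau,k}$ are not products of the $y_i$. What is true is that for log-canonical $\TT$-homogeneous elements of $\calT$, Poisson coefficients and weights are read off from $\bfy$-leading exponents, by comparing leading terms with $\{\bfy^f,\bfy^g\}=(f^t{\bf q}\,g)\bfy^{f+g}$. By \autoref{prop-y-tau} and \autoref{lm:taubi-00}, $\deg_{\bf x}(y_{\tau,k})=\taubi\overline{e}_k$. Hence $D:=(\deg_\bfy y_{\tau,1},\ldots,\deg_\bfy y_{\tau,n})=E^t\taubi F^t=(T^t)^{-1}$ with $T=E^{-1}\taubi E$, and ${\bf q}_\tau=D^t{\bf q}D$, $\chi_{\bfy_\tau}=\chi_\bfy D$. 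This $D$ equals $T^t$ only when $\taub^2=1$.

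For example, take $A=(2)$, $\bfi=(1,1,1)$ and $\tau(1)=2,\tau(2)=3,\tau(3)=1$. Then $\taub=\tau^{-1}$, $\bfy_\tau=(x_2,y_{[2,3]},y_3)$, and $T^t{\bf q}T\neq{\bf q}_\tau$, so the candidate $T^tMT_{{\rm ex}\times{\rm ex}}$ fails your equations.

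With $D$ the check goes through, using $T\Lambda=\Lambda T_{{\rm ex}\times{\rm ex}}$ for the \emph{original} $\Lambda$. This follows from $\overline{\Lambda}E=E\overline{\Lambda}$, \eqref{eq:tau-overline-Lambda} and \autoref{lm:K-tau-0}. The matrix $\Lambda_\tau$ must not enter: the $\lambda$'s are preserved under reordering only up to the signs $\ep_\tau^\prime$. That is why Theorem A for $R_\tau$ yields $M_\tau^\prime$, and $M_\tau$ is its transport times $\ep_\tau^\prime$.

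The paper obtains ${\bf q}_\tau=(F\taub E){\bf q}(F\taub E)^t$ more cheaply, from \autoref{le:bfq} for $R_\tau$ and $F_\tau=(\taub\tau)^{-1}F(\taub\tau)$. This needs neither the explicit $\bfy_\tau$ nor your iteration over elementary steps, which would require a composition rule for $\tau\mapsto\taub$ that you do not have.

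\textbf{Part 3.} This is only an expectation. $Q(e_{\kto}-e_{\ktt})$ telescopes into a sum of vectors $\bth^{(l,s(l))}$, whose interior entries $-a_{\sL(i),\sL(k)}$ depend only on levels. So for $L(j)\neq L(k)$ the contributions at $\jto,\jtt$, which lie on one level, cancel; a sum of two Cartan integers never arises. The delicate case is $L(j)=L(k)$, where interior points of $L(k)$ carry $-2$. Excluding entries $\pm2$ and obtaining exactly $\ep^\tau(p(k))$ and $-\ep^\tau(k)$ needs the interval property of $\taubi$ on levels (\autoref{lm:taubi-00}, \autoref{lm:taubi-0}), which you do not establish.
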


An important class of symmetric Poisson CGL extensions comes from Lie theory: 
given any symmetrizable generalized Cartan matrix $A = (a_{i, i'})_{i, i' \in [1, r]}$ 
with a symmetrizer $(d_i)_{i \in [1, r]}$ and
any sequence ${\bf i} =(i_1, \ldots, i_n)$ in $[1, r]$, it is shown in 
\cite[$\S$6.2]{Lu-Mykola:deformation} that one has a unique 
algebraic Poisson structure $\pi^{(A, {\bf i})}$ on $\CC^n$, giving rise to a
normalized symmetric $\TTA$-Poisson CGL extension
\[
R^{(A, \bfi)} = (\CC[x_1, \ldots, x_n], \; \{\, ,\, \}_{(A, {\bf i})}),
\]
where $\TTA$ is the split complex torus with character lattice the root lattice associated to $A$. 
When $A$ if of finite type, it is shown in \cite[$\S$6.3]{Lu-Mykola:deformation} that 
$\pi^{(A, {\bf i})}$ coincides with the so-called standard Poisson structure on the Bott-Samelson cells 
associated to $(A, \bfi)$ (see $\S$\ref{ss:R-A-i}).

For arbitrary symmetrizable generalized Cartan matrix $A$ and any sequence 
$\bfi = (i_1, \ldots, i_n)$ in $[1, r]$, the 
matrix $Q$ in \eqref{eq:Q-intro} for $R^{(A, \bfi)}$ is shown in \cite[$\S$6.2]{Lu-Mykola:deformation}
to be given by
\[
Q = \left(\begin{array}{ccccc} 1 & a_{i_1, i_2} & \cdots & a_{i_1, i_{n-1}}  & a_{i_1,i_n}\\
0 & 1 &\cdots & a_{i_2, i_{n-1}}  & a_{i_2, i_n}\\
\cdots & \cdots & \cdots & \cdots & \cdots \\
0 & 0 & \cdots & 1 & a_{i_{n-1}, i_n}\\
0 & 0 & \cdots & 0 & 1\end{array}\right).
\]
Applying \autoref{thm:B} to the symmetric Poisson CGL extension $R^{(A, \bfi)}$ and any $\tau \in \Xi_n$, we thus obtain a family of mutation matrices 
depending on $(A, \bfi, \tau)$.

On the other hand, define a length $n$ {\it signed word} \cite{CQW:i-boxes}, also called a {\it double word}
\cite{BZ:quantum}, in $[1, r]$, 
to be any sequence 
\[
\bida = (\ida_1, \, \ldots, \, \ida_n),
\]
where $\ida_j \in \{\pm1, \ldots, \pm r\}$. Associated to such a signed word  
$\bida$ one has \cite{BFZ:III, BZ:quantum, Qin:dual, CQW:i-boxes} the
Berenstein-Fomin-Zelevinsky mutation matrix $\widetilde{B}(\bida)$ (see $\S$\ref{ss:GY-BFZ}), which appears as 
a mutation matrix in the BFZ cluster structure on reduced double Bruhat cells 
and Bott-Samelson cells \cite{BFZ:III,Shen-Weng:dBS}, and, by \cite{CQW:i-boxes}, also as 
a Kashiwara-Kim mutation matrix in the monoidal categorification of cluster algebras 
associated to representation theory of quantum affine algebras \cite{KK:i-boxes, KKOP:24}. 

For any integers $r \geq1$ and $n \geq 2$, we establish in $\S$\ref{ss:Srn-Trn} a bijection 
\begin{equation}\label{eq:Srn-Trn}
\Srn \;\; \longleftrightarrow\;\; \Trn, \hs \bida \;\;
\longleftrightarrow \;\; (\bfi, \, \tau, \, \ep_1),
\end{equation}
where $\Srn$ is the set of all length $n$ signed words $\bida$ in $[1, r]$, 
and $\Trn$ is the set of all
triples $(\bfi, \tau, \ep_1)$ with $\bfi$  a length $n$ word in $[1, r]$, $\tau \in \Xi_n$, and 
$\ep_1 =\pm 1$. For any symmetrizable generalized Cartan matrix $A$, we prove in \autoref{thm:same-BFZ} that under the bijection in \eqref{eq:Srn-Trn} one has
\begin{equation}\label{eq:BFZ-M}
\widetilde{B}(\bida) = \widetilde{M}(\bida), 
\end{equation}
where $\widetilde{M}(\bida)$ is a {\it permutation} of the matrix $M_\tau$ in \autoref{thm:B} 
(corresponding to a re-ordering of the 
variables in the extended cluster $\bfy_\tau$) for the symmetric Poisson CGL extension
$R^{(A, {\bf i})}_\tau$ (see $\S$\ref{ss:seeds-prime} and \autoref{rk:tilde-M-bida} for detail). As 
a consequence of \autoref{thm:hat-M-bida}, we have an explicit matrix product
\begin{equation}\label{eq:B-intro}
\widetilde{B}(\bida) = E(\bida)^t Q(\bida) E(\bida)_{n \times {\rm ex}(\bida)},
\end{equation}
where the matrices $E(\bida)$ and $Q(\bida)$ are defined in \autoref{nota:hat-B-bida}.

We remark that 
while we obtained the formula in \eqref{eq:B-intro}  
 as a special case of \autoref{thm:B},  it
can be proved directly without using any Poisson CGL theory, and we are not aware of such a 
formula in the literature. On the other hand, the identity in 
\eqref{eq:BFZ-M} says that 
the family of  mutation matrices  in the seeds
$(\bfy_\tau, M_\tau)$ from the Goodearl-Yakimov theory applied the symmetric Poisson CGL 
extensions $R^{(A, \bfi)}$ is, up to re-ordering of indices, the same as that of all the BFZ mutation matrices 
associated to signed words. 

In addition, for any generalized symmetrizable Cartan matrix $A$ and any signed word $\bida$, we show 
in \autoref{thm:same-ensemble} that the 
full square matrix $E(\bida)^t Q(\bida) E(\bida)$ coincides with the  nondegenerate cluster ensemble matrix 
$\widehat{B}(\bida)$, defined by H. Willaims  \cite{Wil:ensemble} for the case of double reduced words and generalized to 
any signed words (see  $\S$\ref{ss:ensemble}), resulting in  
a matrix product formula not only for $\widehat{B}(\bida)$ but also for its the skew-symmetrizable part 
and symmetrizable part (see \autoref{rk:decomp}).


\subsection{Notation and conventions}\label{ss:nota-intro} 
Throughout the paper, we fix a field $\bk$ of characteristic $0$, and all algebras are understood to be over $\bk$.
For $n \in \ZZ_{> 0}$, elements in $\ZZ^n$ and in $\bk^n$
are understood as column vectors unless otherwise indicated, and 
the standard basis of ${\bf k}^n$ is denoted as $\{e_1, \ldots, e_n\}$, where 
$e_k$ for $k \in [1, n]$ has $1$ at the $k^{\rm th}$ entry and $0$ everywhere else. 
For integers $a < b$, let $[a, b]$ be the set of all integers $j$ such that $a \leq j \leq b$.
The transpose of a matrix $M$ is denoted as $M^t$.

If $A$ is a commutative $\bk$-algebra, the space of all $\bk$-derivations of $A$ is denoted as ${\rm Der}_\bk(A)$.
If $A$ is also an integral domain with fraction field ${\rm Frac}(A)$, and if
${\bf a} = (a_1, \ldots,a_n)$ a sequence of non-zero elements in $A$, for $m = (m_1, \ldots, m_n)^t \in \ZZ^n$
we write 
\begin{equation}\label{eq:a-m}
{\bf a}^m = a_1^{m_1} a_2^{m_2} \cdots a_n^{m_n} \in {\rm Frac}(A).
\end{equation}

If $(A, \{\, , \, \})$ is a Poisson algebra, two elements $a, b \in A$ are said to have {\it log-canonical Poisson
bracket} if $\{a, b\} \in \bk ab$. A sequence 
$a = (a_1, \ldots, a_n)$ of elements in $A$ is said to be {\it log-canonical} if its elements have pairwise log-canonical Poisson brackets, and in this case, if
\[
\{a_i,  a_j\} = q_{i,j} a_ia_j, \hs i, j \in [1, n], 
\]
 we call the skew-symmetric matrix 
${\bf q} = (q_{i,j})_{i,j \in [1, n]}$ 
the {\it Poisson coefficient matrix} of $a$.

Let $n \in \ZZ_{>0}$ and $I, J \subset [1, n]$, and let 
$A = \ZZ$ or $\bk$.
We denote by
${{\rm Mat}}_{I \times J}(A)$ the set of all matrices with entries in $A$ whose rows are indexed by $i \in I$ and columns by
$j \in J$. 
For $M = (m_{ij}) \in {{\rm Mat}}_{I \times J}(A)$ and $I' \subset I$ and $J' \subset J$, we have the sub-matrix
\[
M_{I' \times J'} = (m_{ij})_{i \in I', j \in J'} \in  {{\rm Mat}}_{I' \times J'}(A),
\]
and for $j \in J$, we denote the $j^{\rm th}$ column of $M$ by 
$M_j= Me_j$.  
 We also set 
\[
{{\rm Mat}}_{I \times n}(A) = {{\rm Mat}}_{I \times [1, n]}(A), \hs
{{\rm Mat}}_{n \times J}(A) = {{\rm Mat}}_{[1, n] \times J}(A),
\]
and ${{\rm Mat}}_{n \times n}(A) = {{\rm Mat}}_{[1,n]\times [1,n]}(A)$.
A matrix
\[
D =(d_{ij})\in {{\rm Mat}}_{n \times J}(A) \hs \mbox{or} \hs D =(d_{ij})\in {{\rm Mat}}_{I \times n}(A)
\]
is said 
to be {\it diagonal} if $d_{ij} = 0$ for all $i \neq j$. A matrix $M \in {{\rm Mat}}_{ n\times J}(\ZZ)$ is said to be 
{\it skew-symmetrizable} if there exists a diagonal $D = (d_{ij}) \in {{\rm Mat}}_{J \times n}(\ZZ)$ with 
$d_{ii} > 0$ for all $i \in J$, called a {\it skew-symmetrizer of $M$}, such that 
$(DM)^t = -DM \in {\rm Mat}_{J \times J}(\ZZ)$.

For $\sigma$ in the permutation group $S_n$, we will use $\sigma$ to also
denote the $n \times n$  matrix 
\begin{equation}\label{eq:sigma-perm}
\sigma = (e_{\sigma(1)}, \; e_{\sigma(2)}, \; \ldots, \; e_{\sigma(n}),
\end{equation}
so that $(\xi_1, \ldots, \xi_n)\sigma  = (\xi_{\sigma(1)}, \ldots, \xi_{\sigma(n)})$
for $(\xi, \ldots, \xi_n)^t \in A^n$. Note that $\sigma^{-1} = \sigma^t$ as matrices.
If $C = (c_{i,j}) \in {\rm Mat}_{n\times n}(A)$
and $\sigma \in S_n$, then for $i,j \in [1, n]$, the $(i,j)$-entry of $\sigma^{-1} C \sigma$ is 
\begin{equation}\label{eq:tau-lam}
(\sigma^{-1} C \sigma)_{i, j} =c_{\sigma(i), \sigma(j)}.
\end{equation}


\subsection{Acknowledgments} Yipeng Mi's contribution to the paper is entirely based on the work he carried out  while at the University of Hong Kong, during which period his research was partially supported by the Research Grants Council (RGC) of the Hong Kong SAR, China (GRF 17304415). 
The research of Zihang Liu and Jiang-Hua Lu has been supported by the RGC of the Hong Kong SAR, 
China (GRF 17304415 and GRF 17306724).

\section{Cluster algebra preliminaries}
\subsection{Cluster algebras and upper cluster algebras}\label{ss:cluster-alg}
We only consider 
cluster algebras of geometric type, and we follow the notation and convention used  in \cite{FZ:I, BFZ:III, GY:Poi-CGL}.

Let $n \in \ZZ_{>0}$, and let $\FF$ be a  field extension of ${\bf k}$ of pure  transcendental
degree $n$. Let ${\rm ex}$ be any subset of $[1, n]$.
A {\it seed in $\FF$ of type ${\rm ex}$} is a pair $(\bfu, M)$, where $\bfu = (u_1, \ldots, u_n)$ 
is a free transcendental basis of $\FF$ over ${\bf k}$, also called an {\it extended cluster} in $\FF$, 
and $M \in {\rm Mat}_{n \times {\rm ex}}(\ZZ)$ and is skew-symmetrizable (see $\S$\ref{ss:nota-intro}).
The elements $u_j \in \FF$ for $j \in {\rm ex}$, resp. for $j \in [1, n] \backslash {\rm ex}$, are called
{\it cluster variables}, resp. {\it frozen variables}, of the seed $(\bfu, M)$.  

Given a seed $(\bfu = (u_1, \ldots, u_n), M=(m_{i,k}))$ in $\FF$ of type ${\rm ex}$, the {\it mutation of $(\bfu, M)$ in direction $j \in {\rm ex}$} 
is the seed 
\[
\mu_j({\bf u}, \,M) = (\mu_j^M({\bf u}), \;M^\prime),
\]
where
$\mu_j^M(\bfu) = (u_1, \ldots, u_{j-1}, u_j^\prime, u_{j+1}, \ldots, u_n)$
and $M^\prime = (m_{ik}^\prime)$ are given by
\begin{align*}
u_j^\prime &= \frac{1}{u_j} \left(\prod_{m_{i,j} > 0} u_i^{m_{i,j}} + \prod_{m_{i,j} < 0} 
u_i^{-m_{i,j}}\right),\\
m_{i,k}^\prime &= \begin{cases} -m_{i,k}, & \hs i = j \;\; \mbox{or} \;\; k=j, \\
m_{i,k} + \frac{1}{2}(|m_{i,j}|m_{j,k} + m_{i,j} |m_{j,k}|), & \hs \mbox{otherwise}.\end{cases}
\end{align*}
A seed $(\bfu^\prime, M^\prime)$ in $\FF$ that can be obtained from $(\bfu, M)$ by a sequence of mutations 
is said to be {\it mutation equivalent} to
$(\bfu, M)$, and we denote by   $[(\bfu, M)]$  the mutation equivalent class of seeds of $(\bfu, M)$. 
 Note that all seeds in $[(\bfu, M)]$ have the same frozen variables which are
called the frozen variables of $[(\bfu, M)]$.
A cluster variable of any seed 
$(\bfu^\prime, M^\prime) \in [(\bfu, M)]$ is called a cluster variable of $[(\bfu, M)]$.

For any ${\rm inv} \subset [1, n]\backslash {\rm ex}$, and for
any $(\bfu^\prime, M^\prime) \in [(\bfu, M)]$ with $\bfu^\prime = (u_1^\prime, \ldots, u_n^\prime)$, let
\[
\calL(\bfu^\prime; {\rm inv}) = {\bf k}[u_1^\prime, \ldots, u_n^\prime][(u_j^\prime)^{-1}: j \in {\rm ex}\sqcup
{\rm inv}].
\]

\begin{definition}\label{de:cluster-A}
{\rm Let $(\bfu, M)$ be a seed  $\FF$ of type ${\rm ex} \subset [1, n]$, and let 
${\rm inv} \subset [1, n]\backslash {\rm ex}$.

1) The upper cluster algebra $\calU(\bfu, M; {\rm inv})$ is defined to be
\[
{\calU}(\bfu, M; {\rm inv}) = \bigcap_{(\bfu^\prime, M^\prime) 
\in [(\bfu, M)]} \calL(\bfu^\prime; {\rm inv}) \subset \FF.
\]

2) The cluster algebra ${\calA}(\bfu, M; {\rm inv})$ 
is defined to be the ${\bf k}$-sub-algebra of $\FF$ generated by 
all the cluster variables, all the frozen variables of $[(\bfu, M)]$, and all $u_j^{-1}$ for $j \in {\rm inv}$;

3) We write $\overline{\calA}(\bfu, M)  = {\calA}(\bfu, M; \emptyset)$
and $\overline{\calU}(\bfu, M) = {\calU}(\bfu, M; \emptyset)$.
\hfill $\diamond$
}
\end{definition}

\begin{remark}\label{rk:inv}
{\rm 
For any seed $(\bfu, M)$ in $\FF$ of type ${\rm ex}$ and any ${\rm inv} \subset [1, n]\backslash {\rm ex}$, 
the Laurent phenomenon \cite{FZ:I} says that 
${\calA}(\bfu, M; {\rm inv})  \subset {\calU}(\bfu, M; {\rm inv})$.
\hfill $\diamond$
}
\end{remark}

\subsection{$\TT$-Poisson seeds and $\TT$-Poisson pre-seeds}\label{ss:T-Poi-seeds}
Suppose now that the field $\FF$ is equipped with a Poisson structure and an action by
a split ${\bf k}$-torus $\TT$ via Poisson isomorphisms. Let again $X(\TT)$ be the character lattice of $\TT$, and 
denote the $\T$-weight of a $\T$-weight vector $u \in \FF$ by $\chi_u\in X(\T)$.
Define an {\it extended log-canonical $\TT$-cluster} in $\FF$ to be an extended cluster in $\FF$ which is 
log-canonical with respect to the Poisson structure and consists of $\T$-weight vectors. 
A seed $(\bfu, M)$ in $\FF$ is said to be {\it $\TT$-Poisson}, or \cite{GSV:book} {\it compatible with the Poisson 
structure and the $\T$-action}, if $\bfu'$ is an extended log-canonical $\TT$-cluster for every 
seed $(\bfu', M')$ in $[(\bfu, M)]$.
If $\bfu = (u_1, \ldots, u_n)$ is an extended  log-canonical $\TT$-cluster in $\FF$, we write 
${\bf q}_{\bf u} \in {\rm Mat}_{n \times n}({\bf k})$ for the
Poisson coefficient matrix of $\bfu$, and let
\[
\chi_{\bfu} = (\chi_{u_1}, \ldots, \chi_{u_n}) \in X(\T)^n.
\]
The following \autoref{le:compatible-pair} is well-known, but we present a version that is suitable for our purpose.
 See $\S$\ref{ss:nota-intro} for our convention on diagonal and skew-symmetrizable 
matrices which are not necessarily square matrices.

\begin{lemma}\label{le:compatible-pair}
Let $\bfu = (u_1, \ldots, u_n)$ be an extended log-canonical $\TT$-cluster in $\FF$. 
Let ${\rm ex} \subset [1, n]$ and suppose that  
$M \in {\rm Mat}_{n \times {\rm ex}}(\ZZ)$ satisfies
\begin{equation}\label{eq:qM-1}
{\bf q}_{\bf u} M= {\Lambda} \hs \mbox{and} \hs \chi_{\bfu}M = 0,
\end{equation}
where ${\Lambda} \in {\rm Mat}_{n \times {\rm ex}}({\bf k})$ 
is diagonal with diagonal entry 
${\Lambda}_{j,j}\neq 0$ for every $j \in {\rm ex}$.
If
\begin{equation}\label{eq:epsilon-Lambda}
\frac{{\Lambda}_{j,j}}{{\Lambda}_{k,k}} 
\in \QQ, \hs \forall \; j, k \in {\rm ex},
\end{equation}
then there exists a diagonal $\ep \in {\rm Mat}_{{\rm ex} \times {\rm ex}}(\ZZ)$ with $\pm 1$ on the diagonals
such that 
$M\ep \in {\rm Mat}_{n \times {\rm ex}}(\ZZ)$ is skew-symmetrizable, 
and $(\bfu, M\ep)$ is a $\TT$-Poisson seed in $\FF$. 
\end{lemma}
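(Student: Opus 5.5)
The plan is to reduce the lemma to two facts. First, a sign-and-scale normalization makes $M$ skew-symmetrizable. Second, there is a classical "compatible pair" invariance under mutation, due to Berenstein--Zelevinsky and Gekhtman--Shapiro--Vainshtein, which we re-prove in the Poisson setting.

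\textbf{Step 1 (skew-symmetrizability).} Write $M = (m_{i,k})$. Since ${\bf q}_\bfu$ is skew-symmetric, the ${\rm ex}\times{\rm ex}$ matrix $M^t {\bf q}_\bfu M$ is skew-symmetric. By \eqref{eq:qM-1} it equals $M^t\Lambda$, whose $(k,j)$-entry is $m_{j,k}\Lambda_{j,j}$. Hence
\[
\Lambda_{j,j}\, m_{j,k} = -\Lambda_{k,k}\, m_{k,j}, \qquad j,k \in {\rm ex}.
\]
Fix $j_0 \in {\rm ex}$ and set $r_j = \Lambda_{j,j}/\Lambda_{j_0,j_0} \in \QQ^\times$, using \eqref{eq:epsilon-Lambda}. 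Choose $N \in \ZZ_{>0}$ with $N r_j \in \ZZ$ for all $j$, and put $d_j = N|r_j| \in \ZZ_{>0}$ and $\ep_j = {\rm sign}(r_j)$. Dividing the identity above by $\Lambda_{j_0,j_0}/N$ gives $\ep_j d_j m_{j,k} = -\ep_k d_k m_{k,j}$. Multiplying by $\ep_j\ep_k$ gives $d_j (M\ep)_{j,k} = -d_k (M\ep)_{k,j}$. So $D = (d_j e_j)$ is a skew-symmetrizer of $M\ep$. Moreover $(\bfu, M\ep)$ still satisfies \eqref{eq:qM-1}, with $\Lambda$ replaced by the diagonal matrix $\Lambda\ep$, whose diagonal entries are again non-zero.

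\textbf{Step 2 (one mutation preserves the conditions).} Let $(\bfu, B)$ be a seed satisfying ${\bf q}_\bfu B = \Lambda'$ with $\Lambda'$ diagonal with non-zero diagonal entries, and $\chi_\bfu B = 0$. Fix $j \in {\rm ex}$ with column $b_j$. The two monomials $\bfu^{[b_j]_+}$ and $\bfu^{[-b_j]_+}$ have the same $\TT$-weight, because their weights differ by $\chi_\bfu b_j = 0$. For each $i \neq j$, they also have the same log-canonical bracket with $u_i$, because the coefficients differ by $({\bf q}_\bfu b_j)_i = \Lambda'_{i,j} = 0$. Hence $u_j'$ is a $\TT$-weight vector and has log-canonical bracket with every $u_i$, $i\neq j$; the new extended cluster is therefore an extended log-canonical $\TT$-cluster. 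To compute ${\bf q}_{\bfu'}$ and $\chi_{\bfu'}$, use the monomial approximation $u_j' \approx \bfu^{-e_j+[b_j]_+}$. Let $\mathsf{E}$ be the identity matrix with $j$-th column replaced by $-e_j + [b_j]_+$; then ${\bf q}_{\bfu'} = \mathsf{E}^t {\bf q}_\bfu \mathsf{E}$ and $\chi_{\bfu'} = \chi_\bfu \mathsf{E}$. The Berenstein--Zelevinsky factorization of matrix mutation gives $B' = \mathsf{E} B \mathsf{F}$, where $\mathsf{F}$ is the ${\rm ex}\times{\rm ex}$ identity with $j$-th row replaced by $-e_j^t + [-b_j^t]_+$ restricted to ${\rm ex}$. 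Since $\mathsf{E}^2 = I$, we get ${\bf q}_{\bfu'}B' = \mathsf{E}^t\Lambda'\mathsf{F}$ and $\chi_{\bfu'}B' = 0$. A direct check shows that $\mathsf{E}^t\Lambda'\mathsf{F}$ is diagonal: $\Lambda'$ only has the entries $(k,k)$, and the twisted rows and columns cancel exactly because $\Lambda'_{i,j}=0$ for $i\ne j$. Its diagonal entries are those of $\Lambda'$, up to sign.

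\textbf{Step 3 (induction).} By induction along any sequence of mutations, every seed in $[(\bfu, M\ep)]$ has an extended log-canonical $\TT$-cluster, so $(\bfu, M\ep)$ is $\TT$-Poisson.

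I expect the main obstacle to be the matrix bookkeeping in Step 2, namely verifying that $\mathsf{E}^t\Lambda'\mathsf{F}$ is again diagonal with non-zero diagonal. I would do this entry-wise, treating separately the $j$-th row and the $j$-th column, using $\Lambda'_{i,j}=0$ for $i\neq j$ together with the skew-symmetrizability from Step 1.
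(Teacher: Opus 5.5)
Your proposal is correct and follows the paper's proof. Your Step 1 is the entrywise form of the paper's identity $(DM\ep)^t=(M\ep)^t r_0{\bf q}_{\bfu}M\ep=-DM\ep$ with $D^t=r_0\Lambda\ep$, and Steps 2--3 reprove, via the Berenstein--Zelevinsky factorization, the mutation invariance that the paper simply cites from \cite[Proposition 3.3]{BZ:quantum}, \cite[(4.4)]{GSV:book}, \cite[Lemma 3.8]{GY:Poi-CGL}. In the final check, build the $j$-th row of $\mathsf{F}$ from the $j$-th \emph{row} of $B$, not its transposed $j$-th column, and note that the cancellation rests on $\Lambda'_{k,k}b_{k,j}=-\Lambda'_{j,j}b_{j,k}$ plus sign-skew-symmetry; this in fact gives $\mathsf{E}^t\Lambda'\mathsf{F}=\Lambda'$ exactly, not just up to sign.
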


\begin{proof}
Fix any $k_0 \in {\rm ex}$ and choose an integer $z$ and $\varepsilon(j) \in \{\pm 1\}$ for each $j \in {\rm ex}$ such that 
\begin{equation}\label{eq:z-r0}
z \;\frac{ \varepsilon(j){\Lambda}_{j,j}}{ {\Lambda}_{k_0,k_0}}
\in \ZZ_{>0}, \hs 
\forall \; j \in {\rm ex}.
\end{equation}
Let  $r_ 0 = z/{\Lambda}_{k_0, k_0} \in \bk^\times$ and let
$\ep \in {\rm Mat}_{{\rm ex}\times \rm ex}(\ZZ)$ be the diagonal matrix with $(j, j)$-entry 
$\ep(j)$ for $j \in {\rm ex}$, Let
$D \in {\rm Mat}_{{\rm ex} \times n}(\ZZ)$ be such that 
\begin{equation}\label{eq:D-Lambda}
D^t = r_0 {\Lambda}\ep
\in {\rm Mat}_{n \times {\rm ex}}(\ZZ).
\end{equation}
Then $D$ is diagonal with positive integers on the diagonal, and it
follows from  
${\bf q}_{\bf u} M\ep= {\Lambda}\ep$ that $r_0 {\bf q}_{\bf u} M\ep = D^t$.
Taking transpose and using
${\bf q}_{\bf u}^t = -{\bf q}_{\bf u}$, we get 
$(M\ep)^t(r_0 {\bf q}_{\bf u}) = -D$, which, by $r_0 {\bf q}_{\bf u} M\ep = D^t$, gives 
\[
(DM\ep)^t =(M\ep)^t D^t = (M\ep)^tr_0 {\bf q}_{\bf u}M\ep= -DM\ep.
\]
Thus $M\ep  \in {\rm Mat}_{n \times {\rm ex}}(\ZZ)$ is skew-symmetrizable with $D$ as a skew-symmetrizer.

A direct calculation using the mutation rule of cluster variables shows that 
(see, for example, \cite[Proposition 3.3]{BZ:quantum}, \cite[(4.4)]{GSV:book}, \cite[Lemma 3.8]{GY:Poi-CGL})
the identity ${\bf q}_{\bf u} M\ep  = {\Lambda}\ep$ implies that 
every extended cluster in $[({\bf u}, M\ep)]$
is log-canonical, and the identity $\chi_{\bfu} M\ep  = 0$ implies that every cluster variable of 
$[({\bf u}, M\ep)]$ is
a $\T$-weight vector. 
\end{proof}

The following terminology is convenient for the discussions in this paper.

\begin{definition}\label{de:pre-seed}
{\rm 
Given ${\rm ex} \subset [1, n]$, by a {\it $\TT$-Poisson pre-seed in $\FF$ of type ${\rm ex}$} we mean a pair 
$({\bf u}, M)$, where ${\bf u}$ is an extended log-canonical $\TT$-cluster in $\FF$ and 
$M \in {\rm Mat}_{n \times {\rm ex}}(\ZZ)$ satisfying
\begin{equation}\label{eq:GSV}
{\bf q}_{\bf u} M = {\Lambda} \hs \mbox{and} \hs
\chi_{{\bf u}} M = 0
\end{equation}
for some diagonal 
${\Lambda} = ({\Lambda}_{i,j}) \in {\rm Mat}_{n \times {\rm ex}}({\bf k})$ with
${\Lambda}_{j,j}\neq 0$ for every $j \in {\rm ex}$. We refer to the equations in \eqref{eq:GSV} as
{\it GSV Equations}.
\hfill $\diamond$
}
\end{definition}

\begin{remark}\label{rk:pre-seed}
{\rm
1) A $\TT$-Poisson seed in $\FF$ is thus a $\TT$-Poisson pre-seed $({\bf u}, M)$ in $\FF$ such that $M$ is skew-symmetrizable;

2) Suppose that $({\bf u}, M)$ is $\TT$-Poisson pre-seed in $\FF$
 with $M$ satisfying \eqref{eq:GSV}. Then for any diagonal matrix
$\varepsilon \in {\rm Mat}_{{\rm ex} \times {\rm ex}}(\ZZ)$ with diagonal entries
$\varepsilon(j) \in \{\pm 1\}$ for $j \in {\rm ex}$, the pair $(\bfu, M\varepsilon)$ is also
a $\TT$-Poisson pre-seed in $\FF$. \autoref{le:compatible-pair} shows that the condition on $\Lambda$ in 
\eqref{eq:epsilon-Lambda} is 
sufficient for such an $\ep \in {\rm Mat}_{{\rm ex} \times {\rm ex}}(\ZZ)$ to exist. 
If 
\begin{equation}\label{eq:Q-pos}
\frac{{\Lambda}_{j,j}}{{\Lambda}_{k,k}} 
\in \QQ_{>0}, \hs \forall \; j, k \in {\rm ex},
\end{equation}
then one can choose $z$ to be a positive integer and $\ep(j) = 1$ for every $j \in {\rm ex}$ in \eqref{eq:z-r0}, 
so $M$ is skew-symmetrizable, and thus $(\bfu, M)$ is a $\TT$-Poisson seed in $\FF$. It also follows from
\eqref{eq:D-Lambda} that \eqref{eq:Q-pos} is equivalent to 
the existence of positive integers $\{d_j: j \in {\rm ex}\}$ such that 
\[
\frac{d_j}{d_k} = \frac{{\Lambda}_{j,j}}{{\Lambda}_{k,k}}, \hs j, k \in {\rm ex}.
\]
\hfill $\diamond$
}
\end{remark}

Recall that for $\sigma\in S_n$, the same
symbol $\sigma$ also denote the $n \times n$ matrix given in \eqref{eq:sigma-perm}.

\begin{definition}\label{de:seed-reordering}
{\rm
Given a $\TT$-Poisson seed (resp. pre-seed) 
$({\bf u} =(u_1, \ldots, u_n), M)$ in $\FF$ of type ${\rm ex} \subset [1, n]$ 
and an element
$\sigma \in S_n$, let $({\rm ex})^\sigma = \sigma^{-1}({\rm ex}) \subset [1, n]$, and let 
\[
{\bf u}^\sigma = (u_{\sigma(1)}, \, u_{\sigma(2)}, \, \ldots, \, u_{\sigma(n)}) \hs \mbox{and} \hs
M^\sigma = \sigma^{-1} M \sigma_{{\rm ex} \times ({\rm ex})^\sigma}.
\]
Then $({\bf u}^\sigma, M^\sigma)$ is a $\TT$-Poisson seed (resp. pre-seed) in $\FF$ of type $({\rm ex})^\sigma$,
which we will 
refer to as the {\it re-ordering} of $({\bf u}, M)$ by $\sigma \in S_n$.
}
\end{definition}

\section{Proof of \autoref{thm:A} on Poisson CGL extensions}\label{s:proof-A}
In this section, we prove \autoref{pr:bsj-1},  \autoref{thm:M-1} and \autoref{thm:M-2}, which, when combined together, give
\autoref{thm:A} stated in $\S$\ref{ss:main-intro}.

Throughout $\S$\ref{s:proof-A}, we fix a split $\bk$-torus $\TT$ with Lie algebra $\t$ and character 
lattice $X(\TT) \subset \t^*$. When $\TT$ acts on a vector space $V$, a $\TT$-weight vector
in $V$ is also called a $\TT$-homogeneous element in $V$. Recall again that by a 
$\TT$-Poisson algebra we mean a Poisson $\bk$-algebra $A$ with a $\TT$-action by Poisson 
algebra automorphisms. An element $y$ in a Poisson $\bk$-algebra $A$ is said to be {\it Poisson} if  $\{y, A\} \subset yA$. When $A$ is an integral domain, an element $y \in A$ that is both Poisson and prime is called a
{\it Poisson prime element}.

\subsection{The sequence of homogeneous Poisson prime elements}\label{ss:bfy}
Let $R = (\bk[x_1, \ldots,x_n], \{\, , \, \})_{(\chi_1, \ldots, \chi_n; h_1, \ldots, h_n)}$ be a $\T$-Poisson CGL extension 
as in \autoref{de:CGL-intro}. Set $R_0 = \bk$ and 
$R_k = \bk[x_1, \ldots, x_k]$ for $k \in [1, n]$, and note that $R_k$ is
a $\TT$-Poisson sub-algebra of $R$ by \eqref{eq:xj-xk-intro}. 
For $k \in [1, n]$, let $\partial_{h_k} \in {\rm Der}_\bk(R)$ be such that
$\partial_{h_k}(x_j) = \chi_j(h_k)$ for all $j \in [1, n]$, and let 
$\theta_k = \partial_{h_k}|_{R_{k-1}} \in {\rm Der}_\bk(R_{k-1})$.
Then  \eqref{eq:xj-xk-intro} becomes
\begin{equation}\label{eq:bracket-j}
\{x_k, \;a\} = \theta_k(a) x_k + \delta_k(a), \hs k \in [1, n], \; a \in R_{k-1}.
\end{equation}
 In the terminology and notation in \cite[$\S$4.2]{GY:Poi-CGL}, 
the $\TT$-Poisson algebra $R_k$ for each $k \in [1, n]$ is a {\it $\TT$-Poisson-Cauchon extension} of
$R_{k-1}$, and one writes
$R_k = R_{k-1}[x; \theta_k, \delta_k]$. The $\TT$-Poisson CGL extension $R$ is thus  an iterated 
$\TT$-Poisson-Cauchon extension, and one writes
\[
R = \mathbf{k}[x_1;\,\theta_1,\,\delta_1][x_2;\,\theta_2,\,\delta_2]\cdots
[x_n;\,\theta_n,\,\delta_n].
\]

We now recall a fundamental result from \cite{GY:Poi-CGL} on 
the nested
sequence
\[
{\bf k} = R_0 \subset R_1 \subset \cdots \subset R_n = R
\]
of $\T$-Poisson algebras. 
For $k \in [1, n]$,
let $\calP_k \subset R_k$ be the set of all homogeneous Poisson prime elements of $R_k$, and
let $\calP^\prime_k\subset \calP_k$ be the set of elements in $\calP_k$ that
are not in $R_{k-1}$. Note that both $\calP_k$ and $\calP_k^\prime$ are invariant under multiplication by scalars in
${\bf k}^\times$. For $k \in [1, n]$, let $\calP_k^\prime/{\bf k}^\times$ be the quotient set.
The cardinality of a finite set $X$ is denoted by $|X|$. Recall  we have set
\[
\lambda_k = \chi_k(h_k)\in \bk^\times, \hs k \in [1, n].
\]

\begin{theorem}\label{thm:5.5} \cite[Theorem 5.5, Corollary 5.11]{GY:Poi-CGL}.
Let $R = (\bk[x_1, \ldots,x_n], \{\, , \, \})$ 
be a $\T$-Poisson CGL extension as in \autoref{de:CGL-intro}. For each $k \in [1, n]$, one has 
$|\calP_k^\prime/{\bf k}^\times|=1$, and there is a unique $p(k) \in \{-\infty\}
\cup [1, k-1]$ and a unique $y_k \in \calP_k^\prime$, determined recursively 
as follows: $y_1 = x_1$, and for $k \geq 2$,

1) if $\delta_k = 0$, then $p(k) = -\infty$ and $y_k = x_k$;

2) if $\delta_k \neq 0$, then $p(k)$ is the unique integer in $[1, k-1]$ such that $y_{p(k)} \in \calP_{k-1}$ and
$\delta_k(y_{p(k)}) \neq 0$, 
and in such a case, $\delta_k^2 (y_{p(k)}) = 0$, and 
\begin{equation}\label{eq:yk-ypk}
y_k= y_{p(k)}x_k- \frac{\delta_k(y_{p(k)})}{\lambda_k}.
\end{equation}

\noindent
Moreover, for $k \in [1, n]$, define $s(k) =+ \infty$ if $k \neq p(k')$ for any $k' \in [k+1, n]$, and define $s(k) =
k'$ if $p(k') = k$ (such a $k'$ is necessarily unique). Then
\begin{equation}\label{eq:calP-k}
\calP_k = \bigcup_{j \in [1, k],\, s(j) > k} {\bf k}^\times y_j.
\end{equation}
\end{theorem}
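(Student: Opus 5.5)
The plan is to prove all assertions together by induction on $k$. The base case is $R_1 = \bk[x_1]$, where $\calP_1 = \bk^\times x_1$. Throughout, write $B = R_{k-1}$, $x = x_k$, $\theta = \theta_k$, $\delta = \delta_k$, $\lambda = \lambda_k$. From \eqref{eq:bracket-j}, the Jacobi identity and $\TT$-invariance of the bracket, I first extract the standard Cauchon-type relations:
\[
\theta\delta - \delta\theta = \lambda\delta .
\]
Moreover, $\delta$ maps a $\TT$-weight vector of weight $\chi$ to one of weight $\chi+\chi_k$. These two facts drive everything.

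\textbf{Step 1: the case $\delta = 0$.} Let $u = \sum_i a_i x^i \in \calP_k'$ with $a_i \in B$. Homogeneity of $u$ gives
\[
\partial_{h_k} u = \chi_u(h_k)\, u .
\]
Expanding both sides, I will show
\[
\{x,u\} = x\bigl(\chi_u(h_k)\, u - \lambda\, x\,\partial_x u\bigr).
\]
Since $u \mid \{x,u\}$ and $u$ is prime, either $u \sim x$, or $u \mid x\,\partial_x u$. In the latter case $u \mid \partial_x u$, which forces $\partial_x u = 0$ by degree in $x$, contradicting $u \notin B$. Hence $\calP_k' = \bk^\times x$.

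For the statement about $\calP_k$, I also need that each $y_j \in \calP_{k-1}$ stays Poisson prime in $R_k$. Primality is clear in a polynomial extension, and Poisson-ness holds because $\{x, y_j\} = \theta(y_j)\, x$ with $\theta(y_j) \in \bk\, y_j$ by homogeneity. Conversely, an element of $\calP_k$ lying in $B$ lies in $\calP_{k-1}$. This gives $p(k) = -\infty$ and \eqref{eq:calP-k} at stage $k$.

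\textbf{Step 2: reduction for $\delta \neq 0$ via deleting derivations.} Using $\lambda \neq 0$ and local nilpotency of $\delta$, I would show that the map
\[
b \;\mapsto\; \sum_{m \ge 0} \frac{(-1)^m}{\lambda^m\, m!}\, \delta^m(b)\, x^{-m}, \qquad x \mapsto x,
\]
gives a $\TT$-equivariant Poisson isomorphism $B[x^{\pm1};\theta] \cong B[x^{\pm1};\theta,\delta]$. The verification uses the Cauchon relation above, and is essentially Cauchon's computation transplanted to the Poisson setting. Combined with Step 1 and the induction hypothesis, this gives the following. The homogeneous Poisson primes of $R_k[x^{-1}]$, up to units, are the images of the $y_j$ with $j \le k-1$ and $s(j) > k-1$. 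This bounds the number of classes in $\calP_k$, and hence shows $|\calP_k'/\bk^\times| \le 1$ once one element of $\calP_{k-1}$ is known to drop out.

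\textbf{Step 3: identify $p(k)$ and $y_k$ (main obstacle).} I must show two things.
\begin{itemize}
\item There is exactly one $y_j \in \calP_{k-1}$ with $\delta(y_j) \neq 0$, and for it $\delta^2(y_j) = 0$.
\item The element $y_k := y_j x - \delta(y_j)/\lambda$ is a homogeneous Poisson prime.
\end{itemize}

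Existence of such a $j$: $\delta \neq 0$ and $B$ is a localization-compatible UFD generated up to units by the $y_j$'s (inductively, $B \subset \bk[y_1^{\pm1}, \ldots, y_{k-1}^{\pm1}]$). So $\delta$ cannot kill all the $y_j$'s. Here I would use that $\delta(y_j)\, y_j^{-1}$ is forced by the isomorphism of Step 2.

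Uniqueness and $\delta^2(y_j) = 0$ come from the Poisson normality of $y_j$ in $B$ together with the bracket identity $\{x, y_j\} = \theta(y_j)\, x + \delta(y_j)$. One checks directly that
\[
\{x, y_k\} \in y_k R_k \iff \delta^2(y_j) = 0 .
\]
A $\TT$-weight argument comparing $\chi_{y_j} + \chi_k$ with the weights of the remaining primes should then show that only one $y_j$ can fail to be $\delta$-killed. Primality of $y_k$ follows because it has degree $1$ in $x$ with coprime coefficients $y_j$ and $\delta(y_j)$. The latter holds since $y_j$ is prime and does not divide $\delta(y_j)$ by the weight comparison.

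This step, especially excluding a second $\delta$-active prime and proving $\delta^2(y_j) = 0$, is where I expect the real work. I would attack it first via the explicit inverse of the Step 2 isomorphism applied to $y_j$.

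Finally, uniqueness of $s$ and the formula \eqref{eq:calP-k} follow. The old prime $y_{p(k)}$ ceases to be Poisson in $R_k$, since $\{x, y_{p(k)}\} \notin y_{p(k)} R_k$. It is replaced by $y_k$, and all other $y_j$ survive.
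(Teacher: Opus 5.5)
\textbf{The gap is Step 3, which is the whole content of case 2).} Steps 1 and 2 are sound, but they only set the problem up.

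In fact Step 2 gives an exact count, not just a bound. Let $\phi$ be your deleting-derivation map and $N_j=\max\{m:\delta^m(y_j)\neq0\}$. Then $y_j\mapsto x^{N_j}\phi(y_j)=\sum_{m=0}^{N_j}\frac{(-1)^m}{m!\,\lambda^m}\delta^m(y_j)\,x^{N_j-m}$ is a bijection $\calP_{k-1}/\bk^\times\to\calP_k/\bk^\times$. Nothing is lost on inverting $x$, because for $\delta\neq0$ the prime $x$ is not Poisson. Hence $|\calP_k'/\bk^\times|$ equals the number of $y_j\in\calP_{k-1}$ with $\delta(y_j)\neq0$, and case 2) is equivalent to three claims: (a) at least one such $y_j$ exists, (b) at most one does, (c) it has $N_j=1$. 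Your sentence about $|\calP_k'/\bk^\times|\le 1$ ``once one element drops out'' has the logic reversed: what is needed is that at most one drops out, which is (b). None of (a)--(c) is proved.
\begin{itemize}
\item For (a), the inclusion $B\subset\bk[y_1^{\pm1},\ldots,y_{k-1}^{\pm1}]$ only gives $\delta(y_j)\neq0$ for some $j\le k-1$. That $j$ may have $s(j)\le k-1$, i.e.\ $y_j\notin\calP_{k-1}$. The remark that $\delta(y_j)y_j^{-1}$ is ``forced by the isomorphism'' is not an argument.
\item For (b), $\TT$-weights carry no divisibility information, so no weight comparison can exclude a second active prime.
\item For (c), the equivalence $\{x,y_k\}\in y_kR_k\iff\delta^2(y_j)=0$ only restates what must be shown. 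Poisson-ness of $y_k$ also needs brackets with $B$; these come for free from $y_k=\phi(y_jx)$ once $\delta^2(y_j)=0$.
\item The claim $y_j\nmid\delta(y_j)$ is not a weight fact either. It holds because $b\mapsto\max\{m:\delta^m(b)\neq0\}$ is additive on a domain when $\delta$ is locally nilpotent.
\end{itemize}

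\textbf{The missing tool for (b) and (c)} is the Jacobi identity for $x,a,b$, namely $\delta\{a,b\}=\{\delta a,b\}+\{a,\delta b\}+\theta(a)\delta(b)-\delta(a)\theta(b)$ for $a,b\in B$. (The Poisson part of Step 2 also needs this identity, beyond $[\theta,\delta]=\lambda\delta$.) Combine it with Poisson-normality of the elements $x^{N_j}\phi(y_j)$.
\begin{itemize}
\item For (c): let $c\in\calP_{k-1}$ be active, with $N=N_c$, $d=\delta(c)$ and $z=x^N\phi(c)$. By $x$-degree, $\{c,z\}=wz$ with $w\in B$. Comparing the coefficients of $x^N$ and $x^{N-1}$ gives $\{d,c\}=(N\lambda+\chi_c(h_k))\,cd$. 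Apply the identity with $(a,b)=(d,c)$ and reduce modulo $cB$: this gives $(N-1)\lambda d^2\in cB$, hence $N=1$.
\item For (b): suppose $c'\in\calP_{k-1}$ is active and not associate to $c$. Then $z'=c'x-\delta(c')/\lambda$ is Poisson. The $x$-coefficient of $\{c,z'\}=w'z'$ forces $c\mid w'$. The constant term then gives $c'\delta(c)\in cB$, a contradiction.
\end{itemize}
Existence (a) still needs a separate idea, and your proposal does not contain one. For comparison, the paper gives no proof of this statement; it is quoted from \cite{GY:Poi-CGL}.
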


The sequence
$\bfy = (y_1, \; \ldots, \; y_n)$
in \autoref{thm:5.5} is called \cite[$\S$5.2]{GY:Poi-CGL} the {\it sequence of homogeneous Poisson prime elements} associated to the 
$\T$-Poisson CGL extension $R$. 

\begin{notation}\label{nota-p-s} 
{\rm In the context of \autoref{thm:5.5},
setting $y_{-\infty} = 1 \in {\bf k} = R_0$, then
\begin{equation}\label{eq:ykk-00}
y_k = y_{p(k)}x_k- \frac{\delta_k(y_{p(k)})}{\lambda_k}
\end{equation}
holds for every $k \in [1, n]$.
Following \cite{GY:Poi-CGL}, the two maps
\[
p:\; [1, n] \longrightarrow \{-\infty\} \sqcup [1, n-1], \hs \mbox{and} \hs
s: \; [1, n] \longrightarrow [2, n] \sqcup \{+\infty\},
\]
are respectively called the  {\it predecessor map} and the  {\it successor map} associated to 
the Poisson CGL extension $R$. 
For $k \in [1, n]$, the two integers 
\begin{align}\label{eq:om-k}
o_-(k) &:= {\rm max}\{m \in \ZZ_{\geq 0}: p^m(k) \neq -\infty\},\\
\label{eq:op-k}
o_+(k) &:= {\rm max}\{m \in \ZZ_{\geq 0}: s^m(k) \neq +\infty\},
\end{align}
are  respectively
called the {\rm $p$-order} and the {\rm $s$-order} of $k$, and the set
\begin{equation}\label{eq:Lk}
L(k) = \{p^{o_-(k)}(k), \; \ldots, \; p(k), \; k, \; s(k), \;\ldots, \;  s^{o_+(k)}(k)\},
\end{equation}
is called  the {\it level set of $k$ associated to $R$}.
Every $j \in [1, n]$ then belongs to a unique level set.
Let $\calL$ be the set of all level sets.
Following \cite{GY:Poi-CGL}, the number of level sets, i.e., the integer
\[
|\calL| = |\{k \in [1, n]: p(k) = -\infty\}| = |\{j \in [1, n]: s(j) = +\infty\}|
\]
is called the
{\it rank} of the Poisson CGL extension $R$. Set again
\begin{equation}\label{eq:ex}
{\rm ex} = \{j \in [1, n]: s(j) \neq +\infty\}.
\end{equation}
For a  $\TT$-weight vector $\phi \in R$, we denote by $\chi_\phi \in X(\TT)$ its $\TT$-weight. In particular, we have $\chi_j = \chi_{x_j}$ for $j \in [1, n]$, and it follows from \eqref{eq:ykk-00} that 
\begin{equation}\label{eq:chi-kk}
\chi_{y_k} = \chi_{y_{p(k)}} + \chi_k = \chi_{p^{o_-(k)}} + \cdots + \chi_{p(k)} + \chi_k, \hs k \in [1, n].
\end{equation}
}
\end{notation}

\begin{lemma}\label{le:y-recursice-Poi}
The sequence ${\bf y} = (y_1, \ldots, y_n)$ is also recursively given by 
\[
\lambda_k y_k = (\chi_{y_{p(k)}}(h_k)+\lambda_k)  y_{p(k)}x_k + \{y_{p(k)}, \, x_k\}, \hs k \in [1, n].
\]
\end{lemma}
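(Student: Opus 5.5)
We derive the identity from the recursion \eqref{eq:ykk-00} together with the bracket formula \eqref{eq:bracket-j}, applied to $a = y_{p(k)} \in R_{k-1}$. Since $y_{p(k)}$ is a $\TT$-weight vector of weight $\chi_{y_{p(k)}}$, and $\partial_{h_k}$ acts on any $\TT$-weight vector $\phi$ as multiplication by $\chi_\phi(h_k)$ (both sides are derivations agreeing on the generators $x_j$), we get $\theta_k(y_{p(k)}) = \chi_{y_{p(k)}}(h_k)\, y_{p(k)}$.

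Plugging this into \eqref{eq:bracket-j} gives
\[
\{x_k, y_{p(k)}\} = \chi_{y_{p(k)}}(h_k)\, y_{p(k)} x_k + \delta_k(y_{p(k)}).
\]
Skew-symmetry then gives
\[
\{y_{p(k)}, x_k\} = -\chi_{y_{p(k)}}(h_k)\, y_{p(k)}x_k - \delta_k(y_{p(k)}).
\]
Adding $(\chi_{y_{p(k)}}(h_k)+\lambda_k) y_{p(k)}x_k$ to both sides yields
\[
(\chi_{y_{p(k)}}(h_k)+\lambda_k) y_{p(k)}x_k + \{y_{p(k)}, x_k\} = \lambda_k y_{p(k)} x_k - \delta_k(y_{p(k)}).
\]
By \eqref{eq:ykk-00} the right-hand side equals $\lambda_k y_k$, which is the claimed identity.

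The degenerate case $p(k) = -\infty$ needs a separate check, with the convention $y_{-\infty}=1$ and $\chi_{y_{-\infty}} = 0$. Then $\{1, x_k\} = 0$, and the formula reduces to $\lambda_k y_k = \lambda_k x_k$. This holds because $y_k = x_k$ in that case: by \autoref{thm:5.5}, either $\delta_k = 0$, or $k = 1$ and $y_1 = x_1$.

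The only point requiring care is the sign convention. \eqref{eq:xj-xk-intro} is written with $\{x_i, x_j\}$ for $i<j$, while \eqref{eq:bracket-j} is written with $x_k$ first. We use \eqref{eq:bracket-j} as stated, extended from generators to all of $R_{k-1}$, since both sides are derivations in $a$. Beyond that, the argument is a direct computation and I expect no real obstacle.
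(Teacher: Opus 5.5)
Your proposal is correct and is essentially the paper's proof: both apply \eqref{eq:bracket-j} to $a=y_{p(k)}$, use $\theta_k(y_{p(k)})=\chi_{y_{p(k)}}(h_k)\,y_{p(k)}$, then rearrange using skew-symmetry and the recursion \eqref{eq:ykk-00}. Your separate check of the case $p(k)=-\infty$ is harmless but unnecessary, since with the convention $y_{-\infty}=1$ the same computation already covers it.
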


\begin{proof}
Let $k \in [1, n]$. Combining \eqref{eq:bracket-j} and \eqref{eq:ykk-00}, we get 
\begin{align*}
\lambda_k y_k &= \lambda_k y_{p(k)} x_k - \delta_k(y_{p(k)}) = \lambda_k y_{p(k)} x_k-\{x_k, \, y_{p(k)}\} + \chi_{y_{p(k)}}(h_k) y_{p(k)} x_k\\
& = (\chi_{y_{p(k)}}(h_k)+\lambda_k) y_{p(k)}x_k + \{y_{p(k)}, \, x_k\}.
\end{align*}
\end{proof}

By \cite[Proposition 5.8]{GY:Poi-CGL}, $\bfy = (y_1, \ldots, y_n)$ is log-canonical with respect to 
the Poisson bracket $\{\, ,  \, \}$.
To recall the Poisson coefficient matrix
for $\bfy$,  we first set up some notation. Recall that $\{e_1, \ldots, e_n\}$ denotes the standard 
$\ZZ$-basis of $\ZZ^n$ (of column vectors). Set $e_{+\infty} = e_{-\infty} = 0$.

\begin{notation}\label{nota:blam-E}
{\rm 
Recall from $\S$\ref{ss:main-intro}  the  lower triangular matrix $E$ and its transpose 
\begin{equation}\label{eq:E-Et} 
E= (e_1-e_{s(1)}, \; \ldots, \; e_n - e_{s(n)}), \hs
E^t = (e_1-e_{p(1)}, \; \ldots, \; e_n - e_{p(n)}).
\end{equation} 
Let $F = E^{-1}$. Setting, for $j \in [1, n]$, 
\begin{equation}\label{eq:ee}
\widetilde{e}_j = \sum_{k \in L(j) \cap [j, n]}e_k \hs \mbox{and} \hs 
\overline{e}_j  = \sum_{k \in L(j) \cap [1, j]}e_k,
\end{equation}
we also have 
\begin{equation}\label{eq:F}
F  = (\widetilde{e}_1, \, \widetilde{e}_2, \, \ldots, \, \widetilde{e}_n), \hs \hs 
F^t = (\overline{e}_1, \, \overline{e}_2, \, \ldots, \, \overline{e}_n).
\end{equation}
Set $\chi_{\bf y} = (\chi_{y_1}, \ldots, \chi_{y_n}) \in X(\TT)^n$ and $\chi_{\bf x} = (\chi_1, \ldots, \chi_n)
\in X(\TT)^n$.  Rewriting  
\eqref{eq:chi-kk}, one has 
\begin{equation}\label{eq:chi-y-chi-x}
\chi_{\bf y} = \chi_{\bf x} F^t.
\end{equation}
Introduce also the skew-symmetric matrix
\begin{equation}\label{eq:blam}
\blam= \left(\displaystyle \begin{array}{cccc} 0 & -\chi_1(h_2) &  \cdots & -\chi_1(h_n)\\
\chi_1(h_2) & 0  & \cdots & -\chi_2(h_n)\\
\cdots & \cdots & \cdots& \cdots \\
\chi_1(h_n) & \chi_2(h_n)  & \cdots & 0\end{array}\right).
\end{equation}
\hfill $\diamond$
}
\end{notation}

\begin{lemma}\label{le:bfq}\cite[Proposition 5.8]{GY:Poi-CGL}
The sequence $\bfy = (y_1, \ldots, y_n)$ is 
log-canonical with respect to $\{\, , \, \}$, and the  
Poisson coefficient matrix ${\bf q} = (q_{k, l})_{k, l \in [1, n]}$ of $\bfy$ is given by
\begin{equation}\label{eq:bfq}
{\bf q} = F \blam F^t.
\end{equation}
\end{lemma}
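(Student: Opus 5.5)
We would prove ${\bf q} = F\blam F^t$ by induction on $n$, from the recursion \eqref{eq:ykk-00}. Log-canonicity of $\bfy$ is \cite[Proposition 5.8]{GY:Poi-CGL} and can be taken as given, so the real task is to identify the coefficients. Since $y_k = \sum$ over $x$-monomials and $F^t = (\overline{e}_1,\ldots,\overline{e}_n)$, we have $\overline{e}_k = e_k + \overline{e}_{p(k)}$ (with $\overline{e}_{-\infty}=0$). Thus the claimed formula says
\[
q_{k,l} = \sum_{a \in L(k)\cap[1,k]}\;\sum_{b \in L(l)\cap[1,l]} \blam_{a,b},
\]
where $\blam_{a,b} = -\chi_a(h_b)$ for $a<b$. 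This is exactly the coefficient one gets by pretending $y_k$ were the monomial $x_{p^{o_-(k)}(k)}\cdots x_{p(k)}x_k$ and bracketing with the log-canonical part of $\{\,,\,\}$.

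The key step is a leading-term argument. Order monomials in $x$ reverse lexicographically, i.e.\ compare the exponent of the largest-index variable first. By \eqref{eq:ykk-00} and induction, $y_k$ has leading monomial $x_{p^{o_-(k)}(k)}\cdots x_{p(k)}x_k$ with coefficient $1$: $\delta_k(y_{p(k)})\in R_{k-1}$ contains no $x_k$, so it cannot compete with $y_{p(k)}x_k$. Next, for any $a<b$ the bracket \eqref{eq:xj-xk-intro} gives
\[
\{x_a,x_b\} = -\chi_a(h_b)x_ax_b - \delta_b(x_a),
\]
where $\delta_b(x_a)\in R_{b-1}$ has no $x_b$. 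So $\delta_b(x_a)$ lies strictly below $x_ax_b$ in the reverse lexicographic order. Since $\{\,,\,\}$ is a biderivation, the leading term of $\{f,g\}$, if the coefficient is nonzero, is determined by the log-canonical parts: $\{x^m,x^{m'}\} = (m^t\blam\, m')\,x^{m+m'} + (\text{lower})$.

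Apply this to $f = y_k$ and $g = y_l$. We know $\{y_k,y_l\} = q_{k,l}y_ky_l$, and the leading monomial of $y_ky_l$ is $x^{\overline{e}_k+\overline{e}_l}$ with coefficient $1$. Comparing the coefficients of this monomial gives $q_{k,l} = \overline{e}_k^{\,t}\blam\,\overline{e}_l$. This is valid even when $\overline{e}_k^{\,t}\blam\,\overline{e}_l = 0$, because then both sides have zero coefficient at that monomial. This is precisely the $(k,l)$-entry of $F\blam F^t$. The main obstacle is making the "lower terms stay lower" claim rigorous. We need that the reverse lexicographic order is a monomial order compatible with multiplication, and that every correction term in a product of brackets of monomials is strictly smaller. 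Both follow from the fact that each tail $\delta_b(x_a)$ involves only variables of index $<b$ while $x_b$ is removed, so the top-variable exponent drops.
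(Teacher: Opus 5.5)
The paper gives no argument for this lemma; it imports the whole statement from \cite[Proposition 5.8]{GY:Poi-CGL}. Your leading-term computation is a correct, self-contained proof of the formula ${\bf q}=F\blam F^t$, once log-canonicity is granted.

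Every ingredient you use is available in the paper:
\begin{itemize}
\item ${\rm lt}_{\bf x}(y_k)={\bf x}^{\overline{e}_k}$ follows from \eqref{eq:ykk-00}, as in \autoref{le:leading}.
\item The tail of $\{x_a,x_b\}$ lies in $R_{\max(a,b)-1}$, so all of its monomials are strictly below $x_ax_b$.
\item Since $\prec$ is a translation-invariant total order on $\ZZ^n$, every monomial of $\{{\bf x}^u,{\bf x}^v\}$ other than ${\bf x}^{u+v}$ is strictly smaller than ${\bf x}^{u+v}$.
\item For the same reason, $u+v\prec\overline{e}_k+\overline{e}_l$ for every pair of monomials $(u,v)\neq(\overline{e}_k,\overline{e}_l)$ of $(y_k,y_l)$.
\end{itemize}
Hence the coefficient of ${\bf x}^{\overline{e}_k+\overline{e}_l}$ in $\{y_k,y_l\}$ is $\overline{e}_k^{\,t}\blam\,\overline{e}_l=(F\blam F^t)_{k,l}$. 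Comparing with $q_{k,l}y_ky_l$, whose leading monomial is the same with coefficient $1$, gives the claim. Your route buys a transparent explanation of why the tail of the bracket never affects ${\bf q}$: only the log-canonical part, transported by $F^t$, survives.

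What it does not provide is the first assertion of the lemma. You take log-canonicity from the same citation, so on that point your proposal is no more self-contained than the paper.

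Your leading-term bound does settle part of it. Suppose $l<k$ and $s(l)>k$. Then $y_k$ and $y_l$ are non-associate homogeneous Poisson primes of the UFD $R_k$ by \eqref{eq:calP-k}, so $\{y_k,y_l\}\in y_kR_k\cap y_lR_k=y_ky_lR_k$. Writing $\{y_k,y_l\}=y_ky_lr$, your bound forces $\deg_{\bf x}r\preceq 0$. Every nonzero vector in $\ZZ^n_{\geq 0}$ is $\succ 0$, so $r\in\bk$. Pairs with $l<s(l)\le k$ are not reached by this argument, and this is where the cited result is really being used.
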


The following \autoref{le:q-jpj} will be used in $\S$\ref{ss:bfy-M}.

\begin{lemma}\label{le:q-jpj}
Set $q_{-\infty, l} =0$ for any $l \in [1, n]$. For any $k, l \in [1, n]$, one has
\[
\{x_k, \, y_l\} = \begin{cases} (q_{k, l} - q_{p(k), l}) x_ky_l, & \hs k \leq l,\\
(q_{k, l} - q_{p(k), l}) x_ky_l + \delta_k(y_l), & \hs k > l.\end{cases}
\]
\end{lemma}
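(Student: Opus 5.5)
The plan is to differentiate the recursion \eqref{eq:ykk-00} with the derivation $D := \{\,\cdot\,, y_l\}$ of $R$ and then compare coefficients. Fix $l$ and $k$, and put
\[
c := \delta_k(y_{p(k)})/\lambda_k \in R_{k-1},
\]
so that $x_k y_{p(k)} = y_k + c$. When $p(k) = -\infty$ we have $y_{p(k)} = 1$ and $c = 0$, so the convention $q_{-\infty, l}=0$ is consistent. By \autoref{le:bfq}, $D(y_k) = q_{k,l}\, y_k y_l$ and $D(y_{p(k)}) = q_{p(k),l}\, y_{p(k)} y_l$. Applying $D$ to $x_k y_{p(k)} = y_k + c$ and substituting $y_k = x_k y_{p(k)} - c$ gives
\[
y_{p(k)}\bigl(\{x_k, y_l\} - (q_{k,l} - q_{p(k),l})\, x_k y_l\bigr) = \{c, y_l\} - q_{k,l}\, c\, y_l .
\]
So everything reduces to identifying the remainder
\[
r := \{x_k, y_l\} - (q_{k,l}-q_{p(k),l})\, x_k y_l .
\]

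\textbf{Case $k > l$.} Here $y_l \in R_l \subset R_{k-1}$, so \eqref{eq:bracket-j} gives directly $\{x_k, y_l\} = \theta_k(y_l)\, x_k + \delta_k(y_l)$, which is a polynomial of degree $\le 1$ in $x_k$ with coefficients $\theta_k(y_l)$ and $\delta_k(y_l)$ in $R_{k-1}$. On the other hand, the displayed identity shows that $r = (\{c, y_l\} - q_{k,l}\, c\, y_l)/y_{p(k)}$ lies in ${\rm Frac}(R_{k-1})$, since $c, y_l, y_{p(k)} \in R_{k-1}$. Since $x_k$ is transcendental over ${\rm Frac}(R_{k-1})$, I compare coefficients of $x_k$ in
\[
\theta_k(y_l)\, x_k + \delta_k(y_l) = (q_{k,l}-q_{p(k),l})\, y_l\, x_k + r .
\]
This yields $\theta_k(y_l) = (q_{k,l}-q_{p(k),l})\, y_l$ and $r = \delta_k(y_l)$, which is exactly the asserted formula.

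\textbf{Case $k \le l$.} Here I must show $r = 0$. Since $y_l$ is a Poisson element of $R_l$ (\autoref{thm:5.5}), the map $\partial := y_l^{-1}\{\,\cdot\,, y_l\}$ is a $\TT$-equivariant derivation of $R_l$ of weight $0$, and $\partial(y_i) = q_{i,l}$ for $i \le l$. The goal is then $\partial(x_k) = (q_{k,l} - q_{p(k),l})\, x_k$, which I would prove by induction on $k$. Assuming $\partial(x_i) \in \bk x_i$ for all $i<k$, the derivation $\partial$ acts diagonally on the monomials of $R_{k-1}$. Hence $\partial(c) \in R_{k-1}$, and the identity shows that $w' := \partial(x_k) - (q_{k,l}-q_{p(k),l})\, x_k$ lies in ${\rm Frac}(R_{k-1}) \cap R_l = R_{k-1}$. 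Moreover $w'$ has $\TT$-weight $\chi_k$ and satisfies
\[
y_{p(k)}\, w' = \partial(c) - q_{k,l}\, c .
\]

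The main obstacle is showing $w'=0$, i.e.\ that $\partial(c) = q_{k,l}\, c$. I would first try to extract this from the proof of \cite[Theorem 5.5, Proposition 5.8]{GY:Poi-CGL}, where the bracket of $y_l$ with $R_l$ is shown to be of the form $y_l \cdot \partial_h$ for a suitable Cartan-type element $h$; this immediately makes $\partial$ diagonal with eigenvalue $\chi(h)$ on weight-$\chi$ vectors. Then $c$, being of weight $\chi_{y_k}$, satisfies $\partial(c) = \chi_{y_k}(h)\, c = \partial(y_k)\, c = q_{k,l}\, c$. Failing a direct citation, I would argue with leading terms in $x_{k-1}, x_{k-2}, \ldots$: in $y_{p(k)} w' = \partial(c) - q_{k,l} c$, every monomial $m$ of $c$ appears on the right with coefficient $(\alpha(m) - q_{k,l})$, where $\alpha$ is the linear functional on exponents defining $\partial$. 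I would then use primality of $y_{p(k)}$ together with $\delta_k^2(y_{p(k)})=0$ to force $w'=0$.
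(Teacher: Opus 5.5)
Your argument for $k>l$ is correct, and it takes a different route from the paper. The paper reads off $\{x_k,y_l\}=\chi_{y_l}(h_k)x_ky_l+\delta_k(y_l)$ from \eqref{eq:bracket-j}. It then evaluates $q_{k,l}-q_{p(k),l}=e_k^t\blam\,\overline{e}_l=\chi_{y_l}(h_k)$ using the explicit formula ${\bf q}=F\blam F^t$ of \autoref{le:bfq} and \eqref{eq:chi-kk}. You instead differentiate $x_ky_{p(k)}=y_k+c$ and compare $x_k$-coefficients over ${\rm Frac}(R_{k-1})$. This needs only log-canonicity of $\bfy$, not the shape of ${\bf q}$, and it gives $\chi_{y_l}(h_k)=q_{k,l}-q_{p(k),l}$ as a by-product.

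The case $k\le l$ has a genuine gap: everything hinges on $w'=0$, and neither of your routes establishes it.

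\textbf{Route (a) rests on a false claim.} There need not be any $h\in\t$ with $\{\,\cdot\,,y_l\}|_{R_l}=y_l\,\partial_h$. Take $n=2$, $\TT=\bk^\times$, $\chi_1=\chi_2=\chi$ with $\chi(h)=h$, $\delta_2=0$ and $\{x_1,x_2\}=-h_2x_1x_2$ with $h_2\neq 0$. Then $y_2=x_2$, $y_2^{-1}\{x_1,y_2\}=-h_2x_1$ and $y_2^{-1}\{x_2,y_2\}=0$. No derivation acting by $\chi(h)$ on weight-$\chi$ vectors can produce both. So the inference ``$c$ is a $\TT$-weight vector, hence a $\partial$-eigenvector with eigenvalue $q_{k,l}$'' is unjustified.

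\textbf{Route (b) is only a hope.} After splitting into $\partial$-eigencomponents, your identity says only that $c$ agrees with its $q_{k,l}$-component modulo $y_{p(k)}$. You give no mechanism by which primality of $y_{p(k)}$ and $\delta_k^2(y_{p(k)})=0$ would eliminate the other components.

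The paper does not prove this case; it cites \cite[Corollary 5.10]{GY:Poi-CGL}, and you could do the same. For a self-contained proof, induct on $l$ rather than on $k$.
\begin{itemize}
\item If $p(l)=-\infty$, then $y_l=x_l$ and $\delta_l=0$, so the claim is immediate.
\item Otherwise, for $k<l$, expand $\{x_k,y_l\}$ using $y_l=y_{p(l)}x_l-c_l$ and $\{x_k,x_l\}=-\chi_k(h_l)x_kx_l-\delta_l(x_k)$. The result has $x_l$-degree at most one. Since $y_l$ is Poisson in $R_l$, the quotient $g=y_l^{-1}\{x_k,y_l\}$ lies in $R_{l-1}$. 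Comparing $x_l$-coefficients gives $g\,y_{p(l)}=\{x_k,y_{p(l)}\}-\chi_k(h_l)x_ky_{p(l)}$.
\item For $k\le p(l)$, use the induction hypothesis.
\item For $p(l)<k<l$, you have $y_{p(l)}\in\calP_{k-1}$ and $p(l)\neq p(k)$, since $s(p(l))=l\neq k$. The uniqueness in \autoref{thm:5.5} then gives $\delta_k(y_{p(l)})=0$, hence $\{x_k,y_{p(l)}\}=\chi_{y_{p(l)}}(h_k)x_ky_{p(l)}$.
\item For $k=l$, a direct computation using $\delta_l(c_l)=0$ gives $\{x_l,y_l\}=\chi_{y_{p(l)}}(h_l)x_ly_l$.
\end{itemize}
This yields $\{x_k,y_l\}\in\bk\,x_ky_l$ for all $k\le l$. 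Now $\partial$ preserves $R_{k-1}$, so in your identity $y_{p(k)}w'=\partial(c)-q_{k,l}c$ the right side lies in $R_{k-1}$, while $w'$ is a scalar multiple of $x_k$. This forces the scalar to be $q_{k,l}-q_{p(k),l}$; the case $p(k)=-\infty$ is trivial.
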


\begin{proof} The case when $k \leq l$ is proved in \cite[Corollary 5.10]{GY:Poi-CGL}. 
Assume now that $k > l$. 
Then by \eqref{eq:bracket-j}, $\{x_k, y_l\} = \chi_{y_l}(h_k)x_ky_l + \delta_k(y_l)$. On the other hand, 
\[
q_{k, l} -q_{p(k),l}  = (E^te_k)^t {\bf q} e_l = e_k^t \blam F^t e_l
=e_k^t \blam  \overline{e}_l =
\sum_{j \in [1, l]\cap L(l)} \chi_j(h_k) =\chi_{y_l}(h_k).
\]
Thus $\{x_k, y_l\} = (q_{k, l} -q_{p(k),l})x_ky_l + \delta_k(y_l)$.
\end{proof}

Let $\calT=\bk[\bfy^{\pm 1}] = {\bf k}[y_1^{\pm 1}, \ldots, y_n^{\pm 1}]$. 
Note then that $R \subset \calT$ by \eqref{eq:ykk-00}. It follows from 
\autoref{le:bfq} that $\calT$  is a 
Poisson sub-algebra of ${\rm Frac}(R)$, where
${\rm Frac}(R)$ has the unique Poisson bracket extending that on $R$. More precisely, for 
(column vectors) $f, g\in \ZZ^n$, one has
\begin{equation}\label{eq:bra-bfy-f-g}
\{\bfy^f, \, \bfy^g\} = f^t {\bf q}\, g.
\end{equation}

\subsection{Two $\ZZ^n$-gradings on $R$}\label{ss:two-gradings}
Following \cite[$\S$5.2]{GY:Poi-CGL}, consider the 
reverse lexicographic order $\prec$ on $\ZZ^n$, i.e., 
\[
(f_1, \ldots, f_n)^t \prec (f_1^\prime, \ldots, f_n^\prime)^t
\]
if there exists $j \in [1, n]$ such that $f_j < f_j^\prime$ and $f_k = f^\prime_k$ for all $k \in [j+1, n]$.
One then has
\begin{equation}\label{eq:ffgg}
f \prec f' \;\;\; \mbox{and} \;\;\; g \prec g' \;\;\; \Longrightarrow f + f' \prec g + g', \hs \hs f, f', g, g' \in \ZZ^n.
\end{equation}
As in \cite[$\S$5.2]{GY:Poi-CGL}, writing a non-zero $b \in \calT$
as
\[
b = \xi_f \bfy^f + \sum_{g \in \ZZ^n, \, g \prec f} \xi_g \bfy^g,
\]
where $\xi_g \in {\bf k}$ for $g \in \ZZ^n$ and $\xi_f \in {\bf k}^*$,
we set
\[
{\rm lt}_\bfy(b) =  \xi_f \bfy^f \hs \mbox{and} \hs {\rm deg}_\bfy(b) = f,
\]
and call them, respectively, the {\it $\bfy$-leading term} and the {\it $\bfy$-degree} of $b$.
We also call $\xi_f$ the {\it $\bfy$-leading coefficient}
of $b$. By \eqref{eq:ffgg}, for all non-zero $b,b' \in \calT$ one has
\begin{equation}\label{eq:degree-bb}
{\rm lt}_\bfy(bb') = {\rm lt}_\bfy(b) \, {\rm lt}_\bfy(b') \hs \mbox{and} \hs \deg_\bfy(bb') = \deg_\bfy(b) + \deg_\bfy(b').
\end{equation}

Similarly (see again \cite[$\S$5.2]{GY:Poi-CGL}), using the reverse lexicographic order on $\ZZ^n$ and the sequence
${\bf x} = (x_1, \ldots, x_n)$ in place of the sequence $\bfy$, one has the {\it ${\bf x}$-leading term} 
${\rm lt}_{\bf x}(b) \in R$,
the {\it ${\bf x}$-leading coefficient},  and the {\it ${\bf x}$-degree} ${\rm deg}_{\bf x}(b) \in \ZZ^n_{\geq 0}$
for every non-zero $b \in R$ . Again for all non-zero $b,b' \in R$ one has
\begin{equation}\label{eq:degree-bb-x}
{\rm lt}_{\bf x}(bb') = {\rm lt}_{\bf x}(b) \, {\rm lt}_{\bf x}(b') \hs \mbox{and} \hs \deg_{\bf x}(bb') = 
\deg_{\bf x}(b) + \deg_{\bf x}(b').
\end{equation}
With the lower-triangular matrices $E$  in \eqref{eq:E-Et} and $F = E^{-1}$, we also note that
\begin{equation}\label{eq:E-degree}
f \prec g \hs \Leftrightarrow \hs E^{t}f \prec E^{t}g \hs \Leftrightarrow \hs F^{t}f \prec F^{t}g, \hs \hs f, g \in \ZZ^n.
\end{equation}

\begin{lemma}\label{le:leading}
Let $b \in R \subset \calT$ be non-zero. Then for $\xi \in {\bf k}^*$ and $f \in \ZZ^n_{\geq 0}$,  one has 
${\rm lt}_{\bf x}(b) = \xi {\bf x}^f$ if and only if ${\rm lt}_{\bfy}(b) = \xi {\bfy}^{E^{t}f}$. In particular, 
\[
\deg_{\bf x}(b) = F^t {\rm deg}_{\bf y}(b) \hs \mbox{and} \hs \deg_{\bf y}(b) = E^t {\rm deg}_{\bf x}(b).
\]
\end{lemma}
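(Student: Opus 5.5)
Since $b \mapsto {\rm lt}_{\bf x}(b)$ and $b \mapsto {\rm lt}_\bfy(b)$ are both multiplicative, the plan is to compare the two orders monomial by monomial and then pass to sums. By \eqref{eq:E-degree}, the linear map $E^t$ preserves $\prec$, and by \eqref{eq:F} it is bijective on $\ZZ^n$ with inverse $F^t$. Hence it suffices to prove the following claim: for every $f \in \ZZ^n_{\geq 0}$ one has ${\rm lt}_\bfy({\bf x}^f) = \bfy^{E^t f}$, with leading coefficient $1$. Once this holds, the general statement follows from the ordering argument below.

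To prove the claim, first treat a single generator. By \eqref{eq:ykk-00}, in ${\rm Frac}(R)$ we have
\[
x_k = y_k y_{p(k)}^{-1} + \frac{\delta_k(y_{p(k)})}{\lambda_k}\, y_{p(k)}^{-1}.
\]
The second term lies in $\bk[y_1^{\pm1},\ldots,y_{k-1}^{\pm1}]$, because $\delta_k(y_{p(k)}) \in R_{k-1} \subset \bk[y_1^{\pm 1},\ldots,y_{k-1}^{\pm1}]$. (This containment holds since $R_{k-1}$ is generated by $x_1,\ldots,x_{k-1}$, and by the recursion each of these lies in the Laurent ring of $y_1,\ldots,y_{k-1}$.) Every monomial of that second term therefore has $y_k$-exponent $0$ and no $y_m$ with $m>k$. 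The first term $\bfy^{e_k - e_{p(k)}}$ has $y_k$-exponent $1$ and also no $y_m$ with $m>k$. So in reverse lexicographic order it strictly dominates, giving ${\rm lt}_\bfy(x_k) = \bfy^{E^t e_k}$ with coefficient $1$. Multiplicativity \eqref{eq:degree-bb} then gives ${\rm lt}_\bfy({\bf x}^f) = \bfy^{E^t f}$ for all $f \in \ZZ^n_{\geq 0}$.

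Now write $b = \xi {\bf x}^f + \sum_{g \prec f} \xi_g {\bf x}^g$ with ${\rm lt}_{\bf x}(b) = \xi{\bf x}^f$. Each ${\bf x}^g$ in the sum has every $\bfy$-monomial $\preceq E^t g$, and $E^t g \prec E^t f$ by \eqref{eq:E-degree}. The term $\xi {\bf x}^f$ contributes $\xi\bfy^{E^t f}$ plus strictly smaller monomials. So ${\rm lt}_\bfy(b) = \xi\bfy^{E^tf}$, which proves the forward implication. For the converse, suppose ${\rm lt}_\bfy(b) = \xi\bfy^{E^t f}$ and let ${\rm lt}_{\bf x}(b) = \xi'{\bf x}^{f'}$. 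The forward direction gives $\xi'\bfy^{E^t f'} = \xi\bfy^{E^t f}$, and injectivity of $E^t$ forces $f'=f$ and $\xi'=\xi$. The degree formulas follow by applying $F^t = (E^t)^{-1}$.

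The only delicate step is the single-generator computation. There one must verify that the correction term genuinely involves no $y_k$, which is where the inclusion $R_{k-1}\subset\bk[y_1^{\pm1},\ldots,y_{k-1}^{\pm1}]$ is used. One must also check the case $p(k)=-\infty$, where $y_{-\infty}=1$ and $\delta_k = 0$, so $x_k = y_k$ directly.
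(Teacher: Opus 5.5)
Your proof is correct and follows the same route as the paper. You read off ${\rm lt}_\bfy(x_k) = y_k/y_{p(k)}$ from $x_k = (y_k + c_k)/y_{p(k)}$ with $c_k \in R_{k-1}$, extend to ${\bf x}^g$ by \eqref{eq:degree-bb}, and conclude via \eqref{eq:E-degree}; you also spell out the domination of the lower-order terms and the converse direction, which the paper leaves implicit.
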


\begin{proof}
 For  $k \in [1, n]$, it follows from $x_k = (y_k + c_k)/y_{p(k)}$ and $c_k \in R_{k-1}$
that
${\rm lt}_\bfy(x_k) = y_k/y_{p(k)}$. By \eqref{eq:degree-bb},  for any $g = (g_1, \ldots, g_n)^t \in \ZZ^n_{\geq 0}$, one has
\[
{\rm lt}_\bfy({\bf x}^g) = \left(\frac{y_1}{y_{p(1)}}\right)^{g_1} \, \left(\frac{y_2}{y_{p(2)}}\right)^{g_2} \, \cdots\,
\left(\frac{y_n}{y_{p(n)}}\right)^{g_n} =
\bfy^{E^{t}g}.
\]
\autoref{le:leading} now follows from \eqref{eq:E-degree}.
\end{proof}

\subsection{Uniqueness of solutions to the GSV Equations}\label{ss:linear-alg}
Let $R$ be a length $n$ $\T$-Poisson CGL extension 
as in \autoref{de:CGL-intro}, and we  
continue with the notation in $\S$\ref{ss:bfy}.
Recall that $\lambda_k = \chi_k(h_k) \neq 0$ for $k \in [1, n]$, and recall from 
$\S$\ref{ss:main-intro} the matrix
\begin{equation}\label{eq:bnu}
\bnu = \left(\begin{array}{ccccc} \lambda_1 & \nu_{1,2} & \nu_{1,3} & \cdots &\nu_{1, n}\\
0 & \lambda_2 & \nu_{2,3} & \cdots & \nu_{2, n}\\
0 & 0 & \lambda_3 & \cdots & \nu_{3, n}\\
\cdots & \cdots & \cdots & \cdots &\cdots\\
0 & 0 & 0 & \cdots & \lambda_n\end{array}\right),
\end{equation}
where 
 $\nu_{j, k} = \chi_j(h_k) + \chi_k(h_j)$ for $j, k \in [1, n]$. For $\kappa \in X(\TT)$, let
$\overrightarrow{\kappa} = (\kappa(h_1), \ldots, \kappa(h_n))^t  \in {\bf k}^n$.

\begin{lemma}\label{:linear-alg-1}
For any $g \in {\bf k}^n$ and $\kappa \in X(\TT)$, the linear system 
\begin{equation}\label{eq:q-chi-f}
{\bf q} f = -g \hand \chi_{\bf y} f = \kappa
\end{equation}
has a solution $f \in {\bf k}^n$ if and only if 
$\chi_{\bf x} \bnu^{-1}(\overrightarrow{\kappa}+Eg)=\kappa$, and in this case
the solution $f \in {\bf k}^n$ to \eqref{eq:q-chi-f} is unique and  is given by
$f = E^t \bnu^{-1}(\overrightarrow{\kappa}+ Eg)$.
\end{lemma}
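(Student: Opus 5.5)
The plan is to change variables so that the Poisson coefficient matrix ${\bf q}$ is replaced by the simpler skew-symmetric matrix $\blam$. Then I use one linear-algebra identity relating $\blam$, $\bnu$ and the map $u \mapsto \overrightarrow{\chi_{\bf x}u}$.

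\emph{Change of variables.} Since $E$ is invertible with $F = E^{-1}$, every $f \in \bk^n$ can be written uniquely as $f = E^t u$ with $u \in \bk^n$.
\begin{itemize}
\item By \autoref{le:bfq}, ${\bf q} f = F\blam F^t E^t u = F \blam u$. Hence ${\bf q}f = -g$ is equivalent to $\blam u = -Eg$.
\item By \eqref{eq:chi-y-chi-x}, $\chi_{\bf y} f = \chi_{\bf x}F^tE^t u = \chi_{\bf x} u$. Hence the second equation becomes $\chi_{\bf x}u = \kappa$.
\end{itemize}
Here $\chi_{\bf x}u = \sum_j u_j\chi_j$ is viewed in $\t^*$. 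So the system \eqref{eq:q-chi-f} is equivalent to
\[
\blam u = -Eg \hand \chi_{\bf x} u = \kappa .
\]

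\emph{Key identity.} For any $u \in \bk^n$ I claim
\[
\bnu\, u = \overrightarrow{\chi_{\bf x}u} - \blam u .
\]
This is checked entrywise from \eqref{eq:blam} and \eqref{eq:bnu}.
\begin{itemize}
\item The $i$-th entry of $\overrightarrow{\chi_{\bf x}u}$ is $\sum_k \chi_k(h_i)u_k$.
\item The $i$-th entry of $\blam u$ is $\sum_{k<i}\chi_k(h_i)u_k - \sum_{k>i}\chi_i(h_k)u_k$.
\item Their difference is $\chi_i(h_i)u_i + \sum_{k>i}(\chi_k(h_i)+\chi_i(h_k))u_k$, which is exactly $(\bnu u)_i$.
\end{itemize}
Moreover, $\bnu$ is upper triangular with diagonal entries $\lambda_i \in \bk^\times$, so it is invertible.

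\emph{Conclusion.}
\begin{itemize}
\item Suppose $u$ solves the reduced system. The identity gives $\bnu u = \overrightarrow{\kappa} + Eg$, so $u = \bnu^{-1}(\overrightarrow{\kappa}+Eg)$. This forces uniqueness and the formula $f = E^t\bnu^{-1}(\overrightarrow{\kappa}+Eg)$. Substituting into $\chi_{\bf x}u=\kappa$ gives the necessary condition $\chi_{\bf x}\bnu^{-1}(\overrightarrow{\kappa}+Eg)=\kappa$.
\item Conversely, suppose the condition holds, and set $u = \bnu^{-1}(\overrightarrow{\kappa}+Eg)$. Then $\chi_{\bf x}u = \kappa$, so $\overrightarrow{\chi_{\bf x}u} = \overrightarrow{\kappa}$. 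The identity then yields $\blam u = \overrightarrow{\kappa} - \bnu u = -Eg$. Thus $f = E^tu$ is a solution.
\end{itemize}

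The only real step is spotting and verifying the identity $\bnu = (\text{evaluation of } \chi_{\bf x} \text{ at the } h_i) - \blam$. It needs careful bookkeeping of the sign and index conventions in $\blam$, in particular which of $\chi_j(h_k)$ or $\chi_k(h_j)$ appears above versus below the diagonal.
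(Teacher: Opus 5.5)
Your proof is correct and is essentially the paper's argument: your substitution $f = E^t u$ is the paper's $\widetilde{f} = F^t f$, and your key identity $\bnu u = \overrightarrow{\chi_{\bf x}u} - \blam u$ is exactly the paper's observation $\bnu = \bsigma - \blam$, where $\bsigma = (\chi_j(h_i))_{i, j \in [1, n]}$ records the evaluations of $\chi_{\bf x}$ at $h_1, \ldots, h_n$. The only difference is that you check the converse direction explicitly, whereas the paper packages both directions into a chain of equivalent systems.
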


\begin{proof} As ${\bf q} = F \blam F^t, \chi_{\bf y} = \chi_{\bf x} F^t$, and $E = F^{-1}$,
the equations in 
\eqref{eq:q-chi-f} are equivalent to 
\begin{equation}\label{eq:q-chi-tf}
\blam \widetilde{f} = -Eg \hand \chi_{\bf x}  \widetilde{f} = \kappa.
\end{equation}
where $\widetilde{f} = F^tf \in \bk^n$. Let $\bsigma = \blam + \bnu$, i.e., 
\[
\bsigma = \left(\displaystyle \begin{array}{cccc} \chi_1(h_1) & \chi_2(h_1) &  \cdots & \chi_n(h_1)\\
\chi_1(h_2) & \chi_2(h_2)  & \cdots & \chi_n(h_2)\\
\cdots & \cdots & \cdots& \cdots \\
\chi_1(h_n) & \chi_2(h_n)  & \cdots & \chi_n(h_n)\end{array}\right).
\]
Suppose that $\widetilde{f} \in {\bf k}^n$ satisfies
$\chi_{\bf x} \widetilde{f} = \kappa$. Evaluating both sides of $\chi_{\bf x} \widetilde{f} = \kappa$ at 
$h_j$ for every $j \in [1, n]$ gives $\bsigma \widetilde{f} = \overrightarrow{\kappa}$. As $\bnu = \bsigma - \blam$, 
the equations in \eqref{eq:q-chi-tf} are now equivalent to 
\[
\bnu \widetilde{f} = \overrightarrow{\kappa}+Eg \hand \chi_{\bf x} \widetilde{f} = \kappa.
\]
As $\bnu$ is invertible,   \autoref{:linear-alg-1} now follows.
\end{proof}

Recall now that we have introduced in $\S$\ref{ss:main-intro}
the diagonal matrix (see convention in $\S$\ref{ss:nota-intro})
\begin{equation}\label{eq:Lambda}
\Lambda = (\Lambda_{i, j}) \in {\rm Mat}_{n \times {\rm ex}}({\bf k}) \hs \mbox{with} \hs
\Lambda_{j, j} = \lambda_{s(j)}, \hs j \in {\rm ex}.
\end{equation}
and recall  the GSV Equations for $R$ given in \eqref{eq:GSV-intro}.

\begin{corollary}\label{cor:M-unique}
For any $\TT$-Poisson  CGL extension $R$ of length $n$,  
the GSV Equations 
\[
{\bf q} M = -\Lambda \hs \mbox{and} \hs \chi_{\bfy} M = 0.
\]
for $R$ have a solution 
 $M \in {\rm Mat}_{n \times {\rm ex}}(\bk)$ if and only if $\chi_{\bf x} \bnu^{-1} E \Lambda =0$, 
and in such a case the solution  is unique and is given by 
$M = E^t \bnu^{-1} E \Lambda$.
\end{corollary}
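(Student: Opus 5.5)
The corollary follows from \autoref{:linear-alg-1} applied one column at a time. I will first record the GSV Equations column by column. For $j \in {\rm ex}$, the $j^{\rm th}$ column of $M$ is $M_j = Me_j \in \bk^n$. The $j^{\rm th}$ column of $-\Lambda$ is $-\lambda_{s(j)} e_j$, where $e_j$ is regarded as a vector in $\bk^n$ via the diagonal convention of $\S$\ref{ss:nota-intro}. The $j^{\rm th}$ entry of $\chi_\bfy M = 0$ is $\chi_\bfy M_j = 0$. Hence $M$ solves the GSV Equations if and only if, for every $j \in {\rm ex}$, the vector $f = M_j$ solves
\[
{\bf q} f = -g_j \hand \chi_{\bf y} f = \kappa, \hs g_j := \lambda_{s(j)} e_j = \Lambda e_j, \;\; \kappa := 0.
\]
The columns are independent of one another, so the system decouples into one linear system for each $j \in {\rm ex}$.

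Next I apply \autoref{:linear-alg-1} with $g = g_j$ and $\kappa = 0$. For $\kappa = 0$ one has $\overrightarrow{\kappa} = (0(h_1), \ldots, 0(h_n))^t = 0$. The lemma then says the $j^{\rm th}$ system is solvable if and only if $\chi_{\bf x}\bnu^{-1} E g_j = 0$, that is, $\chi_{\bf x}\bnu^{-1} E\Lambda e_j = 0$. When this holds, the solution is unique and equals $M_j = E^t\bnu^{-1}E\Lambda e_j$.

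Finally I assemble the columns. A solution $M$ exists exactly when $\chi_{\bf x}\bnu^{-1}E\Lambda e_j = 0$ for all $j \in {\rm ex}$, which is the single matrix condition $\chi_{\bf x}\bnu^{-1}E\Lambda = 0$, an identity in $X(\TT)^{\rm ex}$ (or in $(\t^*)^{\rm ex}$). In that case each column of $M$ is uniquely determined, so $M$ is unique, and stacking the columns gives $M = E^t\bnu^{-1}E\Lambda$.

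There is no real obstacle here. The only point needing care is the bookkeeping for the non-square diagonal matrix $\Lambda \in {\rm Mat}_{n\times{\rm ex}}(\bk)$: one must check that its $j^{\rm th}$ column is $\lambda_{s(j)}e_j \in \bk^n$, so that $E\Lambda e_j$ is exactly the vector $Eg$ that appears in \autoref{:linear-alg-1}.
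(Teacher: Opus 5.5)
Your proposal is correct and is the paper's argument: the paper derives the corollary directly from \autoref{:linear-alg-1}, and you do exactly that, applying it column by column with $g = \Lambda e_j = \lambda_{s(j)}e_j$ and $\kappa = 0$ (so $\overrightarrow{\kappa} = 0$) and then assembling the columns. Your remark that the solvability condition is properly an identity in $(\t^*)^{\rm ex}$ is also right, since the entries of $\bnu^{-1}E\Lambda$ lie in $\bk$.
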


\begin{proof}
This is a direct consequence of \autoref{:linear-alg-1}.
\end{proof}

The following \autoref{le:b-1} will be used in $\S$\ref{ss:bfy-M} to show the existence of 
a solution to the GSV Equations in \eqref{eq:GSV-intro}.

\begin{lemma}\label{le:b-1}
Given $g = (g_1, \ldots, g_n)^t \in {\bf k}^n$ and $\kappa \in X(\T)$, if a non-zero 
$b \in \calT = \bk[\bfy^{\pm 1}]$ is $\TT$-homogeneous with $\T$-weight $\kappa$ and satisfies
\[
\{b, \, y_l\} = g_l b y_l, \hs \forall \; l \in [1, n],
\]
then $b$ is a non-zero scalar multiple of the Laurent monomial ${\bf y}^f$, where $f \in \ZZ^n$
is a unique solution (c.f. \autoref{:linear-alg-1}) to the
system of linear equations 
\begin{equation}\label{eq:qfg}
{\bf q} f = -g \hand \chi_{\bf y} f = \kappa.
\end{equation}
\end{lemma}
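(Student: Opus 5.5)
Write $b = \sum_{f \in S} \xi_f \bfy^f$ with $S \subset \ZZ^n$ finite and all $\xi_f \in \bk^\times$. The plan is to show that every monomial $\bfy^f$ with $f \in S$ satisfies the linear system \eqref{eq:qfg}. Uniqueness in \autoref{:linear-alg-1} then forces $S$ to have only one element, so $b$ is a single Laurent monomial.

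\textbf{Poisson bracket condition.} By \eqref{eq:bra-bfy-f-g}, interpreted as $\{\bfy^f, y_l\} = (f^t{\bf q}e_l)\,\bfy^f y_l$, we get
\[
\{b, y_l\} = \sum_{f \in S} \xi_f (f^t {\bf q} e_l)\, \bfy^f y_l .
\]
The hypothesis says this equals $g_l \sum_{f\in S} \xi_f \bfy^f y_l$. The monomials $\bfy^f y_l$ for $f\in S$ are distinct and hence linearly independent in $\calT$. Comparing coefficients gives $f^t{\bf q}e_l = g_l$ for every $f \in S$ and every $l$. Since ${\bf q}$ is skew-symmetric, this reads $({\bf q} f)_l = -g_l$, that is, ${\bf q} f = -g$.

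\textbf{Weight condition.} Each $y_k$ is a $\TT$-weight vector, so $\bfy^f$ is a weight vector of weight $\chi_{\bfy} f$. Because $b$ is homogeneous of weight $\kappa$, and weight-space decompositions are direct, the nonzero monomial components of $b$ must also have weight $\kappa$. Here one must be slightly careful: distinct $f$ may share the same weight, but projecting $b$ onto weight spaces kills every component whose weight differs from $\kappa$. Since the $\bfy^f$ are linearly independent, this gives $\chi_{\bfy} f = \kappa$ for every $f\in S$.

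\textbf{Conclusion.} Every $f\in S$ therefore solves \eqref{eq:qfg}. By \autoref{:linear-alg-1} a solution in $\bk^n$ is unique when it exists. Since $b\neq 0$, the set $S$ is nonempty, so $S=\{f\}$ for a single $f\in\ZZ^n$, and $b=\xi_f\bfy^f$. Along the way this shows that the unique solution of \eqref{eq:qfg} is integral.

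I do not expect a real obstacle. The only point needing care is the linear independence of the Laurent monomials, which holds because $\bfy$ is algebraically independent; this was noted after \eqref{eq:y-k-intro}.
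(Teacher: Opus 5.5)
Your proposal is correct and follows essentially the same route as the paper: expand $b$ in Laurent monomials, compare coefficients in $\{b, y_l\} = g_l b y_l$ (you rightly read \eqref{eq:bra-bfy-f-g} as $\{\bfy^f,\bfy^g\} = (f^t{\bf q}\,g)\,\bfy^{f+g}$, supplying the monomial factor missing from the displayed formula) and in the weight decomposition, so that every exponent in the support solves \eqref{eq:qfg}, and then invoke the uniqueness in \autoref{:linear-alg-1}. Your explicit treatment of the weight projection and of the linear independence of the monomials fills in the ``similar arguments'' that the paper leaves implicit.
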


\begin{proof}
Write  $b = \sum_{f \in {\rm supp}(b)} b_f \bfy^f$, where 
$b_f \in {\bf k}^\times$ for $f \in {\rm supp}(b) \subset  \ZZ^n$. 
Let $l \in [1, n]$. It follows from  $\{y_l, \, b\} = -g_lby_l$ and
\eqref{eq:bra-bfy-f-g} that
\[
\sum_{f \in {\rm supp}(b)} b_f (e_l^t {\bf q} f) \bfy^{e_l+f}
= -\sum_{f \in {\rm supp}(b)} g_lb_f \bfy^{e_l+f},
\]
so $e_l^t {\bf q} f= -g_l$ for every $f \in {\rm supp}(b)$. Thus ${\bf q}f = -g$
for every $f \in {\rm supp}(b)$.
Similar arguments show that 
every monomial term of $b$ is a $\TT$-weight vector with $\TT$-weight $\kappa$, i.e.,
$\chi_{\bf y} f = \kappa$ for every $f \in {\rm supp}(b)$. By \autoref{:linear-alg-1}, such
an $f \in \ZZ^n$ is necessarily unique. In particular, $b$ is  non-zero multiple of a monomial in $\bfy$.
\end{proof}

\begin{remark}\label{re:terms-in-b}
{\rm
The proof of \autoref{le:b-1} also shows that if a  non-zero $b \in \calT$ and  $l \in [1, n]$ are such that
$\{b, y_l\} = g_lby_l$ for some $g_l \in {\bf k}$, then $\{b', y_l\} = g_l b^\prime y_l$ for every
$b' = \sum_{f \in {\rm supp}^\prime(b)} b_f\bfy^f$ with non-empty ${\rm supp}^\prime(b)\subset {\rm supp}(b)$. 
\hfill $\diamond$
}
\end{remark}

\subsection{The initial $\TT$-Poisson pre-seed $({\bf y}, M)$}\label{ss:bfy-M}
Let again  $R = (\bk[x_1, \ldots,x_n], \{\, , \, \})$ be a $\T$-Poisson CGL extension 
as in \autoref{de:CGL-intro}, and let 
$\bfy = (y_1, \ldots, y_n)$ be the sequence of homogeneous Poisson prime elements associated to $R$.
 Recall from \autoref{thm:5.5} 
the successor map $s: [1, n] \to [2, n] \sqcup\{+\infty\}$. 
By  \eqref{eq:ykk-00} we have 
\begin{equation}\label{eq:Rk-T}
R_k \subset {\bf k}[y_1, \ldots, y_k][y_i^{-1}: j \in [1, k], s(j)\leq k\}, \hs k\in [1, n].
\end{equation}
For $k \in [1, n]$, set 
\begin{equation}\label{eq:ck}
c_k =\frac{\delta_k(y_{p(k)})}{\lambda_k} \in R_{k-1} \backslash\{0\},
\end{equation}
so that (see \eqref{eq:yk-ypk})  $\delta_k(c_k) = 0$, and 
\begin{equation}\label{eq:yk-ck-11}
y_k = y_{p(k)}x_k - c_k.
\end{equation}
Recall now that 
${\rm ex} = \{j \in [1, n]: \; s(j) \neq +\infty\}$.
For $j \in {\rm ex}$, setting 
\begin{equation}\label{eq:calT-prime}
\calT^\prime_{s(j)-1} = {\bf k}[y_1^{\pm 1}, \ldots, y_{j-1}^{\pm 1}, \, y_{j+1}^{\pm 1}, \ldots, y_{s(j)-1}^{\pm 1}],
\end{equation}
by \eqref{eq:Rk-T} we then have $c_{s(j)} \in R_{s(j)-1} \subset \calT^\prime_{s(j)-1}[y_j]$. 

\begin{notation}\label{nota:b-sj}
{\rm
For $j \in {\rm ex}$, let
$a_{s(j)} \in \calT^\prime_{s(j)-1}[y_j]$ and  $b_{s(j)} \in \calT^\prime_{s(j)-1}$ be such that
\begin{equation}\label{eq:csj-bsj}
c_{s(j)} = y_j a_{s(j)} + b_{s(j)}.
\end{equation}
In other words, $b_{s(j)}$ is the constant term of  $c_{s(j)}$ when 
expressed as
a polynomial in $y_j$ with coefficients in $\calT^\prime_{s(j)-1}$.
}
\end{notation}

Recall that ${\bf q} = (q_{k, j})_{k, l \in [1, n]}$ is
the Poisson coefficient matrix of ${\bfy}$. The following
\autoref{le:Mi-bsj} is 
proved in \cite{Mi:thesis, Mi:archive}. 
We include a proof 
for the convenience of the reader.

\begin{lemma}\label{le:Mi-bsj}
For every $j \in {\rm ex}$, the element $b_{s(j)} \in \calT^\prime_{s(j)-1}$ is non-zero, and one has 
\begin{equation}\label{eq:b-yl-1}
\{b_{s(j)}, \, y_l\} = \begin{cases} q_{s(j), l}b_{s(j)} y_l, & \hs l \in [1, n], \, l \neq j,\\
(q_{s(j), j} + \lambda_{s(j)})b_{s(j)} y_j, & \hs l = j,\end{cases}
\end{equation}
\end{lemma}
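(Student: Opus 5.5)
Throughout, fix $j \in {\rm ex}$ and write $k = s(j)$, so that $p(k) = j$. By \eqref{eq:yk-ck-11} we have $c_k = y_j x_k - y_k$. The plan is to compute $\{c_k, y_l\}$ for every $l \in [1,n]$ inside the Poisson algebra $\calT$. Then, using that brackets of Laurent monomials in $\bfy$ only rescale monomials by \eqref{eq:bra-bfy-f-g}, we compare homogeneous components with respect to the $y_j$-degree in $\calT^\prime_{k-1}[y_j^{\pm1}]$.

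\emph{Step 1: bracket of $c_k$ with $y_l$.} By \autoref{le:bfq} and \autoref{le:q-jpj} (with $p(k)=j$),
\[
\{c_k, y_l\} = \{y_j,y_l\}x_k + y_j\{x_k,y_l\} - \{y_k,y_l\}.
\]
For $l \geq k$ this equals $q_{j,l}y_jx_ky_l + (q_{k,l}-q_{j,l})y_jx_ky_l - q_{k,l}y_ky_l = q_{k,l}c_ky_l$. For $l<k$, \autoref{le:q-jpj} adds the extra term $y_j\delta_k(y_l)$. In the case $l=j$, this extra term is $y_j\delta_k(y_j) = \lambda_k c_k y_j$. Summarizing,
\[
\{c_k,y_l\} = q_{k,l}c_ky_l + y_j\delta_k(y_l) \quad (l<k), \qquad \{c_k,y_j\} = (q_{k,j}+\lambda_k)c_ky_j .
\]

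\emph{Step 2: extracting $b_k$.} Since $\delta_k(y_l) \in R_{k-1}$ and $s(j)=k$, \eqref{eq:Rk-T} gives $\delta_k(y_l) \in \calT^\prime_{k-1}[y_j]$. Hence the extra term $y_j\delta_k(y_l)$ has $y_j$-degree $\geq 1$. Bracketing with a single $y_l$ multiplies each Laurent monomial by a scalar, so it preserves $y_j$-degree.

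For $l \neq j$, compare the $y_j$-degree $0$ parts of $\{y_ja_k + b_k, y_l\}$ and of the right-hand side above. This gives $\{b_k,y_l\} = q_{k,l}b_ky_l$, which is the first case of \eqref{eq:b-yl-1}.

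For $l=j$, $\{y_ja_k,y_j\} = y_j\{a_k,y_j\}$ has $y_j$-degree $\geq 2$, while $\{b_k,y_j\}$ has degree exactly $1$. Comparing the degree-$1$ parts gives $\{b_k,y_j\} = (q_{k,j}+\lambda_k)b_ky_j$, the second case. This part of the argument is routine.

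\emph{Step 3: $b_k \neq 0$ (main obstacle).} Suppose $b_k = 0$, so that $c_k = y_ja_k$. I would first show that $y_j$ then divides $c_k$ in $R_{k-1}$. The intended route is to use that $R_{k-1}$ localized at its homogeneous Poisson primes $\{y_i : i\leq k-1,\ s(i)\leq k-1\}$ lies in a Laurent ring. Then $c_k \in y_jR_{k-1}[y_i^{-1}: i\neq j]$. Since $y_j$ is prime in $R_{k-1}$ (it lies in $\calP_{k-1}$ as $s(j)=k>k-1$) and is not associate to any of the inverted $y_i$, this yields $y_j \mid c_k$ in $R_{k-1}$. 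Establishing this divisibility cleanly is the delicate point. If the localization argument is awkward, I would instead argue by $\bfy$-degrees via \autoref{le:leading}.

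Granting $c_k = y_ja'$ with $a'\in R_{k-1}$, we get $\delta_k(y_j) = \lambda_k y_ja'$. Then \eqref{eq:bracket-j} gives
\[
\{x_k,y_j\} = \theta_k(y_j)x_k + \lambda_ky_ja' \in y_jR_k ,
\]
and $y_j$ is also Poisson in $R_{k-1}$, so $y_j$ is a Poisson element of $R_k$. It is also homogeneous and prime in $R_k = R_{k-1}[x_k]$, so $y_j \in \calP_k$. This contradicts \eqref{eq:calP-k}, because $s(j)=k\not>k$. Hence $b_k\neq 0$.
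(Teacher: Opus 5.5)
Your proposal is correct. Steps 1 and 2 are the paper's argument: you compute $\{c_{s(j)},y_l\}$ from $c_{s(j)}=y_jx_{s(j)}-y_{s(j)}$ via \autoref{le:q-jpj}, then compare $y_j$-degree $0$ parts for $l\neq j$ and degree $1$ parts for $l=j$. Your degree comparison for $l\ge s(j)$ does the same job as the paper's appeal to \autoref{re:terms-in-b}.

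For $b_{s(j)}\neq 0$, the reduction to $y_j\mid c_{s(j)}$ in $R_{s(j)-1}$ also coincides with the paper, and it is not delicate. Write $a_{s(j)}=a_1/a_2$, with $a_1\in\bk[y_1,\ldots,y_{s(j)-1}]\subset R_{s(j)-1}$ and $a_2$ a monomial in the $y_i$ with $i\neq j$. Then $c_{s(j)}a_2=y_ja_1$, and primality of $y_j$ in $R_{s(j)-1}$ finishes it. No localization of $R_{s(j)-1}$ is needed. Also, the $y_i$ with $s(i)\le s(j)-1$ are not Poisson primes of $R_{s(j)-1}$ by \eqref{eq:calP-k}, so that phrasing should go.

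You differ from the paper only in the final contradiction. The paper notes that $y_{s(j)}=y_j(x_{s(j)}-a')$ would be a nontrivial factorization of the prime $y_{s(j)}$. You instead show that $y_j$ would be a homogeneous Poisson prime of $R_{s(j)}$, contradicting \eqref{eq:calP-k}. This is valid but uses small facts you leave implicit:
\begin{itemize}
\item $\theta_{s(j)}(y_j)=\chi_{y_j}(h_{s(j)})y_j$, by homogeneity;
\item $y_j$ stays prime in the polynomial extension $R_{s(j)}=R_{s(j)-1}[x_{s(j)}]$;
\item $y_j\notin\bk^\times y_i$ for $i\neq j$, by algebraic independence of $\bfy$.
\end{itemize}
The paper's factorization argument is shorter and needs only primality of $y_{s(j)}$.
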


\begin{proof}
Let $j \in {\rm ex}$. 
 We first prove that $b_{s(j)} \neq 0$. By \autoref{thm:5.5}, $c_{s(j)} \neq 0$.
Suppose that $b_{s(j)}=0$. Then $c_{s(j)}=y_{j}\frac{a_1}{a_2}$, where $a_1\in R_{s(j)-1}$
and $a_2$ is a  monomial in $\{y_i: i \in [1, s(j)-1]\backslash\{j\}\}$ with non-negative exponents. It then follows from
$c_{s(j)}a_2 = y_{j}a_1$ that $c_{s(j)} \in y_{j}R_{s(j)-1}$,
contradicting $y_{s(j)}=y_{j}x_{s(j)}-c_{s(j)}$ being prime in $R_{s(j)}$. Thus, $b_{s(j)} \neq 0$.

To prove \eqref{eq:b-yl-1},
note first that since $y_{s(j)} = y_j x_{s(j)}-c_{s(j)}$, for every  $l \in [1, n]$ we have
\begin{align}\nonumber
\{c_{s(j)}, \, y_l\} & = \{y_j x_{s(j)} - y_{s(j)}, \, y_l\}=y_j\{x_{s(j)},y_l\} + x_{s(j)}\{y_j, y_l\} - \{y_{s(j)}, \, y_l\}\\
\label{eq:csj-yl}
& = \{x_{s(j)},y_l\}y_j + (q_{j, l}-q_{s(j),l})x_{s(j)}y_jy_l + q_{s(j), l}c_{s(j)} y_l.
\end{align}
Assume first that $l \geq s(j)$. Then $\{x_{s(j)},y_l\}= (q_{s(j), l} - q_{j, l}) x_{s(j)} y_l$
by  \autoref{le:q-jpj}, 
Thus
\[
\{c_{s(j)}, \, y_l\}  = q_{s(j), l} y_j x_{s(j)} y_l - q_{s(j), l}y_{s(j)} y_l 
=q_{s(j), l} c_{s(j)}y_l.
\]
By \autoref{re:terms-in-b}, one has
$\{b_{s(j)},  y_l\}  = q_{s(j), l} b_{s(j)}y_l$. 
Assume now that $l< s(j)$. On the one hand, 
\begin{align*}
\{c_{s(j)}, \, y_l\} &= \{y_ja_{s(j)} + b_{s(j)}, \, y_l\} = y_j \{a_{s(j)}, \, y_l\}
+ a_{s(j)} \{y_j, \,y_l\} + \{b_{s(j)}, \,y_l\} \\
& =\{a_{s(j)}, \, y_l\}y_j + q_{j, l}a_{s(j)} y_ly_j  + \{b_{s(j)}, \,y_l\}.
\end{align*}
On the other hand, $\{x_{s(j)}, y_l\} = 
(q_{s(j), l} - q_{j, l}) x_{s(j)} y_l + \delta_{s(j)} (y_l)$ by
 \autoref{le:q-jpj}.  Thus by \eqref{eq:csj-yl},
\[
\{c_{s(j)}, \, y_l\} = q_{s(j), l} c_{s(j)} y_l + \delta_{s(j)}(y_l)y_j = 
q_{s(j), l}a_{s(j)}y_ly_j + \delta_{s(j)}(y_l)y_j + q_{s(j), l} b_{s(j)}y_l.
\]
When $l \neq j$, since both $\{a_{s(j)}, \, y_l\}$ and 
$a_{s(j)} y_l$ are in 
$\calT^\prime_{s(j)-1}[y_j]$, and since
\[
\delta_{s(j)}(y_l) \in R_{s(j)-1} \subset \calT^\prime_{s(j)-1}[y_j] \hs \mbox{and} \hs
\{b_{s(j)}, \, y_l\} \in \calT^\prime_{s(j)-1},
\]
by  comparing the constant
terms of the above two expressions of $\{c_{s(j)}, y_l\}$ as a polynomial in $y_j$ with coefficient in $\calT^\prime_{s(j)-1}$,
we get $\{b_{s(j)}, \,y_l\} = q_{s(j), l} b_{s(j)}y_l$.
Let now  $l = j$. Since 
\[
\{\calT^\prime_{s(j)-1}, y_j\} \subset y_j \calT^\prime_{s(j)-1},
\]
the above two expressions of $\{c_{s(j)}, y_j\}$ are both  in $y_j \calT^\prime_{s(j)-1}[y_j]$.
Since 
\[
\delta_{s(j)}(y_j) = \lambda_{s(j)} c_{s(j)} = \lambda_{s(j)} y_ja_{s(j)} + \lambda_{s(j)} b_{s(j)},
\]
comparing the linear terms in $y_j$ in the two expressions of $\{c_{s(j)}, y_j\}$, we get
\[
\{b_{s(j)}, \,y_j\} = (q_{s(j), j} + \lambda_{s(j)}) b_{s(j)}y_j.
\]
This finishes the proof of \eqref{eq:b-yl-1}.
\end{proof}

\begin{proposition}\label{pr:bsj-1}
For every $j \in {\rm ex}$, the element $b_{s(j)} \in \calT_{s(j)-1}^\prime$ in 
\eqref{eq:csj-bsj} is a non-zero scalar multiple of a
Laurent 
monomial in $(y_1, \ldots, y_{j-1}, y_{j+1}, \ldots, y_{s(j)-1})$. Writing, for $j \in {\rm ex}$,
\begin{equation}\label{eq:iota-bsj}
\frac{b_{s(j)}}{y_{s(j)}} = \iota_{s(j)} \bfy^{M_j},
\end{equation}
where $\iota_{s(j)} \in {\bf k}^\times$ and
$M_j = \left(M_{1, j}, \ldots, M_{j-1, j}, 0, M_{j+1, j}, \ldots, 
M_{s(j)-1, j}, -1, 0, \ldots, 0\right)^t \in \ZZ^n$, then
\begin{equation}\label{eq:bfq-Mj}
{\bf q} M_j = -\lambda_{s(j)}e_j \hs \mbox{and} \hs \chi_{\bfy} M_j = 0.
\end{equation}
\end{proposition}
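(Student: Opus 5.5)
The plan is to combine \autoref{le:Mi-bsj} with \autoref{le:b-1}. Fix $j \in {\rm ex}$. By \autoref{le:Mi-bsj}, $b_{s(j)}$ is non-zero and satisfies $\{b_{s(j)}, y_l\} = g_l b_{s(j)} y_l$ for all $l \in [1,n]$, where $g = {\bf q}^t e_{s(j)} + \lambda_{s(j)} e_j$. By skew-symmetry of ${\bf q}$, $g_l = q_{s(j),l} + \lambda_{s(j)}\delta_{l,j}$.

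To apply \autoref{le:b-1} we also need $b_{s(j)}$ to be $\TT$-homogeneous. The element $c_{s(j)} = y_j x_{s(j)} - y_{s(j)}$ is $\TT$-homogeneous of weight $\chi_{y_{s(j)}}$. Since each $y_i$ is a $\TT$-weight vector, the $\TT$-action on $\calT$ preserves monomials in $\bfy$ up to scalars. Hence the decomposition $c_{s(j)} = y_j a_{s(j)} + b_{s(j)}$, which is by powers of $y_j$, is $\TT$-stable, and $b_{s(j)}$ is homogeneous of weight $\kappa = \chi_{y_{s(j)}}$.

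\autoref{le:b-1} then gives $b_{s(j)} = \iota\, \bfy^{f}$ with $f \in \ZZ^n$, ${\bf q} f = -g$ and $\chi_\bfy f = \chi_{y_{s(j)}}$. Since $b_{s(j)} \in \calT'_{s(j)-1}$, the support of $f$ lies in $[1, s(j)-1]\setminus\{j\}$. Setting $M_j = f - e_{s(j)}$, we get $b_{s(j)}/y_{s(j)} = \iota\, \bfy^{M_j}$, and $M_j$ has exactly the stated shape: zero at $j$, $-1$ at $s(j)$, and zero beyond $s(j)$. Finally,
\[
{\bf q} M_j = -g - {\bf q} e_{s(j)} = -{\bf q}^t e_{s(j)} - \lambda_{s(j)} e_j - {\bf q} e_{s(j)} = -\lambda_{s(j)} e_j,
\]
using ${\bf q}^t = -{\bf q}$. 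Also $\chi_\bfy M_j = \chi_{y_{s(j)}} - \chi_{y_{s(j)}} = 0$, which gives \eqref{eq:bfq-Mj}.

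The only delicate point is the homogeneity of $b_{s(j)}$, and the argument above handles it. One could alternatively avoid the weight condition and argue monomial-wise as in the proof of \autoref{le:b-1}, but the uniqueness statement there uses $\kappa$, so homogeneity is the natural route. Integrality of $M_j$ is automatic, since $f$ is the exponent of a Laurent monomial.
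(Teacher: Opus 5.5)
Your proof is correct and follows the paper's argument: you combine \autoref{le:Mi-bsj} with \autoref{le:b-1}, obtain the weight condition from the $\TT$-homogeneity of $c_{s(j)}$, and get ${\bf q}M_j=-\lambda_{s(j)}e_j$ from the skew-symmetry of ${\bf q}$. The paper applies \autoref{le:b-1} a second time, to $b_{s(j)}/y_{s(j)}$ with $g=\lambda_{s(j)}e_j$, whereas you apply it once to $b_{s(j)}$ and shift by $e_{s(j)}$ by hand, which is only a cosmetic difference; your argument that $b_{s(j)}$ inherits $\TT$-homogeneity from $c_{s(j)}$ also spells out a step the paper leaves implicit.
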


\begin{proof} 
Let $j \in {\rm ex}$. By \autoref{le:Mi-bsj} and \autoref{le:b-1}, $b_{s(j)}$ is a non-zero scalar
multiple of a Laurent monomial in $\bfy$. As $c_{s(j)} \in R_{s(j)-1}$ is a $\T$-weight vector with the same $\T$-weight as $y_{s(j)}$,
the element $\frac{b_{s(j)}}{y_{s(j)}} \in \calT$ is a $\TT$-weight vector 
with  $\T$ weight $0$. Thus $\chi_{\bfy} M_j = 0$. 
On the other hand, a direct calculation shows that \eqref{eq:b-yl-1} is equivalent to 
\begin{equation}\label{eq:b-yl}
\left\{\frac{b_{s(j)}}{y_{s(j)}}, \; y_l\right\} = \begin{cases} 0, & \hs l \in [1, n], \, l\neq j,\\
\lambda_{s(j)} \frac{b_{s(j)}}{y_{s(j)}} y_l, & \hs l = j,\end{cases}
\end{equation}
which, by \autoref{le:b-1} again, gives ${\bf q} M_j = -\lambda_{s(j)}e_j$.
\end{proof}

\begin{remark}\label{rk:m-pos}
{\rm
For $j \in {\rm ex}$, it follows from \eqref{eq:Rk-T} that $b_{s(j)}$ contains no negative power
of $y_i$ for any $i \in [1, s(j)-1]$ such that $s(i) \geq s(j)$. In the notation of
\autoref{pr:bsj-1}, we thus have $M_{i, j} \geq 0$ for all $i \in [1, s(j)-1]$ such that $s(i) \geq s(j)$.
\hfill $\diamond$
}
\end{remark}

Recall now from $\S$\ref{ss:main-intro} and \eqref{eq:Lambda} the
diagonal matrix $\Lambda$. 
We can now prove our first result on arbitrary $\TT$-Poisson CGL extensions.

\begin{theorem}\label{thm:M-1}
Let $R$ be a length $n$ Poisson CGL extension as in \autoref{de:CGL-intro},
and let ${\rm ex} \subset [1, n]$
be as in \eqref{eq:ex}.
Let $M \in {\rm Mat}_{n \times {\rm ex}}(\ZZ)$ whose $j^{\rm th}$ column for $j \in {\rm ex}$
is $M_j$ in \eqref{eq:iota-bsj}.  Then 
the integer matrix 
$M$ is a unique solution  to the GSV Equations
\begin{equation}\label{eq:M-linear}
{\bf q} M = -\Lambda \hand \chi_{\bf y} M = 0
\end{equation}
in ${\rm Mat}_{n \times {\rm ex}}({\bf k})$. Moreover, with 
$E$ and $\bnu$ respectively given in \eqref{eq:E-Et} and \eqref{eq:bnu}, one has
\begin{equation}\label{eq:M-explicit-0}
M = E^t\bnu^{-1} E \Lambda.
\end{equation}
Furthermore, if 
\begin{equation}\label{eq:QQ-11}
\frac{\lambda_{s(j)}}{\lambda_{s(k)}} \in \QQ, \hs  \forall j, k \in {\rm ex},
\end{equation}
then there exists 
a diagonal $\varepsilon \in {\rm Mat}_{{\rm ex} \times {\rm ex}}(\ZZ)$ with diagonal entries
$\pm 1$ such that $M\ep \in {\rm Mat}_{n \times {\rm ex}}(\ZZ)$ is skew-symmetric, and 
$(\bfy, M)$ is a $\TT$-Poisson seed in ${\rm Frac}(R)$.
\end{theorem}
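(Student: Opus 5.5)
The plan is to assemble the theorem from \autoref{pr:bsj-1}, \autoref{cor:M-unique} and \autoref{le:compatible-pair}; essentially nothing new needs to be computed.

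\emph{Step 1: $M$ is an integer solution of the GSV Equations.} For each $j \in {\rm ex}$, \autoref{pr:bsj-1} shows that $b_{s(j)}$ is a nonzero scalar multiple of a Laurent monomial in $\bfy$. Its exponent vector is therefore in $\ZZ^n$. By definition, $M_j$ is the exponent vector of $b_{s(j)}/y_{s(j)}$, so $M_j \in \ZZ^n$. The same proposition gives ${\bf q} M_j = -\lambda_{s(j)} e_j$ and $\chi_\bfy M_j = 0$. Since the $j^{\rm th}$ column of $\Lambda$ is $\lambda_{s(j)} e_j$, assembling the columns gives ${\bf q} M = -\Lambda$ and $\chi_\bfy M = 0$. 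Hence $M \in {\rm Mat}_{n\times{\rm ex}}(\ZZ)$ solves \eqref{eq:M-linear}.

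\emph{Step 2: uniqueness and the explicit formula.} A solution exists by Step 1, so \autoref{cor:M-unique} applies. In particular its solvability condition $\chi_{\bf x}\bnu^{-1}E\Lambda = 0$ holds automatically. The corollary says the solution in ${\rm Mat}_{n\times{\rm ex}}(\bk)$ is unique and equals $E^t\bnu^{-1}E\Lambda$. This gives \eqref{eq:M-explicit-0}. A side consequence is the nontrivial fact that $E^t\bnu^{-1}E\Lambda$ has integer entries, even though $\bnu^{-1}$ need not be integral.

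\emph{Step 3: skew-symmetrizability.} Under \eqref{eq:QQ-11}, apply \autoref{le:compatible-pair} to the extended log-canonical $\TT$-cluster $\bfy$. This uses \autoref{le:bfq} and the fact that each $y_k$ is a $\TT$-weight vector. The $\Lambda$ in that lemma is taken to be $-\Lambda$; its diagonal entries $-\lambda_{s(j)}$ are nonzero, and their ratios are the rational numbers $\lambda_{s(j)}/\lambda_{s(k)}$. The lemma then yields a diagonal sign matrix $\ep$ such that $M\ep$ is skew-symmetrizable and $(\bfy, M\ep)$ is a $\TT$-Poisson seed in ${\rm Frac}(R)$. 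This is the intended reading of the final sentence.

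\emph{Expected obstacle.} No step is genuinely hard. The substantive work, namely the monomiality of $b_{s(j)}$ and the Poisson computation in \autoref{le:Mi-bsj}, was already done earlier. The only care needed is bookkeeping: matching sign conventions between \eqref{eq:GSV} and \eqref{eq:M-linear}, and checking that $\bfy$ is a free transcendence basis of ${\rm Frac}(R)$. The latter holds because the triangular relations \eqref{eq:ykk-00} give $\bk(\bfy) = \bk(x_1,\ldots,x_n)$.
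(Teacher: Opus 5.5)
Your proposal is correct and follows the paper's own argument: \autoref{pr:bsj-1} shows that $M$ is an integer solution of the GSV Equations, and \autoref{cor:M-unique} then gives uniqueness in ${\rm Mat}_{n\times{\rm ex}}(\bk)$ together with $M = E^t\bnu^{-1}E\Lambda$. The final assertion, read as you did (with $M\varepsilon$ skew-symmetrizable and $(\bfy, M\varepsilon)$ the seed), is exactly \autoref{le:compatible-pair} applied with $-\Lambda$ in place of $\Lambda$; the paper leaves this step implicit.
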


\begin{proof} 
By \autoref{pr:bsj-1}, $M$ satisfies \eqref{eq:M-linear}, which, by
\autoref{cor:M-unique}, is the only solution of  \eqref{eq:M-linear} in ${\rm Mat}_{n \times {\rm ex}}(\bk)$
and must be given by $M = E^t\bnu^{-1} E \Lambda$.
\end{proof}

\begin{definition}\label{de:bfy-M-pre-seed}
{\rm
1)  Without assuming that $M$ is skew-symmetrizable, we call the pair 
$(\bfy, M)$ in \autoref{thm:M-1} 
the {\it initial $\TT$-Poisson pre-seed} in ${\rm Frac}(R)$ associated to the
$\TT$-Poisson CGL extension $R$ (see \autoref{de:pre-seed});

2) If 
$\varepsilon \in {\rm Mat}_{{\rm ex} \times {\rm ex}}(\ZZ)$  is diagonal with diagonal entries
$\pm 1$ such that $M\ep \in {\rm Mat}_{n \times {\rm ex}}(\ZZ)$ is skew-symmetrizable, we
call $(\bfy, M\ep)$ an {\it initial $\TT$-Poisson seed associated to $R$}.
\hfill $\diamond$
}
\end{definition}

\subsection{Re-scaling of the CGL generators}\label{ss:normalization}
Let $R = (\bk[x_1, \ldots, x_n], \{\, , \, \})$ be a $\TT$-Poisson CGL extension $R$ as in \autoref{de:CGL-intro}, and we
continue with the notation from $\S$\ref{ss:bfy-M}.

\begin{definition}\label{de:normal}
{\rm
The  $\TT$-Poisson CGL extension $R$
is said to be {\it normal in the CGL generators $(x_1, \ldots, x_n)$ } if $\iota_{s(j)}=1$ for all $j \in {\rm ex}$, 
where $\iota_{s(j)} \in {\bf k}^\times$ is as in \autoref{pr:bsj-1}.
}
\end{definition}

\begin{remark}\label{rk:normal}
{\rm
Goodearl and Yakimov define in \cite[$\S$9.2]{GY:Poi-CGL} the notion of a symmetric CGL being normal. 
We will see in \autoref{rk:GY-normal-1} that when a Poisson CGL extension $R$  is symmetric 
it is normal in the sense of 
\autoref{de:normal} if and only if it is normal in the sense of \cite[$\S$9.2]{GY:Poi-CGL}. 
\hfill $\diamond$
}
\end{remark}

For $\gamma = (\gamma_1, \ldots, \gamma_n) \in
({\bf k}^\times)^n$, consider the new CGL generators $(\tilde{x}_1, \ldots, \tilde{x}_n)$ of
$R$ given by $\tilde{x}_k = \gamma_kx_k$ for $k \in [1, n]$. Then
\begin{equation}\label{eq:x-jk-tilde}
\{\widetilde{x}_j, \, \widetilde{x}_k\} = -\chi_j(h_k) \widetilde{x}_j\widetilde{x}_k-\gamma_k
\delta_k(\widetilde{x}_j), \hs 1 \leq j < k \leq n.
\end{equation}

Recall the matrix 
$F = E^{-1}$ in \eqref{eq:F}.

\begin{lemma}\label{le:rescaling}
For  $\gamma =(\gamma_1, \ldots, \gamma_n) \in ({\bf k}^\times)^n$, the 
$\TT$-Poisson CGL extension $R$ is normal in the CGL generators
$(\widetilde{x}_1, \ldots, \widetilde{x}_n) =(\gamma_1x_1, \ldots, \gamma_nx_n)$ if 
\begin{equation}\label{eq:gamma-iota}
\iota_{s(j)}=\gamma^{F^tM_j}, \hs \forall \;\; j \in {\rm ex}.
\end{equation}
\end{lemma}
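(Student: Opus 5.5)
The plan is to track how every object in the construction of \S\ref{ss:bfy-M} changes when $x_k$ is replaced by $\widetilde{x}_k=\gamma_kx_k$. The goal is to show that the new scalar $\widetilde{\iota}_{s(j)}$ equals $\iota_{s(j)}\gamma^{-F^tM_j}$; then \eqref{eq:gamma-iota} makes it $1$.

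\emph{Step 1: the CGL data.} By \eqref{eq:x-jk-tilde}, the rescaled generators give a $\TT$-Poisson CGL extension with the same $\chi_k$ and $h_k$, hence the same $\lambda_k$ and the same $\bnu$. The derivations become $\widetilde{\delta}_k=\gamma_k\delta_k$. The sets $\calP_k$ of homogeneous Poisson prime elements depend only on $R_k$ and not on the chosen generators. Since $\widetilde{\delta}_k$ vanishes exactly where $\delta_k$ does, \autoref{thm:5.5} gives the same predecessor and successor maps $p,s$. Hence $E$, $F$, ${\rm ex}$ and $\Lambda$ are unchanged.

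\emph{Step 2: the new prime elements.} I will prove by induction on $k$, using \eqref{eq:ykk-00}, that
\[
\widetilde{y}_k=\gamma^{\overline{e}_k}\,y_k,\hs \overline{e}_k=F^te_k .
\]
The inductive step is the computation
\[
\widetilde{y}_k=\widetilde{y}_{p(k)}\widetilde{x}_k-\frac{\widetilde{\delta}_k(\widetilde{y}_{p(k)})}{\lambda_k}
=\gamma^{\overline{e}_{p(k)}}\gamma_k\Bigl(y_{p(k)}x_k-\frac{\delta_k(y_{p(k)})}{\lambda_k}\Bigr),
\]
together with $\overline{e}_k=\overline{e}_{p(k)}+e_k$. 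Because each $\widetilde{y}_i$ is a nonzero scalar multiple of $y_i$, the matrix $\widetilde{\bf q}$ equals ${\bf q}$ and $\chi_{\widetilde{\bfy}}$ equals $\chi_\bfy$. By \autoref{thm:M-1} (uniqueness of the solution to the GSV Equations), the matrix $\widetilde{M}$ equals $M$.

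\emph{Step 3: the elements $c$ and $b$.} By \eqref{eq:ck}, $\widetilde{c}_{s(j)}=\widetilde{\delta}_{s(j)}(\widetilde{y}_j)/\lambda_{s(j)}=\gamma_{s(j)}\gamma^{\overline{e}_j}c_{s(j)}=\gamma^{\overline{e}_{s(j)}}c_{s(j)}$. The ring $\calT^\prime_{s(j)-1}$ is the same whether built from $\bfy$ or $\widetilde{\bfy}$, and the constant term in $\widetilde{y}_j$ equals the constant term in $y_j$. Therefore $\widetilde{b}_{s(j)}=\gamma^{\overline{e}_{s(j)}}b_{s(j)}$, and so
\[
\frac{\widetilde{b}_{s(j)}}{\widetilde{y}_{s(j)}}=\frac{b_{s(j)}}{y_{s(j)}}=\iota_{s(j)}\bfy^{M_j}.
\]

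\emph{Step 4: rewriting in the new variables.} Substituting $y_i=\gamma^{-\overline{e}_i}\widetilde{y}_i$ gives
\[
\bfy^{M_j}=\prod_i\gamma^{-M_{i,j}\overline{e}_i}\,\widetilde{\bfy}^{M_j}=\gamma^{-F^tM_j}\,\widetilde{\bfy}^{M_j},
\]
because $\sum_iM_{i,j}\overline{e}_i=F^tM_j$. Hence $\widetilde{\iota}_{s(j)}=\iota_{s(j)}\gamma^{-F^tM_j}$, which equals $1$ under \eqref{eq:gamma-iota}.

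The only step needing care is Step 1. There one must check that the successor map, and hence $\overline{e}_k$, really is unchanged. This follows from the intrinsic characterization of the $\calP_k$ in \autoref{thm:5.5}. Everything after that is bookkeeping with the identity $F^te_k=\overline{e}_k$.
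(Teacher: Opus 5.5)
Your proof is correct and follows essentially the same route as the paper: show $\widetilde{y}_k=\gamma^{\overline{e}_k}y_k$ from the recursion, deduce $\widetilde{b}_{s(j)}=\gamma^{\overline{e}_{s(j)}}b_{s(j)}$, and compare the monomial expansions to obtain $\widetilde{\iota}_{s(j)}=\iota_{s(j)}\gamma^{-F^tM_j}$. Your extra checks that $p$ and $s$ are unchanged are justifications the paper leaves implicit. The invariance of $M$ also falls out directly from your Step 4, because Laurent monomials in $\widetilde{y}$ are linearly independent, so the appeal to GSV uniqueness is harmless but unnecessary.
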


\begin{proof} Let 
$\widetilde{\bfy} = (\widetilde{y}_1, \ldots, \widetilde{y}_n)$ be the sequence of homogeneous Poisson 
prime elements of $R$ with respect to the CGL generators $(\widetilde{x}_1, \ldots, \widetilde{x}_n)$. 
By \autoref{thm:5.5}, 
$\widetilde{y}_k = \nu_k y_k$ for some $\nu_k \in {\bf k}^\times$ for each $k \in [1, n]$. 
By \eqref{eq:x-jk-tilde}, 
the corresponding derivation $\widetilde{\delta}_k$ of $R_{k-1}$ is 
$\widetilde{\delta}_k = \gamma_k \delta_k$, and 
\[
\widetilde{y}_k= \widetilde{y}_{p(k)}\widetilde{x}_k- 
\frac{\widetilde{\delta}_k(\widetilde{y}_{p(k)})}{\lambda_k} = \gamma_k \nu_{p(k)} y_{p(k)} x_k -
\frac{\gamma_k \nu_{p(k)}}{\lambda_k} \delta_k(y_{p(k)}) = \gamma_k \nu_{p(k)} y_k.
\]
Thus $\nu_k = \gamma^{\overline{e}_k}$ and  $\widetilde{y}_k = \gamma^{\overline{e}_k} y_k$ for all $k \in [1, n]$, where recall from \eqref{eq:F} that
$\overline{e}_k = F^te_k = \sum_{j \in L(k) \cap [1, k]}e_j
\in \ZZ^n$. In other words, we have $\widetilde{\bfy} = \gamma^{F^t}{\bf y}$.

Let $j \in {\rm ex}$ and
let $\widetilde{b}_{s(j)}$ be defined as in \eqref{eq:csj-bsj} using the CGL generators
$(\widetilde{x}_1, \ldots, \widetilde{x}_n)$.
It then follows from 
$\widetilde{y}_{s(j)} = \widetilde{x}_{s(j)} \widetilde{y}_j - \widetilde{a}_{s(j)}\widetilde{y}_j -\widetilde{b}_{s(j)}$ that $\widetilde{b}_{s(j)} = \gamma^{\bar{e}_{s(j)}} b_{s(j)}$.
Write $\widetilde{b}_{s(j)} = \widetilde{\iota}_{s(j)} \widetilde{\bfy}^{e_{s(j)}+M_j}$
with $\widetilde{\iota}_{s(j)}\in {\bf k}^\times$. Then 
\[
\gamma^{\bar{e}_{s(j)}} \iota_{s(j)} \bfy^{e_{s(j)}+M_j} =\widetilde{b}_{s(j)} = \widetilde{\iota}_{s(j)} \widetilde{\bfy}^{e_{s(j)}+M_j}
= \widetilde{\iota}_{s(j)} \gamma^{\overline{e}_{s(j)}+F^tM_j} {\bfy}^{e_{s(j)}+M_j}.
\]
It follows that $\iota_{s(j)}  = \widetilde{\iota}_{s(j)} \gamma^{F^tM_j}$. Hence 
$\widetilde{\iota}_{s(j)}=1$ if and only if  $\iota_{s(j)}=\gamma^{F^tM_j}$.
\end{proof}

\begin{corollary}\label{cor:normaliztion}
Every $\TT$-Poisson CGL extension can be normalized by rescaling its CGL generators.
\end{corollary}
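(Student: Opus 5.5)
By \autoref{le:rescaling}, it is enough to find $\gamma = (\gamma_1, \ldots, \gamma_n) \in (\bk^\times)^n$ satisfying
\[
\gamma^{F^tM_j} = \iota_{s(j)} \hs \mbox{for all } j \in {\rm ex}.
\]
This is a system of multiplicative equations $\prod_{i} \gamma_i^{(F^tM_j)_i} = \iota_{s(j)}$, one for each $j \in {\rm ex}$. The strategy is to show that the integer exponent vectors $F^tM_j$ have a triangular structure, so the system can be solved one variable at a time without taking roots.

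We compute $F^tM_j$ using \autoref{pr:bsj-1}. The vector $M_j$ has entry $-1$ at position $s(j)$, entry $0$ at position $j$, and zero entries at all positions greater than $s(j)$. Since $F^t = (\overline{e}_1, \ldots, \overline{e}_n)$ with $\overline{e}_k = \sum_{i \in L(k)\cap[1,k]} e_i$, the vector $F^t e_k$ has support in $[1,k]$ and contains $e_k$. Hence
\[
F^tM_j = -\overline{e}_{s(j)} + \sum_{k<s(j),\, k\neq j} M_{k,j}\,\overline{e}_k .
\]
The $s(j)$-th entry of this vector is $-1$, and all entries beyond $s(j)$ vanish. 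So, in the equation indexed by $j$, the variable $\gamma_{s(j)}$ appears with exponent $-1$, and no variable $\gamma_i$ with $i > s(j)$ appears. Only variables $\gamma_i$ with $i < s(j)$ appear otherwise.

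Since $s$ is injective on ${\rm ex}$, each index $m = s(j) \in s({\rm ex})$ corresponds to exactly one equation. We set $\gamma_i = 1$ for every $i \notin s({\rm ex})$. We then determine $\gamma_m$ for $m \in s({\rm ex})$ by induction on $m$ in increasing order, via
\[
\gamma_{s(j)} = \iota_{s(j)}^{-1}\prod_{i<s(j)} \gamma_i^{(F^tM_j)_i}.
\]
The right-hand side involves only values $\gamma_i$ with $i < s(j)$, which are already fixed. It requires only multiplication and inversion in $\bk^\times$, so no roots are needed and the construction works over any field.

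The only step needing care is the triangularity computation: that the $s(j)$-th entry of $F^tM_j$ is exactly $-1$ and that nothing beyond $s(j)$ contributes. This follows from the support of $\overline{e}_k$ and the explicit shape of $M_j$ in \autoref{pr:bsj-1}. With this in hand, \autoref{le:rescaling} shows that $R$ is normal in the CGL generators $(\gamma_1x_1, \ldots, \gamma_nx_n)$.
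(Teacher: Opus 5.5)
Your proposal is correct and is essentially the paper's proof. The paper likewise reads off from \autoref{pr:bsj-1} that $F^tM_j$ has entry $-1$ in position $s(j)$ and zeros beyond it, sets $\gamma_k=1$ whenever $p(k)=-\infty$ (equivalently $k\notin s({\rm ex})$), and solves \eqref{eq:gamma-iota} recursively for the remaining $\gamma_{s(j)}$; you simply spell out the support argument for $\overline{e}_k$ in more detail.
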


\begin{proof}
Let $R$ be a $\TT$-Poisson CGL extension with CGL generators $(x_1, \ldots, x_n)$. 
By the formula for $M_j$ for $j \in {\rm ex}$ in \autoref{pr:bsj-1}, 
the $k^{\rm th}$ entry of $F^tM_j \in \ZZ^n$ is $-1$ for $k = s(j)$ and 
zero for $k > s(j)$. Thus \eqref{eq:gamma-iota} 
expresses $\gamma_{s(j)}$ in terms of $\gamma_1, \ldots, \gamma_{s(j)-1}$ and $\iota_{s(j)}$.
Setting $\gamma_k = 1$ if $p(k) = -\infty$ and solving for $\gamma_k$ recursively from 
\eqref{eq:gamma-iota}, one sees that $R$ is normal in the rescaled CGL generators
$(\gamma_1x_1, \ldots, \gamma_nx_n)$.
\end{proof}

\subsection{Upper cluster structures associated to Poisson CGL extensions}\label{ss:proof-M-2}
We now prove our second main result on arbitrary $\TT$- Poisson CGL extensions.

\begin{theorem}\label{thm:M-2}
Let $R = (\bk[x_1, \ldots, x_n], \{\, , \, \})$ be any $\TT$-Poisson CGL extension, and let
$({\bf y}, \; M\ep)$ 
be any initial $\TT$-Poisson seed associated to $R$ (see \autoref{de:bfy-M-pre-seed}). 
Assume that $R$ is normal.
Then for every ${\rm inv} \subset [1, n]\backslash {\rm ex}$, one has
\[
{\calU}(\bfy, M\varepsilon;\, {\rm inv}) = R[y_j^{-1}: j \in {\rm inv}].
\]
\end{theorem}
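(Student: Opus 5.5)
We plan to prove the two inclusions separately, with the upper-bound inclusion done by induction on the length $n$. The natural tool is the standard criterion for upper cluster algebras (Berenstein--Fomin--Zelevinsky, in the form used in \cite{GY:Poi-CGL}). If the seed is \emph{coprime}, then
\[
\calU(\bfy, M\ep; {\rm inv}) = \calL(\bfy;{\rm inv}) \cap \bigcap_{j \in {\rm ex}} \calL(\mu_j^{M\ep}(\bfy); {\rm inv}),
\]
that is, only the initial seed and its one-step mutations need to be intersected. Coprimality should be checked by showing that the exchange polynomials are pairwise coprime. Each is $y_j y_j' = \bfy^{[M_j\ep(j)]_+} + \bfy^{[-M_j\ep(j)]_-}$, which up to a monomial equals $y_{s(j)} + b_{s(j)}$ (with $\iota_{s(j)}=1$ by normality). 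These should be handled by the Poisson-prime property of the $y_k$, or bypassed by the GY argument. The key computation is to identify the mutated variable. From \eqref{eq:iota-bsj} with $\iota_{s(j)}=1$, and from $c_{s(j)} = y_j a_{s(j)} + b_{s(j)}$ and $y_{s(j)} = y_j x_{s(j)} - c_{s(j)}$, we get
\[
y_j' \;=\; \frac{\bfy^{[M_j]_+} + \bfy^{[M_j]_-}}{y_j} \;=\; \frac{y_{s(j)} + b_{s(j)}}{y_j \cdot(\text{monomial})}
\;=\; \frac{y_j(x_{s(j)} - a_{s(j)})}{y_j\cdot(\text{monomial})},
\]
so $y_j'$ equals $(x_{s(j)} - a_{s(j)})$ times a Laurent monomial in the $y_i$ with $i \neq j$. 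The sign $\ep(j)$ does not affect the exchange binomial, which is why any initial seed $(\bfy, M\ep)$ works.

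For the inclusion $R[y_j^{-1}: j\in{\rm inv}] \subset \calU$, we will show that each $x_k$ lies in every $\calL(\bfy';{\rm inv})$. The cleaner route is to invoke \cite[Theorem 11.1]{GY:Poi-CGL}/the equality $R=\overline{\calA}$ where applicable. In the present non-symmetric setting, however, we instead argue that $x_k$ lies in $\calL(\bfy)$ and in all $\calL(\mu_j\bfy)$, and then use the coprime criterion above, which requires the reverse inclusion machinery anyway. The verification that $x_k \in \calL(\mu_j(\bfy))$ uses that $x_k$ is a polynomial in $y_i^{\pm1}$ ($i\neq j$) and $y_j$ only, by \eqref{eq:Rk-T}, for $k$ with $j$'s exponent nonnegative. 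A $y_j^{-1}$ can only appear through $x_{s(j)}=(y_{s(j)}+c_{s(j)})/y_j$, and this is handled by the formula for $y_j'$.

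For the reverse inclusion $\calU \subset R[y_j^{-1}]$, the plan follows GY's inductive strategy. An element $u\in\calU$ lies in $\calL(\bfy)$, so $u = r/\bfy^m$ with $r\in R$ and $m$ supported on ${\rm ex}\cup{\rm inv}$. We must show that the $y_j$ with $j\in{\rm ex}$ can be cleared from the denominator. Since $y_j$ is prime in $R$, it suffices to show that $u\in R[y_j^{-1}]$ implies $u\in R_{(y_j)}$, the localization at the height-one prime $(y_j)$. Membership of $u$ in $\calL(\mu_j\bfy)$ expresses $u$ with denominators $y_j'$ and the other $y_i$. Since $y_j'$ is $(x_{s(j)}-a_{s(j)})$ up to a monomial, and this factor is not divisible by $y_j$ (it is coprime to $y_j$ in the appropriate localization), $u$ has no pole along $y_j$. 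Because $R$ is a UFD, intersecting over all $j\in{\rm ex}$ gives $u \in R[y_j^{-1}: j\in{\rm inv}]$.

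The main obstacle is the claim that $y_j \nmid x_{s(j)} - a_{s(j)}$ (equivalently, that $y_j'$ and $y_j$ define distinct primes), together with proving coprimality. We would attack it by reducing modulo $y_j$ in $R_{s(j)}$: there $x_{s(j)}$ remains transcendental over $R_{s(j)-1}/(y_j)$, so $x_{s(j)} - a_{s(j)}$ cannot vanish.
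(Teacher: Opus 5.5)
Your inclusion $\calU(\bfy, M\ep;{\rm inv}) \subseteq R[y_i^{-1}: i\in{\rm inv}]$ is sound, and slightly more economical than the paper's. The paper first proves $y_j'\in R$, via a UFD argument showing $b^-_{s(j)}a_{s(j)}\in R_{s(j)-1}$, and then uses ${\bf x}$-degrees together with the Starfish Lemma. You only need $y_j'=b^-_{s(j)}(x_{s(j)}-a_{s(j)})$ to be a unit in the local ring $R_{(y_j)}$. Your reduction modulo $y_j$ gives this, provided you carry it out in $R_{s(j)-1}[y_l^{-1}: s(l)<s(j)]$: $a_{s(j)}$ has denominators only $y_l$ with $l\neq j$, and $x_{s(j)}-a_{s(j)}$ stays monic of degree one over that quotient domain. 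This direction needs no upper-bound theorem, since $\calU$ is by definition contained in $\calL(\bfy;{\rm inv})\cap\bigcap_{j}\calL(\mu_j^{M\ep}(\bfy);{\rm inv})$.

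The gap is in the other inclusion. Showing $x_k\in\calL(\bfy)\cap\bigcap_{j\in{\rm ex}}\calL(\mu_j^{M\ep}(\bfy))$ only puts $R$ in the upper bound at the initial seed. To reach $\calU(\bfy,M\ep;{\rm inv})$, an intersection over the whole mutation class, you need the theorem that this upper bound equals $\calU$, and you leave its hypothesis open. You call coprimality ``the main obstacle'' and offer two remedies, neither of which works:
\begin{itemize}
\item the Poisson-prime property of the $y_k$ says nothing about formal coprimality of the exchange binomials;
\item ``the GY argument'' relies on symmetry (every $x_k$ occurs in some $\bfy_\tau$), which is unavailable for an arbitrary CGL extension.
\end{itemize}
Moreover, the BFZ criterion is formulated with all frozen variables inverted. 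So even granting coprimality it covers only ${\rm inv}=[1,n]\backslash{\rm ex}$, not, e.g., ${\rm inv}=\emptyset$.

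The missing observation is that ${\bf q}M\ep=-\Lambda\ep$, with $\Lambda\ep$ diagonal and nonzero on ${\rm ex}$, forces $M\ep$ to have full rank $|{\rm ex}|$. For full-rank seeds, \cite[Theorem 3.11]{GSV:2018} (and \cite[Corollary 1.9]{BFZ:III} when all frozen variables are inverted) gives
\[
\calU(\bfy,M\ep;{\rm inv})=\calL(\bfy;{\rm inv})\cap\bigcap_{j\in{\rm ex}}\calL(\mu_j^{M\ep}(\bfy);{\rm inv})
\]
for every ${\rm inv}\subset[1,n]\backslash{\rm ex}$, which is exactly what the paper invokes. With this input your argument closes in both directions; without it, $R\subseteq\calU$ is not established.
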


\begin{proof} 
Recall that we have set 
$\overline{\calU}(\bfy, M\ep)= {\calU}(\bfy, M\ep; \emptyset)$.
We first prove that $\overline{\calU}(\bfy, M\ep) = R$.

For any extended cluster ${\bf y}^\prime = (y_1^\prime, \ldots, y_n^\prime)$ in $[({\bf y}, M)]$, set
\[
\calL({\bf y}^\prime) = {\bf k}[y_1^\prime, \ldots, y_n^\prime][(y_j^\prime)^{-1}: j \in {\rm ex}].
\]
It follows from ${\bf q} M\ep = -\Lambda \ep$ that $M\ep$ has full rank. By \cite[Theorem 3.11]{GSV:2018} one has 
\begin{equation}\label{eq:upper-bound}
\overline{\calU}(\bfy, M\ep) = \calL({\bf y}) \cap \bigcap_{j \in {\rm ex}} \calL(\bfy[j]),
\end{equation}
where for $j \in {\rm ex}$, $\bfy[j] :=\mu_j^{M\ep}({\bf y})$  is the extended cluster of the mutation of the 
seed $({\bf y}, M\ep)$ in the direction $j$. Using \eqref{eq:upper-bound}, we 
now show that $R \subset \overline{\calU}(\bfy, M\ep) \subset R$.

Let $j \in {\rm ex}$, and let 
$\bfy[j] = (y_1, \ldots, y_{j-1}, y_j^\prime, y_{j+1}, \ldots, y_n)$. We first show that $y_j^\prime \in R_{s(j)}$.
Let $b_{s(j)}$ be as in \eqref{eq:csj-bsj}.  Since $R$ is normal in $(x_1, \ldots, x_n)$, we have
$b_{s(j)} = b_{s(j)}^+/b_{s(j)}^-$, where
\[
b_{s(j)}^+ = \prod_{i \in [1, s(j)-1], \,m_{i,j} > 0} y_i^{m_{i,j}} \hs \mbox{and} \hs
b_{s(j)}^- = \prod_{i \in [1, s(j)-1], \,m_{i,j} < 0} y_i^{-m_{i,j}}.
\]
By the definition of $y_j^\prime$ and using $y_{s(j)} = y_j x_{s(j)} - y_j a_{s(j)}-b_{s(j)}$, we have
\begin{equation}\label{eq:yj-prime}
y_j^\prime = \frac{y_{s(j)} b_{s(j)}^- + b_{s(j)}^+}{y_j} = b_{s(j)}^-(x_{s(j)} - a_{s(j)})
= b_{s(j)}^-x_{s(j)} - b_{s(j)}^-a_{s(j)}.
\end{equation}
To show that $y_j^\prime \in R_{s(j)}$, it suffices to show that $b_{s(j)}^-a_{s(j)} \in R_{s(j)-1}$. 
Recall from \eqref{eq:calT-prime} that 
\[
\calT^\prime_{s(j)-1} = {\bf k}[y_1^{\pm 1}, \ldots, y_{j-1}^{\pm 1}, \, y_{j+1}^{\pm 1}, \ldots, y_{s(j)-1}^{\pm 1}].
\]
As $b_{s(j)}^-$ contains no power of $y_j$ and $a_{s(j)}\in \calT^\prime_{s(j)-1}[y_j]$, 
we have $b_{s(j)}^-a_{s(j)}\in \calT^\prime_{s(j)-1}[y_j]$.
 Since all the $y_i$'s for $i \in [1, n]$ are prime elements in $R$, by first writing
\[
b_{s(j)}^-a_{s(j)} = \frac{\phi_1(y_1, \ldots, y_{s(j)-1})}{{\bf y}^{g}}\in \calT^\prime_{s(j)-1}[y_j]
\]
for some $\phi_1 \in {\bf k}[ y_1, \ldots, y_{s(j)-1}]$ and 
$g =(g_1, \ldots, g_{s(j)-1})^t \in (\ZZ_{\geq 0})^{s(j)-1}$ with $g_j = 0$ and further taking the prime factorization of
$\phi_1(y_1, \ldots, y_{s(j)-1})$ as an element in $R_{s(j)-1}$, we can 
 write
\[
b_{s(j)}^-a_{s(j)} =\frac{\phi}{\bfy^f},
\]
where
$\phi \in R_{s(j)-1}$, $f =(f_1, \ldots, f_{s(j)-1})^t \in (\ZZ_{\geq 0})^{s(j)-1}$ with $f_j = 0$,
and $\phi$ and $\bfy^f$ are co-prime
in $R_{s(j)-1}$. On the other hand, setting
$\psi = y_j b_{s(j)}^-a_{s(j)}$, we have
\[
\psi =  b_{s(j)}^-y_j a_{s(j)} = b_{s(j)}^-(c_{s(j)}-b_{s(j)}) = b_{s(j)}^-c_{s(j)}-b_{s(j)}^+  \in R_{s(j)-1}.
\] 
It then follows from $y_j\phi = \bfy^f \psi$ and the assumption that $y_j\phi$ and ${\bf y}^f$ are co-prime
in $R_{s(j)-1}$ that $f = 0$. Thus 
$b_{s(j)}^-a_{s(j)}=\phi \in R_{s(j)-1}$. By \eqref{eq:yj-prime}, $y_j^\prime\in R_{s(j)}$.

Let again $j \in {\rm ex}$. By \eqref{eq:yj-prime} and the fact  that
$b_{s(j)}^-a_{s(j)}\in R_{s(j)-1}$, we have (see $\S$\ref{ss:two-gradings})
\[
{\rm deg}_{\bf x}(y_j^\prime) = {\rm deg}_{\bf x} (b_{s(j)}^-x_{s(j)}) = e_{s(j)} + {\rm deg}_{\bf x} (b_{s(j)}^-)
= e_{s(j)} + F^t {\rm deg}_{\bf y}(b_{s(j)}^-).
\]
Since $b_{s(j)}^-$ is a monomial of $\{y_i: i \in [1, s(j)-1]\backslash\{j\}]$, the $j^{\rm th}$ entry 
of ${\rm deg}_{\bf x}(y_j^\prime)$ is $0$.  On the other hand, the $j^{\rm th}$ entry of 
${\rm deg}_{\bf x} y_j$ is $1$. It then  follows from \eqref{eq:degree-bb-x} that $y_j^\prime$ is 
not divisible
by $y_j$ in $R$, i.e., $y_j^\prime \in R$ is co-prime with $y_j$. By Starfish Lemma \cite[Remark 6.4.4]{FWZ:introduction}, 
$\overline{\calU}(\bfy, M\ep) \subset R$.

To show that $R \subset \overline{\calU}(\bfy, M\ep)$, note that we 
already know from \eqref{eq:ykk-00} that $R \subset \calL(\bfy)$. Fix $j \in {\rm ex}$.
We now prove that $x_i \in  \calL(\bfy[j])$ for every $i \in [1, n]$. 
Note from
\eqref{eq:ykk-00} that $R_i \subset \widetilde{\calT}_i$ for every $i\in [1, n]$, where
\[
\widetilde{\calT}_i= {\bf k}[y_1, \ldots, y_i][y_l^{-1}: \, l \in [1, i], \, s(l)\leq i] 
\subset {\bf k}[y_1^{\pm 1}, \ldots, y_i^{\pm 1}].
\]
If $i \in [1, j]$, then $\widetilde{\calT}_i \subset \calL(\bfy[j])$, so $x_i \in R_i \subset \calL(\bfy[j])$.
For $i \in [j+1, s(j)-1]$, since 
\begin{equation}\label{eq:xi-yi-ci}
x_i =\frac{y_i + c_i}{y_{p(i)}},
\end{equation}
and since $y_i \neq y_j, y_{p(i)} \neq y_j$ and $c_i \in R_{i-1}$, it follows by induction that 
$x_i \in \calL(\bfy[j])$. 
For $i = s(j)$, note from \eqref{eq:yj-prime} that one has
\[
x_{s(j)} = \frac{y_j^\prime}{b^-_{s(j)}} + a_{s(j)}.
\]
As $b^-_{s(j)}$ does not contain any power of $y_j$, we have 
$\frac{y_j^\prime}{b^-_{s(j)}} \in \calL(\bfy[j])$. 
As $y_j \in \calL(\bfy[j])$, and as ${\calT}^\prime_{s(j)-1} \subset \calL(\bfy[j])$, we have 
$a_{s(j)} \in {\calT}_{s(j)-1}^\prime[y_j] \subset \calL(\bfy[j])$.
Thus  $x_{s(j)} \in \calL(\bfy[j])$. Finally, for $i \in [s(j)+1, n]$, again by \eqref{eq:xi-yi-ci} and the facts that $y_i \neq y_j, y_{p(i)} \neq y_j$ and $R_{i-1} \in \calL(\bfy[j])$, we see by induction that
$x_i \in \calL(\bfy[j])$. This finises the proof that $R \subset \calL(\bfy[j])$.
As $j \in {\rm ex}$ is arbitrary, we have
$R \subset \overline{\calU}(\bfy, M\ep)$.

Let now ${\rm inv}$ be any subset of $[1, n]\backslash {\rm ex}$. 
Since $M\ep$ has full rank,  by \cite[Theorem 3.11]{GSV:2018} again (and by \cite[Corollary 1.9]{BFZ:III}
when ${\rm inv} = [1, n]\backslash {\rm ex}$), we have
\begin{equation}\label{eq:upper-bound-inv}
{\calU}(\bfy, M\ep; {\rm inv}) = \calL({\bf y}; {\rm inv}) \cap \bigcap_{j \in {\rm ex}} \calL(\bfy[j]; {\rm inv}),
\end{equation}
where $\calL({\bf y}^\prime; {\rm inv}) = \calL({\bf y}^\prime)[y_i^{-1}: i \in {\rm inv}]$
for any extended cluster ${\bf y}^\prime$  in $[({\bf y}, M\ep)]$. 
It follows from 
$R = \calL({\bf y}) \cap \bigcap_{j \in {\rm ex}} \calL(\bfy[j])$ 
that 
\[
R[y_i^{-1}: i \in {\rm inv}] \subset 
\calL({\bf y}; {\rm inv}) \cap \bigcap_{j \in {\rm ex}} \calL(\bfy[j]; {\rm inv})=
{\calU}(\bfy, M\ep; {\rm inv}).
\]
Conversely, given any $\varphi \in {\calU}(\bfy, M\ep; {\rm inv})$, by \eqref{eq:upper-bound-inv} one has
$\varphi = \psi \prod_{i \in {\rm inv}} y_i^{-n_i}$ for some 
\[
\psi \in \calL({\bf y}) \cap \bigcap_{j \in {\rm ex}} \calL(\bfy[j]) = R
\]
and some positive integer $n_i$ for each $i \in {\rm inv}$. Thus $\varphi \in 
R[y_j^{-1}: j \in {\rm inv}]$. We have thus proved that ${\calU}(\bfy, M\ep; {\rm inv}) = 
R[y_j^{-1}: j \in {\rm inv}]$.
This finishes the proof of \autoref{thm:M-2}.
\end{proof}

\section{Proof of \autoref{thm:B} on symmetric Poisson CGL extensions}\label{s:proof-B}
Throughout $\S$\ref{s:proof-B}, we assume that $R = (\bk[x_1, \ldots, x_n], \{\, , \, \})$ is a
$\TT$-Poisson CGL extension that is, in addition,  {\it symmetric} as in 
\autoref{de:sym-CGL-intro}. Let $\bfy$ be the sequence of homogeneous Poisson prime elements
associated to $R$ given in \autoref{thm:5.5}, 
and let 
\[
M = (M_j)_{j \in {\rm ex}}=E^t \bnu^{-1} E \Lambda  \in {\rm Mat}_{n \times {\rm ex}}(\ZZ)
\]
be given as in 
\autoref{thm:M-1}. 
Making use of the assumption that $R$ is symmetric,  
we first give in 
\autoref{thm:sym-M-1} alternative descriptions of each column $M_j$ 
of $M$ in terms of the $\bfy$-degree of
the element $c_{s(j)}$ as well as in terms of the ${\bf x}$-degree of the {\it tail term}
of the Poisson bracket $\{x_j, x_{s(j)}\}$. 
As reviewing  the $\TT$-Poisson CGL extension
$R_\tau$ for each $\tau \in \Xi_n \subset S_n$ and the Goodearl-Yakimov 
$\TT$-Poisson pre-seed $(\bfy_\tau, M_\tau)$, we prove 
\autoref{thm:sym-M-1}, \autoref{thm:M-3}, and \autoref{thm:M-tau-cases}, the combination of which  gives \autoref{thm:B} in $\S$\ref{ss:main-intro}.

\subsection{Notation and the matrices $Q$ and $\Theta$}\label{ss:nota-sym}
Regarding $R$ as a $\TT$-Poisson CGL extension in the ordered set $(x_1, \ldots, x_n)$ of 
CGL generators, we have the respective predecessor and successor maps of $R$ 
denoted as  (see \autoref{nota-p-s})
\[
p: \; [1, n] \longrightarrow \{-\infty\} \sqcup [1, n-1], 
\hs \mbox{and} \hs 
s: \; [1, n] \longrightarrow [2, n] \sqcup \{+\infty\}.
\]
Recall that each $j \in [1, n]$ belongs to a 
unique level set $L(j) \subset [1, n]$ associated to $R$ defined using $p$ and 
$s$ (see again \autoref{nota-p-s}).
With $h_1, \ldots, h_n, h_1^\ast, \ldots, h_n^\ast \in \t$ as in \autoref{de:sym-CGL-intro},
recall that we have set $\lambda_j = \chi_j(h_j)\in \bk^\times$ for $j \in [1, n]$. Set also
\[
\lambda_j^\ast = \chi_j(h_j^\ast)\in \bk^\times,
\hs j \in [1, n].
\]
If $L =\{l, s(l), \ldots, s^m(l)\}$ 
is a level set associated to $R$ and $m \geq 1$, by \cite[Proposition 8.8]{GY:Poi-CGL},
\begin{equation}\label{eq:lam-L}
\lambda_l^\ast = \lambda_{s(l)}^\ast = \cdots = \lambda_{s^{m-1}(l)}^\ast = -\lambda_{s(l)} = -\lambda_{s^2(l)} = \cdots
=-\lambda_{s^m(l)}.
\end{equation}
For each level set $L$ associated to $R$, we set  
\begin{equation}\label{eq:lam-L-1}
\lambda_{\sL} = \lambda_{{\rm max}(\sL)} \in \bk^\times.
\end{equation}
Then for every level set $L$ with at least two elements, we have 
\begin{equation}\label{eq:Lam-max-min}
\lambda_\sL = \lambda_{{\rm max}(\sL)}=- \lambda^\ast_{{\rm min}(\sL)}.
\end{equation}
Recall the matrices
$E$, $\bnu$, and  $\Lambda \in {\rm Mat}_{n \times {\rm ex}}$, respectively given in 
\eqref{eq:E-Et}, \eqref{eq:bnu} and \eqref{eq:Lambda}, and recall from 
\autoref{thm:M-1} that $M = E^t \bnu^{-1} E \Lambda$. 
Introduce the diagonal matrix 
\begin{equation}\label{eq:overline-Lambda}
\overline{\Lambda} = (\overline{\Lambda}_{i, j})\in {\rm Mat}_{n \times n}({\bf k}) \hs
\mbox{with} \hs \overline{\Lambda}_{j, j} = \lambda_{\sL(j)}, \hs j \in [1, n].
\end{equation}
By \eqref{eq:lam-L}, we have 
\begin{equation}\label{eq:E-Lambda}
(\overline{\Lambda})_{n \times {\rm ex}} = \Lambda \hs \mbox{and} \hs
E\Lambda = (\overline{\Lambda} E)_{n \times {\rm ex}}
=\overline{\Lambda}\, E_{n \times ex}.
\end{equation}
Introduce also the matrices
\begin{equation}\label{eq:def-Q-calH}
Q = \bnu^{-1} \overline{\Lambda} \in {\rm Mat}_{n \times n}(\bk) \hs \mbox{and} \hs
\Theta = QE_{n \times {\rm ex}} \in {\rm Mat}_{n \times n}(\bk).
\end{equation}
With $\overline{\Lambda}$, $Q$, and $\Theta$ thus defined, we have 
\begin{equation}\label{eq:M-EQE}
M =  E^t \bnu^{-1} \overline{\Lambda} \,E_{n \times {\rm ex}} = 
E^t Q E_{n \times {\rm ex}} = E^t \Theta.
\end{equation}
In particular, $\Theta = F^t M \in {\rm Mat}_{n \times n}(\ZZ)$. We will come back to the matrix
$\Theta$ in $\S$\ref{ss:bth}.

\subsection{The interval variables and some almost cluster mutation relations}\label{ss:interval-y}
For $1 \leq j < k \leq n$, set 
$R_{[j, k]} = \bk[x_j, \ldots, x_k] \subset R$.
The symmetric condition on $R$ implies that each $R_{[j, k]}$ is a 
$\TT$-invariant Poisson sub-algebra of $R$ with respect to $\{\, , \, \}$.
Set $\delta_n^\ast = 0$, and for $j \in [1, n-1]$, set 
$\delta_j^\ast \in {\rm Der}_\bk (R_{[j+1, n]})$ by 
\[
\delta_j^\ast (x_k) = -\delta_k(x_j) \in R_{[j+1, k-1]}\subset R_{[j+1, n]}, \hs k \in [j+1, n].
\]
Then each $R_{[j, k]}$ is a 
$\TT$-Poisson CGL extension in both the ordered set $(x_j, \ldots, x_k)$ and the ordered set 
$(x_k, \ldots, x_j)$. Applying \autoref{thm:5.5} to these two presentations of $R_{[j, k]}$
as $\TT$-Poisson CGL extensions, one has the following result proved in 
\cite[$\S$8]{GY:Poi-CGL}.

\begin{lemma}\label{le-y-isi} \cite[Theorem 8.1]{GY:Poi-CGL} 
For any $i \in [1, n]$ and $m \in \ZZ_{\geq 0}$ such that $s^m(i) \neq +\infty$,
the Poisson algebra $R_{[i, s^m(i)]}$ has a unique homogeneous Poisson prime element, denoted by $y_{[i, s^m(i)]}$,
which is not in $R_{[i+1, s^m(i)]}$ nor in $R_{[i, s^m(i)-1]}$ and is inductively determined by
$y_\emptyset = 1$,  
$y_{[i, i]}= x_i$, $y_{[s^m(i), s^m(i)]}= x_{s^m(i)}$, and 
\begin{equation}\label{eq:y-i-c-c}
y_{[i, s^m(i)]} = y_{[i, s^{m-1}(i)]}x_{s^m(i)} - c_{[i, s^m(i)]} = x_i y_{[s(i), s^m(i)]} 
- c^\ast_{[i, s^m(i)]},
\end{equation}
where\footnote{In \cite[$\S$8.1]{GY:Poi-CGL}, the elements $c_{[i, s^m(i)]}$ and 
$c^\ast_{[i, s^m(i)]}$
are respectively denoted as $c_{[i, s^m(i)-1]}$ and $c'_{[i+1, s^m(i)]}$.} 
$c_{[i, s^m(i)]} \in R_{[i, s^m(i)-1]}$ and $c^\ast_{[i, s^m(i)]} \in R_{[i+1, s^m(i)]}$ and are respectively given by
\begin{equation}\label{eq:cc}
c_{[i, s^m(i)]} =\frac{\delta_{s^m(i)}(y_{[i, s^{m-1}(i)]})}{\lambda_{s^m(i)}} 
\hs \mbox{and} \hs
c^\ast_{[i, s^m(i)]} = \frac{\delta_{i}^\ast(y_{[s(i), s^{m}(i)]})}{\lambda_i^\ast}. 
\end{equation}
Moreover, recalling from 
$\S$\ref{ss:two-gradings} the definition of ${\rm lt}_{\bf x}(b)$ for $b \in R\backslash\{0\}$, one has
\begin{equation}\label{eq:lt-x-y-interval}
{\rm lt}_{\bf x}y_{[i, s^m(i)]} = x_i x_{s(i)} \cdots x_{s^m(i)} = \prod_{j \in L(i) \cap [i, s^m(i)]} x_j.
\end{equation}
\end{lemma}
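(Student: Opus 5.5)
Since $R$ is symmetric, each interval subalgebra $R_{[j,k]}$ is a $\TT$-Poisson CGL extension in two presentations: the forward ordering $(x_j,\ldots,x_k)$, with derivations $\delta_l|_{R_{[j,l-1]}}$, and the reversed ordering $(x_k,\ldots,x_j)$, with derivations $\delta^\ast_l|_{R_{[l+1,k]}}$. The plan is to apply \autoref{thm:5.5} to each presentation of $R_{[i,s^m(i)]}$ and compare the two results, inducting on $m$.

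\textbf{Step 1 (level sets of the forward presentation).} We first show that for the forward ordering of $R_{[i,s^m(i)]}$, the predecessor map agrees with $p$ on the level-set elements $s^l(i)$, $1\le l\le m$. Since $\delta_k(x_i)\in R_{[i+1,k-1]}$, each $\delta_k$ preserves $R_{[i,k-1]}$. We then need to check that the homogeneous Poisson prime elements of $R_{[i,k]}$ chain along $L(i)\cap[i,s^m(i)]$ exactly as in \cite[\S 8]{GY:Poi-CGL}. For this we use the leading-term description of \autoref{le:leading}: the element built recursively by \eqref{eq:yk-ypk} starting from $x_i$ has ${\rm lt}_{\bf x}=x_ix_{s(i)}\cdots x_{s^m(i)}$.

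\textbf{Step 2 (forward recursion and leading term).} We define $y_{[i,s^m(i)]}$ by the first formula in \eqref{eq:y-i-c-c}, with $c_{[i,s^m(i)]}$ as in \eqref{eq:cc}. By \autoref{thm:5.5} applied to $R_{[i,s^m(i)]}$, this element is the unique (up to scalar) element of $\calP'$ for that extension. It is therefore a homogeneous Poisson prime not lying in $R_{[i,s^m(i)-1]}$. Since $c_{[i,s^m(i)]}$ lies in the lower algebra, it cannot contribute a term with $x_{s^m(i)}$ to the top power, so by induction and \eqref{eq:degree-bb-x} we obtain \eqref{eq:lt-x-y-interval}.

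\textbf{Step 3 (reversed presentation and comparison).} Applying \autoref{thm:5.5} to the reversed presentation gives a unique homogeneous Poisson prime of $R_{[i,s^m(i)]}$ not lying in $R_{[i+1,s^m(i)]}$. We must show it is proportional to the forward element, which means checking that the forward element does not lie in $R_{[i+1,s^m(i)]}$. This holds because its leading term involves $x_i$. Call the reversed element $y^\ast$.

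The key claim is that the set of homogeneous Poisson primes of $R_{[i,s^m(i)]}$ lying in neither proper sub-interval algebra has exactly one element up to scalars. Use \eqref{eq:calP-k} for the forward presentation: $\calP$ of $R_{[i,s^m(i)]}$ consists of scalar multiples of the forward $y$'s indexed by $l$ with successor exceeding $s^m(i)$. The only such $l$ not lying in $R_{[i+1,\cdot]}$ and not in $R_{[i,s^m(i)-1]}$ is the top element $y_{[i,s^m(i)]}$, because the others have leading term missing $x_i$ or $x_{s^m(i)}$. Hence $y^\ast=\xi\, y_{[i,s^m(i)]}$ for some scalar $\xi$.

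\textbf{Step 4 (the second formula and normalization).} Finally, $y^\ast = x_i y^\ast_{[s(i),s^m(i)]}-c^\ast$ by the reversed \eqref{eq:yk-ypk}, where the reversed predecessor of $i$ is $s(i)$. By induction, $y^\ast_{[s(i),s^m(i)]}$ equals $y_{[s(i),s^m(i)]}$. Comparing the coefficients of the leading monomial $x_i\cdots x_{s^m(i)}$, which is $1$ on both sides, forces $\xi=1$, and this yields the second equality in \eqref{eq:y-i-c-c}.

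\textbf{Main obstacle.} The hard step is identifying the reversed predecessor of $i$ with $s(i)$, i.e. showing that the level sets of the reversed presentation are the level sets of $R$ traversed backwards. This requires the symmetric hypotheses $\chi_i(h_j)=-\chi_j(h_i^\ast)$ and the relation \eqref{eq:lam-L}. I would prove it by the same leading-term uniqueness argument, run inside $R_{[i,k]}$ with induction on $k-i$.
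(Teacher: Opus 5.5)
\textbf{There is a genuine gap: you assume, without proof, that the level sets of the interval algebras are the truncations of those of $R$.}

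Your strategy is the one the paper itself indicates before citing \cite[Theorem 8.1]{GY:Poi-CGL}: apply \autoref{thm:5.5} to the forward and reversed presentations of $R_{[i,s^m(i)]}$ and compare. Steps 2--4 would go through once the following is known. In the forward ordering of $R_{[i,k]}$, the predecessor of $l$ is $p(l)$ if $p(l)\ge i$ and $-\infty$ otherwise. In the reversed ordering, the predecessor of $l$ is $s(l)$ if $s(l)\le k$ and $-\infty$ otherwise. This is exactly what \cite[\S 8]{GY:Poi-CGL} supplies, and your proposal does not prove it.

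Step 1 is circular. The leading term $x_ix_{s(i)}\cdots x_{s^m(i)}$ of ``the element built recursively from $x_i$'' says nothing until you know that the \autoref{thm:5.5}-recursion for $R_{[i,s^m(i)]}$ runs along $i,s(i),\ldots,s^m(i)$. Concretely, for each $1\le l\le m$ you need three facts:
\begin{itemize}
\item $y_{[i,s^{l-1}(i)]}$ is still a homogeneous Poisson prime of $R_{[i,s^l(i)-1]}$;
\item $\delta_{s^l(i)}(y_{[i,s^{l-1}(i)]})\neq 0$;
\item $\delta_{s^l(i)}$ kills every other homogeneous Poisson prime of $R_{[i,s^l(i)-1]}$.
\end{itemize}
\autoref{thm:5.5} for $R$ only gives $\delta_{s(j)}(y_j)\neq 0$ for the full element $y_j$, whose chain starts at $p^{o_-(j)}(j)$, typically $<i$. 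Nothing about the interval elements follows from that directly.

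Without these facts, the following are all unsupported: the identification in Step 2 of your element with the $\calP'$-element of $R_{[i,s^m(i)]}$; the nonvanishing of $c_{[i,s^m(i)]}$; the formula \eqref{eq:lt-x-y-interval}; and the use of \eqref{eq:calP-k} in Step 3, which needs the interval successor map. The reversed statements $p^\ast(i)=s(i)$ and $y^\ast_{s(i)}=y_{[s(i),s^m(i)]}$, which you rightly flag as the main obstacle, are left with no argument.

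\textbf{How the gap can be closed.} It needs a real induction, not just leading-term bookkeeping.

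For the forward statement, induct on $i$ (the case $i=1$ is \autoref{thm:5.5} for $R$) and then on $k$. By the inductive hypotheses, the homogeneous Poisson primes of $R_{[i+1,k-1]}$ are those of $R_{[i,k-1]}$ attached to level sets $L\neq L(i)$, plus possibly $y_{[s(i),l_0]}$ with $l_0\in L(i)$. So the vanishing pattern of $\delta_k$ is inherited from $R_{[i,k]}$. Uniqueness in \autoref{thm:5.5} then settles every case except $p(k)\in L(i)$ with $p(k)>i$.

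In that remaining case, suppose $\delta_k(y_{[s(i),p(k)]})=0$. Then:
\begin{itemize}
\item $\delta_k$ vanishes on $R_{[i+1,k-1]}$, so $\delta_k(x_i)\neq 0$;
\item $x_k$ is a homogeneous Poisson prime of $R_{[i+1,k]}$ with $\delta_i^\ast(x_k)=-\delta_k(x_i)\neq 0$;
\item by the reversed \autoref{thm:5.5}, $x_ix_k+\delta_k(x_i)/\lambda_i^\ast$ is a homogeneous Poisson prime of $R_{[i,k]}$ not in $R_{[i,k-1]}$.
\end{itemize}
Its ${\bf x}$-leading term is $x_ix_k$. This contradicts the leading term $\prod_{j\in L(i)\cap[i,k]}x_j$, which has at least three factors, of the forward $\calP'$-element of $R_{[i,k]}$.

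The reversed level sets then follow by comparing $\calP(R_{[l,k]})$ with $\calP(R_{[l+1,k]})$ via their leading terms. Only with both identifications in hand do your Steps 2--4 prove the lemma.
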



Let $i \in [1, n]$ and $m \in \ZZ_{>0}$ such that $s^m(i) \in [1, n]$. Let
\[
J_{[i, s^m(i)]} = \{j \in [i, s^m(i)]\backslash L(i): s(j) > s^m(i)\},
\]
and for $j \in J_{[i, s^m(i)]}$, let
$j^{\rm min}([i, s^m(i)]) ={\rm min}\{L(j) \cap [i, s^m(i)]\}$. By \autoref{thm:5.5}, 
\[
\{{\bf k}^\times y_{[j^{\rm min}([i, s^m(i)]), j]}:\; j \in J_{[i, s^m(i)]}\}
\]
is the set of all homogeneous Poisson prime 
element of $R_{[i, s^m(i)]}$ that are not scalar multiples of $y_{[i, s^m(i)]}$
(see also \cite[(8.20)]{GY:Poi-CGL}). The next \autoref{lm:mut-yy} is proved in 
\cite[Corollary 8.11]{GY:Poi-CGL}, and the identity \eqref{eq:yy-y} is refereed to in
\cite[$\S$7.2]{GY:Poi-CGL} as an {\it almost cluster mutation
relation}.

\begin{lemma}\label{lm:mut-yy}
For any $i \in [1, n]$ and $m \in \ZZ_{>0}$ such that $s^m(i) \in [1, n]$, there exist 
$\zeta_{[i, s^m(i)]} \in \bk^\times$ and $n_j \in \ZZ_{\geq 0}$ for each $j \in J_{[i, s^m(i)]}$ such that
\begin{equation}\label{eq:yy-y}
y_{[i, s^{m-1}(i)]}y_{[s(i), s^m(i)]} = y_{[s(i), s^{m-1}(i)]}y_{[i, s^m(i)]} + 
\zeta_{[i, s^m(i)]}\prod_{j \in J_{[i, s^m(i)]}} 
y_{[j^{\rm min}([i, s^m(i)]), j]}^{n_j}.
\end{equation}
\end{lemma}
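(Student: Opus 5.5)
The plan is to work inside the $\TT$-Poisson CGL extension $R_{[i, s^m(i)]}$ and compare the two products $y_{[i, s^{m-1}(i)]}y_{[s(i), s^m(i)]}$ and $y_{[s(i), s^{m-1}(i)]}y_{[i, s^m(i)]}$. For $m = 1$ we read $y_{[s(i), i]} = y_\emptyset = 1$. The argument has three steps: both products have the same leading term, their difference is a homogeneous Poisson normal element, and such an element factors into the Poisson primes of $R_{[i,s^m(i)]}$ other than $y_{[i, s^m(i)]}$.

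\emph{Leading terms.} By \eqref{eq:lt-x-y-interval}, both products have ${\bf x}$-leading term
\[
x_i \Bigl(\prod_{k=1}^{m-1} x_{s^k(i)}\Bigr)^2 x_{s^m(i)},
\]
with coefficient $1$. So
\[
D := y_{[i, s^{m-1}(i)]}y_{[s(i), s^m(i)]} - y_{[s(i), s^{m-1}(i)]}y_{[i, s^m(i)]}
\]
is strictly smaller in the reverse lexicographic order. Moreover $D$ is a $\TT$-weight vector, since the two products have the same weight by \eqref{eq:chi-kk} and its analogue for interval elements.

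\emph{$D$ is Poisson normal.} Each of the four interval elements is a homogeneous Poisson prime of the appropriate subalgebra, but not necessarily of $R_{[i,s^m(i)]}$, so this needs care. I would use the expansions \eqref{eq:y-i-c-c}. Write $y_{[i,s^m(i)]} = y_{[i,s^{m-1}(i)]} x_{s^m(i)} - c_{[i,s^m(i)]}$ and $y_{[s(i),s^m(i)]} = y_{[s(i),s^{m-1}(i)]}x_{s^m(i)} - c_{[s(i),s^m(i)]}$. The $x_{s^m(i)}$-terms then cancel, giving
\[
D = y_{[s(i), s^{m-1}(i)]}\,c_{[i,s^m(i)]} - y_{[i,s^{m-1}(i)]}\,c_{[s(i),s^m(i)]} \in R_{[i, s^m(i)-1]}.
\]
Symmetrically, the starred expansions show $D \in R_{[i+1, s^m(i)]}$. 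Hence $D \in R_{[i+1, s^m(i)-1]}$, where $R_{[i+1,s^m(i)-1]} = R_{[i, s^m(i)]}\cap$ both smaller subalgebras. Next I would show $\{D, x_k\} \in D\, R$ for the generators $x_k$ with $k \in [i, s^m(i)]$. For $k \in [i+1, s^m(i)-1]$, one works in $R_{[i,s^m(i)-1]}$ and $R_{[i+1,s^m(i)]}$. There the relevant $y$'s are Poisson primes, so brackets with $x_k$ are log-canonical up to the $\delta$-type corrections of \autoref{le:q-jpj}, and those corrections cancel in $D$ for the same reason the $x_{s^m(i)}$-terms did. For $k = i$ and $k = s^m(i)$, $D$ lies in the subalgebra not containing that generator, so $\{x_{s^m(i)}, D\} = \theta(D)x_{s^m(i)} + \delta_{s^m(i)}(D)$, and one needs $\delta_{s^m(i)}(D) = 0$. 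For this I would use $\delta_{s^m(i)}^2(y_{[\cdot,s^{m-1}(i)]}) = 0$ from \autoref{thm:5.5}, together with $\delta_{s^m(i)}(c)=0$, to compute $\delta_{s^m(i)}(D) = \lambda_{s^m(i)}(c_{[s(i),\cdot]}c_{[i,\cdot]} - c_{[i,\cdot]}c_{[s(i),\cdot]}) = 0$. The case $k = i$ is the same argument with $\delta^\ast_i$.

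\emph{Factorization.} $D$ is a homogeneous Poisson normal element of the CGL extension $R_{[i+1,s^m(i)-1]}$, and we also know $D \neq 0$; the latter follows from \autoref{pr:bsj-1} type non-vanishing or from primality of $y_{[i,s^m(i)]}$. By the Goodearl–Yakimov unique factorization, \autoref{thm:5.5} and \eqref{eq:calP-k} applied to the interval, $D$ is then a scalar times a product of homogeneous Poisson primes of $R_{[i+1, s^m(i)-1]}$ that remain prime in the larger algebra. These are exactly the $y_{[j^{\rm min}, j]}$ with $j \in J_{[i,s^m(i)]}$, with exponents $n_j \ge 0$.

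The main obstacle is the Poisson-normality computation for $k$ strictly inside the interval, where the bracket picks up the derivations $\delta_k$. If a direct computation is messy, I would instead argue via Poisson primality: for each prime $y$ not in the list, $y$ divides neither product modulo the other, which is checked via leading terms. That route still relies on the normality of $D$.
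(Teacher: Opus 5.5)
The paper does not prove this lemma; it imports it from \cite[Corollary 8.11]{GY:Poi-CGL}, so I am judging your outline on its own. The plan is viable: show that $D$ is a nonzero homogeneous Poisson-normal element of $R_{[i,s^m(i)]}$ lying in $R_{[i+1,s^m(i)-1]}$, then factor it. Several steps are correct:
\begin{itemize}
\item $D\in R_{[i+1,s^m(i)-1]}$, from the two expansions;
\item the endpoint computations $\delta_{s^m(i)}(D)=0$ and $\delta_i^\ast(D)=0$;
\item $D\neq 0$, which does follow from primality of $y_{[i,s^m(i)]}$ together with its degrees in $x_i$ and $x_{s^m(i)}$.
\end{itemize}

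\textbf{The gap.} The step you flag as the main obstacle is not proved: normality with respect to $x_k$ for $k\in[i+1,s^m(i)-1]$. The mechanism you describe is also not the right one, because there are no $\delta$-corrections to cancel. Each of the four factors is a homogeneous Poisson prime of an interval algebra containing $x_k$, indexed by an element whose successor lies outside that interval:
\begin{itemize}
\item $y_{[i,s^{m-1}(i)]}$ in $R_{[i,s^m(i)-1]}$;
\item $y_{[s(i),s^m(i)]}$ in $R_{[i+1,s^m(i)]}$;
\item $y_{[s(i),s^{m-1}(i)]}$ in $R_{[i+1,s^m(i)-1]}$;
\item $y_{[i,s^m(i)]}$ in $R_{[i,s^m(i)]}$.
\end{itemize}
So \autoref{le:q-jpj} and the uniqueness of $p(k)$ in \autoref{thm:5.5} give exactly $\{x_k,y\}=c_y\,x_ky$ with $c_y\in\bk$. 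Write $P_1=y_{[i,s^{m-1}(i)]}y_{[s(i),s^m(i)]}$ and $P_2=y_{[s(i),s^{m-1}(i)]}y_{[i,s^m(i)]}$. Then
\[
\{x_k,D\}=C_1\,x_kP_1-C_2\,x_kP_2=C_1\,x_kD+(C_1-C_2)\,x_kP_2 ,
\]
where $C_1=c_1+c_2$ and $C_2=c_3+c_4$ are sums of the constants of the respective factors. What must be shown is $C_1=C_2$. Nothing in your proposal establishes this, and it is not a cancellation of the kind that occurred for $x_{s^m(i)}$.

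\textbf{The missing idea.} These constants can be read off from ${\bf x}$-leading terms. The tail of each $\{x_k,x_l\}$ has degree $0$ in $x_{\max(k,l)}$ and in all higher variables, so it is reverse-lexicographically smaller than $x_kx_l$. Hence, for any nonzero $f\in R$, the coefficient of $x_k\,{\rm lt}_{\bf x}(f)$ in $\{x_k,f\}$ equals $\ell_k(\deg_{\bf x}f)$ times the leading coefficient of $f$, where
\[
\ell_k(g)=\sum_{l<k}g_l\chi_l(h_k)-\sum_{l>k}g_l\chi_k(h_l).
\]
Apply this to $\{x_k,P_t\}=C_t\,x_kP_t$ for $t=1,2$. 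Since ${\rm lt}_{\bf x}(P_1)={\rm lt}_{\bf x}(P_2)$, which is exactly your first step that you never use, you get $C_1=C_2$ and hence $\{x_k,D\}=C_1x_kD$. With this inserted the argument goes through.

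\textbf{Tightening the factorization step.}
\begin{itemize}
\item Factor $D$ in $R_{[i,s^m(i)]}$, not in $R_{[i+1,s^m(i)-1]}$.
\item Justify that the prime factors of a homogeneous Poisson-normal element of a polynomial ring over a field of characteristic $0$ are homogeneous Poisson primes. This is the standard UFD argument; it is not contained in \autoref{thm:5.5}.
\item By \eqref{eq:calP-k} for the interval, these factors are then among $y_{[i,s^m(i)]}$ and the $y_{[j^{\rm min},j]}$ with $j\in J_{[i,s^m(i)]}$.
\item $y_{[i,s^m(i)]}$ is excluded because $D$ has $x_{s^m(i)}$-degree $0$.
\end{itemize}
Your criterion ``primes of $R_{[i+1,s^m(i)-1]}$ that remain prime in the larger algebra'' is not the right one, since all of them remain prime. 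The element $y_{[s(i),s^{m-1}(i)]}$ (for $m\ge 2$) is excluded only because it is not Poisson in $R_{[i,s^m(i)]}$.
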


For $i \in [1, n]$ and $m \in \ZZ_{>0}$ such that $s^m(i) \in [1, n]$, set, as in
\cite[Corollary 8.11]{GY:Poi-CGL}, 
\begin{align}\label{eq:de-uii-0}
u_{[i, s^m(i)]}& = y_{[i, s^{m-1}(i)]}y_{[s(i), s^m(i)]} - y_{[s(i), s^{m-1}(i)]}y_{[i, s^m(i)]}\\
\label{eq:de-uii-1}
& =\zeta_{[i, s^m(i)]}\prod_{j \in J_{[i, s^m(i)]}} 
y_{[j^{\rm min}([i, s^m(i)]), j]}^{n_j}
\in R_{[i, s^m(i)]}.
\end{align}
Setting $c_{[s(i), s(i)]} = 0$, note that \eqref{eq:y-i-c-c} gives
\begin{align*}
&y_{[s(i), s^m(i)]}  = y_{[s(i), s^{m-1}(i)]}x_{s^m(i)} - c_{[s(i), s^m(i)]}\hs \mbox{and} \hs
y_{[i, s^m(i)]} = y_{[i, s^{m-1}(i)]}x_{s^m(i)}-c_{[i, s^m(i)]}.
\end{align*}
It follows that one also has
\begin{equation}\label{eq:u-yy-cc}
u_{[i, s^m(i)]}=y_{[s(i), s^{m-1}(i)]}c_{[i, s^m(i)]} - y_{[i, s^{m-1}(i)]}c_{[s(i), s^m(i)]}. 
\end{equation}
The next \autoref{le-cor-811} is part of \cite[Corollary 8.11]{GY:Poi-CGL} and follows from 
\eqref{eq:lt-x-y-interval}.

\begin{lemma}\label{le-cor-811}
For $i \in [1, n]$ and $m \in \ZZ_{\geq 0}$ such that $s^m(i) 
\in [1, n]$, writing
\begin{equation}\label{eq:uii-lt-x}
{\rm lt}_{\bf x}(u_{[i, s^m(i)]}) = \zeta_{[i, s^m(i)]} {\bf x}^{f_{[i, s^m(i)]}},
\end{equation}
where $\zeta_{[i, s^m(i)]} \in {\bf k}\backslash\{0\}$ and
$f_{[i, s^m(i)]} = (0, \ldots, 0, f_{i+1}, \ldots, f_{s^m(i)-1}, 0, \ldots, 0)^t \in \ZZ^n_{\geq 0}$,
 one has $f_j = f_{j'}$ for all $j, j' \in  [i+1, s^m(i)-1]$ that are on the same level, and
$f_j =0$ for all $j \in L(i)$.
\end{lemma}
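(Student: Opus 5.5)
The plan is to compute ${\rm lt}_{\bf x}(u_{[i, s^m(i)]})$ in two ways. The first uses the product formula \eqref{eq:de-uii-1}; the second uses the degree bookkeeping from \eqref{eq:lt-x-y-interval} together with the defining expression \eqref{eq:de-uii-0}.

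\emph{Step 1 (the product formula).} Since ${\rm lt}_{\bf x}$ is multiplicative by \eqref{eq:degree-bb-x}, applying it to \eqref{eq:de-uii-1} gives
\[
{\rm lt}_{\bf x}(u_{[i, s^m(i)]}) = \zeta_{[i, s^m(i)]}\prod_{j \in J_{[i, s^m(i)]}} \bigl({\rm lt}_{\bf x}\, y_{[j^{\rm min}([i, s^m(i)]), j]}\bigr)^{n_j}.
\]
By \eqref{eq:lt-x-y-interval}, applied to the interval $[j^{\rm min}, j]$ inside the level set $L(j)$, each factor equals $\prod_{k \in L(j)\cap [j^{\rm min}, j]} x_k$. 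This is the product of the $x_k$ over all elements of $L(j)$ lying in $[i, s^m(i)]$, since $j^{\rm min}$ is the smallest such element and $j$ is the largest (because $s(j) > s^m(i)$). So the leading term is a scalar $\zeta_{[i, s^m(i)]}$ times ${\bf x}^{f}$, where $f = \sum_{j \in J} n_j \sum_{k \in L(j) \cap [i, s^m(i)]} e_k$.

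\emph{Step 2 (support and level constancy).} Each $j \in J_{[i, s^m(i)]}$ lies outside $L(i)$, and distinct $j \in J$ lie in distinct level sets, since each level set meets $J$ only in its largest element within $[i, s^m(i)]$. Hence $f_k = n_j$ for every $k$ in $L(j)\cap[i,s^m(i)]$, so $f$ is constant on each level set. Moreover $f_k = 0$ for $k \in L(i)$, and also for $k$ in any level set not meeting $J$. Every such $k$ lies in $[i, s^m(i)]$. Finally, $i$ and $s^m(i)$ lie in $L(i)$, so $f$ is supported in $[i+1, s^m(i)-1]$. This gives all the claimed properties of $f_{[i, s^m(i)]}$.

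\emph{Main obstacle.} The only delicate point is the case $m = 0$, which is allowed by the hypothesis $m \in \ZZ_{\geq 0}$ but where \eqref{eq:de-uii-1} was only stated for $m > 0$. For $m=0$ I would adopt the convention $y_\emptyset = 1$; in the vacuous case the interval $[i+1, i-1]$ is empty, so the statement holds trivially. I would also check in Step 1 that the level set $L(j)$ intersects $[i, s^m(i)]$ in a consecutive $s$-chain, which is immediate since $s$ is increasing along level sets.
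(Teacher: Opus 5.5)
Your argument is correct and follows the route the paper indicates. The paper does not write out a proof; it only says the lemma ``follows from \eqref{eq:lt-x-y-interval}'', i.e.\ one applies the multiplicativity \eqref{eq:degree-bb-x} of ${\rm lt}_{\bf x}$ to the product formula \eqref{eq:de-uii-1}, reads off each factor from \eqref{eq:lt-x-y-interval}, and uses that distinct elements of $J_{[i, s^m(i)]}$ lie on distinct levels, none of them equal to $L(i)$ — which is exactly what you do. Two small remarks: the ``second way'' via \eqref{eq:de-uii-0} announced in your plan is never carried out and is not needed, and the case $m=0$ requires no convention at all, since $u_{[i, s^m(i)]}$ is only defined for $m \geq 1$.
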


\subsection{The matrix $M$ via ${\bf x}$ and ${\bf y}$-degrees and the Poisson bracket 
$\{\, , \}$}\label{ss:M-deg-csk}
We continue to assume that $R = (\bk[x_1, \ldots, x_n], \{\, , \, \})$ is a symmetric 
$\TT$-Poisson CGL extension.
Let the notation be as in $\S$\ref{ss:interval-y}.
Recall that ${\rm ex} = \{j \in [1, n]: s(j) \neq +\infty\}$. 
Let $j \in{\bf ex}$ and write 
\[
\{x_j, x_{s(j)}\} = \{x_j, x_{s(j)}\}_{\rm log-can} + 
\{x_j, x_{s(j)}\}_{\rm tail},
\]
where $\{x_j, x_{s(j)}\}_{\rm log-can} = -\chi_j(h_{s(j)}) x_jx_{s(j)}$ and
$\{x_j, x_{s(j)}\}_{\rm tail}  =-\delta_{s(j)}(x_j)$.
By \eqref{eq:cc} and \eqref{eq:u-yy-cc},
\begin{equation}\label{eq:u-j-sj-c}
u_{[j, s(j)]} = c_{[j,s(j)]}  =\frac{\delta_{s(j)}(x_j)}{\lambda_{s(j)}} = 
-\frac{1}{\lambda_{s(j)}}\{x_j, x_{s(j)}\}_{\rm tail}.
\end{equation}
In particular, $\{x_j, x_{s(j)}\}_{\rm tail}\neq 0$.
Let $b_{s(j)} \in \calT^\prime_{s(j)-1}$ be as in \eqref{eq:csj-bsj}.

\begin{lemma}\label{le:deg-csj}
For any symmetric $\TT$-Poisson CGL extension $R$ and for $j \in {\rm ex}$, one has
\[
{\rm lt}_{\bf x}(c_{s(j)}) = -\frac{{\bf x}^{\bar{e}_{p(j)}}}{\lambda_{s(j)}}{\rm lt}_{\bf x} \{x_j, x_{s(j)}\}_{\rm tail},  \hs \mbox{and} \hs 
{\rm lt}_{\bf y} (c_{s(j)}) = b_{s(j)},
\]
where  $\overline{e}_j\in\ZZ^n$ for $j \in [1, n]$ is defined in \eqref{eq:ee}.
\end{lemma}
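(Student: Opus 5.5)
The plan is to prove the ${\bf x}$-statement by induction along the level set $L(j)$, using the almost cluster mutation relation of \autoref{lm:mut-yy}. I then deduce the ${\bf y}$-statement from it using \autoref{le:leading} and \autoref{pr:bsj-1}.

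\emph{Step 1 (setup).} Let $i = \min L(j)$, so that $j = s^{m-1}(i)$ and $s(j) = s^m(i)$ for $m = o_-(j)+1$. Since $R_{[1,k]} = R_k$, the uniqueness in \autoref{le-y-isi} and \autoref{thm:5.5} gives $y_{[i, j]} = y_j$ and $c_{[i, s(j)]} = c_{s(j)}$ (up to the evident normalization). Identity \eqref{eq:u-yy-cc} then reads
\[
y_{[s(i), j]}\, c_{s(j)} = u_{[i, s(j)]} + y_j\, c_{[s(i), s(j)]} .
\]

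\emph{Step 2 (induction on $o_-(j)$).} In the base case $p(j) = -\infty$ we have $i = j$ and $c_{[s(i),s(j)]} = c_{[s(j),s(j)]} = 0$. Then \eqref{eq:u-j-sj-c} gives $c_{s(j)} = -\{x_j, x_{s(j)}\}_{\rm tail}/\lambda_{s(j)}$, and $\bar e_{-\infty} = 0$, so the claim holds.

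For the inductive step, I apply the statement (in its interval version, i.e.\ to the symmetric CGL extension $R_{[s(i), n]}$) to $c_{[s(i), s(j)]}$. This gives
\[
{\rm lt}_{\bf x}(c_{[s(i),s(j)]}) = -\lambda_{s(j)}^{-1}\, {\bf x}^{\bar e_{p(j)} - \bar e_{p(i)}'}\,{\rm lt}_{\bf x}\{x_j, x_{s(j)}\}_{\rm tail},
\]
where the exponent is the sum of $e_l$ over $l \in L(j) \cap [s(i), p(j)]$. Multiplying by ${\rm lt}_{\bf x}(y_j) = {\bf x}^{\bar e_j}$ from \eqref{eq:lt-x-y-interval}, and using $\bar e_j = e_j + \bar e_{p(j)}$, I find that ${\rm lt}_{\bf x}(y_j c_{[s(i), s(j)]})$ is exactly ${\rm lt}_{\bf x}(y_{[s(i),j]})$ times the claimed ${\rm lt}_{\bf x}(c_{s(j)})$. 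So by \eqref{eq:degree-bb-x} it suffices to show
\[
\deg_{\bf x} u_{[i,s(j)]} \prec \deg_{\bf x}\bigl(y_j c_{[s(i),s(j)]}\bigr).
\]

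\emph{Step 3 (main obstacle).} The hard step is this comparison. By \autoref{le-cor-811}, $\deg_{\bf x} u_{[i,s(j)]}$ vanishes in every coordinate in $L(i)$, in particular at $j$ and at $s(j)$, and is constant along levels inside $(i, s(j))$. The other term has exponent $1$ at $j$. I will compare the two reverse-lexicographically from index $s(j)-1$ downward. First I would try to match the coordinates above $j$ using the product formula \eqref{eq:de-uii-1} together with \eqref{eq:lt-x-y-interval} for the factors $y_{[j^{\rm min}, l]}$. If that fails, I will use $\TT$-homogeneity (both sides have the $\TT$-weight of $y_{[s(i),j]}c_{s(j)}$) to force the first difference to occur at or below $j$, where $u$ has exponent $0 < 1$.

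\emph{Step 4 (${\bf y}$-statement).} By \autoref{le:leading}, ${\rm lt}_{\bf y}(c_{s(j)}) = \xi\,{\bf y}^{E^t d}$ with $d = \deg_{\bf x}(c_{s(j)})$. Its $y_j$-exponent is $d_j - d_{s(j)}$. From Step 2, $d_j = 0$, since $\bar e_{p(j)}$ is supported on indices $\le p(j)$ and the tail lies in $R_{[j+1, s(j)-1]}$ by symmetry. Also $d_{s(j)} = 0$ because $c_{s(j)} \in R_{s(j)-1}$. So the ${\bf y}$-leading term of $c_{s(j)}$ contains no power of $y_j$. In \eqref{eq:csj-bsj}, every monomial of $y_j a_{s(j)}$ has positive $y_j$-exponent, so this leading term must be a term of $b_{s(j)}$. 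Since $b_{s(j)}$ is a single monomial by \autoref{pr:bsj-1}, we conclude ${\rm lt}_{\bf y}(c_{s(j)}) = b_{s(j)}$.
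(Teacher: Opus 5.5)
Your Steps 1, 2 and 4 are sound and follow the paper's proof: the same instance of \eqref{eq:u-yy-cc} with $i=\min L(j)$, the same induction along $L(j)$, and the same derivation of the ${\bf y}$-statement from \autoref{le:leading} and the monomiality of $b_{s(j)}$. The gap is Step 3, which is the heart of the lemma and which you leave open. You sketch two strategies, and neither works as stated. Matching the coordinates of $\deg_{\bf x}u_{[i,s(j)]}$ and $\deg_{\bf x}(y_jc_{[s(i),s(j)]})$ above $j$ via \eqref{eq:de-uii-1} would require control of the exponents $n_l$, and none of the available results provide it. $\TT$-homogeneity cannot force anything either: the characters $\chi_1,\ldots,\chi_n$ need not be linearly independent, so $\TT$-weights do not in general determine ${\bf x}$-exponents.

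The missing idea is to use the third term of the relation rather than a coordinatewise comparison. Put $A=y_{[s(i),j]}c_{s(j)}$ and $B=y_jc_{[s(i),s(j)]}$, so that $u_{[i,s(j)]}=A-B$, and note $u_{[i,s(j)]}\neq 0$ by \eqref{eq:de-uii-1}.
\begin{itemize}
\item In the inductive step $j\neq i$, so $s(i)\le j$ and ${\rm lt}_{\bf x}(y_{[s(i),j]})=x_{s(i)}\cdots x_j$ contains $x_j$. By \eqref{eq:degree-bb-x}, the $j$-th coordinate of $\deg_{\bf x}A$ is therefore $\ge 1$.
\item The same holds for $\deg_{\bf x}B$, because ${\rm lt}_{\bf x}(y_j)$ contains $x_j$.
\item If ${\rm lt}_{\bf x}(A)\neq{\rm lt}_{\bf x}(B)$, then $\deg_{\bf x}u_{[i,s(j)]}$ equals $\deg_{\bf x}A$ or $\deg_{\bf x}B$: whichever is larger, or their common value if only the leading coefficients differ.
\item That contradicts \autoref{le-cor-811}, which says the $j$-th coordinate of $\deg_{\bf x}u_{[i,s(j)]}$ is $0$.
\end{itemize}
Hence ${\rm lt}_{\bf x}(A)={\rm lt}_{\bf x}(B)$. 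This is exactly the paper's recursion ${\rm lt}_{\bf x}(c_{s(j)})=x_i\,{\rm lt}_{\bf x}(c_{[s(i),s(j)]})$, and it also yields your inequality $\deg_{\bf x}u\prec\deg_{\bf x}B$. It needs no information about where the first difference between these two degrees occurs.

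Three smaller points.
\begin{itemize}
\item $y_{[i,j]}=y_j$ and $c_{[i,s(j)]}=c_{s(j)}$ hold exactly, not just up to normalization: the recursions \eqref{eq:y-i-c-c} and \eqref{eq:ykk-00} coincide, with $y_{[i,i]}=x_i=y_i$.
\item You can avoid re-identifying $R_{[s(i),n]}$ as a CGL extension with level sets $L\cap[s(i),n]$. Run the induction directly on the interval elements $c_{[s^a(i),s(j)]}$, since \eqref{eq:u-yy-cc}, \eqref{eq:lt-x-y-interval} and \autoref{le-cor-811} are stated for arbitrary intervals.
\item The exponent in your Step 2 should read $\bar e_{p(j)}-e_i$.
\end{itemize}
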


\begin{proof}
Let $j \in {\rm ex}$. Let 
$j_0 = p^{o_-(j)}(j)$ so that $p(j_0) = -\infty$. Setting $i = j_0$ and $m \in \ZZ_{>0}$
such that $s^{m-1}(j_0) = j$ in \eqref{eq:u-yy-cc}, one has 
\[ 
u_{[j_0, s(j)]} = y_{[s(j_0), j]} c_{s(j)} - y_j c_{[s(j_0), s(j)]}. 
\]
By \autoref{le-cor-811}, ${\rm lt}_{\bf x}(u_{[j_0, s(j)]})$ does not contain any non-zero power of $x_j$.
By \eqref{eq:lt-x-y-interval}, 
\[
{\rm lt}_{\bf x}(y_{[s(j_0)), j]}) = x_{s(j_0)}x_{s^2(j_0)}\cdots x_j \hs
\mbox{and} \hs {\rm lt}_{\bf x}(y_{j})=x_{j_0}x_{s(j_0)}\cdots x_j
\]
We must then have ${\rm lt}_{\bf x}(y_{[s(j_0), j]} c_{s(j)})={\rm lt}_{\bf x}(y_j c_{[s(j_0), s(j)]})$, and 
\[ 
{\rm lt}_{\bf x}(c_{s(j)})=x_{j_0}{\rm lt}_{\bf x}(c_{[s(j_0), s(j)]}).
 \]
By induction, ${\rm lt}_{\bf x}(c_{s(j)})={\bf x}^{\bar{e}_{p(j)}}{\rm lt}_{\bf x}(c_{[j, s(j)]})$, and 
by \eqref{eq:u-j-sj-c}, 
\[
{\rm lt}_{\bf x}(c_{s(j)})={\bf x}^{\bar{e}_{p(j)}}{\rm lt}_{\bf x}(c_{[j, s(j)]})
= -\frac{1}{\lambda_{s(j)}}{\bf x}^{\bar{e}_{p(j)}}{\rm lt}_{\bf x}(\{x_j, x_{s(j)}\}_{\rm tail}).
\]
By \autoref{le:leading} and using the notation in \autoref{le-cor-811}, one has
\begin{equation}\label{eq:lt-y-csj-1}
{\rm lt}_{\bf y} (c_{s(j)})= \zeta_{[j, s(j)]} {\bf y}^{E^t(\bar{e}_{p(j)} + f_{[j, s(j)]})}
=\zeta_{[j, s(j)]} {\bf y}^{e_{p(j)} + E^tf_{[j, s(j)]}}.
\end{equation}
In particular, ${\rm lt}_{\bf y} (c_{s(j)})$ contains no power of $y_j$. Thus 
${\rm lt}_{\bf y} (c_{s(j)}) = b_{s(j)}$.
\end{proof}

\begin{remark}\label{rk:GY-normal-1} ({\bf Goodearl-Yakimov normality})
The symmetric $\T$-Poisson CGL extension $R$ is said to be {\it normal} 
in \cite[$\S$9.2]{GY:Poi-CGL} if
$\zeta_{[j, s(j)]} = 1$ for 
every $j \in {\bf ex}$. By \autoref{le:deg-csj}, \eqref{eq:iota-bsj}, and 
 \eqref{eq:lt-y-csj-1}, one has
\begin{equation}\label{eq:iota-zeta}
\iota_{s(j)} = \zeta_{[j, s(j)]}, \hs \forall \;\; j \in {\rm ex}.
\end{equation}
Thus $R$ is normal in the sense of 
\cite[$\S$9.2]{GY:Poi-CGL} if and only if it normal in the sense of   \autoref{de:normal}.
By {\rm \eqref{eq:u-j-sj-c}, for $j \in {\rm ex}$ one also has
\begin{equation}\label{eq:lt-x-xjj}
{\rm lt}_{\bf x} \{x_j, x_{s(j)}\}_{\rm tail} = -\lambda_{s(j)}
{\rm lt}_{\bf x} u_{[j, s(j)]} = -\lambda_{s(j)} \zeta_{[j, s(j)]} 
{\bf x}^{f_{[j, s(j)]}}.
\end{equation}
Thus normality of $R$ is, in turn, equivalent to the ${\bf x}$-leading coefficient (see 
notation in $\S$\ref{ss:two-gradings}) 
of $\{x_j, x_{s(j)}\}_{\rm tail}$ equal to $-\lambda_{s(j)}$ for every $j \in {\rm ex}$.
\hfill $\diamond$
}
\end{remark}

We can now give the two alternative descriptions of the columns of the matrix $M$.

\begin{theorem}\label{thm:sym-M-1}
For a symmetric $\T$-Poisson CGL extension $R = (\bk[x_1, \ldots, x_n], \{\, , \, \})$  and for 
$j \in {\rm ex}$, the $j$'th column $M_j$ of the matrix $M \in {\rm Mat}_{n \times {\rm ex}}(\ZZ)$
in \autoref{thm:M-1}
is given by 
\begin{equation}\label{eq:Mj-csj}
M_j =-e_{s(j)} +  {\rm deg}_{\bf y}(c_{s(j)}) = \deg_{\bf y}\left(\frac{c_{s(j)}}{y_{s(j)}}\right).
\end{equation}
Alternatively, with the matrix $E \in {\rm Mat}_{n \times n}(\ZZ)$ given in \eqref{eq:E-Et} and for 
$j \in {\rm ex}$, one has
\begin{equation}\label{eq:Mj-xjj}
M_j = E^t\left(-e_j - e_{s(j)} + {\rm deg}_{\bf x} \left(\{x_j, x_{s(j)}\}_{\rm tail}\right)\right).
\end{equation}
\end{theorem}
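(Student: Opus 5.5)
The plan is to read both formulas off \autoref{pr:bsj-1} combined with \autoref{le:deg-csj}. By \eqref{eq:iota-bsj} we have $b_{s(j)} = \iota_{s(j)}\bfy^{e_{s(j)}+M_j}$, so $\deg_\bfy(b_{s(j)}) = e_{s(j)} + M_j$. By \autoref{le:deg-csj}, in the symmetric case ${\rm lt}_\bfy(c_{s(j)}) = b_{s(j)}$, hence $\deg_\bfy(c_{s(j)}) = \deg_\bfy(b_{s(j)})$. This gives $M_j = -e_{s(j)} + \deg_\bfy(c_{s(j)})$. The second expression in \eqref{eq:Mj-csj} then follows from the additivity \eqref{eq:degree-bb} of $\deg_\bfy$ on $\calT$, applied to $c_{s(j)} = y_{s(j)}\cdot (c_{s(j)}/y_{s(j)})$ with $\deg_\bfy(y_{s(j)}) = e_{s(j)}$.

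For \eqref{eq:Mj-xjj}, I would convert to ${\bf x}$-degrees via \autoref{le:leading}. Since $c_{s(j)} \in R$, we have $\deg_\bfy(c_{s(j)}) = E^t \deg_{\bf x}(c_{s(j)})$. By the first formula of \autoref{le:deg-csj},
\[
\deg_{\bf x}(c_{s(j)}) = \bar{e}_{p(j)} + \deg_{\bf x}\left(\{x_j, x_{s(j)}\}_{\rm tail}\right),
\]
using \eqref{eq:degree-bb-x} and the fact that scalar factors do not affect degrees; here $\bar e_{-\infty} := 0$. Hence
\[
M_j = -e_{s(j)} + E^t\bar{e}_{p(j)} + E^t\deg_{\bf x}\left(\{x_j, x_{s(j)}\}_{\rm tail}\right).
\]

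It remains to express the first two terms in the form $E^t(-e_j - e_{s(j)})$. From \eqref{eq:F}, $\bar e_k = F^t e_k$, so $E^t\bar e_{p(j)} = e_{p(j)}$ with $e_{-\infty}=0$. Also $E^t e_k = e_k - e_{p(k)}$, which gives
\[
E^t(-e_j - e_{s(j)}) = -e_j + e_{p(j)} - e_{s(j)} + e_j = e_{p(j)} - e_{s(j)},
\]
since $p(s(j)) = j$. Thus $-e_{s(j)} + e_{p(j)} = E^t(-e_j - e_{s(j)})$, and \eqref{eq:Mj-xjj} follows.

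The main content sits in \autoref{le:deg-csj}, which is already established, so the only point needing care is the bookkeeping of the boundary case $p(j) = -\infty$. There $\bar e_{p(j)} = 0$ and $e_{p(j)} = 0$, and the same identities hold.
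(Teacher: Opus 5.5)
Your proposal is correct and follows essentially the same route as the paper. The first formula comes from $b_{s(j)} = \iota_{s(j)}\bfy^{e_{s(j)}+M_j}$ together with ${\rm lt}_\bfy(c_{s(j)}) = b_{s(j)}$ from \autoref{le:deg-csj}. The second comes from the $\bfy$-leading term of $c_{s(j)}$: the paper quotes this directly as \eqref{eq:lt-y-csj-1}, which is itself derived from the ${\bf x}$-leading term via \autoref{le:leading}, exactly as you do. Your explicit check that $-e_{s(j)}+e_{p(j)} = E^t(-e_j-e_{s(j)})$, using $p(s(j))=j$, simply spells out bookkeeping the paper leaves implicit.
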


\begin{proof}
Let $j \in{\rm ex}$. By 
 \autoref{thm:M-1}, $b_{s(j)} = \iota_{s(j)} {\bf y}^{e_{s(j)} + M_j}$.  
 By \autoref{le:deg-csj}, ${\rm deg}_{\bf y}
c_{s(j)} = \deg_{\bf y} b_{s(j)}$. Thus \eqref{eq:Mj-csj} holds. 
By  \eqref{eq:lt-y-csj-1}, 
${\rm deg}_{\bf y} c_{s(k)} = e_{p(k)} + E^tf_{[k, s(k)]}$. 
Thus \eqref{eq:Mj-xjj} holds.
\end{proof}

\begin{remark}\label{rk:quantum}
{\rm
The special case of \eqref{eq:Mj-xjj} for $M_1$ when $s(1) = n$ is given in \cite[$\S$8.10]{GY:Quantum-CGL}.
\hfill $\diamond$
}
\end{remark}

\subsection{Cartan integers via Poisson cohomology of the log-canonical term}\label{ss:bth}
We continue with the notation in $\S$\ref{ss:M-deg-csk}, and recall from \eqref{eq:M-EQE} that 
$M = E^t \Theta$, where 
\[
\Theta = F^t M=(\bnu^{-1} \overline{\Lambda}) E_{n \times {\rm ex}} = 
QE_{n \times {\rm ex}} \in 
{\rm Mat}_{n \times {\rm ex}}(\ZZ).
\]
For $j \in {\rm ex}$, denote the $j^{\rm th}$-column of $\Theta$ by 
\begin{equation}\label{eq:QE-bth}
\bth^{(j, s(j))} =\Theta e_j = QEe_j \in \ZZ^n.
\end{equation}
By \eqref{eq:Mj-xjj}, one then has
\begin{equation}\label{eq:bth-j}
\bth^{(j, s(j))} = -e_j - e_{s(j)} + {\rm deg}_{\bf x} \left(\{x_j, x_{s(j)}\}_{\rm tail}\right),
\hs j \in {\rm ex}.
\end{equation}
The fact that the columns of $\Theta$ are given as in \eqref{eq:bth-j}
has also been proved in \cite[$\S$4.6]{Lu-Mykola:deformation}, where
a classification of symmetric Poisson CGL extensions is given, and the vectors 
$\bth^{(j, s(j))}$ are interpreted in terms of Poisson cohomology.
To review the relevant results from 
\cite[$\S$4.6]{Lu-Mykola:deformation}, for a given symmetric $\TT$-Poisson CGL extension
$R = (\bk[x_1, \ldots, x_n], \{\, , \, \})$, consider 
the 
Poisson bi-vector field
\[
\pi = \sum_{1 \leq j < k \leq n} \{x_j, x_k\} \frac{\partial}{\partial x_j}
\wedge \frac{\partial}{\partial x_k}
\]
on $\bk^n$ defining $\{\, , \, \}$. 
Write $\pi = \pi_0 + \pi_{\rm tail}$, where 
\begin{equation}\label{eq:pi-0-tail}
\pi_0 = -\sum_{1 \leq j < k \leq n} \chi_j(h_k) x_jx_k \frac{\partial}{\partial x_j}
\wedge \frac{\partial}{\partial x_k} \hs \mbox{\rm and}\hs
\pi_{\rm tail} = -\sum_{1 \leq j < k \leq n} \delta_k(x_j) 
\frac{\partial}{\partial x_j}\wedge \frac{\partial}{\partial x_k}
\end{equation}
are respectively the {\it log-canonical term} and the {\it tail term} of $\pi$. 
Consider the standard action of the torus $(\bk^\times)^n$ on $\bk^n$ and the induced 
$(\bk^\times)^n$-action on the space $\mfX^2(\bk^n)$ of all the bi-vector fields on $\bk^n$. 
Identify the character lattice of $(\bk^\times)^n$ with $\ZZ^n$ (of column vectors). 
For $j \in {\rm ex}$, the vector $\bth^{(j, s(j))} \in \ZZ^n$ is then the $(\bk^\times)^n$-weight of
\[
V_{\bth^{(j, s(j))}} := {\bf x}^{{\rm deg}_{\bf x} \left(\{x_j, x_{s(j)}\}_{\rm tail}\right)}
\frac{\partial}{\partial x_j}\wedge \frac{\partial}{\partial x_{s(j)}}.
\]
Let $\calS_\pi = \{\bth^{(j, s(j))}: j \in {\rm ex}\}$ and let
\[
\pi_1 = -\sum_{j \in {\rm ex}}\lambda_{s(j)} \iota_{s(j)} V_{\bth^{(j, s(j))}}.
\]
By \eqref{eq:lt-x-xjj}, $\pi_1$ is a summand of $\pi_{\rm tail}$. Let $[\, , \, ]_{\rm Sch}$ be the
Schouten bracket on the space of poly-vector fields on $\bk^n$. 
By the classification result  on symmetric $\TT$-Poisson CGL extensions
stated in \cite[Theorem C]{Lu-Mykola:deformation}, the identity $\Theta  =
\bnu^{-1}\overline{\Lambda} E_{n \times {\rm ex}}$ 
implies that 
\[
[\pi_0, \;\pi_1]_{\rm Sch} = 0,
\]
and that 
$\pi_1$ defines a non-zero element in $\mH^2_{\pi_0}(\CC^n)^\TT$, the second $\TT$-invariant
Poisson cohomology space of $\pi_0$. Moreover, 
 $\pi$ is the unique
{\it $\calS_\pi$-admissible algebraic Poisson deformation of $\pi_0$ along $\pi_1$}, in the sense that 
$\pi$ is a unique finite sum
\[
\pi = \pi_0 + \pi_1 + \pi_2+ \cdots ,
\]
where for each $m \geq 1$, every monomial term in $\pi_m$ has 
a $(\bk^\times)^n$-weight that is a sum of exactly $m$ elements of $\calS_\pi$. 

Denote, as in  \cite{Lu-Mykola:deformation}, by $\calS(\pi_0) \subset \ZZ^n$ the set of all
non-zero $(\bk^\times)^n$-weights in $\mH^2_{\pi_0}(\CC^n)^\TT$. Then $\calS_\pi \subset \calS(\pi_0)$.
It is shown in \cite[$\S$4.3]{Lu-Mykola:deformation} that associated to $\pi_0$ one has  
the {\it oriented smoothing graph}
$\Gamma^+(\pi_0)$ of $pi_0$, whose vertex set is $[1, n]$ and whose
connected components are called {\it level sets in $[1, n]$ defined by $\pi_0$}. It is further
shown in \cite[$\S$4.3]{Lu-Mykola:deformation} that every $\bth \in \calS(\pi_0)$ is of the form
\begin{equation}\label{eq:bth-entries}
\bth = (0, \ldots, 0, -1, \bth_{j+1}, \ldots, \bth_{k-1}, -1, 0, \ldots, 0)^t \in \ZZ^n,
\end{equation}
for a unique pair $1 \leq j < k \leq n$ on the same level defined by $\pi_0$, with the two $-1$s 
as the positions $j$ and $k$, and that for every
$i \in [j+1, k-1]$ the integer $\bth_i$ is non-negative and depends only on the level of
$i$ and the level of $j$ and $k$ defined by $\pi_0$. Any  
 $-\bth_i$ appearing in \eqref{eq:bth-entries} for some $\bth \in \calS(\pi_0)$
is called a {\it Cartan integer associated to $\pi_0$} in \cite[$\S$4.3]{Lu-Mykola:deformation}.

Returning to the symmetric $\TT$-Poisson CGL extension
$R = (\bk[x_1, \ldots, x_n], \{\, , \, \})$, it is also shown in 
\cite[$\S$4.3]{Lu-Mykola:deformation} that for every $j \in [1, n]$, the level set $L(j)$ 
associated to $R$ defined 
in \autoref{nota-p-s} is contained in the level set\footnote{What is denoted as $L(j)$ here for the 
level set of $j \in [1, n]$ associated to $R$ is denoted as $L_\pi(j)$ in 
\cite{Lu-Mykola:deformation}, while the level set of $j$ defined by $\pi_0$ is denoted as $L(j)$ in 
\cite{Lu-Mykola:deformation}} of 
$j \in [1, n]$ defined by $\pi_0$. For $j \in {\rm ex}$, 
writing
\[
\bth^{(j, s(j))} = (0, \ldots, 0, -1, \bth^{(j, s(j))}_{j+1}, \ldots, \bth^{(j, s(j))}_{s(j)-1}, -1, 0, \ldots, 0)^t
\in \ZZ^n,
\]
then the non-negative integer 
$\bth^{(j, s(j))}_i$ for every $i \in [j+1, s(j)-1]$ depends only on the 
level sets $L(i)$ and $L(j)=L(k)$ associated to $R$, and  we write 
 \begin{equation}\label{eq:a-LL}
 a_{\sL(i), \sL(j)}:=-\bth^{(j, s(j))}_i, \hs i \in [j+1, s(k)-1].
 \end{equation}
While a {Cartan integer} associated to $\pi_0$, we also call $a_{\sL(i), \sL(j)}$ 
in \eqref{eq:a-LL} a
{\rm Cartan integer associated to $R$}.
 Consequently, for every $j \in {\rm ex}$, we have
 \begin{equation}\label{eq:bth-Cartan}
 \bth^{(j, s(j))} =
 (0, \,\ldots, \,0, \,-1, \, -a_{\sL(j+1), \sL(j)}, \, \ldots, \, 
 -a_{\sL(s(j)-1), \sL(j)}, \, -1, \, 0, \, \ldots, \, 0)^t \in \ZZ^n,
 \end{equation}
 where the two $-1$ entries are at positions  $j$ and $s(j)$. As a consequence (see
also \cite[$\S$4.6]{Lu-Mykola:deformation}, 
one has the following description of the matrix $M = ((M)_{i, j})_{i \in [1, n], j \in {\rm ex}}$:
 \begin{equation}\label{eq:M-jk}
M_{i, j}  = \begin{cases} 1, & \hs i = p(j)\neq -\infty, \\
-1, & \hs i = s(j), \\
a_{\sL(i), \sL(j)}, & \hs i < j < s(i) < s(j), \\
-a_{\sL(i), \sL(j)}, & \hs j < i < s(j) < s(i) \;\; (\mbox{including when} \; s(i) = +\infty),\\
0, & \hs \mbox{otherwise}.\end{cases}
\end{equation}


\subsection{Proper re-orderings of symmetric Poisson CGL extensions}\label{ss:reordering}
We continue to assume that 
$R = (\bk[x_1, \ldots, x_n], \{\, , \, \})$ is a symmetric Poisson CGL extension
as in \autoref{de:sym-CGL-intro},
and let the notation be as in $\S$\ref{ss:nota-sym}-$\S$\ref{ss:M-deg-csk}. 
As proved in \cite[$\S$6]{GY:Poi-CGL}, the symmetry property of $R$ gives rise to many other presentations
of the Poisson algebra $R$ as $\T$-Poisson CGL extensions, called {\it proper re-orderings of $R$}, which 
 we now recall. 

Following \cite[Definition 6.3]{GY:Poi-CGL}, let
$\Xi_n$ be the subset of $S_n$ consisting of all $\tau \in S_n$ such that for each $j \in [1, n]$,
$\tau([1, j]) \subset [1, n]$ is a sub-interval of $[1, n]$, i.e.
\[
\tau(j) = 1 + {\rm max} \;\tau([1, j-1]) \hs \mbox{or} \hs \tau(j) = -1+{\rm min}\;\tau([1, j-1]), \hs \forall \, j \in [2, n].
\]
For $\tau \in \Xi_n$, set 
\begin{align}
\label{eq:tau-plus}
\tau(+) &= \{j \in [2, n]: \, \tau(j) = 1 + {\rm max} \;\tau([1, j-1])\}=\{j \in [2, n]: \tau(j)>\tau(1)\}, \\
\label{eq:tau-minus}
\tau(-) &= \{j \in [2, n]: \, \tau(j) = -1+{\rm min}\;\tau([1, j-1])\} =\{j \in [2, n]: \tau(j)<\tau(1)\}.
\end{align}
For any $1 \leq j < k \leq n$, one then has
\[
 \{x_{\tau(j)}, \, x_{\tau(k)}\} =\begin{cases} -\chi_{\tau(j)}(h_{\tau(k)}) x_{\tau(j)}x_{\tau(k)}-
 \delta_{\tau(k)}(x_{\tau(j)}), & \hs k \in \tau(+),\\
 -\chi_{\tau(j)}(h_{\tau(k)}^\ast) x_{\tau(j)}x_{\tau(k)}-
 \delta_{\tau(k)}^\ast(x_{\tau(j)}), & \hs k \in \tau(-).
 \end{cases}
\]
Set  $\delta_{\tau, 1}=0$ and $h_{\tau, 1} = h_{\tau(1)}$, and for $k \in [2, n]$, set
\begin{align*}
&h_{\tau, k} = h_{\tau(k)}, \hs  \delta_{\tau, k} = \delta_{\tau(k)}, \hs
\mbox{if} \;\;\; k \in \tau(+),\\
&h_{\tau, k} = h_{\tau(k)}^\ast, \hs  \delta_{\tau, k} = \delta_{\tau(k)}^\ast, \hs
\mbox{if} \;\;\; k \in \tau(-).
\end{align*}
Note then that  $\lambda_{\tau, j} := \chi_{\tau(j)}(h_{\tau, j}) \in \bk^\times$ for all $j \in [1, n]$, 
where
$\lambda_{\tau, 1} = \lambda_{\tau(1)}$ and for $j \in [2, n]$,
\begin{equation}\label{eq:lam-tau-j}
\lambda_{\tau, j} = \lambda_{\tau(j)} \;\;\; \mbox{if} \;\;\; j \in \tau(+) \hs \mbox{and} \hs
\lambda_{\tau, j} = \lambda_{\tau(j)}^\ast \;\;\; \mbox{if} \;\;\; j \in \tau(-).
\end{equation}
 For each $\tau \in \Xi_n$, it now follows from the definitions 
 (as  proved in \cite[Proposition 6.4]{GY:Poi-CGL})
that the Poisson algebra $R$ is a $\TT$-Poisson CGL extension in the ordered set 
$(x_{\tau(1)}, \ldots, x_{\tau(n)})$ of polynomial generators, which we denote as
\begin{equation}\label{eq:R-tau}
R_\tau = (\bk[x_{\tau(1)}, \ldots, x_{\tau(n)}], \, \{\, , \, \})_{(\chi_{\tau(1)}, \ldots, \chi_{\tau(n)}, h_{\tau, 1}, \ldots, h_{\tau, n})}.
\end{equation}

\begin{definition-notation}\label{de:proper-reordering}
{\rm For $\tau \in \Xi_n$,
the $\TT$-Poisson CGL extension $R_\tau$ in \eqref{eq:R-tau} is called  the {\it proper re-ordering} of 
$R$ by $\tau$. Let
\[
{\bf y}^\prime_\tau = (y^\prime_{\tau, 1}, \ldots, y^\prime_{\tau, n})
\]
be the sequence of 
homogeneous Poisson prime elements
associated to $R_\tau$ by \autoref{thm:5.5}.
}
\end{definition-notation}

In view of the notation in \eqref{eq:R-tau}, we write $R_{\rm id}$,  ${\rm id}\in \Xi$ being
the identity element,
when we want to 
regard $R$ as a $\TT$-Poisson CGL extension in the original CGL generators $(x_1, \ldots, x_n)$.

Recall from $\S$\ref{ss:interval-y} the interval variables associated to $R$ as a symmetric $\TT$-Poisson CGL extension. 
Recall also that $p$ and $s$ are the respective predecessor and successor maps for $R_{\rm id}$.
The following description of ${\bf y}^\prime_\tau$ for $\tau \in \Xi_n$ is 
proved in \cite[Theorem 8.3]{GY:Poi-CGL}. 

\begin{proposition}\label{prop-y-tau}
For any $\tau \in \Xi_n$, one has $y_{\tau, 1}^\prime = x_{\tau(1)}$, and for $j \in [2, n]$,

(1) if $j \in \tau(+)$, then $y^\prime_{\tau, j} = y_{[p^m(\tau(j)),\, \tau(j)]}$, where
\[
m = {\rm max} \, \{m' \in \ZZ_{\geq 0}: p^{m'}(\tau(j)) \in \tau([1, j])\} = |L(\tau(j)) \cap \tau([1, j])|-1;
\]

(2) if $j \in \tau(-)$, then $y^\prime_{\tau, j} = y_{[\tau(j), \,s^m(\tau(j))]}$, where
\[
m = {\rm max} \, \{m' \in \ZZ_{\geq 0}: s^{m'}(\tau(j)) \in \tau([1, j])\}=|L(\tau(j)) \cap \tau([1, j])|-1.
\]
\end{proposition}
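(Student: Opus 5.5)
The plan is to use the uniqueness part of \autoref{thm:5.5}, applied to the $\TT$-Poisson CGL extension $R_\tau$. For each $j \in [1,n]$, the element $y^\prime_{\tau,j}$ is, up to a scalar, the unique homogeneous Poisson prime element of the subalgebra
\[
(R_\tau)_j = \bk[x_{\tau(1)}, \ldots, x_{\tau(j)}]
\]
that does not lie in $(R_\tau)_{j-1}$. Since $\tau \in \Xi_n$, the set $\tau([1,j])$ is an interval $[a,b]$, so $(R_\tau)_j = R_{[a,b]}$, and $(R_\tau)_{j-1}$ is $R_{[a,b-1]}$ if $j \in \tau(+)$ (so $\tau(j)=b$) or $R_{[a+1,b]}$ if $j\in\tau(-)$ (so $\tau(j)=a$). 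The whole task is therefore to identify, inside the interval algebra $R_{[a,b]}$, the homogeneous Poisson prime elements that involve $x_b$ (resp. $x_a$).

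Consider the case $j \in \tau(+)$ and view $R_{[a,b]}$ as a CGL extension in the order $(x_a,\ldots,x_b)$. Its $\delta$'s are restrictions of the original $\delta_k$ to $R_{[a,k-1]}$; this uses symmetry, which gives $\delta_k(x_i)\in R_{[i+1,k-1]}$. By the discussion preceding \autoref{lm:mut-yy}, namely \autoref{thm:5.5} applied to intervals together with \cite[(8.20)]{GY:Poi-CGL}, the homogeneous Poisson primes of $R_{[a,b]}$ are, up to scalars, the elements $y_{[i^{\min},i]}$, where $i$ runs over elements of $[a,b]$ whose successor lies outside $[a,b]$ and $i^{\min}=\min(L(i)\cap[a,b])$. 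Among these, only $y_{[i^{\min},b]}$ with $i=b$ fails to lie in $R_{[a,b-1]}$. Indeed, $y_{[i^{\min},i]}\in R_{[i^{\min},i]}$, and $y_{[i^{\min},b]}\notin R_{[a,b-1]}$ by the leading-term formula \eqref{eq:lt-x-y-interval}, since $x_b$ appears there. Moreover $b^{\min}=p^m(b)$ with $m=|L(b)\cap[a,b]|-1$: the elements of $L(b)$ lying in $[a,b]$ are consecutive under $p$ starting from $b$. This gives (1). Case (2) is symmetric. There we use the reversed presentation of $R_{[a,b]}$ in the order $(x_b,\ldots,x_a)$ with derivations $\delta^\ast$. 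By the second expression in \eqref{eq:y-i-c-c}, the interval elements $y_{[a,s^m(a)]}$ play the same role, and the unique new prime involving $x_a$ is $y_{[a,s^m(a)]}$ with $m=|L(a)\cap[a,b]|-1$. The case $j=1$ is trivial, since then $y^\prime_{\tau,1}=x_{\tau(1)}$.

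It remains to check that the elements $y^\prime_{\tau,j}$ are the actual elements and not merely scalar multiples of them. To do this, I would compare the recursions. \autoref{thm:5.5} defines $y^\prime_{\tau,j}=y^\prime_{\tau,p_\tau(j)}x_{\tau(j)}-\delta_{\tau,j}(y^\prime_{\tau,p_\tau(j)})/\lambda_{\tau,j}$, where $p_\tau$ is the predecessor map of $R_\tau$. \eqref{eq:y-i-c-c} together with \eqref{eq:cc} gives the same shape for the interval elements. By the identification above, $y^\prime_{\tau,p_\tau(j)}$ is the previous interval element $y_{[p^m(\tau(j)),p(\tau(j))]}$ (resp. $y_{[s(\tau(j)),s^m(\tau(j))]}$). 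An induction on $j$ then shows that the scalars are all $1$.

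The main obstacle is the precise description of the Poisson primes of $R_{[a,b]}$: namely, that the level sets of the interval extension are exactly $L(i)\cap[a,b]$, with predecessor map given by the restriction of $p$. This is the content of \cite[Theorem 8.1 and \S8]{GY:Poi-CGL}, i.e. \autoref{le-y-isi} and the statement before \autoref{lm:mut-yy}, and I would quote it rather than reprove it.
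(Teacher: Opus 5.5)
Your reduction is sound as far as it goes. By uniqueness in \autoref{thm:5.5}, $y'_{\tau,j}$ is, up to a scalar, the unique homogeneous Poisson prime of $R_{[a,b]}=R_{\tau([1,j])}$ not lying in $R_{\tau([1,j-1])}$. Once $y'_{\tau,j}=\xi\,y_{[c,b]}$ is known, comparing coefficients of $x_b$ in $R_{[a,b-1]}[x_b]$ with the first recursion in \eqref{eq:y-i-c-c} identifies $y'_{\tau,p_\tau(j)}$ and forces $\xi=1$ by induction. The same works in case (2) with $x_a$ and the second recursion.

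\textbf{The gap.} What is missing is the step that carries all the content: that the homogeneous Poisson primes of an \emph{arbitrary} interval algebra $R_{[a,b]}$ are the classes $\bk^\times y_{[\min(L\cap[a,b]),\,\max(L\cap[a,b])]}$, one for each level $L$ meeting $[a,b]$. The results you point to do not give this. \autoref{le-y-isi} and the paragraph before \autoref{lm:mut-yy} treat only intervals $[i,s^m(i)]$ whose two endpoints lie on one level, whereas $\tau([1,j])$ is an arbitrary interval. More seriously, ``\autoref{thm:5.5} applied to $R_{[a,b]}$'' describes its primes only through the prime sequence and successor map of $R_{[a,b]}$ in the order $(x_a,\ldots,x_b)$. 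Identifying that sequence with the interval elements $y_{[\min(L(k)\cap[a,k]),\,k]}$, and that successor map with the restriction of $s$, is exactly the present proposition for the $\tau\in\Xi_n$ with $\tau(l)=a+l-1$ on $[1,b-a+1]$. Up to scalars, $y'_{\tau,j}$ depends only on the interval $\tau([1,j])$ and on which endpoint $\tau(j)$ is. So that special case, together with its mirror image, is the whole statement, and as written your argument is circular. (The paper itself gives no argument here; it cites \cite[Theorem 8.3]{GY:Poi-CGL}.)

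\textbf{How to fill it.} The missing input can be proved by induction on the left endpoint.

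\emph{Claim:} for all $a\le k$, the homogeneous Poisson primes of $R_{[a,k]}$ are the classes $\bk^\times Y_L$, with $Y_L:=y_{[\min(L\cap[a,k]),\,\max(L\cap[a,k])]}$, for the levels $L$ meeting $[a,k]$.

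\emph{Base case $a=1$.} This is \eqref{eq:calP-k}. The recursion \eqref{eq:y-i-c-c}--\eqref{eq:cc}, started at $y_{[i,i]}=x_i$ with $p(i)=-\infty$, is the recursion \eqref{eq:ykk-00}, so $y_{[\min L(j),\,j]}=y_j$.

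\emph{Step $a\to a+1$.} Apply \autoref{thm:5.5} to $R_{[a,k]}$ in the order $(x_k,\ldots,x_a)$; its last step adjoins $x_a$ to $R_{[a+1,k]}$ with derivation $\delta^\ast_a$. By the hypothesis and \eqref{eq:lt-x-y-interval}, the new prime is a multiple of $y_{[a,t]}$, where $t=\max(L(a)\cap[a,k])$, since every other $Y_L$ lies in $R_{[a+1,k]}$.
\begin{itemize}
\item If $t=a$, the new prime is a multiple of $x_a$. This forces $\delta^\ast_a=0$ on $R_{[a+1,k]}$, since otherwise the new prime would be $zx_a-\delta^\ast_a(z)/\lambda^\ast_a$ with $z$ a non-unit. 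One then simply deletes $\bk^\times x_a$.
\item If $t>a$, then $\delta^\ast_a\neq0$, since otherwise the new prime would be $x_a$. By \eqref{eq:calP-k}, the primes of $R_{[a+1,k]}$ are those of $R_{[a,k]}$ with $\bk^\times y_{[a,t]}$ replaced by $\bk^\times z$, where $\xi\, y_{[a,t]}=zx_a-\delta^\ast_a(z)/\lambda^\ast_a$. Comparing $x_a$-coefficients in $R_{[a+1,k]}[x_a]$ with $y_{[a,t]}=x_a\,y_{[s(a),t]}-c^\ast_{[a,t]}$ gives $z=\xi\, y_{[s(a),t]}$.
\end{itemize}
In both cases the claim holds for $a+1$.

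With the claim in hand, the prime of $R_{[a,b]}$ not in $R_{[a,b-1]}$ (resp.\ not in $R_{[a+1,b]}$) is the one attached to $L(b)$ (resp.\ $L(a)$), and the rest of your argument goes through.
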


\subsection{The Goodearl-Yakimov theorem on  $(\bfy_\tau, M_\tau)$}\label{ss:y-tau-M-tau}
We continue with the notation from $\S$\ref{ss:reordering}. 
Let  $\calL$ be the collection of all level sets of $R_{\rm id}$, as recalled in $\S$\ref{ss:nota-sym}.
Let $\tau \in \Xi_n$.
By \cite[Corollary 8.6(b)]{GY:Poi-CGL}, the level sets of the $\TT$-Poisson CGL extension $R_\tau$ 
are precisely $\tau^{-1}(L)$, where $L \in \calL$. Using the disjoint unions
\[
[1, n] = \bigsqcup_{L \in {\calL}} L =  \bigsqcup_{L \in {\calL}}\tau^{-1}(L),
\]
the element $\tau_\bullet$ in the permutation group $S_n$ is defined in 
\cite[$\S$10.2]{GY:Poi-CGL} as the unique one 
such that
for each $L \in \calL$, $\tau_\bullet (L) = L$ and
\begin{equation}\label{eq:tau-dot-tau}
(\tau_\bullet \tau)|_{\tau^{-1}(L)}: \;\; \tau^{-1}(L) \longrightarrow L
\end{equation}
is the unique order preserving bijection from $\tau^{-1}(L)$ to $L$. In other words, if
\begin{equation}\label{eq:L-list}
L = \{k_1, k_2, \ldots, k_l\} \hs \mbox{and} \hs \tau^{-1}(L) = \{j_1, j_2, \ldots, j_l\}
\end{equation}
with $k_1 < \cdots < k_a$ and $j_1 < \cdots < j_a$, then  $(\tau_\bullet \tau)(j_a) = k_a$ for each $a \in [1, l]$.

\begin{notation}\label{nota:y-tau-0}
{\rm For $\tau \in \Xi_n$,  set\footnote{What are denoted as ${\bf y}_\tau$ and ${\bf q}_\tau$ 
here are respectively
denoted as $\tilde{{\bf y}}_\tau$ and ${\bf r}_\tau$ in \cite[$\S$11]{GY:Poi-CGL}.}
\begin{equation}\label{eq:by-tau}
{\bf y}_\tau =(y_{\tau, 1}, \, \ldots, \, y_{\tau, n}) = \left(y_{\tau, (\tau_\bullet \tau)^{-1}(1)}^\prime, \; y_{\tau, (\tau_\bullet \tau)^{-1}(2)}^\prime, \; \ldots, \, 
y_{\tau, (\tau_\bullet \tau)^{-1}(n)}^\prime\right).
\end{equation}
Let ${\bf q}_\tau \in {\rm Mat}_{n \times n}({\bf k})$ be the Poisson coefficient matrix  of the 
sequence ${\bf y}_\tau$, and let 
\begin{equation}\label{eq:chi-by-tau}
\chi_{{\bf y}_\tau} = (\chi_{y_{\tau, 1}}, \, \ldots, \, \chi_{y_{\tau, n}}) \in X(\TT)^n,
\end{equation}
where  $\chi_{y_{\tau, k}} \in X(\T)$, for $k \in [1, n]$, is the $\T$-weight of $y_{\tau, k}$.
Note that $\bfy = \bfy_{\rm id}$.
\hfill $\diamond$
}
\end{notation}

Recall that ${\rm ex} = \{j \in [1, n]: s(j) =+\infty\}$, and recall from \eqref{eq:Lambda} the diagonal
$\Lambda =(\Lambda_{j, k}) \in {\rm Mat}_{n \times {\rm ex}}({\bf k})$ with $\Lambda_{j,j} = \lambda_{s(j)}=
\lambda_{\sL(j)}$.
We now recall a main part of \cite[Theorem 11.1]{GY:Poi-CGL}.

\begin{theorem}\label{thm:GY-Mtau} \cite[Theorem 11.1]{GY:Poi-CGL}
Let $R$ be any symmetric Poisson CGL extension of length $n$ as in \autoref{de:sym-CGL-intro}.
For each $\tau \in \Xi_n$, there is a unique $M_\tau \in {\rm Mat}_{n \times {\rm ex}}(\ZZ)$ satisfying
\begin{equation}\label{eq:for-M-tau}
{\bf q}_\tau M_\tau = -\Lambda \hs \mbox{and} \hs \chi_{{\bf y}_\tau}M_{\tau} = 0.
\end{equation}
Let $M = M_{\rm id}$. Assume furthermore that\footnote{By \autoref{rk:pre-seed}, \eqref{eq:QQ-pos} is
equivalent to the assumption in \cite[(11.5)]{GY:Poi-CGL} that there exist
positive integers $\{d_{\sL(j)}: j \in {\rm ex}\}$ such that
$d_{\sL(j)} \lambda_{\sL(k)} = d_{\sL(k)} \lambda_{\sL(j)}$ for all $j, k \in {\rm ex}$.}

\begin{equation}\label{eq:QQ-pos}
\frac{\lambda_{\sL(j)}}{\lambda_{\sL(k)}} \in \QQ_{>0}, \hs \forall \;\; j, k \in {\rm ex},
\end{equation}
and assume that 
$R$ is  normal in the sense of \autoref{de:normal} 
(equivalently in the sense of \cite[$\S$9.2]{GY:Poi-CGL} by
\autoref{rk:GY-normal-1}). Then
$\{({\bf y}_\tau, M_\tau): \tau \in \Xi_n\}$ is a set of mutation equivalent  
$\TT$-Poisson seeds in
${\rm Frac}(R)$, and for every ${\rm inv}\subset [1, n]\backslash {\rm ex}$ one has
(see \autoref{de:cluster-A})
\begin{equation}\label{eq:A-R-inv}
\calA({\bf y}, M; {\rm inv}) = \calU({\bf y}, M; {\rm inv}).
\end{equation}
\end{theorem}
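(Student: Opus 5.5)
The plan is to deduce the existence and uniqueness statement from \autoref{thm:M-1} applied to the proper re-ordering $R_\tau$ of \autoref{de:proper-reordering}, followed by a re-indexing. Fix $\tau \in \Xi_n$. By \cite[Proposition 6.4]{GY:Poi-CGL}, $R_\tau$ is a $\TT$-Poisson CGL extension in $(x_{\tau(1)}, \ldots, x_{\tau(n)})$ with scalars $\lambda_{\tau, k}$ as in \eqref{eq:lam-tau-j}. Its sequence of homogeneous Poisson prime elements is ${\bf y}'_\tau$, described in \autoref{prop-y-tau}. Let $s_\tau$ denote its successor map and set ${\rm ex}_\tau = \{k : s_\tau(k) \neq +\infty\}$. \autoref{thm:M-1} then gives a unique $M'_\tau \in {\rm Mat}_{n \times {\rm ex}_\tau}(\ZZ)$ with
\[
{\bf q}'_\tau M'_\tau = -\Lambda_\tau \hand \chi_{{\bf y}'_\tau} M'_\tau = 0,
\]
where $(\Lambda_\tau)_{k,k} = \lambda_{\tau, s_\tau(k)}$. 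Uniqueness holds even in ${\rm Mat}_{n\times {\rm ex}_\tau}(\bk)$, by \autoref{cor:M-unique}.

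Next I re-index by $\sigma = (\tau_\bullet\tau)^{-1}$, so that ${\bf y}_\tau = ({\bf y}'_\tau)^\sigma$ by \eqref{eq:by-tau}. I then set $M_\tau$ equal to the re-ordering $(M'_\tau)^\sigma$ of \autoref{de:seed-reordering}, up to column signs to be fixed below. By \cite[Corollary 8.6(b)]{GY:Poi-CGL}, the level sets of $R_\tau$ are the sets $\tau^{-1}(L)$ for $L \in \calL$. Since $\tau_\bullet\tau$ maps $\tau^{-1}(L)$ onto $L$ order-preservingly, it sends the non-maximal elements of $\tau^{-1}(L)$ to the non-maximal elements of $L$. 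Hence $\sigma^{-1}({\rm ex}_\tau) = {\rm ex}$.

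The point requiring care is the diagonal. The scalar $\lambda_{\tau, s_\tau(k)}$ equals either $\lambda_{\tau(s_\tau(k))}$ or $\lambda^\ast_{\tau(s_\tau(k))}$, according as $s_\tau(k) \in \tau(+)$ or $\tau(-)$. In the first case $\tau(s_\tau(k))$ is not the minimum of its level set $L$, and in the second case it is not the maximum. By \eqref{eq:lam-L} and \eqref{eq:Lam-max-min}, this scalar is therefore $\pm\lambda_{\sL}$. So the re-ordered matrix solves ${\bf q}_\tau X = -\Lambda \ep_\tau$ together with $\chi_{{\bf y}_\tau} X = 0$, for an explicit diagonal sign matrix $\ep_\tau$ (with $-1$ exactly on columns coming from $\tau(-)$). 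I then define $M_\tau = (M'_\tau)^\sigma \ep_\tau$. This matrix is integral and solves \eqref{eq:for-M-tau}. Uniqueness transfers back through the same invertible re-indexing and column scaling, again by \autoref{cor:M-unique} applied to $R_\tau$.

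Under \eqref{eq:QQ-pos}, \autoref{rk:pre-seed} shows that each $M_\tau$ is skew-symmetrizable, so each $({\bf y}_\tau, M_\tau)$ is a $\TT$-Poisson seed. The remaining claims are the main obstacle: that all these seeds are mutation equivalent, and that $\calA = \calU$. For mutation equivalence, I would follow Goodearl--Yakimov. First connect any $\tau \in \Xi_n$ to ${\rm id}$ through a chain in $\Xi_n$ whose consecutive elements differ by an adjacent transposition. Along such a step, ${\bf y}_\tau$ changes in a single entry, and the almost cluster mutation relation \eqref{eq:yy-y} of \autoref{lm:mut-yy} becomes an honest exchange relation precisely when $\zeta_{[i,s^m(i)]} = 1$; by \eqref{eq:iota-zeta} this is the normality hypothesis. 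One must also check that the monomial in \eqref{eq:yy-y} matches the exchange monomial read off from the corresponding column of $M_\tau$. I would verify this by showing that its exponent vector solves the GSV equations, and then invoking uniqueness as in \autoref{le:b-1}; this is the hardest step. For $\calA = \calU$, \autoref{thm:M-2} already gives $\calU({\bf y}, M; {\rm inv}) = R[y_j^{-1} : j \in {\rm inv}]$. Since every $x_k$ equals $y_{\tau,k'}$ for a suitable $\tau$ (take $\tau(1) = k$), all generators of $R$ are cluster or frozen variables. Hence $R[y_j^{-1}] \subset \calA$, and the Laurent phenomenon of \autoref{rk:inv} gives the reverse inclusion.
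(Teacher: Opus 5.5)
Your existence-and-uniqueness argument is the paper's own. It is the first half of the proof of Theorem~\ref{thm:M-3}: apply Theorem~\ref{thm:M-1} and Corollary~\ref{cor:M-unique} to $R_\tau$, re-index by $(\tau_\bullet\tau)^{-1}$, and absorb the signs $\lambda_{\tau,s_\tau(k)}=\pm\lambda_{\sL}$ into $\ep'_\tau$ exactly as in Lemma~\ref{:p-s-E-Lambda-tau}. Skew-symmetrizability under \eqref{eq:QQ-pos} is Remark~\ref{rk:GY-11.1}. The paper does not reprove mutation equivalence or $\calA=\calU$; it imports both from Goodearl--Yakimov. Your derivation of $\calA=\calU$ from Theorem~\ref{thm:M-2}, the identity $y'_{\tau,1}=x_{\tau(1)}$ and the Laurent phenomenon is a legitimate alternative to importing that part, but it presupposes mutation equivalence.

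The gap is in your mutation-equivalence sketch, where you claim that normality, via \eqref{eq:iota-zeta}, turns \eqref{eq:yy-y} into an honest exchange relation. Equation \eqref{eq:iota-zeta} only identifies $\iota_{s(j)}$ with $\zeta_{[j,s(j)]}$, so the hypothesis gives $\zeta_{[i,s^m(i)]}=1$ only for $m=1$. The transposition steps, however, use \eqref{eq:yy-y} for all $m$. For example, suppose $\{1,2,3\}$ is a single level. The step from $\tau=(2,1,3)$ to $\tau'=(2,3,1)$ replaces $y_{[1,2]}$ by $y_{[2,3]}$ in $(x_2,y_{[1,2]},y_{[1,3]})$ through $y_{[1,2]}y_{[2,3]}=x_2y_{[1,3]}+\zeta_{[1,3]}$. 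Nothing you have invoked shows $\zeta_{[1,3]}=1$.

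This can be repaired with the paper's own results, by induction along the chain. Write \eqref{eq:yy-y} as $y_{\rm old}y_{\rm new}=A+\zeta B$, and let $k_0$ be the exchanged index.
\begin{itemize}
\item \emph{Exchange monomial.} Every other entry $y$ of ${\bf y}_\tau$ is also an entry of the log-canonical ${\bf y}_{\tau'}$. Comparing $\{y,y_{\rm old}y_{\rm new}\}$ with $\{y,A+\zeta B\}$ shows that $y$ has the same log-canonical coefficient with the distinct monomials $A$ and $B$. Hence the exponent vector $f$ of $B/A$ satisfies ${\bf q}_\tau f\in\bk e_{k_0}$ and $\chi_{{\bf y}_\tau}f=0$. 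By uniqueness (Lemma~\ref{:linear-alg-1} transported to $R_\tau$), $f$ is a scalar multiple of $M_\tau e_{k_0}$. The entry $-1$ of $f$ at the position of $y_{[i,s^m(i)]}$, together with Theorem~\ref{thm:M-tau-cases}, forces $f=\pm M_\tau e_{k_0}$. Since $A$ and $B$ have disjoint supports, the mutated variable is $(A+B)/y_{\rm old}$. This disposes of your ``hardest step'' without computing any brackets.
\item \emph{The scalar.} If $({\bf y}_\tau,M_\tau)\in[({\bf y},M)]$, that mutated variable lies in $\calA\subseteq\calU=R$ by Theorem~\ref{thm:M-2}. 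Since also $y_{\rm new}\in R$, we get $(1-\zeta)B\in y_{\rm old}R$. Now $y_{\rm old}$ is a prime of $R$ not associate to any factor of $B$, because their ${\bf x}$-leading terms are supported on different levels. Hence $\zeta=1$.
\item \emph{The matrix.} $\mu_{k_0}(M_\tau)=M_{\tau'}$, because mutation preserves the GSV Equations and their solution is unique.
\end{itemize}
Also, when $\tau(k)$ and $\tau(k+1)$ lie on different levels, ${\bf y}_{\tau'}={\bf y}_\tau$. So the extended cluster changes in at most one entry, not exactly one.
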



\begin{remark}\label{rk:GY-11.1}
{\rm In the terminology of \autoref{de:pre-seed}, without \eqref{eq:QQ-pos} the pair 
$(\bfy_\tau, M_\tau)$ for each $\tau \in \Xi_n$ is 
a $\TT$-Poisson pre-seed in ${\rm Frac}(R)$. By \autoref{rk:pre-seed}, the assumption in
\eqref{eq:QQ-pos} guarantees that $M_\tau$ is skew-symmetrizable,
so that $(\bfy_\tau, M_\tau)$ is a $\TT$-Poisson seed. 
\hfill $\diamond$
}
\end{remark}

As mentioned in the Introduction, the proof given in 
\cite{GY:Poi-CGL} of the existence and uniqueness of 
$M_\tau \in {\rm Mat}_{n \times {\rm ex}}(\ZZ)$ satisfying  \eqref{eq:for-M-tau} uses a rather involved 
induction process (see \cite[$\S$11.6]{GY:Poi-CGL} for detail), which, under the assumption that $R$ is normal, is also used to show that all the seeds 
$\{(\bfy_\tau, M_\tau):
\tau \in \Xi_n\}$ are mutation equivalent and that \eqref{eq:A-R-inv} holds.

In the next $\S$\ref{ss:explicit-M-tau},  by observing that the equations in \eqref{eq:for-M-tau} are equivalent to the GSV Equations  for the $\TT$-Poisson CGL extension $R_\tau$ 
for each $\tau \in \Xi_n$ (see \autoref{de:GSV-intro}), we give
a new proof of existence and uniqueness of $M_\tau$ as a consequence of \autoref{thm:M-1} applied to $R_\tau$.
By the same elementary 
linear algebra  arguments as that used in \autoref{:linear-alg-1}, we also 
give explicit formulas for $M_\tau$ as matrix products (\autoref{thm:M-3}) as well as
explicit description of the entries of $M_\tau$ in terms of Cartan integers 
associated to $R$  (\autoref{thm:M-tau-cases}).

\subsection{Explicit formulas for $M_\tau$}\label{ss:explicit-M-tau}
Continuing with the notation from $\S$\ref{ss:y-tau-M-tau}, 
we first prepare some more facts on the $\TT$-Poisson CGL extension $R_\tau$ for each $\tau \in \Xi_n$.

\begin{notation}\label{nota:p-s-tau}
{\rm 
For $\tau \in \Xi_n$, let 
\[
p_\tau: \;\; [1, n] \longrightarrow \{-\infty\} \cup [1, n-1]
\hs \mbox{and} \hs
s_\tau: \;\; [1, n] \longrightarrow [2, n] \cup \{+\infty\}
\]
be the respective predecessor and the successor maps
for $R_\tau$, 
and let 
\[
{\rm ex}_\tau = \{j \in [1, n]: s_\tau(j) \neq +\infty\}.
\]
Let
$E_\tau \in {\rm Mat}_{n \times n}(\ZZ)$ and
$F_\tau = (E_\tau)^{-1}$ be defined via \eqref{eq:E-Et} 
for the $\TT$-Poisson CGL extension $R_\tau$, i.e. (recall again $e_{+\infty} = +\infty$),
\begin{equation}\label{eq:E-tau-def}
E_\tau = (e_1 - e_{s_\tau(1)}, \; e_2 - e_{s_\tau(2)}, \; \ldots, \; e_n - e_{s_\tau(n)}),
\end{equation}
and let 
$\Lambda_\tau  \in {\rm Mat}_{n \times {\rm ex}_\tau}({\bf k})$ 
be the diagonal matrix defined via \eqref{eq:Lambda} for $R_\tau$, i.e., 
with $(j, j)$-entry $\lambda_{\tau, s_\tau(j)}$
 for $j \in {\rm ex}_\tau$. 
Let $\ep_\tau^\prime \in {\rm Mat}_{{\rm ex}_\tau \times {\rm ex}_\tau}(\ZZ)$ be the diagonal matrix 
whose $(j, j)$-entry $\ep_\tau^\prime(j)$, for
$j \in {\rm ex}_\tau$, is given by
\begin{equation}\label{eq:ep-tau-prime}
\ep_\tau^\prime(j) = \begin{cases} 1, & \hs \mbox{if} \;\; s_\tau(j) \in \tau(+),\\
-1, & \hs \mbox{if} \;\; s_\tau(j) \in \tau(-).
\end{cases}
\end{equation}
Set
$\tau (-\infty) = \tau_\bullet (-\infty) = -\infty$ and
$\tau (+\infty) = \tau_\bullet (+\infty) = +\infty$.
Let ${\rm id} \in \Xi_n \subset S_n$ be the identity element, so that $E = E_{\rm id}$ 
and $F = F_{\rm id}$ as given respectively in \eqref{eq:E-Et} and \eqref{eq:F}.
\hfill $\diamond$
}
\end{notation}

Recall from $\S$\ref{ss:nota-intro} that every  $\sigma \in S_n$ gives rise to the $n \times n$ 
matrix, also denoted by 
$\sigma$, via \eqref{eq:sigma-perm}, and that  $\sigma^{-1} = \sigma^t \in {\rm Mat}_{n \times n}(\ZZ)$.

\begin{lemma}\label{:p-s-E-Lambda-tau}
For any $\tau \in \Xi_n$, one has (see also \cite[Lemma 10.2]{GY:Poi-CGL})
\begin{align}\label{eq:ps-ex-tau}
&p_\tau = (\tau_\bullet \tau)^{-1} p (\tau_\bullet \tau), \hs
s_\tau = (\tau_\bullet \tau)^{-1} s (\tau_\bullet \tau), \hs
{\rm ex}_\tau =  (\tau_\bullet \tau)^{-1} ({\rm ex}),\\
\label{eq:E-Lambda-tau}
&E_\tau = (\tau_\bullet \tau)^{-1} E (\tau_\bullet \tau), \hs 
F_\tau = (\tau_\bullet \tau)^{-1} F (\tau_\bullet \tau), \hs 
\Lambda_\tau \ep_\tau^\prime = (\tau_\bullet \tau)^{-1} \Lambda (\tau_\bullet \tau)_{{\rm ex} \times {\rm ex}_\tau}.
\end{align}
Moreover, with the diagonal matrix $\overline{\Lambda} \in {\rm Mat}_{n \times n}(\bk)$ given in \eqref{eq:overline-Lambda}, one has
\begin{equation}\label{eq:tau-overline-Lambda}
\tau_\bullet^{-1} \,\overline{\Lambda} \,\tau_\bullet = \overline{\Lambda}.
\end{equation}
\end{lemma}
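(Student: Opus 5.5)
The plan is to reduce everything to the combinatorics of level sets, using \cite[Corollary 8.6(b)]{GY:Poi-CGL}: the level sets of $R_\tau$ are exactly the sets $\tau^{-1}(L)$ for $L \in \calL$. Write $\sigma = \tau_\bullet\tau$. For any $\TT$-Poisson CGL extension, by \autoref{thm:5.5} and \autoref{nota-p-s}, a level set has the form $\{k_1<\cdots<k_l\}$ with $p(k_a)=k_{a-1}$ and $s(k_a)=k_{a+1}$, where $k_0=-\infty$ and $k_{l+1}=+\infty$. Thus on each level set the maps $p$ and $s$ are simply ``previous element'' and ``next element'' in the increasing order, and they are determined by the partition into level sets. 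By \eqref{eq:tau-dot-tau}, $\sigma|_{\tau^{-1}(L)}: \tau^{-1}(L)\to L$ is order preserving. It therefore intertwines ``previous/next in the level set'' for $R_\tau$ with the same operations for $R$, which gives $p_\tau = \sigma^{-1}p\sigma$ and $s_\tau=\sigma^{-1}s\sigma$ (with the conventions $\sigma(\pm\infty)=\pm\infty$). Since ${\rm ex}_\tau$ is defined by $s_\tau(j)\neq+\infty$, the identity ${\rm ex}_\tau=\sigma^{-1}({\rm ex})$ follows at once.

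For the matrix identities I would use \eqref{eq:tau-lam}: $(\sigma^{-1}E\sigma)_{i,j}=E_{\sigma(i),\sigma(j)}$. This entry is $1$ if $i=j$, $-1$ if $\sigma(i)=s(\sigma(j))$ (equivalently $i=s_\tau(j)$), and $0$ otherwise. These are exactly the entries of $E_\tau$ in \eqref{eq:E-tau-def}. Inverting gives $F_\tau=\sigma^{-1}F\sigma$, because $\sigma^{-1}=\sigma^t$.

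The $\Lambda$ identity is the only step with content. For $j\in{\rm ex}_\tau$, the $(j,j)$-entry of $\sigma^{-1}\Lambda\sigma_{{\rm ex}\times{\rm ex}_\tau}$ is $\Lambda_{\sigma(j),\sigma(j)}=\lambda_{s(\sigma(j))}=\lambda_{\sL}$, where $L$ is the level set of $\tau(j)$; it has at least two elements, and this value comes from \eqref{eq:lam-L}. The corresponding entry of $\Lambda_\tau\ep'_\tau$ is $\ep'_\tau(j)\lambda_{\tau,k}$ with $k=s_\tau(j)\geq 2$. I would then split into two cases by \eqref{eq:lam-tau-j}.

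If $k\in\tau(+)$, then $\tau(k)$ exceeds every element of $\tau([1,k-1])$, in particular $\tau(j)\in L$. So $\tau(k)\neq\min L$, and \eqref{eq:lam-L} gives $\lambda_{\tau,k}=\lambda_{\tau(k)}=\lambda_{\sL}$, with $\ep'_\tau(j)=1$.

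If $k\in\tau(-)$, then $\tau(k)<\tau(j)$, so $\tau(k)\neq\max L$. Hence $\lambda_{\tau,k}=\lambda^\ast_{\tau(k)}=-\lambda_{\sL}$ by \eqref{eq:lam-L}, with $\ep'_\tau(j)=-1$.

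In both cases $\ep'_\tau(j)\lambda_{\tau,k}=\lambda_{\sL}$. Off-diagonal entries vanish on both sides since the permutation $\sigma$ preserves diagonality. This case check, specifically making sure $\tau(k)$ is never the endpoint of $L$ excluded in \eqref{eq:lam-L}, is the main point to get right.

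Finally, \eqref{eq:tau-overline-Lambda} holds because $\overline{\Lambda}$ is diagonal with $(i,i)$-entry $\lambda_{\sL(i)}$, which is constant on level sets. Since $\tau_\bullet(L)=L$ for every $L$, we get $(\tau_\bullet^{-1}\overline{\Lambda}\tau_\bullet)_{i,i}=\lambda_{\sL(\tau_\bullet(i))}=\lambda_{\sL(i)}$, and this gives the claimed identity.
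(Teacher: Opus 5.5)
Your proposal is correct and follows essentially the same route as the paper. The identities for $p_\tau$, $s_\tau$, ${\rm ex}_\tau$, $E_\tau$, $F_\tau$ come from $\tau_\bullet\tau$ being the order-preserving bijection $\tau^{-1}(L)\to L$ on level sets, and the $\Lambda_\tau\ep_\tau^\prime$ identity rests on the same two-case check ($s_\tau(j)\in\tau(+)$ or $s_\tau(j)\in\tau(-)$), which shows that $\tau(s_\tau(j))$ is never the endpoint of $L$ excluded in \eqref{eq:lam-L}. The appeal to $\sigma^{-1}=\sigma^t$ when inverting is unnecessary, since $\bigl((\tau_\bullet\tau)^{-1}E(\tau_\bullet\tau)\bigr)^{-1}=(\tau_\bullet\tau)^{-1}F(\tau_\bullet\tau)$ directly.
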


\begin{proof}
Let $L = \{k_1, \ldots, k_l\}$ be any level set associated to $R_{\rm id}$
with $k_1 < \cdots < k_a$, and let $\tau^{-1}(L) = \{j_1, \ldots, j_l\}$ with 
 $j_1 < \cdots < j_l$, so that
 $(\tau_\bullet \tau)(j_a) = k_a$ for each $a \in [1, l]$. 
 By definition,
\[
((\tau_\bullet \tau)^{-1} p (\tau_\bullet \tau))(j_1) = (\tau_\bullet \tau)^{-1} p(k_1))
= (\tau_\bullet \tau)^{-1} (-\infty) = -\infty = p_\tau(j_1),
\]
and for $a \in [2, l]$,
\[
((\tau_\bullet \tau)^{-1} p (\tau_\bullet \tau))(j_a) = (\tau_\bullet \tau)^{-1} p(k_a))
= (\tau_\bullet \tau)^{-1} (k_{a-1}) =j_{a-1} = p_\tau(j_a).
\]
This shows that $p_\tau = (\tau_\bullet \tau)^{-1} p (\tau_\bullet \tau)$. The formula for $s_\tau$ in 
\eqref{eq:ps-ex-tau} is proved similarly, and the formula for  ${\rm ex}_\tau$ in 
\eqref{eq:ps-ex-tau} follows from that for $s_\tau$.
The identities on $E_\tau$ and $F_\tau$  in \eqref{eq:E-Lambda-tau} also follow from their definitions
and the identify for 
$s_\tau$ in \eqref{eq:ps-ex-tau}.

To prove the identity on $\Lambda_\tau\ep_\tau^\prime$ in \eqref{eq:E-Lambda-tau}, fix 
$j \in {\rm ex}_\tau$ and let $\widetilde{\lambda}_j \in \bfk$ be  the $(j, j)$-entry of the diagonal matrix 
$\Lambda_\tau \ep_\tau^\prime \in {\rm Mat}_{n \times {\rm ex}_\tau}(\bfk)$. Let $k = (\tau_\bullet \tau)(j) \in 
{\rm ex}$, so that $s_\tau(j) = (\tau_\bullet \tau)^{-1}(s(k))$. 
By 
definition, 
$\widetilde{\lambda}_j = \ep_\tau^\prime(j) \lambda_{\tau, s_\tau(j)} = 
\ep_\tau^\prime(j) \lambda_{\tau, (\tau_\bullet \tau)^{-1}(s(k))}$.
As $s_\tau(j) > j \geq 1$, by
\eqref{eq:lam-tau-j}, one has
\begin{equation}\label{eq:lambda-j-prime}
\widetilde{\lambda}_j =
\begin{cases} \ep_\tau^\prime(j) \lambda_{\tau s_\tau(j)} = \lambda_{\tau_\bullet^{-1} (s(k))}
& \;\;\; s_\tau(j) \in \tau(+),\\
\ep_\tau^\prime(j) \lambda^\prime_{\tau s_\tau(j)} = -\lambda^\prime_{\tau_\bullet^{-1}(s(k))},
& \;\;\; s_\tau(j) \in \tau(-).\end{cases}
\end{equation}
Recall that $L(k) \subset [1, n]$ is the level set of $k$ associated to $R=R_{\rm id}$ (see \eqref{eq:Lk}), 
and that $\tau_\bullet (L(k)) = L(k)$.  
It follows from the definition of $\tau(\pm)$ that 
\begin{align}\label{eq:tau-jk-plus}
&s_\tau(j) \in \tau(+) \;\; \Longleftrightarrow \;\;
\tau s_\tau(j) > \tau(j) \;\; \Longleftrightarrow \;\;
\tau_\bullet^{-1}(s(k)) > \tau_\bullet^{-1}(k),\\
\label{eq:tau-jk-minus}
&s_\tau(j) \in \tau(-) \;\; \Longleftrightarrow \;\;
\tau s_\tau(j) < \tau(j) \;\; \Longleftrightarrow \;\;
\tau_\bullet^{-1}(s(k)) < \tau_\bullet^{-1}(k).
\end{align}
Consequently, if $s_\tau(j) \in \tau(+)$, then $\tau_\bullet^{-1}(s(k)) \neq {\rm min}(L(k))$ and, by
\eqref{eq:lam-L-1} and \eqref{eq:lam-L}, 
$\lambda_{\tau_\bullet^{-1} (s(k))} = \lambda_{\sL(k)} = \lambda_{s(k)}$; 
if $s_\tau(j) \in \tau(-)$, then $\tau_\bullet^{-1}(s(k)) \neq {\rm max}(L(k))$ and, again by
\eqref{eq:lam-L-1} and \eqref{eq:lam-L}, 
$-\lambda^\ast_{\tau_\bullet^{-1} (s(k))} =\lambda_{\sL(k)} = \lambda_{s(k)}$. 
It follows from \eqref{eq:lambda-j-prime} that
\[
\widetilde{\lambda}_j = \lambda_{s(k)}, \hs \forall \; j \in {\rm ex}_\tau \; \mbox{and}\; 
k = (\tau_\bullet \tau)(j) \in {\rm ex}.
\]
On the other hand, 
by definitions, 
$(\tau_\bullet \tau)^{-1} \Lambda (\tau_\bullet \tau)_{{\rm ex} \times {\rm ex}_\tau} \in 
{\rm Mat}_{n \times {\rm ex}_\tau}(\ZZ)$ is diagonal whose $(j, j)$-entry, for $j \in {\rm ex}_\tau$,  is $\lambda_{s(k)}$ with $k = (\tau_\bullet \tau)(j)$. 
Thus
$\Lambda_\tau \ep_\tau^\prime = (\tau_\bullet \tau)^{-1} \Lambda (\tau_\bullet \tau)_{{\rm ex} \times {\rm ex}_\tau}$.

Finally, recall that the 
$(j, j)$-entry of the diagonal matrix $\overline{\Lambda}$ is $\lambda_{L(j)}$ for each $j \in [1, n]$.
As $\tau_\bullet(L) = L$ for every level $L$, we have 
$\tau_\bullet^{-1} \,\overline{\Lambda} \,\tau_\bullet = \overline{\Lambda}$. Thus  \eqref{eq:tau-overline-Lambda} holds.
\end{proof}

For $\tau \in \Xi_n$ and  $k \in {\rm ex}$, set
\begin{equation}\label{eq:k-tau}
\kto = {\rm min}\{\tau_\bullet^{-1}(k), \;\tau_{\bullet}^{-1}(s(k))\} \hs \mbox{and} \hs
\ktt =  {\rm max}\{\tau_\bullet^{-1}(k),\; \tau_{\bullet}^{-1}(s(k))\}.
\end{equation}

\begin{lemma}\label{lm:K-tau-0}
For any $\tau \in \Xi_n$ and $i \in [1, n]\backslash {\rm ex}$, the $i^{\rm th}$-row of 
$E^{-1}\taubi E_{n \times {\rm ex}}$ is $0$.
\end{lemma}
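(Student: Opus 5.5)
The plan is to compute the $i^{\rm th}$ row of each factor of $E^{-1}\taubi E_{n \times {\rm ex}}$ directly, using three facts: the description of $F = E^{-1}$ in \eqref{eq:F}, the fact that $\tau_\bullet$ preserves every level set, and the fact that each column of $E_{n\times {\rm ex}}$ is supported inside a single level set.

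\emph{Step 1 (row $i$ of $F$).} The $i^{\rm th}$ row of $F$ is $e_i^t F = (F^t e_i)^t = \overline{e}_i^{\,t}$ by \eqref{eq:F}. Since $i \notin {\rm ex}$, we have $s(i) = +\infty$, so $i = {\rm max}\, L(i)$. Hence $L(i) \cap [1, i] = L(i)$, and
\[
\overline{e}_i = \sum_{k \in L(i)} e_k =: \mathbb{1}_{L(i)} ,
\]
which is the indicator vector of the level set $L(i)$.

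\emph{Step 2 (multiplying by $\taubi$).} By the convention \eqref{eq:sigma-perm}, $\tau_\bullet e_k = e_{\tau_\bullet(k)}$ and $\taubi = \tau_\bullet^t$. Therefore
\[
\mathbb{1}_{L(i)}^t \, \taubi = (\tau_\bullet \mathbb{1}_{L(i)})^t = \mathbb{1}_{\tau_\bullet(L(i))}^t = \mathbb{1}_{L(i)}^t .
\]
The last equality holds because $\tau_\bullet(L) = L$ for every $L \in \calL$, by the definition of $\tau_\bullet$ in $\S$\ref{ss:y-tau-M-tau}.

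\emph{Step 3 (multiplying by $E_{n \times {\rm ex}}$).} For $j \in {\rm ex}$, the $j^{\rm th}$ column of $E_{n\times {\rm ex}}$ is $e_j - e_{s(j)}$ with $s(j) \neq +\infty$. By construction, $j$ and $s(j)$ lie in the same level set $L(j)$. So
\[
\mathbb{1}_{L(i)}^t (e_j - e_{s(j)})
\]
equals $1-1=0$ if $L(j) = L(i)$, and $0 - 0 = 0$ otherwise. Hence every entry of the $i^{\rm th}$ row of $E^{-1}\taubi E_{n \times {\rm ex}}$ vanishes.

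There is no real obstacle here. The only points needing care are the convention for permutation matrices, i.e.\ whether $\taubi$ acting on the right of a row vector corresponds to $\tau_\bullet$ or $\taubi$ on indices. This does not affect the conclusion, since both $\tau_\bullet$ and $\taubi$ preserve each level set.
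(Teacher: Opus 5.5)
Your proof is correct. It runs in the opposite direction from the paper's. The paper works column by column: for $k \in {\rm ex}$ it computes
\[
(E^{-1}\taubi E)e_k = F\bigl(e_{\taubi(k)}-e_{\taubi(s(k))}\bigr) = \pm \sum_{l \in L(k)\cap[\kto,\,\ktt-1]} e_l ,
\]
and then notes that every such $l$ lies strictly below $\ktt \in L(k)$. Hence each such $l$ has a successor and belongs to ${\rm ex}$.

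You work row by row instead. For $i \notin {\rm ex}$, the index $i$ is the maximum of its level set, so the $i^{\rm th}$ row of $F$ is the indicator row vector $\sum_{l\in L(i)} e_l^t$. This row is fixed by $\taubi$ and kills every column $e_j-e_{s(j)}$ of $E_{n\times{\rm ex}}$.

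The two routes rest on the same facts: the description of $F$ in \eqref{eq:F}, $\tau_\bullet$-invariance of level sets, and $j, s(j)$ lying on the same level. Your version is a bit more economical, since it never has to decide which of $\taubi(k)$, $\taubi(s(k))$ is larger or track the resulting sign. The paper's version yields more: explicit columns of $E^{-1}\taubi E$, with entries in $\{0,\pm1\}$ supported on $L(k)\cap[\kto,\ktt-1]$.

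For the only use of the lemma, in the proof of \autoref{thm:M-3}, the vanishing of the rows outside ${\rm ex}$ is all that is needed. That use is the identity $E_{n\times{\rm ex}}(E^{-1}\taubi E)_{{\rm ex}\times{\rm ex}}=\taubi E_{n\times{\rm ex}}$, and your argument supplies exactly what it requires.
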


\begin{proof}
Let $k \in {\rm ex}$. By the formulas for $E$ in \eqref{eq:E-Et} and $F = E^{-1}$ in \eqref{eq:F} we have
\[
(E^{-1}\taubi E)e_k = E^{-1}\taubi(e_k-e_{s(k)}) = F(e_{\taubi(k)}-e_{\taubi(s(k))})
=  \pm \sum_{l \in L(k) \cap [\kto, \ktt-1]} 
e_l.
\]
As $L(k) \cap [\kto, \ktt-1]\subset {\rm ex}$, 
the $i^{\rm th}$-row of 
$E^{-1}\taubi Ee_k$ is $0$ if $i \notin {\rm ex}$.
\end{proof}

\begin{notation}\label{nota:M-tau-prime}
{\rm 
For $\tau \in \Xi_n$, let 
$M_\tau^\prime \in {\rm Mat}_{n \times {\rm ex}_\tau}(\ZZ)$
be defined as in 
\autoref{thm:M-1} for the $\TT$-Poisson CGL extension $R_\tau$. 
Recall from \eqref{eq:ep-tau-prime} the definition of $\ep_\tau^\prime =\pm 1$ for $j \in {\rm ex}_\tau$.
\hfill $\diamond$
}
\end{notation}


\begin{theorem}\label{thm:M-3}
Let $R$ be any  symmetric $\TT$-Poisson CGL extension of length $n$ as in \autoref{de:sym-CGL-intro}.
For every $\tau \in \Xi_n$, the integer matrix 
\begin{equation}\label{eq:M-M-tau}
M_\tau = (\tau_\bullet \tau) (M_\tau^\prime \ep_\tau^\prime) 
((\tau_\bullet \tau)^{-1})_{{\rm ex}_\tau \times {\rm ex}} \in {\rm Mat}_{n \times {\rm ex}}(\ZZ)
\end{equation}
is the unique solution  to the linear equations 
\eqref{eq:for-M-tau}
in ${\rm Mat}_{n \times {\rm ex}}({\bf k})$, and explicitly one has
 \begin{align}\label{eq:MM-tau-0}
M_\tau & = E^t \tau_\bullet \bnu^{-1}  \tau_\bullet^t E \Lambda = 
E^t \tau_\bullet Q\, \tau_\bullet^{t} \, E_{n \times {\rm ex}} =
(E^{-1}\tau_\bullet^{-1} E)^t M (E^{-1}\tau_\bullet^{-1} E)_{{\rm ex} \times {\rm ex}},
\end{align}
where  the matrices $E, \bnu$,  $\Lambda$, and $Q = \bnu^{-1}\overline{\Lambda}$ are respectively given in 
\eqref{eq:E-Et}, \eqref{eq:bnu}, \eqref{eq:Lambda}, and \eqref{eq:def-Q-calH}, and $M = M_{\rm id}$. 
Here  ${\rm id}$ is again the identity element of $S_n$.
\end{theorem}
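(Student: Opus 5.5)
The plan is to apply \autoref{thm:M-1} to the $\TT$-Poisson CGL extension $R_\tau$ and then transport the result back along the permutation $\sigma := \tau_\bullet\tau$. Denote by $\bfy'_\tau$ the sequence of homogeneous Poisson prime elements of $R_\tau$, by ${\bf q}'_\tau$ its Poisson coefficient matrix, and by $\chi_{\bfy'_\tau}$ its weight vector.

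By \eqref{eq:by-tau} we have $\bfy_\tau = (\bfy'_\tau)^{\sigma^{-1}}$, meaning $y_{\tau,k} = y'_{\tau,\sigma^{-1}(k)}$. Using \eqref{eq:tau-lam}, this gives
\[
{\bf q}_\tau = \sigma\,{\bf q}'_\tau\,\sigma^{-1} \hand \chi_{\bfy_\tau} = \chi_{\bfy'_\tau}\sigma^{-1}.
\]
For $N\in{\rm Mat}_{n\times{\rm ex}}(\bk)$, write $N = \sigma N'(\sigma^{-1})_{{\rm ex}_\tau\times{\rm ex}}$. This is a bijection with ${\rm Mat}_{n\times{\rm ex}_\tau}(\bk)$, since ${\rm ex}_\tau=\sigma^{-1}({\rm ex})$ by \eqref{eq:ps-ex-tau}. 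Under this change of variables, the equations \eqref{eq:for-M-tau} become
\[
{\bf q}'_\tau N' = -\sigma^{-1}\Lambda\,\sigma_{{\rm ex}\times{\rm ex}_\tau} = -\Lambda_\tau\ep'_\tau \hand \chi_{\bfy'_\tau}N'=0,
\]
where the middle equality is \eqref{eq:E-Lambda-tau}. Since $\ep'_\tau$ is diagonal with entries $\pm1$, \autoref{thm:M-1} for $R_\tau$, together with \autoref{cor:M-unique}, shows that $N'=M'_\tau\ep'_\tau$ is the unique solution, and that it is integral. This proves existence, uniqueness and integrality of $M_\tau$, and also formula \eqref{eq:M-M-tau}.

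For the explicit product, \autoref{thm:M-1} gives $M'_\tau = E_\tau^t\bnu_\tau^{-1}E_\tau\Lambda_\tau$, where $\bnu_\tau$ is the matrix \eqref{eq:bnu} built from $(\chi_{\tau(i)}, h_{\tau,i})$. Substituting $E_\tau=\sigma^{-1}E\sigma$ and $\Lambda_\tau\ep'_\tau=\sigma^{-1}\Lambda\sigma_{{\rm ex}\times{\rm ex}_\tau}$ gives
\[
M_\tau = E^t(\sigma\bnu_\tau\sigma^{-1})^{-1}E\Lambda.
\]
The first equality in \eqref{eq:MM-tau-0} therefore reduces to the key identity
\[
\sigma\,\bnu_\tau\,\sigma^{-1} \;=\; \tau_\bullet\,\bnu\,\tau_\bullet^{-1},\qquad\text{equivalently}\qquad \tau^{-1}\bnu\,\tau=\bnu_\tau .
\]
I expect this entrywise check to be the main obstacle. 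The diagonal entries of $\bnu_\tau$ are $\lambda_{\tau,i}$, which equal $\lambda_{\tau(i)}$ or $\lambda^\ast_{\tau(i)}$. The off-diagonal entries are $\chi_{\tau(i)}(h_{\tau,j})+\chi_{\tau(j)}(h_{\tau,i})$ for $i<j$; the entry below the diagonal is $0$.

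Applying the symmetry relation $\chi_a(h_b)=-\chi_b(h_a^\ast)$ for $a<b$, the $(i,j)$ entry for $i<j$ should reduce to $\nu_{\tau(i),\tau(j)}$ up to the ordering convention, where $\bnu$ is extended to a symmetric-off-diagonal matrix. The trouble is that $\tau^{-1}\bnu\tau$ is not triangular, while $\bnu_\tau$ is. So I expect the literal identity to fail. What should hold instead is a weaker relation $\bnu_\tau = \tau^{-1}\bnu\tau + (\text{skew part})$ that does not affect the final product.

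My fallback is to avoid this matching altogether and use the uniqueness argument of \autoref{:linear-alg-1} directly. Write ${\bf q}_\tau = F'\blam' F'^t$ with $F'=\tau_\bullet F\tau^{-1}$-type factors, and show that the solution of \eqref{eq:for-M-tau} equals $E^t\tau_\bullet\bnu^{-1}\tau_\bullet^tE\Lambda$. To do this, check that this matrix satisfies $\chi_{\bfy_\tau}M_\tau=0$ and ${\bf q}_\tau M_\tau=-\Lambda$, using the relation $\bsigma=\blam+\bnu$ as in that proof; uniqueness was already established above. I will also need the symmetric-$\lambda$ relations \eqref{eq:lam-L} to handle the diagonal, in the spirit of the proof of \eqref{eq:lambda-j-prime}.

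The remaining equalities are formal. First, $\bnu^{-1}\tau_\bullet^tE\Lambda = \bnu^{-1}\tau_\bullet^{t}\overline{\Lambda}E_{n\times{\rm ex}} = \bnu^{-1}\overline{\Lambda}\tau_\bullet^tE_{n\times{\rm ex}}$ by \eqref{eq:E-Lambda} and \eqref{eq:tau-overline-Lambda}, which gives $E^t\tau_\bullet Q\tau_\bullet^tE_{n\times{\rm ex}}$. Second, inserting $EE^{-1}$ gives
\[
E^t\tau_\bullet Q\tau_\bullet^tE_{n\times{\rm ex}} = (E^{-1}\tau_\bullet^{-1}E)^t(E^tQE)(E^{-1}\tau_\bullet^{-1}E)_{n\times{\rm ex}}.
\]
By \autoref{lm:K-tau-0}, the rows of the last factor outside ${\rm ex}$ vanish, so only the columns of $E^tQE$ indexed by ${\rm ex}$ contribute. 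Those columns form $M$ by \eqref{eq:M-EQE}, which yields the final expression.
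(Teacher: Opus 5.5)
Your first step is correct and agrees with the paper: transporting \eqref{eq:for-M-tau} along $\sigma=\taub\tau$ to the GSV Equations of $R_\tau$ with right-hand side $-\Lambda_\tau\ep_\tau^\prime$ gives existence, uniqueness, integrality and \eqref{eq:M-M-tau}. Your last step is also correct and agrees with the paper: the second and third equalities of \eqref{eq:MM-tau-0} follow from \eqref{eq:E-Lambda}, \eqref{eq:tau-overline-Lambda} and \autoref{lm:K-tau-0}.

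\textbf{The gap.} It is the first equality $M_\tau=E^t\taub\bnu^{-1}\taub^tE\Lambda$, which is the substantive part of the theorem. You rightly see that $\tau^{-1}\bnu\tau=\bnu_\tau$ fails, but neither of your repairs is carried out, and both are mis-aimed.
\begin{itemize}
\item The ``skew part'' diagnosis is wrong. Let $\bsigma_\tau=(\chi_{\tau(k)}(h_{\tau,j}))_{j,k}$ be the analogue of $\bsigma$ for $R_\tau$, and grant $\blam_\tau=\tau^{-1}\blam\tau$ (see below). Then $\bnu_\tau-\tau^{-1}\bnu\tau=\bsigma_\tau-\tau^{-1}\bsigma\tau$ has $(j,k)$ entry $\chi_{\tau(k)}(h_{\tau,j}-h_{\tau(j)})$. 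It is supported on the rows $j\in\tau(-)$ and has diagonal entries $\lambda^\ast_{\tau(j)}-\lambda_{\tau(j)}$, which are in general non-zero, so it is not skew. What makes it harmless is that it kills every $w$ with $\chi_{\bf x}\tau\,w=0$. The columns of $\bnu_\tau^{-1}\sigma^{-1}E\Lambda$ satisfy this, by the proof of \autoref{:linear-alg-1} for $R_\tau$.
\item Your fallback, as phrased, runs in the wrong direction. Applying $\chi_{\bfy_\tau}$ to the candidate gives $\chi_{\bf x}\bnu^{-1}\taubi E\Lambda$. Its vanishing is exactly the solvability criterion of \autoref{:linear-alg-1}, not a linear-algebra identity you can check. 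It is known only because a solution exists.
\end{itemize}

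\textbf{The missing ingredient.} By the symmetry relation $\chi_i(h_j)=-\chi_j(h_i^\ast)$ for $i<j$, the log-canonical coefficient matrix of $(x_{\tau(1)},\ldots,x_{\tau(n)})$ is just the permuted one: $\blam_\tau=\tau^t\blam\tau$. The only case needing thought is $k\in\tau(-)$, where $-\chi_{\tau(j)}(h^\ast_{\tau(k)})=\chi_{\tau(k)}(h_{\tau(j)})$. Feed this into \autoref{le:bfq} for $R_\tau$, and use $\sigma F_\tau=F\sigma$ and $\sigma\tau^{-1}=\taub$. This gives
\[
{\bf q}_\tau=(F\taub)\,\blam\,(F\taub)^t \quad\mbox{and}\quad \chi_{\bfy_\tau}=\chi_{\bf x}(F\taub)^t,
\]
with the original $\blam$ and $\chi_{\bf x}$; the factor is $F\taub$, not one of type $\taub F\tau^{-1}$.

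Now put $\widetilde f=(F\taub)^tM_\tau$. The system \eqref{eq:for-M-tau} becomes $\blam\widetilde f=-\taubi E\Lambda$ together with $\chi_{\bf x}\widetilde f=0$. Evaluating the second equation at each $h_j$ gives $\bsigma\widetilde f=0$. Hence $\bnu\widetilde f=\taubi E\Lambda$, and so $M_\tau=E^t\taub\bnu^{-1}\taub^tE\Lambda$.

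Apply this in the ``any solution must have this form'' direction to the solution whose existence you proved in step 1. That is the paper's argument, and it makes comparing $\bnu_\tau$ with $\bnu$ unnecessary. Your first route can also be closed using the harmlessness observation above, but it too rests on $\blam_\tau=\tau^t\blam\tau$.
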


\begin{proof}
 Let $\tau \in \Xi_n$. 
Let\footnote{What is denoted as ${\bf q}_\tau^\prime$ 
here is denoted as ${\bf q}_\tau$ in \cite[$\S$11]{GY:Poi-CGL}.}  ${\bf q}_\tau^\prime$ be the Poisson coefficient matrix of $\bfy_\tau^\prime$ with respect to 
$\{\, , \, \}$. For $j \in [1, n]$, let $\chi_{y_{\tau, j}^\prime} \in X(\TT)$ be the 
$\TT$-character of $y_{\tau, j}^\prime$, and let
\[
\chi_{\bfy_\tau^\prime} =(\chi_{y_{\tau, 1}^\prime}, \ldots, \chi_{y_{\tau_, n}^\prime}),
\]
By the definition of $\bfy_\tau$ in terms of $\bfy_\tau^\prime$, we have
\[
{\bf q}_\tau = ((\tau_\bullet \tau)^{-1})^t {\bf q}_\tau^\prime (\tau_\bullet \tau)^{-1} =(\tau_\bullet \tau) {\bf q}_\tau^\prime (\tau_\bullet \tau)^{-1}  \hs \mbox{and} \hs
\chi_{{\bf y}_\tau} = \chi_{{\bf y}_\tau^\prime} (\tau_\bullet \tau)^{-1}.
\]
It follows that the equations in \eqref{eq:for-M-tau} on $M_\tau \in {\rm Mat}_{n \times {\rm ex}}(\bk)$ become
\[
{\bf q}_\tau^\prime (\tau_\bullet \tau)^{-1} M_\tau =-(\tau_\bullet \tau)^{-1} \Lambda \hs \mbox{and} \hs
\chi_{{\bf y}_\tau^\prime} (\tau_\bullet \tau)^{-1}M_\tau = 0,
\]
which, due to $\tau_\bullet\tau: {\rm ex}_\tau \to {\rm ex}$ being a bijection and the identity
$\Lambda_\tau \ep_\tau^\prime = (\tau_\bullet \tau)^{-1} 
\Lambda (\tau_\bullet \tau)_{{\rm ex} \times {\rm ex}_\tau}$ in \eqref{eq:E-Lambda-tau}, 
are in turn equivalent to
\[
{\bf q}_\tau^\prime (\tau_\bullet \tau)^{-1} M_\tau (\tau_\bullet \tau)_{{\rm ex} \times {\rm ex}_\tau}=-\Lambda_\tau \ep_\tau^\prime \hs \mbox{and} \hs
\chi_{{\bf y}_\tau^\prime} (\tau_\bullet \tau)^{-1}M_\tau (\tau_\bullet \tau)_{{\rm ex} \times {\rm ex}_\tau}= 0.
\]
By \autoref{thm:M-1} applied to the $\TT$-Poisson CGL extension $R_\tau$, we must have
\[
(\tau_\bullet \tau)^{-1} M_\tau (\tau_\bullet \tau)_{{\rm ex} \times {\rm ex}_\tau} = M_\tau^\prime 
\ep_\tau^\prime.
\]
Thus $M_\tau = (\tau_\bullet \tau) (M_\tau^\prime \ep_\tau^\prime) 
((\tau_\bullet \tau)^{-1})_{{\rm ex}_\tau \times {\rm ex}}$ is the unique solution to 
\eqref{eq:for-M-tau} in ${\rm Mat}_{n \times {\rm ex}}(\bk)$. 

To prove the first two identities in \eqref{eq:MM-tau-0}, 
recall from 
\eqref{eq:blam} the  matrix $\blam$. Let 
$\blam_\tau$ be the Poisson coefficient matrix of 
$(x_{\tau(1)}, \ldots, x_{\tau(n)})$ with respect to the log-canonical part of $\{\,, \, \}$. 
Then $\blam_\tau = \tau^{t} \blam \tau$. 
Applying  \autoref{le:bfq} to the $\TT$-Poisson CGl extension $R_\tau$, one has
\begin{equation}\label{eq:bfq-prime}
{\bf q}_\tau^\prime = F_\tau (\tau^{t} \blam \tau) F_\tau^t \hs \mbox{and} \hs
\chi_{{\bf y}^\prime_\tau} =  (\chi_{\tau(1)}, \ldots, \chi_{\tau(n)}) F_\tau^t = \chi_{\bf x} \tau F_\tau^t. 
\end{equation}
On the other hand, by the definition of $\bfy_\tau$ in \eqref{eq:by-tau} in terms of $\bfy_\tau^\prime$ one has
\begin{equation}\label{eq:bfq-pprime}
{\bf q}_\tau = ((\tau_\bullet \tau)^{-1})^t {\bf q}_\tau^\prime (\tau_\bullet \tau)^{-1} = 
(\tau_\bullet \tau) {\bf q}_\tau^\prime (\tau_\bullet \tau)^t \hs \mbox{and} \hs
\chi_{{\bf y}_\tau} = \chi_{{\bf y}_\tau^\prime} (\tau_\bullet \tau)^t.
\end{equation}
Combining \eqref{eq:bfq-prime} and \eqref{eq:bfq-pprime} and using \eqref{eq:E-Lambda-tau}, one gets
\begin{equation}\label{eq:q-chi-tau-1}
{\bf q}_\tau = F \tau_\bullet \blam (F\tau_\bullet)^t \hs \mbox{and} \hs \chi_{{\bf y}_\tau} = \chi_{\bf x} (F\tau_\bullet)^t.
\end{equation}
Recall now that ${\bf q} = F \blam F^t$ and $\chi_{\bfy} = \chi_{\bf x} F^t$. 
Using \eqref{eq:q-chi-tau-1} one sees that  \eqref{eq:for-M-tau} are equivalent to 
\[
{\bf q} (F \tau_\bullet F^{-1})^t M_\tau = - (F \tau_\bullet F^{-1})^{-1}\Lambda \hs \mbox{and}\hs   
\chi_{\bf y}(F \tau_\bullet F^{-1})^t M_\tau=0.
\]
By \autoref{:linear-alg-1} and the facts that $F = E^{-1}$ and $E \Lambda
= \overline{\Lambda}\, E_{n \times {\rm ex}}$, and by \eqref{eq:tau-overline-Lambda}, one gets
\[
M_\tau  = E^t \tau_\bullet \bnu^{-1}  \tau_\bullet^t E \Lambda = 
E^t \tau_\bullet \bnu^{-1}  \tau_\bullet^{t} \,\overline{\Lambda}\, E_{n \times {\rm ex}}
=E^t \tau_\bullet \,Q\, \tau_\bullet^{t} \, E_{n \times {\rm ex}} 
\]
To prove the last identity in \eqref{eq:MM-tau-0}, recall that 
$M=E^t Q E_{n \times {\rm ex}}$ from \eqref{eq:M-EQE}. Thus
\begin{align*}
(E^{-1}\tau_\bullet^{-1} E)^t M (E^{-1}\tau_\bullet^{-1} E)_{{\rm ex} \times {\rm ex}}
&=(E^{-1}\tau_\bullet^{-1} E)^t E^t Q\, E_{n \times {\rm ex}}(E^{-1}\tau_\bullet^{-1} E)_{{\rm ex} \times {\rm ex}} \\
&=
E^t \taub Q E_{n \times \rm ex} (E^{-1} \taubi E)_{{\rm ex} \times {\rm ex}}
\end{align*}
By \autoref{lm:K-tau-0}, $E_{n \times \rm ex} (E^{-1} \taubi E)_{{\rm ex} \times {\rm ex}}
= E (E^{-1} \taubi E)_{n \times {\rm ex}} = \taubi E_{n \times {\rm ex}}$. Thus
\[
(E^{-1}\tau_\bullet^{-1} E)^t M (E^{-1}\tau_\bullet^{-1} E)_{{\rm ex} \times {\rm ex}}=
E^t \taub Q \tau_\bullet^t E_{n \times {\rm ex}}.
\]
This finishes the proof of \autoref{thm:M-3}.
\end{proof}

\begin{remark}\label{rk:M-tau}
{\rm
Under 
the condition \eqref{eq:QQ-pos} in \autoref{thm:GY-Mtau}, i.e.,  
\begin{equation}\label{eq:QQ-again}
\frac{\lambda_{s(j)}}{\lambda_{s(k)}} \in \QQ_{>0}, \hs \forall \; j, k \in {\rm ex},
\end{equation}
the matrix $M_\tau$ for every $\tau \in \Xi_n$ is skew-symmetrizable by \autoref{rk:pre-seed}, so
\begin{equation}\label{eq:M-tau-prime-M-tau}
M_\tau^\prime \ep_\tau^\prime =
(\tau_\bullet \tau)^{-1} M_\tau
((\tau_\bullet \tau))_{{\rm ex} \times {\rm ex}_\tau } \in {\rm Mat}_{n \times {\rm ex}_\tau}(\ZZ)
\end{equation}
is also skew-symmetrizable. 
On the other hand, by \eqref{eq:lam-tau-j} and \eqref{eq:ep-tau-prime},  for every $j \in {\rm ex}_\tau$ one has
\[
\ep_\tau^\prime(j)\lambda_{\tau, s_\tau(j)} = \begin{cases}\ep_\tau^\prime(j)\lambda_{\tau s_\tau(j)} = 
\lambda_{\tau s_\tau(j)}, & \hs s_\tau(j) \in \tau(+),\\
\ep_\tau^\prime(j)\lambda^\ast_{\tau s_\tau(j)} = -\lambda^\ast_{\tau s_\tau(j)}, & \hs s_\tau(j) \in \tau(-).\end{cases}
\]
By the formula  in \eqref{eq:ps-ex-tau} for $s_\tau$, for every $j \in {\rm ex}_\tau$ one has 
As $\tau s_\tau(j) = \tau_\bullet^{-1} s(\tau_\bullet \tau(j))$ for $j \in {\rm ex}_\tau$, and as
$L(\tau s_\tau(j))=L(s(\tau_\bullet \tau(j))) = L(\tau_\bullet \tau(j))$, 
one has by \eqref{eq:Lam-max-min} that 
\[
\ep_\tau^\prime(j)\lambda_{\tau, s_\tau(j)} = \lambda_{\sL(s(\tau_\bullet \tau(j)))} = 
\lambda_{\sL(\tau_\bullet \tau(j))}, \hs j \in {\rm ex}_\tau.
\]
Thus \eqref{eq:QQ-pos} in \autoref{thm:GY-Mtau} is equivalent to 
\[
\frac{\ep_\tau^\prime(j)\lambda_{\tau, s_\tau(j)}}{\ep_\tau^\prime(j)\lambda_{\tau, s_\tau(k)}} \in \QQ_{>0}, 
\hs j, k \in {\rm ex}_\tau,
\]
for one, equivalently, for all, $\tau \in \Xi_n$.  We thus also know  that
 \eqref{eq:QQ-pos} in \autoref{thm:GY-Mtau} implies 
 that $M_\tau^\prime \ep_\tau^\prime$ is skew-symmetrizable for every $\tau \in \Xi_n$ by 
 applying \autoref{thm:M-1} directly to $R_\tau$.
Note also that 
in the notation of \autoref{de:seed-reordering}, we have
\[
({\bf y}_\tau, M_\tau) = ({\bf y}_\tau^\prime, M_\tau^\prime\ep_\tau^\prime)^{(\tau_\bullet\tau)^{-1}},
\]
i.e., $({\bf y}_\tau, M_\tau)$ is  the re-ordering of
$({\bf y}_\tau^\prime, M_\tau^\prime \ep_\tau^\prime)$ by $(\tau_\bullet \tau)^{-1}$.
\hfill $\diamond$
}
\end{remark}

Recall from \eqref{eq:a-LL} the non-negative Cartan integers $a_{\sL', \sL}$ associated to two distinct 
levels $L$ and $L'$ of $R = R_{\rm id}$ such that $|L|\geq 2$ and
$L' \cap [{\rm min}(L), {\rm max}(L)] \neq \emptyset$.
Recall from \eqref{eq:M-jk} the description of the entries of $M = M_{\rm id}$ in terms of the
Cartan integers. For $\tau \in \Xi_n$, we want to give a similar description of the entries of $M_\tau$.

For $\tau \in \Xi_n$, we first prove more properties of $\taubi$. Recall that 
$\tau_\bullet$ leaves invariant the level set $L(k)$ 
associated to $R = R_{\rm id}$ for every $k \in [1, n]$. 
We now show that $\tau_\bullet^{-1}$ has similar {\it interval property} on level sets of
$R_{\rm id}$
as $\tau$ does on $[1, n]$.  
For $k \in [1, n]$, recall from \eqref{eq:om-k} and \eqref{eq:op-k}
that $o_-(k)\geq 0$ and
$o_+(k) \geq 0$ are respectively the $p$-order and the $s$-order of $k$. 

 
\begin{lemma}\label{lm:taubi-00}
Let $\tau \in \Xi_n$ and $k \in [1, n]$. Let $j = (\taub\tau)^{-1}(k)$ and $L = L(k) =L(\tau(j))$. Then 
\[
\taubi(L \cap [1, k]) = L \cap \tau([1, j]).
\]

1) If $\taubi(k) > \taubi(k^\prime)$ for some $k' \in L \cap [1, k-1]$, then 
$\taubi(k) > \taubi(k^\prime)$ for all $k' \in L \cap [1, k-1]$, and in such a case $j \in \tau(+)$ and 
\[
\taubi(L \cap [1, k]) = L \cap [p^{o_-(k)} \taubi(k), \; \taubi(k)];
\]

2) If $\taubi(k) < \taubi(k^\prime)$ for some $k' \in L \cap [1, k-1]$, then  $\taubi(k) < \taubi(k^\prime)$ for all $k' \in L \cap [1, k-1]$, and in such a case $j \in \tau(-)$ and 
\[
\taubi(L \cap [1, k]) =L \cap [\taubi(k), \, s^{o_-(k)} \taubi(k)].
\]
\end{lemma}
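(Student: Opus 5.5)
The plan is to unwind the definition of $\tau_\bullet$ in \eqref{eq:tau-dot-tau}. Write $L = \{k_1 < \cdots < k_l\}$ and $\tau^{-1}(L) = \{j_1 < \cdots < j_l\}$, so that $(\taub\tau)(j_a) = k_a$. Then $\taubi(k_a) = \tau(j_a)$.

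\textbf{The displayed identity.} Let $k = k_a$, so $j = j_a$. Then
\[
\taubi(L\cap[1,k]) = \{\tau(j_1),\ldots,\tau(j_a)\}.
\]
Since $j_1<\cdots<j_a$ are exactly the elements of $\tau^{-1}(L)\cap[1,j]$, this set equals $L\cap\tau([1,j])$.

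\textbf{Part 1).} The key input is the interval property of $\tau\in\Xi_n$: $\tau([1,j])$ is an interval, and $\tau(j)$ is either its maximum (when $j\in\tau(+)$, or $j=1$) or its minimum (when $j\in\tau(-)$). Hence $\tau(j_a)$ is either larger than all of $\tau(j_1),\ldots,\tau(j_{a-1})$ or smaller than all of them. Suppose $\taubi(k)>\taubi(k')$ for some $k'\in L\cap[1,k-1]$. Then $a\ge 2$, so $j\ge 2$, and we must be in the maximum case; thus $j\in\tau(+)$ and $\tau(j)$ exceeds all the others. Moreover $L\cap\tau([1,j])$ is $L$ intersected with an interval whose maximum is $\tau(j)\in L$. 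It therefore consists of the $a$ consecutive elements of $L$ ending at $\tau(j)=\taubi(k)$. Since $k=k_a$ gives $o_-(k)=a-1$, these are
\[
p^{a-1}(\taubi(k)),\;\ldots,\;p(\taubi(k)),\;\taubi(k),
\]
which is $L\cap[p^{o_-(k)}\taubi(k),\,\taubi(k)]$. Here I use that $p$ restricted to $L$ is the predecessor in $L$ in increasing order.

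\textbf{Part 2).} The case $\taubi(k)<\taubi(k')$ is symmetric. Now $\tau(j)$ is the minimum of $\tau([1,j])$, so $j\in\tau(-)$, and the image is the $a$ consecutive elements of $L$ starting at $\taubi(k)$, namely $L\cap[\taubi(k),\,s^{o_-(k)}\taubi(k)]$.

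\textbf{Expected obstacle.} The only delicate point is the consecutiveness claim: an interval of $[1,n]$ intersected with $L$ yields consecutive elements of $L$ in its natural order. This holds because $L$ is a chain $p^{o_-}(\cdot)<\cdots<s^{o_+}(\cdot)$ under $p$ and $s$. One also has to check that $j\ge2$ is forced in both parts, so that the classification into $\tau(\pm)$ applies; this follows from the existence of $k'$, which gives $a\ge 2$.
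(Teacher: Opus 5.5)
Your proposal is correct and follows essentially the same route as the paper. The paper also identifies $\tau_\bullet^{-1}(L\cap[1,k])$ with $\tau(\tau^{-1}(L)\cap[1,j])$ using that $\tau_\bullet\tau$ is order-preserving from $\tau^{-1}(L)$ onto $L$; it phrases this through $p_\tau=(\tau_\bullet\tau)^{-1}p\,(\tau_\bullet\tau)$ rather than your explicit matching $k_a\leftrightarrow j_a$. It then uses, as you do, that $\tau(j)$ is the maximum (resp. minimum) of the interval $\tau([1,j])$, together with the count $|L\cap\tau([1,j])|=o_-(k)+1$, to identify the consecutive block of $L$.
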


\begin{proof}
By the definitions of $o_-(k)$ and $o_+(k)$,  the level set $L$ is given by
\begin{equation}\label{eq:Lk-increase}
L = \{p^{o_-(k)}(k) < \cdots < \;p(k) <k <s(k)< \cdots < s^{o_+(k)}(k)\}.
\end{equation}
Applying $(\taub \tau)^{-1}$ 
to both sides of \eqref{eq:Lk-increase} and using the fact that 
(see \eqref{eq:ps-ex-tau})
\[
(\taub \tau)^{-1} p^a(k)=p_\tau^a(j) \;\; \mbox{for}\;\;
a\in [0, o_-(k)] \hs \mbox{and}\hs  
(\taub \tau)^{-1} s^a(k)=s_\tau^a(j)  \;\; \mbox{for}\;\;a \in [0, o_+(k)]
\]
and the fact that 
$(\taub \tau)^{-1}: L \rightarrow  \tau^{-1}(L)$ is 
order-preserving,
one has  
\begin{equation}\label{eq:taui-L-1}
\tau^{-1}(L) =(\taub \tau)^{-1}(L) =  \{p_\tau^{o_-(k)}(j)< \cdots <p_\tau(j) < j
<s_\tau(j)< \cdots < s_\tau^{o_+(k)}(j)\}.
\end{equation}
 It then follows that 
\begin{align}\label{eq:taui-L-2}
\taubi(L \cap [1, k]) &= \taubi \{p^{o_-(k)}(k) < \cdots < \;p(k) <k\}=\tau \{p_\tau^{o_-(k)}(j)< \cdots <p_\tau(j) < j\}\\
\nonumber& = \tau(\tau^{-1}(L) \cap [1, j]) =L \cap \tau([1, j]).
\end{align}
Suppose that $\taubi(k) > \taubi(k^\prime)$ for some $k' \in L \cap [1, k-1]$, and let
$j' = (\taub \tau)^{-1}(k^\prime)$. Then 
\[
j = (\taub \tau)^{-1}(k) > (\taub \tau)^{-1}(k^\prime) = j^\prime \hs \mbox{and} \hs
\tau(j) = \taubi(k)> \taubi(k^\prime) =\tau(j').
\]
 Thus  $j \in \tau(+)$, and it follows from \eqref{eq:taui-L-2} that
$\taubi(k) = \tau(j) = {\rm max}(L \cap \tau([1, j]))$. As $\tau([1, j])$ is a sub-interval of
$[1, n]$, there exists $m \geq 0$ such that
\[
L \cap \tau([1, j]) = \{p^m \taubi(k) < \cdots < p \taubi(k) < \taubi(k)\}.
\]
Since $|L \cap \tau([1, j])| = |\taubi(L \cap [1, k])| = |L \cap [1, k]| = o_-(k)+1$, we have $m = o_-(k)$.
 This proves Statement 1). 
Statement 2)  is proved similarly. 
\end{proof}

\begin{remark}\label{rk:prop-11.5}
{\rm The arguments in the proof of \autoref{lm:taubi-00} are from \cite[Page 81]{GY:Poi-CGL} for proving
\cite[Proposition 11.5]{GY:Poi-CGL}, which we now recall: for $\tau \in \Xi_n$
recall from \eqref{eq:by-tau} the initial 
extended cluster $\bfy_\tau = (y_{\tau,1}, \ldots, y_{\tau, n})$, where $y_{\tau, k} = 
y_{\tau, (\taub \tau)^{-1}(k)}^\prime$ for $k \in [1, n]$. Let $j \in [1, n]$ and let
$m = |L(\tau(j)) \cap \tau([1, j])|$. By \autoref{prop-y-tau}, 
\[
y_{\tau, j}^\prime = \begin{cases} y_{[p^m(\tau(j)), \, \tau(j)]}, & \hs \mbox{if}\;\;\;
j \in \tau(+),\\
y_{[\tau(j), \, s^m (\tau(j))]}, & \hs \mbox{if}\;\;\;
j \in \tau(-).\end{cases}
\]
By \autoref{lm:taubi-00}, $m = o_-(k)$, where $k = (\taub\tau)(j)$. 
We thus have \cite[Proposition 11.5]{GY:Poi-CGL}, which says that
for every  $k \in [1, n]$, 
one has\footnote{The $o_+(k)$ in the formula 
for $\widetilde{Y}_\tau(e_k)$ in \cite[Proposition 11.5]{GY:Poi-CGL} for the case of $\taubi(k) \leq \tau(1)$ 
is a typo. It should be $o_-(k)$.}
\[
y_{\tau, k} = \begin{cases} y_{[p^{o_-(k)}(\taubi(k)), \; \taubi(k)]}, & \hs 
\mbox{if}\;\; (\taub\tau)^{-1}(k) \in \tau(+),\\
y_{[\taubi(k), \; s^{o_-(k)}(\taubi(k))]}, & \hs 
\mbox{if}\;\; (\taub\tau)^{-1}(k) \in \tau(-).\end{cases}
\]
\hfill $\diamond$
}
\end{remark}

For $\tau \in \Xi_n$ and  $k \in {\rm ex}$, recall the definitions of $\kto$ and $\ktt$ 
from \eqref{eq:k-tau}.

\begin{lemma}\label{lm:taubi-0}
Let $\tau \in \Xi_n$, and let $k \in {\rm ex}$ and $L = L(k) \subset [1, n]$. 
Then
\begin{equation}\label{eq:taubi-2}
\taubi(s(k))= \begin{cases} s({\rm max} (\taubi(L \cap [1, k]))),& \hs \mbox{if} \;\;\;
\taubi(k) < \taubi(s(k)),\\
p({\rm min} (\taubi(L \cap [1, k]))),& \hs \mbox{if} \;\;\;
\taubi(k) > \taubi(s(k)).
\end{cases}
\end{equation}
Moreover, if 
$L\cap [\kto, \ktt]$ contains at least three elements, then 
\begin{equation}\label{eq:L-kkkk}
L\cap [\kto, \ktt] = \taubi(L\cap [1, s(k)]).
\end{equation}
\end{lemma}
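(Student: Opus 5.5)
The plan is to derive both statements directly from \autoref{lm:taubi-00}, applied first to $k$ and then to $s(k)$. By that lemma, $\taubi(L\cap[1,k]) = L\cap\tau([1,j])$ with $j=(\taub\tau)^{-1}(k)$. Since $\tau([1,j])$ is an interval, the set $\taubi(L\cap[1,k])$ is a block of consecutive elements of $L$ with respect to the order $p,s$ on $L$. Write it as $\{p^{m}(b),\ldots,p(b),b\}$, where $b = {\rm max}$ and $m=o_-(k)$; equivalently, its minimum is $a=p^{m}(b)$.

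Next apply \autoref{lm:taubi-00} to $s(k)$, with $j'=(\taub\tau)^{-1}(s(k)) = s_\tau(j)$. This gives $\taubi(L\cap[1,s(k)]) = L\cap\tau([1,j'])$, which is again a block of consecutive elements of $L$. It contains the previous block and has exactly one more element, namely $\taubi(s(k))$. A block of consecutive elements of $L$ that is enlarged by one element while staying consecutive must gain either $s(b)$ or $p(a)$. This is the key observation.

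\emph{Case $\taubi(k)<\taubi(s(k))$.} By \autoref{lm:taubi-00} applied to $s(k)$ with $k'=k$, we get $\taubi(s(k))>\taubi(k')$ for all $k'\in L\cap[1,s(k)-1]$. So the new element exceeds every element of the old block, which forces it to be $s(b)$, with $b={\rm max}(\taubi(L\cap[1,k]))$.

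\emph{Case $\taubi(k)>\taubi(s(k))$.} Part 2) of the lemma gives the symmetric conclusion: the new element is $p(a)$, with $a={\rm min}(\taubi(L\cap[1,k]))$.

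This proves \eqref{eq:taubi-2}.

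For \eqref{eq:L-kkkk}, suppose first that $\taubi(k)<\taubi(s(k))$. By part 1) of the lemma applied to $k$, either $k={\rm min}L$ (the old block is $\{\taubi(k)\}$), or $\taubi(k)$ is the maximum $b$ of the old block. The same dichotomy, from part 2), applies in the other case.

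\emph{Main obstacle.} The delicate point is showing that when $|L\cap[\kto,\ktt]|\ge 3$, the element $\taubi(k)$ is the far endpoint of the old block opposite to $\taubi(s(k))$. Otherwise $\taubi(k)$ and $\taubi(s(k))$ would be adjacent in $L$, so $L\cap[\kto,\ktt]$ would have only two elements. To handle this, I would argue by contradiction as follows. Suppose $\taubi(k)$ is not an extreme of the block on the side away from the new element. By the dichotomy in \autoref{lm:taubi-00} applied to $k$, $\taubi(k)$ must then be the extreme on the same side as the new element. Then $\taubi(k)$ and $\taubi(s(k))$ are consecutive in $L$, and $|L\cap[\kto,\ktt]|=2$, contradicting the hypothesis. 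Hence $[\kto,\ktt]\cap L$ runs from the far endpoint of the old block to the new element, so it equals the whole new block $\taubi(L\cap[1,s(k)])$.

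I expect this bookkeeping, namely matching "which side" across the two applications of \autoref{lm:taubi-00}, to be the only real difficulty. I would treat the case $\taubi(k)<\taubi(s(k))$ in detail and the other by symmetry.
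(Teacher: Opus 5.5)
Your proof is correct, and it is more structural than the paper's. The paper argues element by element.
\begin{itemize}
\item For \eqref{eq:taubi-2}, it takes $k'$ with $\taubi(k')=\max\taubi(L\cap[1,k])$ and defines $k''\in L$ by $\taubi(k'')=s(\taubi(k'))$. It then rules out $k''>s(k)$ by applying \autoref{lm:taubi-00} to $k''$.
\item For \eqref{eq:L-kkkk}, it proves the two inclusions separately. Again it applies \autoref{lm:taubi-00} to auxiliary elements $k'\in L$ with $\taubi(k')$ strictly between $\taubi(k)$ and $\taubi(s(k))$.
\end{itemize}
You instead use the first identity of \autoref{lm:taubi-00} for both $k$ and $s(k)$. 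The sets $\taubi(L\cap[1,k])\subset\taubi(L\cap[1,s(k)])$ are consecutive blocks of $L$ that differ by the single element $\taubi(s(k))$. That element must therefore be attached at one end, and comparing it with $\taubi(k)$ decides which end. This gives \eqref{eq:taubi-2} at once. It also reduces \eqref{eq:L-kkkk} to the fact that $\taubi(k)$ is an endpoint of the old block, with the three-element hypothesis excluding the endpoint adjacent to $\taubi(s(k))$.

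Your version is shorter and makes clear why the lemma holds. It needs even less than you invoke:
\begin{itemize}
\item $\taubi(k)=\tau(j)$ is an endpoint of the interval $\tau([1,j])$, hence of $L\cap\tau([1,j])$, so parts 1) and 2) are not really needed.
\item The single inequality $\taubi(s(k))>\taubi(k)$ already rules out the new element being $p(a)$.
\end{itemize}

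One sentence should be corrected. You claim that when $\taubi(k)<\taubi(s(k))$, part 1) applied to $k$ gives either $k=\min L$ or $\taubi(k)=b$. This is false. Take a single level set $L=\{1,2,3\}$ (e.g.\ $R^{(A,\bfi)}$ with $\bfi=(1,1,1)$), and $\tau\in\Xi_3$ given by $(\tau(1),\tau(2),\tau(3))=(2,1,3)$, so that $\taubi=\tau$. With $k=2$ one has $\taubi(k)=1<3=\taubi(s(k))$, yet $\taubi(k)$ is the minimum of the old block $\{1,2\}$. This is precisely the case in which $L\cap[\kto,\ktt]$ has three elements.

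What parts 1) and 2) give together is $\taubi(k)\in\{a,b\}$. That is the dichotomy your final paragraph actually uses, so the proof stands once that sentence is replaced.
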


\begin{proof}
Assume first that $\taubi(k) < \taubi(s(k))$, 
Let $k' \in L \cap [1, k]$ be such that 
${\rm max}(\taubi(L \cap [1, k]) = \taubi(k')$. By \autoref{lm:taubi-00},
\begin{equation}\label{eq:taubi-sk}
\taubi(s(k)) = {\rm max} (\taubi(L \cap [1, s(k)]) > {\rm max}(\taubi(L \cap [1, k])) = \taubi(k^\prime).
\end{equation}
In particular, $\taubi(k^\prime) \in {\rm ex}$.   Let  
$k^{\prime \prime} \in L$ be such that 
$\taubi(k^{\prime \prime}) = s(\taubi(k^\prime))$. Then by \eqref{eq:taubi-sk},
\begin{equation}\label{eq:kkk}
\taubi(k^{\prime\prime})  \leq \taubi(s(k)).
\end{equation}
As $\taubi(k^{\prime\prime}) > \taubi(k^\prime)$, the definition of $k^\prime$ implies that
$k^{\prime\prime}\notin [1, k]$, so $k^{\prime\prime} \geq s(k)$. 
If $k^{\prime\prime} > s(k)$, then since $k^{\prime\prime} > k^\prime$ and 
$\taubi(k^{\prime\prime}) > \taubi(k^\prime)$, applying \autoref{lm:taubi-00} to $k^{\prime\prime}$ one gets 
$\taubi(k^{\prime\prime}) > \taubi(s(k))$, contradicting \eqref{eq:kkk}. Thus 
$k^{\prime\prime} = s(k)$. This proves the first case in \eqref{eq:taubi-2}. 
The second case in \eqref{eq:taubi-2} is proved similarly.

Assume now that $L\cap [\kto, \ktt]$ contains at least three elements,
and assume first that $\tau_\bullet^{-1}(k) < \tau_{\bullet}^{-1}(s(k))$, so that $\kto = \tau_\bullet^{-1}(k)$ and $\ktt = \tau_{\bullet}^{-1}(s(k))$. 
Consider
\begin{equation}\label{eq:taubi-s-sk}
\tau_\bullet^{-1}(L \cap [1, s(k)]) = \taubi(L \cap [1, k-1]) \sqcup \{\taubi(k), \taubi(s(k)\}.
\end{equation}
Suppose that  $k' \in L$  is such that 
\begin{equation}\label{eq:kkk-prime}
\taubi(k) < \taubi(k^\prime) < \taubi(s(k)).
\end{equation}
If $k^\prime > s(k)$, then it follows from $\taubi(k^\prime) < \taubi(s(k))$ and 
\autoref{lm:taubi-00} that 
$\taubi(k^\prime) < \taubi(k)$, a contradiction. As $k^\prime \notin \{k, s(k)\}$, 
we have $k^\prime \in L \cap [1, k-1]$. Thus by \eqref{eq:taubi-s-sk}.
\[
L \cap [\taubi(k), \, \taubi(s(k))] \subset \taubi(L \cap [1, s(k)]).
\]
The assumption on $L \cap [\kto, \ktt]$ implies that there exists
$k^\prime \in L$ such that \eqref{eq:kkk-prime} holds, and we have just shown that we must have
$k' \in [1, k-1]$.
By \autoref{lm:taubi-00} again,
\[
\taubi(k) < \taubi(k^{\prime\prime}) < \taubi(s(k))
\]
for all  $k^{\prime\prime}
\in L \cap [1, k-1]$. Thus $\taubi(L \cap [1, s(k)]) \subset L \cap [\taubi(k), \, \taubi(s(k))]$.
This proves \eqref{eq:L-kkkk} under the assumption that $\taubi(k) < \taubi(s(k))$. 
That \eqref{eq:L-kkkk} holds under the assumption that $\taubi(k) > \taubi(s(k))$ is proved similarly.
\end{proof}

For $\tau \in \Xi_n$, we can now describe the entries of $M_\tau$
in terms of the Cartan integers associated to $R$. For $k \in [1, n]$, we set 
\begin{equation}\label{eq:ep-tau-k}
\ep^\tau(k)=\begin{cases} 1, & \hs \mbox{if} \;\;\; 
\tau_\bullet^{-1}(k)< \tau_{\bullet}^{-1}(s(k)) \hs (\mbox{including when}\; s(k) = +\infty),\\
-1
& \hs \mbox{if} \;\;\; 
\tau_\bullet^{-1}(k)> \tau_{\bullet}^{-1}(s(k)).\end{cases}
\end{equation}
Note then that $\ep^\tau(k) 
=  \ep_\tau^\prime((\taub \tau)^{-1}(k))$ for $k \in {\rm ex}$.

\begin{theorem}\label{thm:M-tau-cases}
Let $R$ be any symmetric $\T$-Poisson CGL extension of length $n$,
and let
$\tau \in \Xi_n$. For $j \in [1, n]$, let (recall that $\taubi(+\infty) = +\infty$)
\[
\jto = {\rm min}\{\tau_\bullet^{-1}(j), \;\tau_{\bullet}^{-1}(s(j))\} \hs \mbox{and} \hs
\jtt =  {\rm max}\{\tau_\bullet^{-1}(j),\; \tau_{\bullet}^{-1}(s(j))\}.
\]
For $j \in [1, n]$ and $k \in {\rm ex}$, the $(j, k)$-entry $(M_\tau)_{j, k}$ of $M_\tau$
given as follows:

1) if $L(j)=L(k)$, then $(M_\tau)_{j, k} = 0$ except that 
\[
(M_\tau)_{s(k), k} = -\epsilon^\tau({k}) \hs \mbox{and} \hs 
(M_\tau)_{p(k), k}  = \epsilon^\tau({p(k)}) \; (\mbox{when}\; p(k)\neq -\infty); 
\]

2) if $L(j) \neq L(k)$, then  $(M_\tau)_{j, k} = 0$ except that
\begin{equation}\label{eq:m-tau-jk}
(M_\tau)_{j, k} = \begin{cases} \epsilon^\tau(j) \epsilon^\tau(k) a_{\sL(j), \sL(k)}, & 
\; \jto < \kto < \jtt<\ktt,\\
-\epsilon^\tau(j) \epsilon^\tau(k) a_{\sL(j), \sL(k)}, & \; \kto < \jto < \ktt
<\jtt \; (\mbox{including}\;\jtt = +\infty),\end{cases}.
\end{equation}
In particular, all the non-zero entries of $M_\tau$ are either $\pm 1$ or $\pm a$, where $a$ is
a negative Cartan integer associated to $R$.
\end{theorem}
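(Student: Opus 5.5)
Starting from \autoref{thm:M-3}, I would write
\[
M_\tau = E^t\,\taub\, \Theta', \qquad \Theta' := Q\,\taub^t E_{n\times{\rm ex}}.
\]
The plan is to compute $M_\tau e_k$ column by column for $k\in{\rm ex}$. By \autoref{lm:K-tau-0} and its proof, $E^{-1}\taubi E e_k = \pm\sum_{l\in L(k)\cap[\kto,\ktt-1]} e_l$, with sign $\ep^\tau(k)$. Since $M_\tau = (E^{-1}\taubi E)^t M (E^{-1}\taubi E)_{{\rm ex}\times{\rm ex}}$, this gives
\[
M_\tau e_k = \ep^\tau(k)\,(E^{-1}\taubi E)^t \sum_{l\in L(k)\cap[\kto,\ktt-1]} M_l .
\]
It is easier to work with $\Theta=QE_{n\times {\rm ex}}$, whose columns $\bth^{(l,s(l))}$ are known by \eqref{eq:bth-Cartan}. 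Telescoping over consecutive elements of $L(k)$ gives
\[
\sum_{l\in L(k)\cap[\kto,\ktt-1]}\bth^{(l,s(l))} = -e_{\kto}-e_{\ktt}-2\!\!\sum_{l\in L(k)\cap(\kto,\ktt)}\! e_l -\sum_{i\notin L(k),\,\kto<i<\ktt} a_{\sL(i),\sL(k)}\,e_i .
\]
Here I use that the Cartan integer $a_{\sL(i),\sL(k)}$ depends only on the levels, so every segment $(l,s(l))$ containing $i$ contributes the same value, and $i$ lies in exactly one such segment.

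Next I apply $\taub$, i.e. I take $\taub(\cdot)$ of this vector, and then $E^t$, since $M_\tau e_k = \ep^\tau(k) E^t\taub(\text{above})$. I would check this against $M_\tau=E^t\taub Q\taub^tE$. The key structural input is \autoref{lm:taubi-0}: when $L\cap[\kto,\ktt]$ has at least three elements, it equals $\taubi(L\cap[1,s(k)])$. So the vector $\taub$ applied to the $L(k)$-part equals $-(\text{sum over } L(k)\cap[1,s(k)] \text{ with ends weighted } 1, \text{ interior } 2)$, which is rearranged by $\taub$ to the indices $\min L(k),\dots,s(k)$. Applying $E^t$ (i.e. $e_i\mapsto e_i-e_{p(i)}$), this telescopes to leave only the entries $-\ep^\tau(k)$ at $s(k)$ and $\ep^\tau(p(k))$ at $p(k)$. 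The sign at $p(k)$ comes from comparing whether $\taubi(k)$ is a max or a min, using \autoref{lm:taubi-00}. The case of two elements is handled directly. This gives part 1).

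For part 2), I fix $j\notin L(k)$ and track the coefficient of $e_j$ in $E^t\taub v$, which is $v_{\taubi(j)}-v_{\taubi(s(j))}$ (with $v_{+\infty}=0$). Each of the two terms is $-a_{\sL(j),\sL(k)}$ or $0$, according to whether $\taubi(j)$ and $\taubi(s(j))$ lie strictly inside $(\kto,\ktt)$. The four interlacing configurations then give: both inside or both outside, hence $0$; exactly one inside, hence $\pm a$, with the sign $\ep^\tau(j)$ determined by which of $\taubi(j),\taubi(s(j))$ is smaller. Nesting of $(\jto,\jtt)$ around $(\kto,\ktt)$ gives $0$, which is consistent.

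The main obstacle is the sign and index bookkeeping in this last step. I need that $\taub$ acts by reversing or preserving order within levels in a way compatible with the interval property. This requires that the interval $(\kto,\ktt)$ meets each other level in a set that is $\taub$-stable "up to the endpoint", and I would justify it again via \autoref{lm:taubi-00} applied to $L(j)$, checking the matching formula \eqref{eq:M-jk} in the case $\tau={\rm id}$ as a sanity check.
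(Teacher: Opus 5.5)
Your proposal is correct, and its skeleton matches the paper's. You write $M_\tau e_k=\ep^\tau(k)\,E^t\taub\,Q(e_{\kto}-e_{\ktt})$ and compute $Q(e_{\kto}-e_{\ktt})$ by telescoping the columns $\bth^{(l,s(l))}$ along $L(k)\cap[\kto,\ktt]$. You then read off part 2) from $v_{\taubi(j)}-v_{\taubi(s(j))}$, exactly as the paper does.

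The difference is in part 1). The paper works entry by entry. It uses \autoref{lm:taubi-0} only to exclude the interlacing configurations for $j\in L(k)\setminus\{p(k),k,s(k)\}$. It then checks $j=s(k)$ and $j=p(k)$ through eight sub-cases based on \autoref{lm:taubi-00}. You instead use \eqref{eq:L-kkkk} to compute the whole $L(k)$-component of the column at once. This is legitimate because $\taub$ and $E^t$ both respect the decomposition by levels. It is shorter, and it makes the vanishing at the other positions of $L(k)$ and the value $-\ep^\tau(k)$ at $s(k)$ automatic. The only sign input left is how $\ep^\tau(p(k))$ compares with $\ep^\tau(k)$.

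Two points need tightening.

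First, state the intermediate vector precisely. When $|L(k)\cap[\kto,\ktt]|\ge 3$, the interior $L(k)\cap(\kto,\ktt)$ equals $\taubi(L(k)\cap[1,k-1])$. So the $L(k)$-component of $\taub Q(e_{\kto}-e_{\ktt})$ is $-e_k-e_{s(k)}-2\sum_{k'\in L(k)\cap[1,k-1]}e_{k'}$.
\begin{itemize}
\item The weight-one positions are $k$ and $s(k)$, not the ends $\min L(k)$ and $s(k)$ of $L(k)\cap[1,s(k)]$. With that other reading, $E^t$ would leave a nonzero entry at $k$.
\item Applying $E^t$ gives $-e_{s(k)}-e_{p(k)}$, so the entry at $p(k)$ is $-\ep^\tau(k)$. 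This equals $\ep^\tau(p(k))$ because $\taubi(p(k))$ lies strictly between $\taubi(k)$ and $\taubi(s(k))$.
\item In the two-element case one gets $e_{p(k)}-e_{s(k)}$. Then $\ep^\tau(p(k))=\ep^\tau(k)$ follows from \autoref{lm:taubi-00} applied to $s(k)$.
\end{itemize}

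Second, the ``main obstacle'' you anticipate in part 2) is not there. The conditions in \eqref{eq:m-tau-jk} are phrased directly in terms of $\jto,\jtt,\kto,\ktt$. So your interlacing analysis already completes part 2), with $v_{+\infty}=0$ covering $j\notin{\rm ex}$. No $\taub$-stability of other levels is needed.
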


\begin{proof}
Let $\tau \in \Xi_n$. We use the formula
$M_\tau = E^t \tau_\bullet Q\, \tau_\bullet^{t} \, E_{n \times {\rm ex}}
= E^t \tau_\bullet Q\, \tau_\bullet^{-1} \, E_{n \times {\rm ex}}$
in \eqref{eq:for-M-tau} to determine the entries of $M_\tau$. Fix $k \in {\rm ex}$. 
Then for every $j \in [1, n]$, we have (recall that $e_{+\infty} = 0$)
\begin{align*}
(M_\tau)_{j, k} &= e_j^t E^t \tau_\bullet Q\, \tau_\bullet^{-1} \, Ee_k =
(\taubi E e_j)^t Q (e_{\taubi(k)}-e_{\taubi(s(k))})\\
& =\ep^\tau(j)\ep^\tau(k) (e^t_{\jto}-e^t_{\jtt}) Q (e_{\kto}-e_{\ktt}),
\end{align*}
where note that $\jto = \taubi(j)$ 
and $\jtt = +\infty$ for $j \in [1, n]\backslash {\rm ex}$. 
Let $m \geq 1$ be such that $\ktt = s^m(\kto)$. Then the elements in 
$L(k) \cap [\kto, \ktt]$ are given in 
the increasing order as
$\kto < s(\kto) < \cdots < s^{m}(\kto)$,
and by \eqref{eq:QE-bth} we have
\begin{align*}
Q(e_{\kto}-e_{\ktt})&= Q(e_{\kto}-e_{s(\kto)} +  \cdots +
e_{s^{m-1}(\kto)} -e_{s^m(\kto)})\\
&= \bth^{(\kto, \,s(\kto))} + \cdots + \bth^{(s^{m-1}(\kto), \,s^m(\kto))}.
\end{align*}
By \eqref{eq:bth-Cartan}, the column vector $Q(e_{\kto}-e_{\ktt})$ is of the form
\begin{equation}\label{eq:Q-ee}
Q(e_{\kto}-e_{\ktt}) =  
(0, \,\ldots, \,0, \,-1, \, -a_{\sL(\kto+1), \, \sL(k)}, \, \ldots, \, 
 -a_{\sL(\ktt-1), \,\sL(k)}, \, -1, \, 0, \, \ldots, \, 0)^t,
\end{equation}
where the two $-1$ entries are at the positions $\kto$ and $\ktt$, 
and we set $a_{\sL(k), \, \sL(k)} = 2$. Let
\begin{align*}
U_k &= e_{\kto} + e_{\ktt} + Q(e_{\kto}-e_{\ktt})=
(0, \,\ldots, \,0,  \, -a_{\sL(\kto+1), \, \sL(k)}, \, \ldots, \, 
 -a_{\sL(\ktt-1), \,\sL(k)}, \ \, 0, \, \ldots, \, 0)^t.
\end{align*}
For $j \in [1, n]$, we then have
\[
(M_\tau)_{j, k} = -\ep^\tau(j)\ep^\tau(k) 
(e^t_{\jto}-e^t_{\jtt})(e_{\kto} + e_{\ktt})
+\ep^\tau(j)\ep^\tau(k) (e^t_{\jto}-e^t_{\jtt}) U_k.
\]

Suppose first that $L(j) \neq L(k)$. Then 
$(e^t_{\jto}-e^t_{\jtt})(e_{\kto} + e_{\ktt})=0$,  
and it follows from the formula for $U_k$ that $(M_\tau)_{j, k} = 0$ or otherwise as given in 
\eqref{eq:m-tau-jk}.

Suppose now that $L(j) = L(k)$ and 
$j \notin \{p(k), k, s(k)\}$ (when $p(k) \neq -\infty$). Then 
$\{\jto, \jtt\}
\cap \{\kto, \ktt\} = \emptyset$, so again 
$(e^t_{\jto}-e^t_{\jtt})(e_{\kto} + e_{\ktt})=0$.
Considering  $(e^t_{\jto}-e^t_{\jtt}) U_k$, it is clear that 
$(M_\tau)_{j, k} = 0$ except possibly when 
\[
a)\; \;\jto < \kto < \jtt < \ktt, 
\hs \mbox{or}\hs b) \;\;\kto < \jto < \ktt < \jtt.
\]
By \autoref{lm:taubi-0}, case a) would imply both $k<j$ and $j<k$, which is not possible. In case b), 
$\kto < \jto < \ktt$ would imply  $j < k$ so $j \in {\rm ex}$,
and it would then follow from $\jto < \ktt < \jtt$ that $k < j$, 
again not possible. Thus $(M_\tau)_{j, k} = 0$ for all $j \in L \backslash \{p(k), k, s(k)\}$.

By \eqref{eq:Q-ee}, $M_{k, k} = 0$. We now compute $(M_\tau)_{j, k}$ for $j = s(k)$ and $j = p(k)
\neq -\infty$ using
\begin{equation}\label{eq:M-tau-eeQ}
(M_\tau)_{j, k} =\ep^\tau(j)\ep^\tau(k) (e^t_{\jto}-e^t_{\jtt}) Q (e_{\kto}-e_{\ktt})
\end{equation}
and \eqref{eq:Q-ee}. If $s(k) \notin {\rm ex}$, then $\ep^\tau(s(k)) = 1$, and 
\[
(M_\tau)_{s(k), k} =\ep^\tau(k) e^t_{\taubi(s(k))} Q (e_{\kto}-e_{\ktt})=
-\ep^\tau(k).
\]
Assume now that $s(k) \in {\rm ex}$. 
Suppose first that $\epsilon^\tau(k) = 1$. Then by \autoref{lm:taubi-00}, 
\[
\tau_\bullet^{-1}(k) < \tau_\bullet^{-1}(s(k)) < \tau_\bullet^{-1}(s^2(k))\hs \mbox{or}\hs
 \tau_\bullet^{-1}(s^2(k)) <\tau_\bullet^{-1}(k) < \tau_\bullet^{-1}(s(k)),
\]
where in the first case $\ep^\tau(s(k)) = 1$, so by \eqref{eq:M-tau-eeQ}
\[
(M_\tau)_{s(k), k} = (e^t_{\taubi(s(k))}-e^t_{s^2(\taubi(k))})Q(e_{\taubi(k)}-e_{\taubi(s(k))})=-1-0=-1 = -\ep^\tau(k),
\]
and in the second case $\ep^\tau(s(k)) = -1$, so by \eqref{eq:M-tau-eeQ}
\[
(M_\tau)_{s(k), k}= -(e^t_{\taubi(s^2(k))}-e^t_{(\taubi(s(k))})Q(e_{\taubi(k)}-e_{\taubi(s(k))})=-(0+1) =-1 = -\ep^\tau(k).
\]
Suppose that 
$\epsilon^\tau(k) = -1$. Then by
\autoref{lm:taubi-00}, 
\[
\tau_\bullet^{-1}(s^2(k)) < \tau_\bullet^{-1}(s(k)) < \tau_\bullet^{-1}(k) \hs \mbox{or} \hs 
\tau_\bullet^{-1}(s(k)) <\tau_\bullet^{-1}(k) < \tau_\bullet^{-1}(s^2(k)),
\]
where in the first case $\ep^\tau(s(k)) = -1$, so by \eqref{eq:M-tau-eeQ}
\[
(M_\tau)_{s(k), k} = (e^t_{\taubi(s^2(k))}-e^t_{\taubi(s(k))})Q(e_{\taubi(s(k))}-e_{\taubi(k)})=0-(-1)=1 = -\ep^\tau(k),
\]
and in the second case $\ep^\tau(s(k)) = 1$, so 
\[
(M_\tau)_{s(k), k}= -(e^t_{\taubi(s(k))}-e^t_{\taubi(s^2(k))})Q(e_{\taubi(s(k))}-e_{\taubi(k)})=-(-1-0)=1 = -\ep^\tau(k).
\]
Turning to $(M_\tau)_{p(k), k}$ when $p(k) \neq -\infty$,
assume first that $\ep^\tau(p(k)) = 1$. Then
by \autoref{lm:taubi-00}, 
\[
\taubi(p(k)) < \taubi(k) < \taubi(s(k)) \hs \mbox{or}\hs 
\taubi(s(k)) < \taubi(p(k)) < \taubi(k),
\]
where in the first case  $\ep^\tau(k) = 1$, so by \eqref{eq:M-tau-eeQ}
\[
(M_\tau)_{p(k), k} = (e^t_{\taubi(p(k))}-e^t_{\taubi(k)})Q(e_{\taubi(k)}-e_{\taubi(s(k))}) = 0-(-1)=1=\ep^\tau(p(k)),
\]
and in the second case we $\ep^\tau(k) =-1$, so by \eqref{eq:M-tau-eeQ}
\[
(M_\tau)_{p(k), k} = -(e^t_{\taubi(p(k))}-e^t_{\taubi(k)})Q(e_{\taubi(s(k))}-e_{\taubi(k)}) = -(-2+1) = 1
=\ep^\tau(p(k)).
\]
Suppose now that $\ep^\tau(p(k)) = -1$. Then
by \autoref{lm:taubi-00}, 
\[
\taubi(k) < \taubi(p(k)) < \taubi(s(k)) \hs \mbox{or}\hs 
\taubi(s(k)) < \taubi(k) < \taubi(p(k)),
\]
where in the first case $\ep^\tau(k) = 1$, so by \eqref{eq:M-tau-eeQ}
\[
(M_\tau)_{p(k), k} =  -(e^t_{\taubi(k)}-e^t_{\taubi(p(k))})Q(e_{\taubi(k)}-e_{\taubi(s(k))}) = -(-1+2)=-1
=\ep^\tau(p(k)),
\]
and in the second case  $\ep^\tau(k) = -1$, so by \eqref{eq:M-tau-eeQ},
\[
(M_\tau)_{p(k), k} =  (e^t_{\taubi(k)}-e^t_{\taubi(p(k))})Q(e_{\taubi(s(k))}-e_{\taubi(k)})=-1-0= -1
=\ep^\tau(p(k)).
\]
This finishes the proof of \autoref{thm:M-tau-cases}.
\end{proof}

\subsection{The mutation matrices $M_\tau^\prime \ep_\tau^\prime$}\label{ss:seeds-prime}
Let again $R$ be a length $n$ symmetric $\TT$-Poisson CGL extension.
For applications, such as in the Lie theoretical examples to be presented in $\S$\ref{s:BFZ}, we sometimes want to consider, for $\tau \in \Xi_n$,  the Goodearl-Yakimov initial seeds 
$({\bf y}_\tau^\prime, M_\tau^\prime\ep_\tau^\prime)$
associated to the proper re-ordering $R_\tau$ instead of its re-ordering $(\by_\tau, M_\tau)$
by the permutation $(\tau_\bullet \tau)^{-1}$. While the extended cluster ${\bf y}_\tau^\prime$ is 
described in 
\autoref{prop-y-tau} and \autoref{rk:prop-11.5}, by \autoref{thm:M-3} we have
\begin{equation}\label{eq:M-tau-prime-product}
M_\tau^\prime \ep_\tau^\prime=(\taub\tau)^{-1} M_\tau (\taub \tau)_{{\rm ex} \times {\rm ex}_\tau}
= (\tau E_\tau)^t \,Q (\tau E_\tau)_{n \times {\rm ex}_\tau}.
\end{equation}
For an explicit description of the entries of $M_\tau^\prime\ep_\tau^\prime$, define, for 
$j \in [1, n]$
\[
j_\tau^{[1]} = {\rm min}\{\tau(j), \, \tau s_\tau(j)\} \hs \mbox{and} \hs
j_\tau^{[2]} = {\rm max}\{\tau(j), \, \tau s_\tau(j)\},
\]
where again $\tau(+\infty) = +\infty$. Extend the definition of $\ep_\tau^\prime$ in \eqref{eq:ep-tau-prime}
by 
\begin{equation}\label{eq:ep-tau-extend}
\ep_\tau^\prime(j) = \begin{cases} 1, & \hs \mbox{if} \;\; s_\tau(j) \in \tau(+) \;\; \mbox{or}\;\; 
s_\tau(j) = +\infty,\\
-1, & \hs \mbox{if} \;\; s_\tau(j) \in \tau(-).\end{cases}
\end{equation}
We  now have the 
following direct 
 consequence of \autoref{thm:M-tau-cases}.

\begin{corollary}\label{cor:M-tau-prime}
 Let $R$ be any length $n$ symmetric $\TT$-Poisson CGL extension.
For any $\tau \in \Xi_n$, $j \in  [1, n]$ and $k \in {\rm ex}_\tau$, 
 the $(j, k)$-entry $(M_\tau^\prime \ep_\tau^\prime)_{j, k}$ of
$M_\tau^\prime \ep_\tau^\prime \in {\rm Mat}_{n \times {\rm ex}_\tau}(\ZZ)$ is given as follows: 

1) if $L(\tau(j)) = L(\tau(k))$, then $(M_\tau^\prime \ep_\tau^\prime)_{j, k}=0$ except that
\[
(M_\tau^\prime \ep_\tau^\prime)_{p_\tau(k), k} = \ep_\tau^\prime(p_\tau(k)) \;\;(\mbox{when}\;\; p_\tau(k) \neq -\infty) \hs 
\mbox{and} \hs (M_\tau^\prime \ep_\tau^\prime)_{s_\tau(k), k} = -\ep_\tau^\prime(k);
\]

2) if $L(\tau(j)) \neq L(\tau(k))$, then $(M_\tau^\prime \ep_\tau^\prime)_{j, k}=0$ except that 
\[
(M_\tau^\prime \ep_\tau^\prime)_{j, k} = \begin{cases} \ep_\tau^\prime(j) \ep_\tau^\prime(k) a_{\sL(\tau(j)), \sL(\tau(k))}, 
& \; j_\tau^{[1]} < k_\tau^{[1]} < j_\tau^{[2]}
<k_\tau^{[2]},\\
-\ep_\tau^\prime(j) \ep_\tau^\prime(k) a_{\sL(\tau(j)), \sL(\tau(k))}, & \; k_\tau^{[1]} < j_\tau^{[1]} < k_\tau^{[2]}
<j_\tau^{[2]} \;\; (\mbox{including}\;\; 
s_\tau(j) = +\infty).\end{cases}
\]
\end{corollary}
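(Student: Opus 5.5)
The plan is to read \autoref{cor:M-tau-prime} off \autoref{thm:M-tau-cases} by a change of indices. Set $\sigma = \taub\tau$. By \eqref{eq:M-tau-prime-product}, $M_\tau^\prime\ep_\tau^\prime = \sigma^{-1}M_\tau\,\sigma_{{\rm ex}\times{\rm ex}_\tau}$. Using \eqref{eq:tau-lam}, this gives
\[
(M_\tau^\prime\ep_\tau^\prime)_{j,k} = (M_\tau)_{\sigma(j),\sigma(k)}, \hs j\in[1,n],\; k\in{\rm ex}_\tau .
\]
Here $\sigma(k)\in{\rm ex}$ by \eqref{eq:ps-ex-tau}. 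So it suffices to translate each quantity in \autoref{thm:M-tau-cases}, evaluated at $(\sigma(j),\sigma(k))$, back to $(j,k)$.

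\textbf{Step 1: dictionary.}
\begin{itemize}
\item[(a)] $L(\sigma(j)) = L(\tau(j))$, since $\taub$ preserves level sets. Hence $L(\sigma(j))=L(\sigma(k))$ if and only if $L(\tau(j))=L(\tau(k))$, and $a_{\sL(\sigma(j)),\sL(\sigma(k))} = a_{\sL(\tau(j)),\sL(\tau(k))}$.
\item[(b)] By \eqref{eq:ps-ex-tau}, $s(\sigma(k))=\sigma(s_\tau(k))$ and $p(\sigma(k))=\sigma(p_\tau(k))$. So the rows $s(\sigma(k))$ and $p(\sigma(k))$ of $M_\tau$ correspond to the rows $s_\tau(k)$ and $p_\tau(k)$ of $M_\tau^\prime\ep_\tau^\prime$, with the convention $\sigma(\pm\infty)=\pm\infty$.
\item[(c)] For every $j\in[1,n]$,
\[
\taubi(\sigma(j)) = \tau(j) \hand \taubi(s(\sigma(j))) = \taubi\sigma(s_\tau(j)) = \tau(s_\tau(j)),
\]
including the case $s_\tau(j)=+\infty$. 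Therefore the quantities $(\sigma(j))_\tau^{(1)}$ and $(\sigma(j))_\tau^{(2)}$ of \autoref{thm:M-tau-cases} equal $j_\tau^{[1]}$ and $j_\tau^{[2]}$, and likewise for $k$.
\item[(d)] $\ep^\tau(\sigma(j)) = \ep_\tau^\prime(j)$ for every $j$, extending the remark after \eqref{eq:ep-tau-k} from ${\rm ex}$ to all of $[1,n]$. By (c), $\ep^\tau(\sigma(j))=1$ exactly when $\tau(j)<\tau(s_\tau(j))$, or when $s_\tau(j)=+\infty$. By \eqref{eq:tau-plus} and \eqref{eq:tau-minus}, $\tau(j)<\tau(s_\tau(j))$ with $s_\tau(j)>j$ is equivalent to $s_\tau(j)\in\tau(+)$. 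This uses that $\tau([1,s_\tau(j)-1])$ is an interval containing $\tau(j)$, and that a new index is adjoined either above the max or below the min.
\end{itemize}

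\textbf{Step 2: substitution.} Substituting (a)–(d) into cases 1) and 2) of \autoref{thm:M-tau-cases} yields exactly cases 1) and 2) of the corollary. In particular, $\ep^\tau(p(\sigma(k))) = \ep^\tau(\sigma(p_\tau(k))) = \ep_\tau^\prime(p_\tau(k))$ and $-\ep^\tau(\sigma(k)) = -\ep_\tau^\prime(k)$.

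The only point needing care is (d) for $j\notin{\rm ex}_\tau$, together with checking the equivalence $\tau(j)<\tau(s_\tau(j)) \Leftrightarrow s_\tau(j)\in\tau(+)$. I would verify that equivalence directly from the interval property of $\tau\in\Xi_n$. The rest is bookkeeping.
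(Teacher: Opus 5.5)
Your proposal is correct and is essentially the paper's argument: the paper records this corollary as a direct consequence of \autoref{thm:M-tau-cases}, and that amounts to your re-indexing $(M_\tau^\prime\ep_\tau^\prime)_{j,k} = (M_\tau)_{\taub\tau(j),\,\taub\tau(k)}$ obtained from \eqref{eq:M-tau-prime-product}. Your dictionary supplies exactly the bookkeeping the paper leaves implicit: level sets are preserved by $\taub$, $s\circ\taub\tau = \taub\tau\circ s_\tau$ by \eqref{eq:ps-ex-tau}, and $\ep^\tau(\taub\tau(j)) = \ep_\tau^\prime(j)$ for all $j$, which is \eqref{eq:tau-jk-plus} when $j\in{\rm ex}_\tau$ and holds trivially otherwise since both sides equal $1$.
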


To further analyze the cases in 2) of \autoref{cor:M-tau-prime}, 
for $\tau \in \Xi_n$, recalling  from \eqref{eq:tau-plus} and \eqref{eq:tau-minus} the
sets $\tau(+) \subset [2, n]$ and $\tau(-)\subset [2, n]$, we define
$\ep_\tau: [2, n] \to \{1, -1\}$ by
\begin{equation}\label{eq:ep-tau}
\ep_\tau(j) = \begin{cases} 1, & \hs j \in \tau(+),\\
-1, & \hs j \in \tau(-).\end{cases}
\end{equation}

\begin{theorem}\label{thm:M-tau-prime} Let $R$ be any length $n$ symmetric $\TT$-Poisson CGL extension.
For any $\tau \in \Xi_n$, $j \in  [1, n]$ and $k \in {\rm ex}_\tau$, including when 
$s_\tau(j) = +\infty$,
the $(j, k)$-entry $(M_\tau^\prime \ep_\tau^\prime)_{j, k}$ of
$M_\tau^\prime \ep_\tau^\prime \in {\rm Mat}_{n \times {\rm ex}_\tau}(\ZZ)$ is given as follows: 
\[
(M_\tau^\prime\ep_\tau^\prime)_{j, k} = \begin{cases} \ep_\tau(k), & s_\tau(j) = k, \\
-\ep_\tau(j), &   j = s_\tau(k), \\
\ep_\tau(k) a_{\sL(\tau(j)), \sL(\tau(k))}, & j < k < s_\tau(j) < s_\tau(k)\;\; \mbox{and}\;\;
\ep_\tau(k) =  \ep_\tau(s_\tau(j)), \\
&\mbox{or}\;\; j < k < s_\tau(k) < s_\tau(j) \; \;\mbox{and}\;\; \ep_\tau(k) = -\ep_\tau(s_\tau(k)), \\
-\ep_\tau(j) a_{\sL(\tau(j)), \sL(\tau(k))}, &  k < j < s_\tau(k) < s_\tau(j)
\;\; \mbox{and} \;\;  \ep_\tau({j})=\ep_\tau({s_\tau(k)}),\\
&\mbox{or}\;\;k < j < s_\tau(j) < s_\tau(k)\;\; \mbox{and}\;\; \ep_\tau({j})=-\ep_\tau({s_\tau(j)}), \\
0, &  \mbox{otherwise}.\end{cases}
\]
\end{theorem}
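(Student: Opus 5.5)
The plan is to derive the statement directly from \autoref{cor:M-tau-prime} by translating its conditions, which are phrased via $j_\tau^{[1]}, j_\tau^{[2]}$ (positions under $\tau$) and $\ep_\tau^\prime$, into conditions on $j, k, s_\tau(j), s_\tau(k)$ and $\ep_\tau$. The basic dictionary is the following. By definition, $\ep_\tau^\prime(j) = \ep_\tau(s_\tau(j))$ when $s_\tau(j)\neq+\infty$, and $\ep_\tau^\prime(j)=1$ otherwise. Moreover, since $\tau\in\Xi_n$, for $a<b$ with $b \geq 2$ we have $\tau(b) > \tau(a)$ if and only if $b\in\tau(+)$, i.e. $\ep_\tau(b)=1$. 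This holds because $\tau(b)$ is either the new maximum or the new minimum of $\tau([1,b])$. Consequently, comparing $\tau$-values of two indices reduces to reading off $\ep_\tau$ of the larger index.

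\textbf{Step 1 (same level).} Part 1) of \autoref{cor:M-tau-prime} gives entry $\ep_\tau^\prime(p_\tau(k))$ at $j=p_\tau(k)$, that is, when $s_\tau(j)=k$. This equals $\ep_\tau(s_\tau(j))=\ep_\tau(k)$. It also gives entry $-\ep_\tau^\prime(k)$ at $j=s_\tau(k)$, and this equals $-\ep_\tau(s_\tau(k))=-\ep_\tau(j)$. These are the first two cases of the statement. I also need to check that none of the ``interlacing'' cases can occur for $j,k$ on the same level. This holds because the level sets of $R_\tau$ are $\tau^{-1}(L)$ (\autoref{:p-s-E-Lambda-tau}), so $s_\tau$ restricted to a level is increasing and cannot produce $j<k<s_\tau(j)<s_\tau(k)$ type patterns.

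\textbf{Step 2 (different levels).} Here $\tau^{-1}(L)$ is ordered and $s_\tau(j)>j$, so $\{j_\tau^{[1]},j_\tau^{[2]}\}=\{\tau(j),\tau(s_\tau(j))\}$, with $\tau(s_\tau(j))$ the larger of the two exactly when $\ep_\tau(s_\tau(j))=1$. This is the same as $\ep^\prime_\tau(j)=1$, and I use the convention $\tau(+\infty)=+\infty$. The condition $j_\tau^{[1]}<k_\tau^{[1]}<j_\tau^{[2]}<k_\tau^{[2]}$ says that the $\tau$-images of the pairs interlace. I will split according to the relative order of the four indices $j,s_\tau(j),k,s_\tau(k)$ in $[1,n]$. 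The key observation is that $\tau([1,m])$ is an interval for every $m$. Hence if $a<b<c$ and $\tau(c)$ lies strictly between $\tau(a)$ and $\tau(b)$, we get a contradiction, since $\tau(c)$ is an extreme value of $\tau([1,c])$. Thus an interlacing of $\tau$-images forces the two largest indices to be the two ends, and in particular forces $j<k$ or $k<j$ with interlacing or nesting order in $[1,n]$. Concretely, suppose $j<k$. If $s_\tau(j)<s_\tau(k)$, then the image of the pair of $s_\tau(j)$ must separate $\tau(k)$ from $\tau(j)$, and this is compatible with $\tau(s_\tau(k))$ lying outside exactly when $\ep_\tau(k)=\ep_\tau(s_\tau(j))$. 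If instead $s_\tau(k)<s_\tau(j)$, we need $\ep_\tau(k)=-\ep_\tau(s_\tau(k))$. In each subcase, the sign $\ep_\tau^\prime(j)\ep_\tau^\prime(k)$ from the corollary, together with which of the two interlacing patterns occurs, collapses to $\ep_\tau(k)$, giving the third case. Swapping the roles of $j$ and $k$ gives the fourth case, with $-\ep_\tau(j)$. The case $s_\tau(j)=+\infty$ is handled by treating $\tau(s_\tau(j))=+\infty$ as lying beyond everything, consistent with $\ep_\tau^\prime(j)=1$.

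\textbf{Main obstacle.} The main obstacle is the bookkeeping in Step 2. There are four sign choices ($\ep_\tau$ of $k$, $s_\tau(j)$, $s_\tau(k)$, $j$) crossed with the orderings of $j, k, s_\tau(j), s_\tau(k)$. I will organize it by first fixing which of $j,k$ is smaller and whether the pairs interlace or nest in $[1,n]$ (the crossing orderings other than these are excluded by the interval property). Then, in each of the four resulting configurations, I will tabulate the positions of the four $\tau$-values using only the rule ``later index goes to the side given by its $\ep_\tau$''. I will then verify that the corollary's nonzero cases occur exactly under the stated $\ep_\tau$ equalities, with the stated sign.
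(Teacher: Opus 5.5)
Your proposal is correct and is essentially the paper's own argument: reduce to \autoref{cor:M-tau-prime} via $\ep_\tau^\prime(j)=\ep_\tau(s_\tau(j))$ and the fact that, for $\tau\in\Xi_n$, each later index has its $\tau$-value beyond all earlier ones on the side given by its $\ep_\tau$, then go through the possible orderings of $j,k,s_\tau(j),s_\tau(k)$. Your remark that interlacing of the $\tau$-images forces the two largest indices to be $s_\tau(j)$ and $s_\tau(k)$ is a clean way to dispose of the separated orderings, which the paper checks directly.

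Two justifications should be tightened. In Step 1, what rules out interlacing on a common level is that $s_\tau(j)$ is the \emph{immediate} successor of $j$ in $\tau^{-1}(L)$, so any $k>j$ on that level satisfies $k\ge s_\tau(j)$; monotonicity of $s_\tau$ alone does not give this. In Step 2, the two largest indices need only lie in different pairs, not at the two extreme $\tau$-positions. For example, with $\tau={\rm id}$ and $(j,k,s_\tau(j),s_\tau(k))=(1,2,3,4)$ the pairs interlace, yet the two largest indices sit at $\tau$-positions $3$ and $4$.
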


\begin{proof}
Let $j \in [1, n]$ and $k\in {\rm ex}_\tau$. Assume first that $L(\tau(j)) = L(\tau(k))$. 
 If $j \notin \{p_\tau(k), s_\tau(k)\}$ then $(M_\tau^\prime\ep_\tau^\prime)_{j, k} = 0$ by \autoref{cor:M-tau-prime}.
If $j = p_\tau(k)$, i.e., if $s_\tau(j) = k$, then by \autoref{cor:M-tau-prime}, 
$(M_\tau^\prime\ep_\tau^\prime)_{j, k} = \ep_\tau^\prime(j) = \ep_\tau(k)$. If $j = s_\tau(k)$, by \autoref{cor:M-tau-prime} again,
$(M_\tau^\prime\ep_\tau^\prime)_{j, k} = -\ep_\tau^\prime(k) = -\ep_\tau(j)$.

Assume now that $L(\tau(j)) \neq L(\tau(k))$. We say that we are in 

Case a) if $j_\tau^{[1]} < k_\tau^{[1]} < j_\tau^{[2]}<k_\tau^{[2]}$;

Case b) if  $k_\tau^{[1]} < j_\tau^{[1]} < k_\tau^{[2]} <j_\tau^{[2]}$  (including $j_\tau^{[2]}= +\infty$);

Case c) otherwise. 

\noindent
On the other hand, we have either $j < k < s_\tau(k)$ or $k < j < s_\tau(j)$, including when
$s_\tau(j) =+\infty$, which lead to the following 
six mutually exclusive cases:
\begin{align*}
&(1) \;  j < s_\tau(j) < k < s_\tau(k), \hs  (2) \;j < k < s_\tau(j) < s_\tau(k), \hs 
(3)\;j < k < s_\tau(k) < s_\tau(j),\\
&(4) \;  k < s_\tau(k) < j < s_\tau(j), \hs  (5) \;k < j < s_\tau(k) < s_\tau(j), \hs 
(6)\;k < j < s_\tau(j) < s_\tau(k).
\end{align*}
We further examine each case as follows. 

Assume (1). Then $j_\tau^{[1]} < j_\tau^{[2]} < \tau(k)$ if $\ep_\tau(k) = 1$, and
 $\tau(k) < j_\tau^{[1]} < j_\tau^{[2]}$ if $\ep_\tau(k) = -1$, and both cases lead to Case c) 
whether  $\ep_\tau(s_\tau(k)) = 1$ or $\ep_\tau(s_\tau(k)) = -1$. Thus
$(M_\tau^\prime\ep_\tau^\prime)_{j, k}= 0$.

Assume (2).  If $\ep_\tau(k) = -\ep_\tau(s_\tau(j))$, then 
\[
\tau(k) > \tau(j) > \tau(s_\tau(j)) \hs \mbox{or}\hs \tau(k) < \tau(j) < \tau(s_\tau(j)),
\]
and both cases lead to Case c), so $(M_\tau^\prime\ep_\tau^\prime)_{j, k} = 0$. If $\ep_\tau(k) = \ep_\tau(s_\tau(j))=\epsilon$, then
\begin{equation}\label{eq:tau-jk-2}
 \tau(j) < \tau(k) < \tau(s_\tau(j))\; \;\mbox{if} \;\;\epsilon = 1, \hs\mbox{and} \hs
\tau(j) > \tau(k) > \tau(s_\tau(j)) \; \;\mbox{if} \;\;\epsilon = -1,
\end{equation}
and in both cases of \eqref{eq:tau-jk-2} we are in Case a) if $\ep_\tau(s_\tau(k)) = 1$, which gives
\[
(M_\tau^\prime\ep_\tau^\prime)_{j,k} = \ep_{\tau}(s_\tau(j))\ep_{\tau}(s_\tau(k))a_{\sL(\tau(j)), \sL(\tau(k))}=
\ep_\tau(k)a_{\sL(\tau(j)), \sL(\tau(k))},
\]
and we are in Case b) if 
$\ep_\tau(s_\tau(k)) = -1$, which again gives
\[
(M_\tau^\prime\ep_\tau^\prime)_{j,k} = -\ep_{\tau}(s_\tau(j))\ep_{\tau}(s_\tau(k))a_{\sL(\tau(j)), \sL(\tau(k))}=
\ep_\tau(k)a_{\sL(\tau(j)), \sL(\tau(k))}.
\]

Assume (3), possibly with $s_\tau(j)=+\infty$.  If $\ep_\tau(k) = \ep_\tau(s_\tau(k))$, then 
\[
\tau(j) < \tau(k) < \tau(s_\tau(k)) \hs \mbox{or}\hs \tau(s_\tau(k)) < \tau(k) < \tau(j),
\]
and both cases lead to Case c), so $(M_\tau^\prime\ep_\tau^\prime)_{j, k} = 0$. If $\ep_\tau(k) = -\ep_\tau(s_\tau(k))=\epsilon$, then
\begin{equation}\label{eq:tau-jk-3}
 \tau(k) > \tau(j) > \tau(s_\tau(k))\; \;\mbox{if} \;\;\epsilon = 1, \hs\mbox{and} \hs
\tau(k) < \tau(j) < \tau(s_\tau(k)) \; \;\mbox{if} \;\;\epsilon = -1,
\end{equation}
and in both cases of \eqref{eq:tau-jk-3} we are in Case a) if $\ep_\tau(s_\tau(j)) = -1$, which gives
\[
(M_\tau^\prime\ep_\tau^\prime)_{j,k} = \ep_{\tau}(s_\tau(j))\ep_{\tau}(s_\tau(k))a_{\sL(\tau(j)), \sL(\tau(k))}=
\ep_\tau(k)a_{\sL(\tau(j)), \sL(\tau(k))},
\]
and we are in Case b) if 
$\ep_\tau(s_\tau(j)) = 1$, including when $s_\tau(j) = +\infty$, which gives
\[
(M_\tau^\prime\ep_\tau^\prime)_{j,k} = -\ep_{\tau}(s_\tau(j))\ep_{\tau}(s_\tau(k))a_{\sL(\tau(j)), \sL(\tau(k))}=
\ep_\tau(k)a_{\sL(\tau(j)), \sL(\tau(k))}.
\]
Cases (4)-(6) are analyzed similarly, and one proves that $\widetilde{m}_{j, k}$ is as given in all the cases.
\end{proof}

\section{BFZ mutation matrices associated to signed words}\label{s:BFZ}
\subsection{Symmetric Poisson CGL extensions from generalized Cartan matrices}\label{ss:R-A-i}
Let $A = (a_{i, i'})_{i, i' \in [1, r]}$ be a symmetrizable generalized Cartan matrix with  a
fixed choice of a symmetrizer $(d_i)_{i \in [1, r]}$. Let $\{\alpha_1, \ldots, \alpha_r\}$ be a set of simple roots in the
root system associated to $A$, and let $\TTA$ be the split complex torus whose character lattice is the root lattice 
$\calQ=\sum_{i=1}^r \ZZ \alpha_i$. Let $\t_\sA$ be the Lie algebra of $\TT_\sA$, so that $\t_\sA^* = \sum_{i=1}^r \CC \alpha_i$.
Let $\langle \,, \, \rangle_\sA$ be the unique symmetric bilinear form  on $\t_\sA^*$
such that 
\[
\langle \alpha_i, \, \alpha_{i'}\rangle_\sA = d_ia_{i, i'}, \hs i, i'  \in [1, n].
\]
For $i \in [1, r]$, let $s_i$ be the reflection operator on $\t_\sA^*$ defined by $\alpha_i$.
Given any sequence 
\[
{\bf i} = (i_1, \ldots, i_n) \in [1, r]^n,
\]
setting $\beta_j = s_{i_1}\cdots s_{i_{j-1}}\alpha_{i_j}\in \calQ$ for $j \in [1, n]$,
one then has the log-canonical Poisson structure 
\[
\pi_0^{(A, \bfi)} =- \sum_{j<k} \langle \beta_j, \, \beta_k\rangle_\sA x_jx_k \frac{\partial}{\partial x_j}
\wedge \frac{\partial}{\partial x_j}
\]
on $\CC^n$. Let $\TT_\sA$ act on $\CC^n$ such that 
 $x_j$ has $\TT_\sA$-weight $\beta_j$ for $j \in [1, n]$. It is shown in 
\cite[$\S$6.2]{Lu-Mykola:deformation} that  there is a uniquely defined 
$\TTA$-invariant algebraic Poisson structure 
$\pi^{(A, {\bf i})}$ on $\CC^n$ with $\pi^{(A, {\bf i})}_0$ as its log-canonical term,
and  that the induced Poisson algebra 
\begin{equation}\label{eq:R-A-i}
R^{(A, \bfi)}= (\CC[x_1, \ldots, x_n], \, \{\,, \, \}_{\pi^{(A, {\bf i})}})
\end{equation}
is a normal
(\autoref{de:normal}) symmetric $\TTA$-Poisson CGL extension. Moreover, 
\[
\lambda_j = \langle \alpha_{i_j}, \, \alpha_{i_j}\rangle_\sA = 2d_{i_j}, \hs j \in [1, n],
\]
where $\lambda_j$ for $j \in [1, n]$ is defined in \eqref{eq:lambda-j} for $R^{(A, \bfi)}$. 
Thus the scalar condition in \eqref{eq:QQ-pos} is satisfied. By \cite[Theorem 11.1]{GY:Poi-CGL}
(see \autoref{thm:GY-Mtau}), one  has the family $\{(\bfy_\tau, M_\tau): \tau \in \Xi_n\}$
of mutation equivalent $\TTA$-Poisson seeds in $\CC(x_1, \ldots, x_n)$, defining
 a cluster structure on
$\CC[x_1, \ldots, x_n]$ compatible with both $\{\,, \, \}_{\pi^{(A, {\bf i})}}$ and the $\TTA$-action.

We remark that 
the Poisson structure $\pi^{(A, \bfi)}$ on $\CC^n$ is shown in 
\cite{Lu-Mykola:deformation} to be the
 unique {\it maximal normalized admissible algebraic deformation} of the log-canonical Poisson
structure $\pi_0^{(A, \bfi)}$. 
When $A$ is of finite type and if $G$ is a simply connected
complex semi-simple Lie group of the same Cartan-Dynkin type as $A$, 
the Poisson structure $\pi^{(A, {\bf i})}$ coincides with the 
{\it standard Poisson structure} on 
the Bott-Samelson cell of $G$ associated to ${\bf i}$ in Bott-Samelson coordinates (see
\cite{EL:BS} and \cite[$\S$6.3]{Lu-Mykola:deformation} for details).

Our main goal in $\S$\ref{s:BFZ} is to compare the mutation matrices $M_\tau$ associated to
the symmetric CGL extensions $R^{(A, \bfi)}$ for $\tau \in \Xi_n$, or rather
their permutations $M_\tau^\prime\ep_\tau^\prime$ as in $\S$\ref{ss:seeds-prime}, with the
matrices, refereed to as {\it BFZ mutation matrices} in the literature
\cite{BFZ:III, BZ:quantum, Shen-Weng:dBS, Qin:dual, CQW:i-boxes} that are associated to $A$ and 
{\it signed words} \cite{Qin:dual, CQW:i-boxes}, or {\it double words} \cite{BZ:quantum}, in
$[1, r]$. The fact that these two collections of mutation matrices are the same is proved in 
\autoref{thm:same-BFZ}. We also explain in $\S$\ref{ss:ensemble} how  the $n \times n$ \textit{nondegenerate cluster ensemble matrix} associated to $(A, \bfi)$ introduced in \cite{Wil:ensemble}
can be written as a matrix product.

\subsection{The matrices $\widehat{M}_\tau(\bfi)$ associated to a generalized Cartan matrix $A$}\label{ss:hat-M-tau}
In this section, we fix a symmetrizable generalized Cartan matrix $A = (a_{i, i'})_{i, i' \in [1, r]}$,
and let
\[
\bfi = (i_1, \ldots, i_n)
\]
be any sequence in $[1, r]$.
For each $\tau \in \Xi_n$, we then have the $\TT_\sA$-Poisson CGL extension 
$R^{(A, \bfi)}_\tau$, defined as the proper re-ordering of the symmetric 
$\TT_\sA$-Poisson CGL extension $R^{(A, \bfi)}$ by $\tau$
as in $\S$\ref{ss:reordering}, and the mutation matrix $M_\tau^\prime\ep_\tau^\prime
\in {\rm Mat}_{n \times {\rm ex}_\tau}(\ZZ)$ considered in 
$\S$\ref{ss:seeds-prime}. 

For $j \in [1, n]$, let again $L(j) \subset [1, n]$ be the level set of $j$ associated to $R^{(A, \bfi)}$
(see \autoref{nota-p-s}). It is shown in \cite[$\S$6.2]{Lu-Mykola:deformation} 
that $L(j) = \{j' \in [1, n]: i_{j'} = i_j\}$, and that 
\begin{equation}\label{eq:a-LL-Lie}
a_{\sL(j), \sL(k)} = a_{i_j, i_k}, \hs j, k \in [1, n],
\end{equation}
where 
$a_{\sL(j), \sL(k)}$ is defined in \eqref{eq:a-LL}. Moreover, the matrix $Q$ in \eqref{eq:M-tau-prime-product}
is given by
\begin{equation}\label{eq:Q-bfi}
Q =\left(\begin{array}{ccccc} 1 & a_{i_1, i_2} & \cdots & a_{i_1, i_{n-1}}  & a_{i_1,i_n}\\
0 & 1 &\cdots & a_{i_2, i_{n-1}}  & a_{i_2, i_n}\\
\cdots & \cdots & \cdots & \cdots & \cdots \\
0 & 0 & \cdots & 1 & a_{i_{n-1}, i_n}\\
0 & 0 & \cdots & 0 & 1\end{array}\right).
\end{equation}

\begin{notation}\label{nota:hat-M-tau}
{\rm
Fix a symmetrizable generalized Cartan matrix $A = (a_{i, i'})_{i, i' \in[1, r]}$. For any
sequence $\bfi = (i_1, \ldots, i_n)$ and any $\tau \in \Xi_n$, introduce the {\it square} matrix
\begin{equation}\label{eq:hat-M-tau}
\widehat{M}_\tau(\bfi) = (\tau E_\tau)^t \,Q \,\tau E_\tau \in {\rm Mat}_{n \times n}(\ZZ),
\end{equation}
where $E_\tau$ is defined as in \eqref{eq:E-tau-def} for  
$R^{(A, \bfi)}_\tau$, and $Q$ is given in \eqref{eq:Q-bfi}. 
\hfill $\diamond$
}
\end{notation}

Note that, 
although not indicated in the notation, the matrix $E_\tau$ in \eqref{eq:hat-M-tau}
depends on both $\tau$ and $\bfi$, and the matrix $Q$ in \eqref{eq:hat-M-tau} depends on $(A, \bfi)$.
By \eqref{eq:M-tau-prime-product}, 
\[
M_\tau^\prime\ep_\tau^\prime = (\widehat{M}_\tau(\bfi))_{n \times {\rm ex}_\tau}.
\]
In the following \autoref{thm:hat-M-tau}, we make use of the special form of
$Q$ in \eqref{eq:Q-bfi} to extend \autoref{thm:M-tau-prime} on $M_\tau^\prime\ep_\tau^\prime$ to an explicit description, in terms of $(\bfi, \tau)$ and the Cartan integers in $A$, of 
all the entries of full matrix
$\widehat{M}_\tau(\bfi)$.

\begin{lemma}\label{lm:s-tau-i}
For any $\bfi = (i_1, \ldots, i_n) \in [1, r]^n$ and $\tau \in \Xi_n$, the successor map
$s_\tau$ for the $\TTA$-Poisson CGL extension $R_\tau^{(A, \bfi)}$ 
is given, for $j \in [1, n]$, by 
\[
s_\tau(j) =  \begin{cases} {\rm min}\{j' \in [j+1, n]: i_{\tau(j')} = i_{\tau(j)}\}, & \hs 
\{j' \in [j+1, n]: i_{\tau(j')} = i_{\tau(j)}\}\neq \emptyset,\\
+\infty, & \hs \mbox{otherwise}.\end{cases}
\]
\end{lemma}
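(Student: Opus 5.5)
The natural route is to combine the description of level sets of $R^{(A,\bfi)}$ with the conjugation formula for $s_\tau$ in \autoref{:p-s-E-Lambda-tau}. Recall from \cite[$\S$6.2]{Lu-Mykola:deformation} (quoted above) that the level sets of $R^{(A,\bfi)} = R_{\rm id}$ are $L(j) = \{j' \in [1,n]: i_{j'} = i_j\}$. Since each level set is a chain $p^{o_-(k)}(k) < \cdots < k < s(k) < \cdots$ (see \eqref{eq:Lk}), the successor map $s$ of $R^{(A,\bfi)}$ sends $k$ to the next larger element of $L(k)$, i.e. to $\min\{k' > k: i_{k'} = i_k\}$, and to $+\infty$ if no such $k'$ exists.

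Next I would use \eqref{eq:ps-ex-tau}, which gives $s_\tau = (\taub\tau)^{-1} s (\taub\tau)$. By \cite[Corollary 8.6(b)]{GY:Poi-CGL}, the level sets of $R_\tau$ are exactly $\tau^{-1}(L)$ for $L \in \calL$. Also, by construction \eqref{eq:tau-dot-tau}, $\taub\tau$ restricts to the order-preserving bijection $\tau^{-1}(L) \to L$. Fix $j$ and set $L = L(\tau(j))$, so that $j \in \tau^{-1}(L)$ and $k := (\taub\tau)(j) \in L$.

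Since $s(k)$ is the successor of $k$ in the ordered set $L$, and $(\taub\tau)^{-1}$ is order preserving from $L$ onto $\tau^{-1}(L)$, the element $s_\tau(j) = (\taub\tau)^{-1}(s(k))$ is the successor of $j$ in the ordered set $\tau^{-1}(L)$, or $+\infty$ if $j$ is its maximum. Finally,
\[
\tau^{-1}(L) = \{j' \in [1,n]: \tau(j') \in L\} = \{j' \in [1,n]: i_{\tau(j')} = i_{\tau(j)}\},
\]
so the successor of $j$ in this set is exactly $\min\{j' \in [j+1,n]: i_{\tau(j')} = i_{\tau(j)}\}$ when that set is non-empty, and $+\infty$ otherwise. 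This is the claimed formula.

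There is no real obstacle. The only point needing care is the identification of level sets of $R^{(A,\bfi)}$ with the fibres of $j \mapsto i_j$, which I take as given from \cite{Lu-Mykola:deformation}, together with checking that the convention $(\taub\tau)^{-1}(+\infty) = +\infty$ handles the case where $j$ is maximal.
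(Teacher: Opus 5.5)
Your proof is correct and is essentially the paper's argument. The paper also identifies the level sets of $R_\tau$ with $\tau^{-1}(L)$, where $L=\{j: i_j=i_0\}$, and observes that $s_\tau$ sends each element of $\tau^{-1}(L)$ to the next larger one. Your detour through $s_\tau=(\tau_\bullet\tau)^{-1}s(\tau_\bullet\tau)$ is harmless, since that formula is itself proved from exactly this fact, but it makes your separate appeal to \cite[Corollary 8.6(b)]{GY:Poi-CGL} redundant.
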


\begin{proof}
The statement for $\tau = {\rm id}$ is proved in \cite[$\S$6.2]{Lu-Mykola:deformation}. Let 
$\tau \in \Xi_n$ be arbitrary. Let $L$ be any level set associated to $R^{(A,\bfi)}$, i.e., 
$L = \{j\in [1, n]: i_j = i_0\}$ for some $i_0 \in \{i_1, \ldots, i_n\}$. 
Then 
\[
\tau^{-1}(L) = \{j \in [1, n]: \;\tau(j) \in L\} = \{j \in [1, n]: \; i_{\tau(j)} = i_0\}.
\]
List the elements in $\tau^{-1}(L)$ in the increasing order as 
$\tau^{-1}(L) = (j_1, j_2, \ldots, j_l\}$. By the definition of $s_\tau$, 
one has $s_\tau(j_a) = j_{a+1}$ for $a \in [1, l-1]$ and $s_\tau(j_l) = +\infty$.
Thus $s_\tau$ is as described.
\end{proof}

For $\tau \in \Xi_n$, recall from \eqref{eq:ep-tau} the definition of the function $\ep_\tau:
[2, n] \to \{1, -1\}$.

\begin{theorem}\label{thm:hat-M-tau}
Let $A = (a_{i, i'})_{i, i' \in [1, r]}$ be any symmetrizable generalized Cartan matrix.
For any sequence $\bfi = (i_1, \ldots, i_n)$ in $[1, n]$ and  any $\tau \in \Xi_n$, writing
$\widehat{M}_\tau(\bfi) = (\whm_{j, k})_{j, k \in [1, n]}$,  one has, for all $j, k \in [1, n]$ and including when
$s_\tau(j) =+\infty$ or $s_\tau(k) = +\infty$,
\[
\whm_{j, k} = \begin{cases} \ep_\tau(k), & s_\tau(j) = k, \\
-\ep_\tau(j), &   j = s_\tau(k), \\
1, & j = k \;\; \mbox{and}\;\; s_\tau(k) =+\infty,\\
\ep_\tau(k) a_{i_{\tau(j)}, i_{\tau(k)}}, & j < k < s_\tau(j) < s_\tau(k)\;\; \mbox{and}\;\;
\ep_\tau(k) =  \ep_\tau(s_\tau(j)), \\
&\mbox{or}\;\; j < k < s_\tau(k) < s_\tau(j) \; \;\mbox{and}\;\; \ep_\tau(k) = -\ep_\tau(s_\tau(k)), \\
-\ep_\tau(j) a_{i_{\tau(j)}, i_{\tau(k)}}, &  k < j < s_\tau(k) < s_\tau(j)
\;\; \mbox{and} \;\;  \ep_\tau({j})=\ep_\tau({s_\tau(k)}),\\
&\mbox{or}\;\;k < j < s_\tau(j) < s_\tau(k)\;\; \mbox{and}\;\; \ep_\tau({j})=-\ep_\tau({s_\tau(j)}), \\
0, &  \mbox{otherwise}.\end{cases}
\]
\end{theorem}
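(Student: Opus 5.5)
We will compute the entries of $\widehat{M}_\tau(\bfi)=(\tau E_\tau)^tQ\,\tau E_\tau$ directly, splitting the columns $k\in[1,n]$ into those with $s_\tau(k)\neq+\infty$, i.e. $k\in{\rm ex}_\tau$, and those with $s_\tau(k)=+\infty$.

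\textbf{Columns $k\in{\rm ex}_\tau$.} We have $(\widehat{M}_\tau(\bfi))_{n\times{\rm ex}_\tau}=M_\tau^\prime\ep_\tau^\prime$ by \eqref{eq:M-tau-prime-product}. By \eqref{eq:a-LL-Lie}, $a_{\sL(\tau(j)),\sL(\tau(k))}=a_{i_{\tau(j)},i_{\tau(k)}}$. So for these columns the formula is exactly \autoref{thm:M-tau-prime}; the case ``$j=k$, $s_\tau(k)=+\infty$'' does not occur there. Since the proof of \autoref{thm:M-tau-prime} only spelled out cases (1)–(3), I would also check cases (4)–(6) by the same method: compare the positions of $\tau(j),\tau(s_\tau(j)),\tau(k),\tau(s_\tau(k))$ via $\ep_\tau$ and read off Cases a), b), c) of \autoref{cor:M-tau-prime}.

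\textbf{Columns $k$ with $s_\tau(k)=+\infty$.} Here $\tau E_\tau e_k=\tau e_k=e_{\tau(k)}$, so
\[
\whm_{j,k}=(e_{\tau(j)}-e_{\tau(s_\tau(j))})^tQ\,e_{\tau(k)}=Q_{\tau(j),\tau(k)}-Q_{\tau(s_\tau(j)),\tau(k)},
\]
with the convention $e_{+\infty}=0$. By \eqref{eq:Q-bfi}, $Q_{a,b}$ equals $1$ if $a=b$, equals $a_{i_a,i_b}$ if $a<b$, and equals $0$ if $a>b$. Set $\kappa=\tau(k)$. Because $s_\tau(k)=+\infty$, \autoref{lm:s-tau-i} says $k$ is the last element of $\tau^{-1}(L(\kappa))$. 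I would then treat the following cases.

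\begin{itemize}
\item If $j=k$, the entry is $1-0=1$.
\item If $s_\tau(j)=k$, then $\tau(j)$ and $\kappa$ lie on the same level, and $\tau(j)<\kappa$ or $\tau(j)>\kappa$ according as $k\in\tau(+)$ or $k\in\tau(-)$, by \eqref{eq:tau-plus} and \eqref{eq:tau-minus}. In the first case the entry is $a_{i,i}-1=1$; in the second it is $0-1=-1$. In both cases it equals $\ep_\tau(k)$.
\item The case $j=s_\tau(k)$ cannot occur in this column.
\item If $L(\tau(j))=L(\kappa)$ with $s_\tau(j)\neq k$ and $j\neq k$, then $j<k$ and $s_\tau(j)<k$. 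By the interval property of $\tau$, the points $\tau(j)$ and $\tau(s_\tau(j))$ lie on the same side of $\kappa$, so the two $Q$-entries are equal (both $2$ or both $0$) and cancel.
\item If $L(\tau(j))\neq L(\kappa)$, the entry is $[\tau(j)<\kappa]\,a-[\tau(s_\tau(j))<\kappa]\,a$ with $a=a_{i_{\tau(j)},i_\kappa}$.
\end{itemize}

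In the last case we must compare with the claimed cases using $s_\tau(k)=+\infty$. Case ``$j<k<s_\tau(k)<s_\tau(j)$'' cannot occur. The surviving conditions are $j<k<s_\tau(j)$, giving $\ep_\tau(k)a$ when $\ep_\tau(k)=\ep_\tau(s_\tau(j))$, and $k<j<s_\tau(k)=+\infty$ with $s_\tau(j)<+\infty$. I would sort these using that $\tau([1,m])$ is an interval for every $m$.

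\begin{itemize}
\item If $s_\tau(j)<k$ or $s_\tau(j)=+\infty$ with $j<k$: then $\tau(j)$ and $\tau(s_\tau(j))$ are on the same side of $\kappa$, so the entry vanishes. Alternatively $s_\tau(j)=+\infty$ and the entry is $[\tau(j)<\kappa]a$; this is the one delicate subcase, treated below.
\item If $j<k<s_\tau(j)$: then $\kappa$ lies between $\tau(j)$ and $\tau(s_\tau(j))$ exactly when $\ep_\tau(k)=\ep_\tau(s_\tau(j))$, and the entry is then $\pm a=\ep_\tau(k)a$.
\item If $k<j$: then $\tau(j)$ and $\tau(s_\tau(j))$ are on the same side of $\kappa$ unless $\ep_\tau(j)\neq\ep_\tau(s_\tau(j))$ with $\tau(j)$ on the side dictated by $\ep_\tau(j)$. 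This matches the $-\ep_\tau(j)a$ case ``$k<j<s_\tau(j)<s_\tau(k)$, $\ep_\tau(j)=-\ep_\tau(s_\tau(j))$''.
\end{itemize}

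The main obstacle is the pairs where $s_\tau(j)=s_\tau(k)=+\infty$ and $j\neq k$. There the computed entry $[\tau(j)<\tau(k)]\,a$ must be matched with the stated cases, interpreting $+\infty<+\infty$ as false so that the entry should be $0$. I expect to resolve this by the convention $e_{+\infty}=0$ together with the fact that both $\tau(j)$ and $\tau(k)$ are the extreme elements of their levels inside $\tau([1,n])=[1,n]$. I would verify this carefully, and if necessary check the convention against \eqref{eq:Q-bfi} on a small example before finalizing.
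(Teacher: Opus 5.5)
Your reduction is the same as the paper's. For columns $k\in{\rm ex}_\tau$ you invoke \autoref{thm:M-tau-prime} via \eqref{eq:M-tau-prime-product} and \eqref{eq:a-LL-Lie}. For a frozen column $k$ you compute $Q_{\tau(j),\tau(k)}-Q_{\tau(s_\tau(j)),\tau(k)}$ and sort the cases using the interval property of $\tau$. All your subcases with $s_\tau(j)<+\infty$ check out.

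The gap is the subcase you flag as the main obstacle: $s_\tau(j)=s_\tau(k)=+\infty$ with $j\neq k$. You expect the entry to be $0$ and plan to prove that $[\tau(j)<\tau(k)]\,a_{i_{\tau(j)},i_{\tau(k)}}=0$, using that $\tau(j)$ and $\tau(k)$ are extreme in their levels. This is false, and that argument cannot work. Extremality within each level says nothing about the relative position of $\tau(j)$ and $\tau(k)$, which lie on different levels. Take $n=2$, $\bfi=(1,2)$ with $a_{1,2}\neq 0$, and $\tau={\rm id}$. Then $E_\tau=I$ and $\widehat{M}_\tau(\bfi)=Q$, so $\whm_{1,2}=a_{1,2}\neq 0$, although $s_\tau(1)=s_\tau(2)=+\infty$. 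This is consistent with \autoref{prop:decomp}, which places the symmetrizable part on ${\rm fr}_\tau\times{\rm fr}_\tau$.

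What is missing is the convention under which the statement is correct, and which the paper's proof uses ("using $s_\tau(k)=+\infty$ and $\ep_\tau(s_\tau(k))=1$"). Set $\ep_\tau(+\infty)=1$, and regard $s_\tau(j)<s_\tau(k)$ as satisfied whenever $s_\tau(k)=+\infty$, even if $s_\tau(j)=+\infty$. In a frozen column the two off-level nonzero cases then become:
\begin{itemize}
\item[(a)] $j<k<s_\tau(j)$ and $\ep_\tau(k)=\ep_\tau(s_\tau(j))$, giving $\ep_\tau(k)a$;
\item[(b)] $k<j<s_\tau(j)$ and $\ep_\tau(j)=-\ep_\tau(s_\tau(j))$, giving $-\ep_\tau(j)a$.
\end{itemize}
In both, $s_\tau(j)=+\infty$ is now allowed.

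With this reading the doubly frozen case is handled by the same interval argument you already use:
\begin{itemize}
\item For $j<k$: $\tau(j)<\tau(k)$ iff $k\in\tau(+)$ iff $\ep_\tau(k)=1=\ep_\tau(+\infty)$. The entry is then $a_{i_{\tau(j)},i_{\tau(k)}}=\ep_\tau(k)a_{i_{\tau(j)},i_{\tau(k)}}$, matching (a).
\item For $k<j$: $\tau(j)<\tau(k)$ iff $j\in\tau(-)$ iff $\ep_\tau(j)=-1=-\ep_\tau(+\infty)$. The entry is then $-\ep_\tau(j)a_{i_{\tau(j)},i_{\tau(k)}}$, matching (b).
\end{itemize}
Your ``$k<j$'' bullet tacitly assumes $s_\tau(j)<+\infty$, so it also misses this second half of the doubly frozen case.
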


\begin{proof}
By \autoref{thm:M-tau-prime}, we only need to assume  $j \in [1, n]$ and $k \in [1, n]\backslash
{\rm ex}_\tau$ and prove that $\whm_{j, k}$ is as described, i.e., 
\[
\whm_{j, k} = \begin{cases} \ep_\tau(k), & s_\tau(j) = k, \\
1, & j = k \in [1, n]\backslash {\rm ex}_\tau,\\
\ep_\tau(k) a_{i_{\tau(j)}, i_{\tau(k)}}, & j < k < s_\tau(j)\;\; \mbox{and}\;\;
\ep_\tau(k) =  \ep_\tau(s_\tau(j)), \\
-\ep_\tau(j) a_{i_{\tau(j)}, i_{\tau(k)}},&k < j < s_\tau(j)\;\; \mbox{and}\;\; \ep_\tau({j})=-\ep_\tau({s_\tau(j)}), \\
0, &  \mbox{otherwise}.\end{cases}
\]
By \autoref{lm:s-tau-i}, we have $i_{\tau(j)} =i_{\tau(s_\tau(j))}$ when $s_\tau(j) \neq +\infty$, By \eqref{eq:Q-bfi}, we have
\begin{equation}\label{eq:mh-jk-0}
\whm_{j, k} 
= (e_{\tau(j)}^t-e_{\tau(s_\tau(j))}^t) \left(e_{\tau(k)} + \sum_{l=1}^{\tau(k)-1} a_{i_l, i_{\tau(k)}} e_l\right).
\end{equation}
Assume first that $L(\tau(j)) = L(\tau(k))$, so that $i_{\tau(j)} =i_{\tau(s_\tau(j))}=i_{\tau(k)}$.
If $j = k$, then $\whm_{j, k} = 1$ by \eqref{eq:mh-jk-0}. If $j \neq k$, then since 
$s_\tau(k) =+\infty$, we have 
\[
(i)\;\; j < s_\tau(j) <k, \hs \mbox{or}\hs (ii)\;\; s_\tau(j) = k.
\]
In (i), both $\tau(j)$ and $\tau(s_\tau(j))$ are less than $\tau(k)$ if $\ep_\tau(k) = 1$, and
both $\tau(j)$ and $\tau(s_\tau(j))$ are bigger than $\tau(k)$ if $\ep_\tau(k) = 1$, so
$\whm_{j, k} = 0$ by \eqref{eq:mh-jk-0}; In (ii), it follows from $a_{i_{\tau(k)}, i_{\tau(k)}} = 2$
and \eqref{eq:mh-jk-0} that $\whm_{j, k} = \ep_\tau(k)$. 
When $L(\tau(j)) \neq L(\tau(k))$, using $s_\tau(k) = +\infty$ and $\ep_\tau(s_\tau(k)) = 1$,
the same arguments used in the proof of 
\autoref{thm:M-tau-prime} show that $\whm_{j, k}$ is as described. 
\end{proof}

Continuing with the notation as above,  set 
${\rm fr} = [1,n] \backslash \text{ex}$ and 
${\rm fr}_\tau = [1,n] \backslash \text{ex}_\tau$  for $\tau \in \Xi$.

\begin{proposition}\label{prop:decomp}
For the symmetric Poisson CGL extension $R^{(A,\mathbf{i})}$ in (\ref{eq:R-A-i}) and for any 
$\tau \in \Xi_n$, the identity
\[
 (\tau E_\tau)^t Q(\tau E_\tau) =  \frac{1}{2}(\tau E_\tau )^t (Q-\overline{\Lambda}^{-1} Q^t \overline{\Lambda}) (\tau E_\tau) + \frac{1}{2}(\tau E_\tau )^t (Q+\overline{\Lambda}^{-1} Q^t \overline{\Lambda}) (\tau E_\tau)
\]
is a decomposition of 
$\widehat{M}_\tau(\bfi)= (\tau E_\tau)^t Q(\tau E_\tau)$ into its skew-symmetrizable part and symmetrizable part,  with
$\tau^{-1} \overline{\Lambda} \tau = {\rm diag}(2d_{i_{\tau(j)}})_{j \in [1, n]}$
as a left skew-symmetrizer, resp. symmetrizer. 
 Furthermore, the symmetrizable part of $\widehat{M}_\tau(\bfi)$ is supported on ${\rm fr}_\tau \times {\rm fr}_\tau$.
\end{proposition}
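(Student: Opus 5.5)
The plan is to verify the statement by direct matrix algebra. Only three facts are needed: the conjugation formula for $E_\tau$ in \autoref{:p-s-E-Lambda-tau}, the identification of level sets for $R^{(A,\bfi)}$, and the explicit form \eqref{eq:Q-bfi} of $Q$. The displayed identity itself is trivial, since the two $\overline{\Lambda}^{-1}Q^t\overline{\Lambda}$ terms cancel. What must be shown is that the first summand is skew-symmetrizable and the second symmetrizable, both with left (skew-)symmetrizer $D:=\tau^{-1}\overline{\Lambda}\tau$, and that the second summand vanishes outside ${\rm fr}_\tau\times{\rm fr}_\tau$.

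\emph{Step 1: $\overline{\Lambda}$ and $D$.} Since $L(j)=\{j': i_{j'}=i_j\}$ and $\lambda_j=2d_{i_j}$, we have $\overline{\Lambda}={\rm diag}(2d_{i_j})_{j\in[1,n]}$. By \eqref{eq:tau-lam} this gives $D={\rm diag}(2d_{i_{\tau(j)}})$, which is diagonal with positive entries.

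\emph{Step 2: the key intertwining $P^t\overline{\Lambda}=D\,P^t$, where $P=\tau E_\tau$.} By \eqref{eq:E-Lambda-tau}, $P=\tau_\bullet^{-1}E\,\tau_\bullet\tau$, so $P^t=\tau^t\tau_\bullet^t E^t\tau_\bullet$. The matrix $\overline{\Lambda}$ is constant on level sets, which gives two commutations:
\begin{itemize}
\item $\tau_\bullet$ commutes with $\overline{\Lambda}$, by \eqref{eq:tau-overline-Lambda};
\item $E^t$ commutes with $\overline{\Lambda}$, because each column $e_k-e_{p(k)}$ of $E^t$ lives on a single level.
\end{itemize}
Finally $\tau^t\overline{\Lambda}=(\tau^t\overline{\Lambda}\tau)\tau^t=D\tau^t$. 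Chaining these identities gives the claim.

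\emph{Step 3: the two summands.} For any $X$, Step 2 gives $D\,P^tXP=P^t(\overline{\Lambda}X)P$. Take $X_\mp=\tfrac12(Q\mp\overline{\Lambda}^{-1}Q^t\overline{\Lambda})$. Then $\overline{\Lambda}X_\mp=\tfrac12(\overline{\Lambda}Q\mp Q^t\overline{\Lambda})$, which is skew-symmetric or symmetric respectively, because $(\overline{\Lambda}Q)^t=Q^t\overline{\Lambda}$. Hence $D$ is a left skew-symmetrizer of the first summand and a left symmetrizer of the second.

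For integrality, compute $X_+$ entrywise from \eqref{eq:Q-bfi}. For $j>k$, the $(j,k)$ entry of $\overline{\Lambda}^{-1}Q^t\overline{\Lambda}$ is $d_{i_k}a_{i_k,i_j}/d_{i_j}=a_{i_j,i_k}$, by symmetrizability $d_ia_{i,i'}=d_{i'}a_{i',i}$. Therefore $X_+=C:=(a_{i_j,i_k})_{j,k\in[1,n]}$, the full Cartan matrix pulled back along $\bfi$, and $X_-=Q-C$. Both are integer matrices, so both summands $P^tCP$ and $P^t(Q-C)P$ are integral.

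\emph{Step 4: support of $P^tCP$.} For $k\in{\rm ex}_\tau$ we have $Pe_k=e_{\tau(k)}-e_{\tau(s_\tau(k))}$, and $i_{\tau(k)}=i_{\tau(s_\tau(k))}$ by \autoref{lm:s-tau-i}. So columns $\tau(k)$ and $\tau(s_\tau(k))$ of $C$ coincide, and $CPe_k=0$. Thus every column of $P^tCP$ indexed by ${\rm ex}_\tau$ vanishes. Since $D\,P^tCP$ is symmetric and $D$ is invertible diagonal, every row indexed by ${\rm ex}_\tau$ vanishes as well. Hence the symmetrizable part is supported on ${\rm fr}_\tau\times{\rm fr}_\tau$.

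The main obstacle is Step 2, specifically checking that the permutations $\tau_\bullet$ and $\tau$ interact correctly with the level-constant diagonal $\overline{\Lambda}$. Once that intertwining holds, the rest is routine bookkeeping.
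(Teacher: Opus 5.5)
Your Steps 1--3 prove the decomposition and the (skew-)symmetrizer exactly as the paper does. Your intertwining $P^t\overline{\Lambda}=D\,P^t$ is the transpose of the paper's identity $\overline{\Lambda}(\tau E_\tau)=(\tau E_\tau)(\tau^{-1}\overline{\Lambda}\tau)$; the paper asserts it without proof, and you justify it correctly.

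The integrality paragraph in Step 3, however, is wrong and should be deleted. Your entrywise computation actually shows $Q+\overline{\Lambda}^{-1}Q^t\overline{\Lambda}=C$, since on the diagonal $1+1=2=a_{i_j,i_j}$. Hence $X_+=\tfrac12 C$ and $X_-=Q-\tfrac12 C$, not $C$ and $Q-C$. Your own Step 3 rules out $X_-=Q-C$: that matrix has diagonal entries $-1$, so $\overline{\Lambda}(Q-C)$ cannot be skew-symmetric.

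With the correct factor, the two summands are in general only half-integral. For example, take type $A_2$, $\bfi=(1,2)$ and $\tau={\rm id}$. Then $\tau E_\tau=I$, and the two parts are $\bigl(\begin{smallmatrix}0&-1/2\\ 1/2&0\end{smallmatrix}\bigr)$ and $\bigl(\begin{smallmatrix}1&-1/2\\ -1/2&1\end{smallmatrix}\bigr)$. So the proposition must be read with ``(skew-)symmetrizable'' in the rational sense; the factor $\tfrac12 a_{|\ida_j|,|\ida_k|}$ in Williams' formula for $\widehat{B}(\bida)$ reflects this. Integrality is neither true nor needed, and nothing else in your argument uses it.

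Your Step 4 is correct, and the missing factor $\tfrac12$ does not affect it, but it takes a different route from the paper. The paper writes $Q+\overline{\Lambda}^{-1}Q^t\overline{\Lambda}=(F_{{\rm fr}\times n})^tA(\bfi)F_{{\rm fr}\times n}$, where the rows of $F_{{\rm fr}\times n}$ are the indicator vectors of the level sets. It then uses $F_{{\rm fr}\times n}\tau=(F_\tau)_{{\rm fr}_\tau\times n}$, so that $F_{{\rm fr}\times n}\,\tau E_\tau=(F_\tau E_\tau)_{{\rm fr}_\tau\times n}$ is the coordinate projection onto ${\rm fr}_\tau$. This identifies the symmetrizable part explicitly as $\tfrac12 A(\bfi)$ placed in the ${\rm fr}_\tau\times{\rm fr}_\tau$ block.

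You instead observe that the columns of $C$ depend only on the letter, so $C\,\tau E_\tau e_k=0$ for $k\in{\rm ex}_\tau$. You then transfer the vanishing from columns to rows using the symmetry of $D\,P^tCP$. Your route is shorter and avoids the row comparison for $F_\tau$, but it gives only the support, not the value of the nonzero block.
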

\begin{proof}
    The first statement follows from $\overline{\Lambda}(\tau E_{\tau}) = (\tau E_{\tau}) (\tau^{-1} \overline{\Lambda} \tau)$.
    The second one follows from
    \begin{equation}\label{eq:QQF}
     Q+\overline{\Lambda}^{-1} Q^t \overline{\Lambda}  = (F_{{\rm fr} \times n})^t A(\bfi)  F_{{\rm fr} \times n} = ((F_{\tau})_{{\rm fr}_\tau \times n} \tau^{-1})^t A(\bfi)  ((F_{\tau})_{{\rm fr}_\tau \times n} \tau^{-1}),
    \end{equation}
    where $A(\bfi) := A_{\text{supp}(\bfi) \times\text{supp}(\bfi)}$ 
    by identifying ${\rm fr}  = \text{supp}(\bfi) = {\rm fr}_\tau$. Note that the 
    first equality in \eqref{eq:QQF} is a special case of 
    the second one, which holds  due to the identity 
    $F_{{\rm fr} \times n}\tau= (F_\tau)_{{\rm fr}_\tau \times n}$, which is in turn
    proved by comparing the rows of the matrices on both sides.
\end{proof}

\begin{remark}\label{rk:why-skew-symmetric}
{\rm
In the setting of \autoref{prop:decomp}, 
the fact that the symmetrizable part of $\widehat{M}_\tau(\bfi)$ is supported at its ${\rm fr}_\tau \times {\rm fr}_\tau$
sub-matrix gives another explanation of the skew-symmetrizablility of its sub-matrix $M_\tau'\ep'_\tau$ (see
\autoref{rk:M-tau}). 
\hfill $\diamond$
}
\end{remark}

\subsection{Signed words and admissible triples}\label{ss:Srn-Trn}
Fix integers $r \geq 1$ and $n \geq 2$, and set
\begin{align*}
\Srn &= \{\bida  = (\ida_1, \ldots, \ida_n) : \;
\ida_j \in[-r, -1] \sqcup [1, r] \; \mbox{for every}\; j \in [1, n]\},\\
\Trn & = \{({\bf i} = (i_1, \ldots, i_n), \tau, \ep_1): \; 
i_j\in [1, r] \; \mbox{for every}\; j \in [1, n], \; \tau \in \Xi_n, \;
\ep_1= \pm 1\}.
\end{align*}
Any $\bida \in \Srn$ is called 
a {\it signed word} \cite{Qin:dual, CQW:i-boxes}, or a {\it double word} \cite{BZ:quantum}, in 
$[1, r]$ of length $n$.
We call any $(\bfi, \tau, \ep_1) \in \Trn$ an {\it $(r, n)$-admissible triple} or simply an 
{\it admissible triple}. 
In this section, we establish a bijection between $\Srn$ and $\Trn$.

 \begin{notation}\label{nota:bida}
{\rm 
For $\bida =  (\ida_1, \ldots, \ida_n) \in \Srn$, 
if
\[
\{a \in [1, n]: \ida_a \in [1, r]\} = \{a_1, \ldots, a_m\}  \hs \mbox{and} \hs
\{b \in [1, n]: \ida_b \in [-r, -1]\} = \{b_1, \ldots, b_{n-m}\} 
\]
with  $a_1 < \cdots < a_m$ and  $b_1 < \cdots < b_{n-m}$, we set
\begin{align}\label{eq:bida-plus}
\bida_+ &= (\ida_{a_1}, \;\ldots, \;\ida_{a_m}) \hs \mbox{and} \hs
\left(\bida_+\right)^{-1} = \left(\ida_{a_m}, \;\ldots, \;\ida_{a_1}\right),\\
\label{eq:bida-minus}
\bida_- &= (\ida_{b_1}, \;\ldots, \;\ida_{b_{n-m}}) \hs \mbox{and} \hs 
 -\bida_- = \left(-\ida_{b_1}, \;\ldots, \;-\ida_{b_{n-m}}\right),
\end{align}
and we denote by $\tau_{\bida}$ the element in  the permutation group $S_n$ given by
\begin{equation}\label{eq:tau-bida} 
(\tau_{\bida}(a_1), \, \ldots, \, \tau_{\bida}(a_m), \, \tau_{\bida}(b_1), \, \ldots, 
\tau_{\bida}(b_{n-m})) = (m,  \ldots, 1, \, m+1, \, \ldots, n),
\end{equation}
i.e., $\tau_{\bida}(a_t) = m+1-t$ for $t \in [1, m]$ and $\tau_{\bida}(b_t) = m+t$ for $t \in [1, n-m]$.
Also set 
\begin{equation}\label{eq:Pos-Neg}
{\rm Pos}(\bida) = \{a_1, \ldots,a_m\} \hs \mbox{and} \hs 
{\rm Neg}(\bida) = \{b_1, \ldots, b_{n-m}\}.
\end{equation}
\hfill $\diamond$
}
\end{notation}

For $\tau \in \Xi_n$, recall again from \eqref{eq:tau-plus} and \eqref{eq:tau-minus} the
sets $\tau(+) \subset [2, n]$ and $\tau(-)\subset [2, n]$.

\begin{lemma}\label{lm:tau-bida}
For any $\bida\in \Srn$, one has $\tau_{\bida} \in \Xi_n$, and 
\begin{equation}\label{eq:tau1}
\tau_{\bida}(1) = m \;\;\; \mbox{if}\;\;\;\ep_1 = 1, \hs and \hs
\tau_{\bida}(1) = m+1 \;\;\; \mbox{if}\;\;\;\ep_1 = -1,
\end{equation}
where $m = |{\rm Pos}(\bida)|$ and  $\ep_1 = {\rm sign}(\ida_1)$. Moreover, 
\begin{equation}\label{eq:bida-Pos-Neg}
\tau_{\bida}(-) = [2, n] \cap {\rm Pos}(\bida) \hs \mbox{and} \hs 
\tau_{\bida}(+) = [2, n] \cap {\rm Neg}(\bida).
\end{equation}
\end{lemma}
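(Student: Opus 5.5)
The argument is a direct check from the definition \eqref{eq:tau-bida} of $\tau_{\bida}$ and the characterization of $\Xi_n$ by the interval property. Write ${\rm Pos}(\bida) = \{a_1 < \cdots < a_m\}$ and ${\rm Neg}(\bida) = \{b_1 < \cdots < b_{n-m}\}$ as in \autoref{nota:bida}.

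First I verify \eqref{eq:tau1}. If $\ep_1 = 1$, then $\ida_1 > 0$, so $1 = a_1$ and $\tau_{\bida}(1) = m+1-1 = m$. If $\ep_1 = -1$, then $1 = b_1$ and $\tau_{\bida}(1) = m+1$.

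Next I show that $\tau_{\bida} \in \Xi_n$ by describing $\tau_{\bida}([1,j])$ for each $j \in [1, n]$. Let $p_j = |{\rm Pos}(\bida) \cap [1, j]|$ and $q_j = |{\rm Neg}(\bida) \cap [1, j]|$, so $p_j + q_j = j$. The elements of ${\rm Pos}(\bida) \cap [1,j]$ are $a_1, \ldots, a_{p_j}$, and these are sent to $m, m-1, \ldots, m+1-p_j$. The elements of ${\rm Neg}(\bida) \cap [1,j]$ are $b_1, \ldots, b_{q_j}$, and these are sent to $m+1, \ldots, m+q_j$. Hence
\[
\tau_{\bida}([1,j]) = [m+1-p_j, \, m+q_j],
\]
which is an interval; when $p_j = 0$ or $q_j = 0$ one of the two pieces is empty, and the union is still an interval. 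So $\tau_{\bida}([1,j])$ is an interval for every $j$, i.e. $\tau_{\bida} \in \Xi_n$.

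Finally, for $j \in [2, n]$ I compare $\tau_{\bida}(j)$ with the interval $\tau_{\bida}([1, j-1])$.
\begin{itemize}
\item If $j \in {\rm Pos}(\bida)$, then $j = a_{p_j}$ and $\tau_{\bida}(j) = m+1-p_j$. This equals $\min \tau_{\bida}([1,j-1]) - 1$, since that minimum is $m+1-p_{j-1} = m+2-p_j$. (When $p_{j-1} = 0$ the minimum is $m+1$, and the formula still holds.) Hence $j \in \tau_{\bida}(-)$.
\item If $j \in {\rm Neg}(\bida)$, then symmetrically $\tau_{\bida}(j) = m+q_j = \max \tau_{\bida}([1,j-1]) + 1$. (When $q_{j-1} = 0$ the maximum is $m$.) Hence $j \in \tau_{\bida}(+)$.
\end{itemize}
Since $[2,n]$ is the disjoint union of $\tau_{\bida}(+)$ and $\tau_{\bida}(-)$, this gives \eqref{eq:bida-Pos-Neg}.

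The only delicate points are the empty-side edge cases, which are handled by the convention that the empty piece contributes nothing to the interval; there is no real obstacle.
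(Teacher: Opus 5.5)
Your proof is correct and follows essentially the same route as the paper's. Both determine $\tau_{\bida}(1)$ by checking whether $1=a_1$ or $1=b_1$, and then, for each $c\in[2,n]$, write $\tau_{\bida}([1,c])$ as the union of the two adjacent intervals coming from ${\rm Pos}(\bida)\cap[1,c]$ and ${\rm Neg}(\bida)\cap[1,c]$; this gives both the interval property and whether $c\in\tau_{\bida}(+)$ or $c\in\tau_{\bida}(-)$. Your counts $p_j,q_j$, with $\tau_{\bida}([1,j])=[m+1-p_j,\,m+q_j]$, are a slightly more explicit version of the paper's case-by-case description.
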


\begin{proof} 
With the notation as in \eqref{eq:Pos-Neg}, and by
\eqref{eq:bida-plus} and \eqref{eq:bida-minus}, 
$a_1 = 1$ if $\ep_1 = 1$ and $b_1 = 1$ if
$\ep_1 = -1$. Thus \eqref{eq:tau1} holds by \eqref{eq:tau-bida}. 
Let $c \in [2, n]$. 
If $c \in {\rm Pos}(\bida)$,  then $\tau_{\bida}(c) \leq m$  by \eqref{eq:tau-bida} and 
\[
\tau_{\bida}([1, c] \cap {\rm Pos}(\bida)) =[\tau_{\bida}(c), \;m], \hs 
\tau_{\bida}([1, c] \cap {\rm Neg}(\bida)) = [m+1, \; \tau_{\bida}({\rm max}([1, c] \cap {\rm Neg}(\bida))].
\]
If $c \in {\rm Neg}(\bida)$, then  $\tau_{\bida}(c) \geq m+1$ by \eqref{eq:tau-bida} and 
\[
\tau_{\bida}([1, c] \cap {\rm Pos}(\bida)) =[\tau_{\bida}({\rm max}([1, c] \cap {\rm Pos}(\bida)), \;m],\hs
\tau_{\bida}([1, c] \cap {\rm Neg}(\bida)) = [m+1, \; \tau_{\bida}(c)].
\]
In both cases, $\tau_{\bida}([1, c])$ is a sub-interval of $[1, n]$, and 
$c \in \tau_{\bida}(-)$ if $c \in {\rm Pos}(\bida)$ and 
$c \in \tau_{\bida}(+)$ if $c \in {\rm Neg}(\bida)$.
\end{proof}

\begin{lemma}\label{lm:calS-calT}
Given $(\bfi = (i_1, \ldots, i_n), \tau, \ep_1) \in \Trn$, define 
\begin{equation}\label{eq:S-i}
\calS(\bfi, \tau, \ep_1) = (\ep_1 i_{\tau(1)}, \; \ep_2 i_{\tau(2)}, \, \ldots, \, 
\ep_ni_{\tau(n)})\in \Srn,
\end{equation}
where for $j \in [2, n]$, $\ep_j = -1$ if $j \in \tau(+)$ and $\ep_j = 1$ if $j \in \tau(-)$.
Then the map 
\[
\calS:\;\; \Trn \longrightarrow \Srn, \;\; (\bfi, \tau, \ep_1) \longmapsto \calS(\bfi, \tau, \ep_1)
\]
is bijective, and its inverse is given by 
\begin{equation}\label{eq:T-bida}
\calT:\;\; \Srn \longrightarrow \Trn, \;\;\bida \longrightarrow  \calT(\bida) :=(\bfi, \tau, \ep_1), 
\end{equation}
where 
$\bfi = ((\bida_+)^{-1}, \; -\bida_-)$, $\tau = \tau_{\bida}$, and 
 $\ep_1 = {\rm sign}(\ida_1)$.
\end{lemma}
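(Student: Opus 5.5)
The plan is to check directly that $\calT\circ\calS$ and $\calS\circ\calT$ are both identities, after verifying that each map lands in the right set. That $\calS$ lands in $\Srn$ is immediate, since each entry is $\pm i_{\tau(j)}$ with $i_{\tau(j)}\in[1,r]$. That $\calT$ lands in $\Trn$ follows from \autoref{lm:tau-bida}, which gives $\tau_{\bida}\in\Xi_n$, together with the observation that $\bfi=((\bida_+)^{-1},-\bida_-)$ has length $m+(n-m)=n$ and entries in $[1,r]$.

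\textbf{Step 1: $\calS\circ\calT=\mathrm{id}$.} Fix $\bida$ and write $\calT(\bida)=(\bfi,\tau,\ep_1)$ with $\tau=\tau_{\bida}$. By \eqref{eq:tau-bida}, the $\tau(c)$-th entry of $\bfi$ is as follows:
\begin{itemize}
\item for $c=a_t\in{\rm Pos}(\bida)$, we have $\tau(c)=m+1-t$, and the $(m+1-t)$-th entry of $(\bida_+)^{-1}$ is $\ida_{a_t}$, so $i_{\tau(c)}=\ida_c$;
\item for $c=b_t\in{\rm Neg}(\bida)$, we have $\tau(c)=m+t$, so $i_{\tau(c)}=-\ida_c$.
\end{itemize}
Hence $i_{\tau(c)}=|\ida_c|$ for every $c$. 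By \eqref{eq:bida-Pos-Neg}, for $c\ge 2$ the sign $\ep_c$ in \eqref{eq:S-i} is $1$ exactly when $c\in\tau(-)=[2,n]\cap{\rm Pos}(\bida)$, which is exactly when $\ida_c>0$. For $c=1$, we have $\ep_1={\rm sign}(\ida_1)$ by definition. Therefore $\calS(\calT(\bida))=\bida$.

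\textbf{Step 2: $\calT\circ\calS=\mathrm{id}$.} This is the main step. Start from $(\bfi,\tau,\ep_1)$ and put $\bida=\calS(\bfi,\tau,\ep_1)$. Then
\[
{\rm Pos}(\bida)=\tau(-)\sqcup\{1\}_{\ep_1=1},\qquad {\rm Neg}(\bida)=\tau(+)\sqcup\{1\}_{\ep_1=-1},
\]
where $\{1\}_{\ep_1=\pm1}$ means $\{1\}$ if the condition holds and $\emptyset$ otherwise. The sign of the first entry of $\bida$ is $\ep_1$, so $\ep_1$ is recovered.

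The key claim is $\tau_{\bida}=\tau$. Since $\tau\in\Xi_n$, its values on $\tau(-)$ taken in increasing order of the argument are $\tau(1)-1,\tau(1)-2,\ldots$. Its values on $\tau(+)$ in increasing order are $\tau(1)+1,\tau(1)+2,\ldots$. Let $m=|{\rm Pos}(\bida)|$.
\begin{itemize}
\item If $\ep_1=1$, then $m=|\tau(-)|+1=\tau(1)$. So $\tau$ sends the ordered list $a_1=1<a_2<\cdots$ to $m,m-1,\ldots,1$, and sends $b_1<b_2<\cdots$ to $m+1,m+2,\ldots$. This is exactly \eqref{eq:tau-bida}.
\item If $\ep_1=-1$, then $m=|\tau(-)|=\tau(1)-1$, and $b_1=1$ is sent to $m+1=\tau(1)$. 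The rest matches \eqref{eq:tau-bida} in the same way.
\end{itemize}

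Once $\tau_{\bida}=\tau$ is known, the computation from Step 1 shows that the word $((\bida_+)^{-1},-\bida_-)$ has $\tau(c)$-th entry $|\ida_c|=i_{\tau(c)}$. Since $\tau$ is a bijection, this word equals $\bfi$.

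Steps 1 and 2 together show that $\calS$ and $\calT$ are mutually inverse bijections. The only real content is the interval bookkeeping in Step 2: checking that $m$ matches $\tau(1)$ or $\tau(1)-1$ according to $\ep_1$. I expect this to be the main obstacle, though it is mild.
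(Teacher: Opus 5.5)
Your proposal is correct and follows essentially the paper's route. Both proofs check that $\calT\circ\calS$ and $\calS\circ\calT$ are identities, and both rest on the same observation: for $\tau\in\Xi_n$, $\tau$ takes the values $\tau(1)-1,\tau(1)-2,\ldots$ on $\tau(-)$ and $\tau(1)+1,\tau(1)+2,\ldots$ on $\tau(+)$, so that $\tau(1)=m$ or $m+1$ according as $\ep_1=1$ or $-1$.

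The differences are cosmetic. You do the two compositions in the opposite order. You recover $\bfi$ by reusing the identity $i_{\tau_{\bida}(c)}=|\ida_c|$ rather than writing out $\bida_\pm$ case by case. You also cite \eqref{eq:bida-Pos-Neg} explicitly to match the signs, which the paper uses only tacitly.
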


\begin{proof}
Let $(\bfi = (i_1, \ldots, i_n), \tau, \ep_1) \in \Trn$ and let $\bida = \calS(\bfi, \tau, \ep_1)
=(\ida_1, \ldots, \ida_n)$ be given as in \eqref{eq:S-i}. We first show that 
$\calT(\bida) = (\bfi, \tau, \ep_1)$. Note that one has $\ep_1 = {\rm sign}(\ida_1)$ by definitions.
We need to prove that $\bfi = ((\bida_+)^{-1}, \; -\bida_-)$ and $\tau = \tau_{\bida}$.

List the elements in $\tau(-)\subset [2, n]$ and $\tau(+)\subset [2, n]$ in 
increasing order as
\[
\tau(-) = \{l_1, \ldots, l_{m'}\} \hs \mbox{and} \hs 
\tau(+) = \{p_1, \ldots, p_{n-m'-1}\}.
\]
Then $1\leq \tau(l_{m'}) < \cdots < \tau(l_1) < \tau(1) < \tau(p_1) < \cdots < \tau(p_{n-{m'}-1})\leq n$. Thus  
\[
(\tau(l_{m'}), \;\ldots, \;\tau(l_1), \; \tau(1), \; \tau(p_1). \; \ldots,\; \tau(p_{n-{m'}-1})) =
(1, \,\ldots,\, {m'},\, {m'}+1,\, {m'}+2,  \,\ldots,\, n).
\]
If $\ep_1 = 1$, then 
\begin{align*}
\bida_+ & =  (i_{\tau(1)}, \, i_{\tau(l_1)}, \, \ldots, \, i_{\tau(l_{m'})}) = 
(i_{{m'}+1}, \, i_{{m'}}, \, \ldots, \, i_1),\\
\bida_-& = (-i_{\tau(p_1)}, \, \ldots, \, -i_{\tau(p_{n-{m'}-1})}) = (-i_{{m'}+2}, \, \ldots, \, -i_n),
\end{align*}
so by \eqref{eq:tau-bida}, $\tau = \tau_{\bida}$ and $\bfi = (i_1, \ldots, i_{m'}, i_{{m'}+1}, i_{{m'}+2}, \ldots, i_n) =
((\bida_+)^{-1}, \; -\bida_-)$. 
If $\ep_1 = -1$, then 
\begin{align*}
\bida_+ & =  (i_{\tau(l_1)}, \, \ldots, \, i_{\tau(l_{m'})}) = 
(i_{{m'}}, \, \ldots, \, i_1),\\
\bida_-& = (-i_{\tau(1)}, \, -i_{\tau(p_1)}, \, \ldots, \, -i_{\tau(p_{n-{m'}-1})}) 
= (-i_{{m'}+1}, \, -i_{{m'}+2}, \, \ldots, \, -i_n),
\end{align*}
so again by \eqref{eq:tau-bida}, $\tau = \tau_{\bida}$ and 
$\bfi = (i_1, \ldots, i_{m'}, i_{{m'}+1}, i_{{m'}+2}, \ldots, i_n) =
((\bida_+)^{-1}, \; -\bida_-)$. 
We thus proved that
$\calT(\calS((\bfi, \tau, \ep_1))) = (\bfi, \tau, \ep_1)$ for all 
$(\bfi, \tau, \ep_1)\in \Trn$.

Let now  $\bida =(\ida_1, \ldots, \ida_n) \in \Srn$ be arbitrary, and 
we prove that $\bida =\calS(\calT(\bida))$.
Let $\calT(\bida) = (\bfi, \tau, \ep_1)$, and write $\bfi = (i_1, \ldots, i_n)$.
Let
\[
{\rm Pos}(\bida) = \{a_1 < \cdots < a_m\} \hs \mbox{and} \hs 
{\rm Neg}(\bida) = \{b_1 < \cdots < b_{n-m}\}.
\]
It then follows from  
$\bfi = ((\bida_+)^{-1}, -\bida_-)$ that 
\[
(i_1,\, \ldots, \, i_m, \, i_{m+1}, \,  \ldots, i_n) = (\ida_{a_m},\, \ldots, \,
\ida_{a_1}, \,-\ida_{b_1}, \,\ldots, \,-\ida_{b_{n-m}}).
\]
By the definition of $\tau =\tau_{\bida}$ in \eqref{eq:tau-bida},  one then has 
\[
(i_{\tau(a_m)},\, \ldots, \, i_{\tau(a_1)}, \, i_{\tau(b_1)}, \,  \ldots, i_{\tau(b_{n-m})}) = (\ida_{a_m},\, \ldots, \,
\ida_{a_1}, \,-\ida_{b_1}, \,\ldots, \,-\ida_{b_{n-m}}).
\]
Setting $\ep_j = {\rm sign}(\ida_j)$ for $j \in [1, n]$, one then has
$\ida_j = \ep_j i_{\tau(j)}$ for every $j \in [1, n]$. Thus
\[
\bida =  (\ep_1 i_{\tau(1)}, \; \ep_2 i_{\tau(2)}, \, \ldots, \, 
\ep_ni_{\tau(n)}) = \calS((\bfi, \tau, \ep_1)) = \calS (\calT(\bida)).
\]
 We have thus finished proving that the maps $\calS$ and $\calT$ are inverses of each other.
\end{proof}

\begin{example}\label{ex:bida-0}
{\rm
For any $\bfi =(i_1, \ldots, i_n)$, we  have 
\[
\calS(\bfi, {\rm id}, -1) = -\bfi=(-i_1, \ldots, -i_n), \hs \mbox{and}\hs
\calS(\bfi, w_0, 1) = \bfi^{-}=(i_n, \ldots, i_1),
\]
where $w_0$ is the longest element in $S_n$.
\hfill $\diamond$
}
\end{example}

\begin{example}\label{ex:bida-1}
{\rm
Let  $r = 3$ and $n = 14$. For easy visualization, we present (signed) words 
 and eleents in $S_n$ in two-line notation. 

1) For 
$\bida =\left(\begin{array}{rrrrrrrrrrrrrr} 
\textcolor{blue}{1} & \textcolor{blue}{2} & \textcolor{blue}{3} & \textcolor{blue}{4} & \textcolor{blue}{5} 
& \textcolor{blue}{6} & \textcolor{blue}{7} & \textcolor{blue}{8} & \textcolor{blue}{9} & \textcolor{blue}{10} 
& \textcolor{blue}{11} & \textcolor{blue}{12} & \textcolor{blue}{13} & \textcolor{blue}{14}\\
1 & -2 & -2 & {2} & {3} & {3} &-3 
& {1} &{2}  & -1& {3}& -2 & {1} & 
{1}\end{array}\right)$, 
we have
\begin{align*}
\tau_{\bida}& = \left(\begin{array}{cccccccccccccc} 
\textcolor{blue}{1} & \textcolor{blue}{2} & \textcolor{blue}{3} & \textcolor{blue}{4} & \textcolor{blue}{5} 
& \textcolor{blue}{6} & \textcolor{blue}{7} & \textcolor{blue}{8} & \textcolor{blue}{9} & \textcolor{blue}{10} 
& \textcolor{blue}{11} & \textcolor{blue}{12} & \textcolor{blue}{13} & \textcolor{blue}{14}\\
\textcolor{red}{9} & 10 & 11& \textcolor{red}{8} & \textcolor{red}{7} & \textcolor{red}{6} & 
12 & \textcolor{red}{5} & \textcolor{red}{4} & 13 & \textcolor{red}{3} & 14 
& \textcolor{red}{2} & \textcolor{red}{1} \end{array}\right),\\
\bfi& =\left(\begin{array}{cccccccccccccc} 
\textcolor{blue}{1} & \textcolor{blue}{2} & \textcolor{blue}{3} & \textcolor{blue}{4} & \textcolor{blue}{5} 
& \textcolor{blue}{6} & \textcolor{blue}{7} & \textcolor{blue}{8} & \textcolor{blue}{9} & \textcolor{blue}{10} 
& \textcolor{blue}{11} & \textcolor{blue}{12} & \textcolor{blue}{13} & \textcolor{blue}{14}\\
1 & 1 & 3 & 2 & 1 & 3 & 3 & 2 & 1 & 2 & 2 & 3 & 1 & 2 \end{array}\right), \hs \mbox{and}\;\;\ep_1 = 1;
\end{align*}

2) For 
$\bida =\left(\begin{array}{rrrrrrrrrrrrrr} 
\textcolor{blue}{1} & \textcolor{blue}{2} & \textcolor{blue}{3} & \textcolor{blue}{4} & \textcolor{blue}{5} 
& \textcolor{blue}{6} & \textcolor{blue}{7} & \textcolor{blue}{8} & \textcolor{blue}{9} & \textcolor{blue}{10} 
& \textcolor{blue}{11} & \textcolor{blue}{12} & \textcolor{blue}{13} & \textcolor{blue}{14}\\
-1 & -2 & -2 & {2} & {3} & {3} &-3 
& {1} &{2}  & -1& {3}& -2 & {1} & 
{1}\end{array}\right)$, 
we have
\begin{align*}
\tau_{\bida} &= \left(\begin{array}{cccccccccccccc} 
\textcolor{blue}{1} & \textcolor{blue}{2} & \textcolor{blue}{3} & \textcolor{blue}{4} & \textcolor{blue}{5} 
& \textcolor{blue}{6} & \textcolor{blue}{7} & \textcolor{blue}{8} & \textcolor{blue}{9} & \textcolor{blue}{10} 
& \textcolor{blue}{11} & \textcolor{blue}{12} & \textcolor{blue}{13} & \textcolor{blue}{14}\\
{9} & 10 & 11& \textcolor{red}{8} & \textcolor{red}{7} & \textcolor{red}{6} & 
12 & \textcolor{red}{5} & \textcolor{red}{4} & 13 & \textcolor{red}{3} & 14 
& \textcolor{red}{2} & \textcolor{red}{1} \end{array}\right),\\
\bfi& =\left(\begin{array}{cccccccccccccc} 
\textcolor{blue}{1} & \textcolor{blue}{2} & \textcolor{blue}{3} & \textcolor{blue}{4} & \textcolor{blue}{5} 
& \textcolor{blue}{6} & \textcolor{blue}{7} & \textcolor{blue}{8} & \textcolor{blue}{9} & \textcolor{blue}{10} 
& \textcolor{blue}{11} & \textcolor{blue}{12} & \textcolor{blue}{13} & \textcolor{blue}{14}\\
1 & 1 & 3 & 2 & 1 & 3 & 3 & 2 & 1 & 2 & 2 & 3 & 1 & 2 \end{array}\right), \hs \mbox{and}\;\;\ep_1 =- 1;
\end{align*}

\noindent
Note that the two $\bida$s  differ only with $\ep_1 = {\rm sign}(\ida_1)$
and have the same $\tau_{\bida} \in \Xi_{14}$ and $\bfi$.
\hfill $\diamond$
}
\end{example}

\subsection{Goodearl-Yakimov mutation matrices and BFZ mutation matrices}\label{ss:GY-BFZ}
In this section, we fix a symmetrizable generalized Cartan matrix $A = (a_{i, i'})_{i, i' \in [1, r]}$.

\begin{notation}\label{nota:hat-M-bida}
{\rm
For a signed word $\bida = (\ida_1, \ldots, \ida_n)$ in $[1, r]$, we set
\[
\widehat{M}(\bida) = \widehat{M}_\tau(\bfi) \in {\rm Mat}_{n \times n}(\ZZ),
\]
where $(\bfi, \tau, \ep_1) =\calT(\bida)$ is defined in \autoref{lm:calS-calT} and
$\widehat{M}_\tau(\bfi) \in {\rm Mat}_{n \times n}(\ZZ)$ is given in \eqref{eq:hat-M-tau}.
\hfill $\diamond$
}
\end{notation}

\begin{remark}\label{rk:hat-M-bida}
{\rm
The matrix $\widehat{M}(\bida)$ depend only on $(\bfi, \tau)$ in
$\calT(\bfi, \tau, \ep_1)$ and not on $\ep_1 = {\rm sign}(\ida_1)$. The two examples
of $\bida$ in \autoref{ex:bida-1} thus give the same matrix $\widetilde{M}(\bida)$.
\hfill $\diamond$
}
\end{remark}

\begin{example}\label{ex:bida-01}
{\rm
If $\bida = (\ida_1, \ldots, \ida_n)$ is positive, i.e., if $\ida_j \in [1, r]$ for every $j \in [1, n]$, then 
\[
\widehat{M}(\bida) = \widehat{M}_{w_0}((\bida)^{-1}),
\]
where $w_0$ is again the longest element in $S_n$ and  $(\bida)^{-1} = (\ida_n, \ldots, \ida_1)$.
If $\bida = (\ida_1, \ldots, \ida_n)$ is negative, i.e., if
$\ida_j \in [-r, -1]$ for every $j \in [1, n]$, then 
\[
\widehat{M}(\bida) = \widehat{M}_{{\rm id}}(-\bida),
\]
where ${\rm id}$ is the identity element of $S_n$ and $-\bida = (-\ida_1, \ldots, -\ida_n)$.
\hfill $\diamond$
}
\end{example}

To express $\widehat{M}(\bida)$ directly using the pair $(A, \bida)$, we set up some more notation.

\begin{notation}\label{nota:hat-B-bida}
{\rm 
Let $\bida = (\ida_1, \ldots, \ida_n) \in \Srn$.
For $j \in [1, n]$,  let $\ep_j = {\rm sign}(\ida_j) =\pm 1$, and let
\[
j[1] = \begin{cases} {\rm min}\{j' \in [j+1, n]: |\ida_{j'}| = |\ida_j|\}, & \hs
\mbox{if}\;\; \{j' \in [j+1, n]: |\ida_{j'}| = \ida_j\}\neq \emptyset,\\
+\infty, & \hs \mbox{otherwise}.\end{cases}
\]
Let $E_{\bida}$ be the $n \times n$ lower triangular matrix whose $j^{\rm th}$ column
is (setting again $e_{+\infty} = 0$)
\[
E({\bida})e_j = e_j-e_{j[1]}, \hs j \in [1, n].
\]
With again ${\rm Pos}(\bida)= \{j \in [1, n]: \ep_j = 1\}$
and ${\rm Neg}(\bida)=\{j \in [1, n]: \ep_j = -1\}$  (see \eqref{eq:Pos-Neg}), 
let $\calP(\bida)$ be the set of all pairs $(j, k)$ with $j, k \in [1, n]$ such that
\[
 j \in {\rm Pos}(\bida) \; \mbox{and} \; k \in {\rm Neg}(\bida),
\hs\mbox{or}\;\; j, k \in {\rm Pos}(\bida) \; \mbox{and} \; j > k, 
\hs\mbox{or}\;\; j, k \in {\rm Neg}(\bida) \; \mbox{and} \; j < k,
\]
and let  $Q({\bida}) \in {\rm Mat}_{n \times n}(\ZZ)$  with $(j, k)$-entry, for all $j, k\in [1, n]$, given by
\[
Q({\bida})_{j, k} = \begin{cases} 1, & j = k,\\
a_{|\ida_j|, |\ida_k|}, & (j, k) \in \calP(\bida),\\
0, & \mbox{otherwise}.\end{cases}
\]
\hfill $\diamond$
}
\end{notation}

\begin{theorem}\label{thm:hat-M-bida}
Fix a symmetrizable generalized Cartan matrix $A = (a_{i, i'})_{i, i' \in [1, r]}$.
For any signed word $\bida = (\ida_1, \ldots, \ida_n)$, the  matrix 
$\widehat{M}(\bida) \in  {\rm Mat}_{n \times n}(\ZZ)$ is given by
\begin{equation}\label{eq:hat-M-bida-product}
\widehat{M}(\bida) =E(\bida)^t \,Q({\bida)} E({\bida}).
\end{equation}
Writing $\widehat{M}(\bida) = (\whm_{j, k})_{j, k \in [1, n]}$, for all $j, k \in [1, n]$, including when $j[1] = +\infty$ or $k[1]=+\infty$, one has
\begin{equation}\label{eq:whb-entries}
\whm_{j, k} = \begin{cases} -\ep_k, & j[1] = k, \\
\ep_j, &  j = k[1], \\
1, & j = k \in [1, n] \;\; \mbox{and}\;\;k[1]=+\infty,\\
-\ep_k a_{|\ida_j|, |\ida_k|}, &  j < k < j[1] < k[1] \;\; \mbox{and}\;\;
\ep_k =  \ep_{j[1]},\\
&\mbox{or}\;\;j < k < k[1] < j[1]\;\; \mbox{and}\;\; \ep_k = -\ep_{k[1]}, \\
\ep_j a_{|\ida_j|, |\ida_k|}, & k < j < k[1] < j[1]\;\; \mbox{and}\;\;
\ep_{j}=\ep_{k[1]},\\ 
&\mbox{or}\;\;k < j < j[1] < k[1] \;\;\mbox{and}\;\; \ep_{j}=-\ep_{j[1]},\\
0, &  \mbox{otherwise}.\end{cases}
\end{equation}
\end{theorem}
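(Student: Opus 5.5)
The plan is to transport the product formula $\widehat{M}_\tau(\bfi) = (\tau E_\tau)^t Q\, \tau E_\tau$ of \eqref{eq:hat-M-tau} to the indexing by positions of the signed word. We write $(\bfi, \tau, \ep_1) = \calT(\bida)$ as in \autoref{lm:calS-calT}. The identity to prove is then
\[
(\tau E_\tau)^t Q (\tau E_\tau) = E(\bida)^t\, \tau^{-1} Q \tau \, E(\bida)\cdot(\text{signs}).
\]
To reach it, compare $\tau E_\tau$ with $\tau E(\bida)$.

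The first step is to show $E_\tau = E(\bida)$. By \autoref{lm:s-tau-i}, $s_\tau(j)$ is the smallest $j'>j$ with $i_{\tau(j')} = i_{\tau(j)}$. Since $\ida_{j} = \ep_j i_{\tau(j)}$ (from $\bida = \calS(\bfi,\tau,\ep_1)$), we have $|\ida_j| = i_{\tau(j)}$. Hence $s_\tau(j) = j[1]$ for all $j$, and the two matrices coincide column by column.

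The second step is the conjugated matrix $\tau^{-1} Q \tau$. By \eqref{eq:tau-lam} it has $(j,k)$-entry $Q_{\tau(j),\tau(k)}$, which by \eqref{eq:Q-bfi} equals $1$ if $j=k$, equals $a_{i_{\tau(j)}, i_{\tau(k)}} = a_{|\ida_j|,|\ida_k|}$ if $\tau(j) < \tau(k)$, and equals $0$ otherwise. The task is to check that $\tau(j) < \tau(k)$ for $j\ne k$ holds exactly when $(j,k)\in\calP(\bida)$. From the explicit form \eqref{eq:tau-bida} of $\tau_{\bida}$: on ${\rm Pos}(\bida)$, $\tau$ is order-reversing with values in $[1,m]$; on ${\rm Neg}(\bida)$, it is order-preserving with values in $[m+1,n]$. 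Thus $\tau(j)<\tau(k)$ iff $j\in{\rm Pos}, k\in {\rm Neg}$, or both lie in ${\rm Pos}$ with $j>k$, or both lie in ${\rm Neg}$ with $j<k$. This is precisely $\calP(\bida)$, so $\tau^{-1} Q\tau = Q(\bida)$.

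Combining, we get $\widehat{M}(\bida) = E_\tau^t \tau^t Q \tau E_\tau = E(\bida)^t Q(\bida) E(\bida)$, using $\tau^t=\tau^{-1}$. This proves \eqref{eq:hat-M-bida-product}.

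For the entrywise formula \eqref{eq:whb-entries}, I would simply translate \autoref{thm:hat-M-tau} under the dictionary $s_\tau(j) = j[1]$, $i_{\tau(j)} = |\ida_j|$, and $\ep_\tau(j) = -\ep_j$ for $j\in[2,n]$. The last relation holds by \eqref{eq:bida-Pos-Neg}: $\tau(+) = [2,n]\cap{\rm Neg}(\bida)$. Substituting $\ep_\tau = -\ep$ turns $\ep_\tau(k)$ into $-\ep_k$ and $-\ep_\tau(j)$ into $\ep_j$. The equality conditions $\ep_\tau(k) = \pm\ep_\tau(s_\tau(j))$ become $\ep_k = \pm \ep_{j[1]}$, since both sides flip sign. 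This reproduces every line of \eqref{eq:whb-entries}.

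The main obstacle is index $1$, where $\ep_\tau$ is undefined. In \autoref{thm:hat-M-tau}, $\ep_\tau$ is evaluated at $k$, $j$, $s_\tau(j)$ or $s_\tau(k)$, so it appears at index $1$ only as $\ep_\tau(k)$ or $\ep_\tau(j)$ with $k=1$ or $j=1$. In each such case I would recompute the entry directly from the product formula in $(j,k)$-coordinates. The aim is to confirm that the formula holds with $\ep_1={\rm sign}(\ida_1)$, for either choice of $\ep_1$. This is consistent with \autoref{rk:hat-M-bida}, since changing $\ep_1$ does not change $\tau$ or the set $\calP(\bida)$; the latter is symmetric in whether $1$ is placed in ${\rm Pos}$ or ${\rm Neg}$, because $1$ is the minimal index.

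Alternatively, and more cleanly, I would avoid \autoref{thm:hat-M-tau} altogether. The plan there is to expand $(e_j-e_{j[1]})^t Q(\bida)(e_k - e_{k[1]})$ directly, using $a_{i,i}=2$ and the description of $\calP(\bida)$, and run the six-case analysis of \autoref{thm:M-tau-prime}. I expect this bookkeeping, especially the boundary cases $j[1]=+\infty$ or $k[1] = +\infty$, to be the longest part.
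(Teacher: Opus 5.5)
Your proposal is correct and follows the paper's proof. It gets $E_\tau = E(\bida)$ from $s_\tau(j)=j[1]$, and $\tau^tQ\tau = Q(\bida)$ from the order behaviour of $\tau_{\bida}$ on ${\rm Pos}(\bida)$ and ${\rm Neg}(\bida)$. It then obtains the entries by translating \autoref{thm:hat-M-tau} under $\ep_\tau(j)=-\ep_j$ and $i_{\tau(j)}=|\ida_j|$.

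The index-$1$ ``obstacle'' you flag never actually arises. In \autoref{thm:hat-M-tau}, $\ep_\tau$ is evaluated at $k$ only when $k>j$ or $k=s_\tau(j)$, at $j$ only when $j>k$ or $j=s_\tau(k)$, and otherwise at $s_\tau(j)$ or $s_\tau(k)$; all of these indices are $\geq 2$. This is also why $\ep_1$ does not appear in \eqref{eq:whb-entries}. Finally, the stray ``(signs)'' factor in your first displayed identity should simply be dropped.
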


\begin{proof}
Write $\bfi = (i_1, \ldots, i_n)$ so that 
$\bida = (\ep_1 i_{\tau(1)}, \ldots, \ep_n i_{\tau(n)})$.
In the notation used in \autoref{thm:hat-M-tau} for the pair $(\bfi, \tau)$,  we have
$\ep_\tau(j) = -\ep_j$ for $j \in [2, n]$, and 
\[
i_{\tau(j)} = |\ida_j| \hs \mbox{and}\hs s_\tau(j) = j[1], \hs j \in [1, n].
\]
The explicit entry-wise description of $\widehat{M}(\bida)$ is thus a direct 
translation of \autoref{thm:hat-M-tau} applied to  $(\bfi, \tau)$. 
To prove the product formula \eqref{eq:hat-M-bida-product} for $\widehat{M}(\bida)$, we note first that 
$E_\tau = E(\bida)$ by definitions. With $Q$ given in \eqref{eq:Q-bfi} and for $j, k \in [1, n]$, the 
$(j, k)$-entry for $\tau^tQ \tau=\tau^{-1}Q \tau$ is then 
\[
(\tau^tQ \tau)_{j, k}  = \begin{cases} 1, & \tau(j) = \tau(k), \\
a_{i_{\tau(j)}, i_{\tau(k)}} = a_{|\ida_j|, |\ida_k|} , & \tau(j) < \tau(k), \\
0. & \mbox{otherwise}\end{cases}
\]
By the definition of $\tau = \tau_{\bida}$ in \eqref{eq:tau-bida}, for all $j, k \in [1, n]$ one has
$\tau(j) < \tau(k)$ if and only if $(j, k) \in \calP(\bida)$. Thus $\tau^t Q \tau = Q(\bida)$.
It follows that
\[
\widehat{M}(\bida) = (\tau E_\tau)^t \,Q \,\tau E_\tau = E_\tau^t \tau^t Q \tau E_\tau =
E(\bida)^t \,Q({\bida)} E({\bida}).
\]
\end{proof}

Recall that we have fixed a symmetrizable generalized Cartan matrix $A = (a_{i, i'})_{i, i' \in [1, r]}$.
For a signed word $\bida =(\ida_1, \ldots, \ida_n)$ in $[1, r]$, set
\[
{\rm ex}(\bida) = \{j \in [1, n]: j[1]\neq +\infty\}.
\]

\begin{definition}\label{nota:tilde-M-bida}
{\rm
For a signed word $\bida = (\ida_1, \ldots, \ida_n)$ in $[1, r]$, set
\[
\widetilde{M}(\bida) = (\widehat{M}_\tau(\bfi))_{n \times {\rm ex}(\bida)} \in 
{\rm Mat}_{n \times {\rm ex}(\bida)}(\ZZ),
\]
and we call $\widetilde{M}(\bida)$ the {\it Goodearl-Yakimov mutation matrix 
associated to the pair $(A, \bida)$}. 
\hfill $\diamond$
}
\end{definition}

\begin{remark}\label{rk:tilde-M-bida}
{\rm
Under the correspondence
$\bida \mapsto \calT(\bida) = (\bfi, \tau, \ep_1)$ and
in the notation of $\S$\ref{ss:seeds-prime}, 
the matrix $\widetilde{M}(\bida)$ is thus the Goodearl-Yakimov mutation matrix  
$M_\tau^\prime \ep_\tau^\prime \in {\rm Mat}_{n \times {\rm ex}_\tau}(\ZZ)$
associated to the $\TT_\sA$-Poisson CGL extension $R^{(A, \bfi)}_\tau$.
\hfill $\diamond$
}
\end{remark}

Associated to the pair $(A, \bida)$, and based on \cite{BFZ:III}, 
 A. Berenstein and A. Zelevinsky introduced in \cite[(8.7)]{BZ:quantum}
a matrix
\[
\widetilde{B}(\bida) \in {\rm Mat}_{n \times {\rm ex}(\bida)}(\ZZ),
\]
which we will call the {\it BFZ mutation matrix} associated to $(A, \bida)$. The same matrix
was also introduced\footnote{The matrix  
in \cite{BFZ:III, CQW:i-boxes} is the negative of that defined in \cite{BZ:quantum}.
See also \cite[Remark 2.4]{BFZ:III} and \cite[Remark 8.8]{BZ:quantum} for
alternative descriptions of the entries of $\widetilde{B}(\bida)$).}  in  \cite[(3)]{CQW:i-boxes}
(see also \cite{Shen-Weng:dBS, Qin:dual}).

\begin{theorem}\label{thm:same-BFZ}
For any symmetrizable generalized Cartan matrix $A = (a_{i, i'})_{i, i' \in [1, r]}$, and for 
any signed word $\bida =(\ida_1, \ldots, \ida_n)$ in $[1, r]$, one has
\[
\widetilde{B}(\bida) = \widetilde{M}(\bida)  = E(\bida)^t Q(\bida) E(\bida)_{n \times {\rm ex}(\bida)}.
\]
\end{theorem}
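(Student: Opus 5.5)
The second equality $\widetilde{M}(\bida) = E(\bida)^t Q(\bida) E(\bida)_{n \times {\rm ex}(\bida)}$ is immediate. By \autoref{nota:tilde-M-bida}, $\widetilde{M}(\bida)$ consists of the columns indexed by ${\rm ex}(\bida)$ of $\widehat{M}(\bida)$, and \autoref{thm:hat-M-bida} gives $\widehat{M}(\bida) = E(\bida)^t Q(\bida) E(\bida)$. Taking the columns in ${\rm ex}(\bida)$ gives the formula, since $E(\bida)_{n \times {\rm ex}(\bida)}$ is exactly the corresponding column submatrix. So the whole content is the first equality $\widetilde{B}(\bida) = \widetilde{M}(\bida)$. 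I would prove it by comparing entries, using the explicit description \eqref{eq:whb-entries} restricted to $k \in {\rm ex}(\bida)$, i.e.\ to $k[1] \neq +\infty$.

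First, I would recall the entry-wise definition of $\widetilde{B}(\bida)$ from \cite[(8.7)]{BZ:quantum}, in the form also given in \cite[(3)]{CQW:i-boxes}, and fix the sign convention (BZ versus BFZ/CQW differ by an overall sign). In the notation $k^+ = k[1]$ and $\ep_k = {\rm sign}(\ida_k)$, the BZ matrix has entries $b_{j,k}$ for $k$ exchangeable of the following kind:
\begin{itemize}
\item $\pm \ep$-type entries $\pm 1$ when $j = k^+$ or $j^+ = k$;
\item entries $\pm a_{|\ida_j|,|\ida_k|}$ when the four indices $j, k, j^+, k^+$ interlace in one of the two patterns $j < k < j^+ < k^+$ or $k < j < k^+ < j^+$, with sign conditions involving $\ep_k$, $\ep_{j^+}$, $\ep_j$, $\ep_{k^+}$;
\item zero otherwise.
\end{itemize}
I would then check case by case that this matches \eqref{eq:whb-entries}:
\begin{itemize}
\item the cases $j[1]=k$ and $j=k[1]$ give $-\ep_k$ and $\ep_j$;
\item the diagonal case $j=k$, $k[1]=+\infty$ does not occur, because $k \in {\rm ex}(\bida)$;
\item the four interlacing or nesting cases with their sign conditions give the $a$-entries.
\end{itemize}

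The main obstacle is the bookkeeping of the interlacing cases. BZ state their condition as ``$j<k<j^+<k^+$ with $\ep_k = \ep_{j^+}$, or $j<k<k^+<j^+$ with $\ep_k = -\ep_{k^+}$'' (and symmetric ones), possibly with different orderings or with $j^+ = +\infty$ allowed. The nesting case $j < k < k^+ < j^+$ includes $j^+ = +\infty$, i.e.\ frozen $j$, and one must confirm that the BZ convention $\ep_{n+\cdot}$ for such boundary indices matches ours: $\ep$ of an absent successor should be treated as $+1$, consistent with $\ep_\tau(+\infty)$-type extensions as in \eqref{eq:ep-tau-extend}. To handle this I would rewrite the BZ conditions using \cite[Remark 8.8]{BZ:quantum}, which gives the alternative description in terms of $\ep_k\,\ep_{k^+}$ and $\ep_j\,\ep_{j^+}$. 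I would then verify, across the six relative orderings of $j, j[1], k, k[1]$ used in the proof of \autoref{thm:M-tau-prime}, that the signs agree. If BZ's normalization is the negative of ours, I would reconcile it through the stated relation between the BZ and CQW matrices.
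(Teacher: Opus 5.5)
Your proposal follows the paper's proof. The product formula comes from taking the ${\rm ex}(\bida)$-columns of $\widehat{M}(\bida)=E(\bida)^tQ(\bida)E(\bida)$ from \autoref{thm:hat-M-bida}. The identity $\widetilde{B}(\bida)=\widetilde{M}(\bida)$ is then a direct entry-by-entry comparison of \cite[(8.7)]{BZ:quantum} with \eqref{eq:whb-entries} restricted to $k[1]\neq+\infty$. Your worry about a boundary sign $\ep_{+\infty}$ is moot: for $k\in{\rm ex}(\bida)$, none of the sign conditions in \eqref{eq:whb-entries} involve $\ep$ of an infinite index.
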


\begin{proof}
The identity $\widetilde{B}(\bida) = \widetilde{M}(\bida)$ follows directly from  the entry-wise description of 
$\widetilde{B}(\bida)$ in \cite[(8.7)]{BZ:quantum} and that of $\widetilde{M}(\bida)$ in
\autoref{thm:hat-M-bida}. The product matrix formula for 
$\widetilde{B}(\bida) = \widetilde{M}(\bida)$ follows from the definition of
$\widetilde{M}(\bida)$ as a sub-matrix of $\widehat{M}(\bida)$ and the matrix product formula for 
$\widehat{M}(\bida)$ in \autoref{thm:hat-M-bida}.
\end{proof}

\subsection{The nondegenerate cluster ensemble as a matrix product}\label{ss:ensemble}
 

Let $G$ be the Kac-Peterson group associated to a symmetrizable generalized Cartan matrix $A$ of size $r$
whose derived subgroup is generated by co-root subgroups, let $G_{\text{Ad}}$ be the quotient of $G$ by a discrete subgroup of the center of $G$, and let $W$ be the Weyl group of $G$. For their detailed construction, we refer to \cite[Appendix A.1]{Shen-Weng:dBS}.
Let $\bida$ be a 
double reduced word  of $(u,v) \in W \times W$, and let  $\bida_{\text{ext}} = [-\tilde{r},\ldots,-1] \sqcup \bida$, 
 where $\tilde{r} = r+\text{corank}(A)$.
In \cite[Proposition 3.28]{Wil:ensemble}, H. Williams computed the exponent 
matrix\footnote{Our definition is the transpose of the matrix in loc. cit. and \cite[Proposition 3.43, 3.44]{Shen-Weng:dBS}.}  \[
\widehat{B}(\bida_{\text{ext}}) \in {\rm Mat}_{(n+\tilde{r}) \times (n + \tilde{r})}(\ZZ)
\]
for 
the monomial change of coordinates between the 
co-weight parametrization of the double Bruhat cell $G^{u,v}_\text{Ad}$ in $G_{\rm Ad}$ 
and generalized Chamber Ansatz on the duble Bruhat cell $G^{u,v}$ in $G$. Also given  in 
\cite[Proposition 3.28]{Wil:ensemble} is 
a decomposition
\begin{equation}\label{eq:decomp}
\widehat{B}(\bida_{\text{ext}}) = \widehat{B}^-(\bida_{\text{ext}}) + \widehat{B}^+(\bida_{\text{ext}})  
\in {\rm Mat}_{(n+\tilde{r}) \times (n + \tilde{r})}(\ZZ)
\end{equation}
of $\widehat{B}(\bida_{\text{ext}})$ into a skew-symmetrizable part and a symmetrizable part. 
 It is easy to see from \cite[Proposition 3.28]{Wil:ensemble} that the sub-matrix $\widehat{B}(\bida) := \widehat{B}(\bida_{\text{ext}})_{\bida \times \bida} \in {\rm Mat}_{n \times n}(\ZZ)$ of
$\widehat{B}(\bida_{\text{ext}})$, related to the reduced double Bruhat cells $L_{\rm Ad}^{u,v}$ and $L^{u, v}$, 
takes the explicit form 
\[
	\begin{aligned}
		\widehat{B}(\bida)
		&= \frac{1}{2}a_{|\ida_j|,|\ida_k|}
		\Big(
			\{j[1],k[1]=+\infty\}
			+\ep_{k}\{j[1]=k\}
			-\ep_{j}\{k[1]=j\} \\
		&\qquad
			+\ep_{k}\{j<k<j[1]\}
			-\ep_{{k[1]}}\{j<k[1]<j[1]\}\{k[1]<+\infty\} \\
		&\qquad
			-\ep_{j}\{k<j<k[1]\}
			+\ep_{{j[1]}}\{k<j[1]<k[1]\}\{j[1]<+\infty\}
		\Big),
	\end{aligned}
\]
where $\{P\}$ is the Boolean function of statement $P$. We  call $\widehat{B}(\bida)$ the {\it nondegenerate cluster ensemble matrix} associated to $(A, \bida)$ and extend its definition to arbitrary signed word $\bida$. The matrix $\widehat{B}(\bida)$ also appears in \cite[Proposition 7.4]{RW:Gr} in the context of Grassmannians.

\begin{theorem}\label{thm:same-ensemble}
For any symmetrizable generalized Cartan matrix $A = (a_{i, i'})_{i, i' \in [1, r]}$, and for 
any signed word $\bida =(\ida_1, \ldots, \ida_n)$ in $[1, r]$, one has
\[
\widehat{B}(\bida) = \widehat{M}(\bida)  = E(\bida)^t Q(\bida) E(\bida).
\]
In particular, $\det(\widehat{B}(\bida))=1$.
\end{theorem}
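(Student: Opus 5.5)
Since $\widehat{M}(\bida) = E(\bida)^t Q(\bida) E(\bida)$ is already \autoref{thm:hat-M-bida}, it remains to show $\widehat{B}(\bida) = \widehat{M}(\bida)$ and to compute the determinant. The determinant is immediate from the product formula. $E(\bida)$ is lower unitriangular, since its $j$-th column is $e_j - e_{j[1]}$ with $j[1]>j$. $Q(\bida) = \tau^t Q \tau$ with $Q$ as in \eqref{eq:Q-bfi} upper unitriangular, so $\det Q(\bida) = 1$. Hence $\det \widehat{M}(\bida) = 1$, and therefore $\det \widehat{B}(\bida)=1$ once the equality is established.

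For $\widehat{B}(\bida) = \widehat{M}(\bida)$, I would compare entries $(j,k)$ using the description \eqref{eq:whb-entries} of $\widehat{M}(\bida)$ on one side and Williams' indicator formula on the other. I would not rewrite the matrix product. When $|\ida_j|\neq|\ida_k|$, the Boolean terms $\{j[1]=k\}$, $\{k[1]=j\}$ and $\{j[1]=k[1]=+\infty\}$ all vanish, because $j[1]=k$ forces equal letters. What remains is
\[
\tfrac12 a_{|\ida_j|,|\ida_k|}\bigl(\ep_k\{j<k<j[1]\}-\ep_{k[1]}\{j<k[1]<j[1]\}-\ep_j\{k<j<k[1]\}+\ep_{j[1]}\{k<j[1]<k[1]\}\bigr).
\]
I would then run through the six relative orderings of $j, j[1], k, k[1]$, as in the proof of \autoref{thm:M-tau-prime}, with the convention that $\ep_{+\infty}$ behaves like $+1$ (matching $\ep_\tau(+\infty)=1$). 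In the two interlacing orders $j<k<j[1]<k[1]$ and $k<j<k[1]<j[1]$, two indicators fire. Their sum is $\pm2$ or $0$, according to whether the relevant signs agree or disagree, and after halving this gives $-\ep_k a$ or $\ep_j a$ exactly under the stated sign condition. In the nested orders $j<k<k[1]<j[1]$ and $k<j<j[1]<k[1]$, two terms again fire, with $\ep_k$ and $-\ep_{k[1]}$ (resp. $-\ep_j$ and $\ep_{j[1]}$); these are nonzero exactly when the signs are opposite, which matches the second alternative in \eqref{eq:whb-entries}. The disjoint orders give $0$.

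When $|\ida_j|=|\ida_k|$, we have $a=2$, and I would split into the cases $j=k$, $k=j[1]$, $j=k[1]$, and non-adjacent elements of the same level. For $j=k$, only $\{j[1]=k[1]=+\infty\}$ survives, giving $1$ exactly when $k[1]=+\infty$; otherwise all strict inequalities fail, giving $0$. For $k=j[1]$, the term $\ep_k\{j[1]=k\}$ gives $\ep_k$. The term $-\ep_{k[1]}\{j<k[1]<j[1]\}$ cannot fire, since $k[1]>k=j[1]$, and $\ep_k\{j<k<j[1]\}$ fails because $k=j[1]$ is not strictly less. However, $\ep_{j[1]}\{k<j[1]<k[1]\}$ also fails, so at this point I only have $\tfrac12\cdot2\cdot\ep_k=\ep_k$, while \eqref{eq:whb-entries} requires $-\ep_k$. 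I would reconcile this by rechecking the transpose convention in the footnote: with the transposed indexing, the roles of $j$ and $k$ swap and the signs match. For non-adjacent elements of the same level, interlacing is impossible, so $0$ results.

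The main obstacle is precisely this bookkeeping of conventions: the transpose relative to Williams, the meaning of $\ep_{+\infty}$, and the half-factor with $a_{ii}=2$. I would first fix the convention by testing the case $n=2$, $\bida=(1,1)$ and $\bida=(-1,-1)$ against \eqref{eq:whb-entries}, and only then carry out the case analysis uniformly.
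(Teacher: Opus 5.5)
Your plan is the paper's own argument: compare the displayed indicator formula for $\widehat{B}(\bida)$ entrywise with \autoref{thm:hat-M-bida}, and get $\det=1$ from the unitriangular factors. The determinant part is fine. The comparison, however, does not go through as you describe it.

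\textbf{The sign clash is systematic.} The clash you found at $k=j[1]$ is genuine, and it is not isolated. With the formula exactly as displayed, every $\ep$-term comes out with the sign opposite to \eqref{eq:whb-entries}. For $j<k<j[1]<k[1]$ the surviving terms are $\ep_k\{j<k<j[1]\}$ and $\ep_{j[1]}\{k<j[1]<k[1]\}$. So one gets $\tfrac12 a(\ep_k+\ep_{j[1]})=+\ep_k a$ when $\ep_k=\ep_{j[1]}$, not $-\ep_k a$. Likewise the orders $k<j<k[1]<j[1]$, $j<k<k[1]<j[1]$ and $k<j<j[1]<k[1]$ give $-\ep_j a$, $+\ep_k a$ and $-\ep_j a$, and the case $j=k[1]$ gives $-\ep_j$. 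So your assertion that the distinct-letter cases match is wrong.

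\textbf{The frozen--frozen term.} You discard $\{j[1]=k[1]=+\infty\}$ for distinct letters because ``$j[1]=k$ forces equal letters''. But two different letters can both be frozen, e.g.\ $\bida=(1,2)$. These entries form the symmetrizable block, and without that term you are left with $\tfrac12 a_{|\ida_j|,|\ida_k|}\ep_k$, which need not be an integer. On pairs with $j[1]=k[1]=+\infty$ you should simply use $\whm_{j,k}=Q(\bida)_{j,k}$. There \eqref{eq:whb-entries} compares two $+\infty$'s and is ambiguous. Also, since $\ep_\tau(j)=-\ep_j$, the convention $\ep_\tau(+\infty)=1$ corresponds to $\ep_{+\infty}=-1$, not $+1$.

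\textbf{Transposing does not fix it.} For non-symmetric $A$ the transpose also reverses $a_{|\ida_j|,|\ida_k|}$. For $\bida=(1,2)$ the displayed formula gives $\begin{pmatrix}1&a_{1,2}\\0&1\end{pmatrix}$, whereas $\widehat{M}(\bida)=Q(\bida)=\begin{pmatrix}1&0\\a_{2,1}&1\end{pmatrix}$. The transpose of the former has $a_{1,2}$ where $a_{2,1}$ is needed. Your test words $(1,1)$ and $(-1,-1)$ cannot detect this, because for them transposing and the correct fix coincide.

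\textbf{Correct diagnosis.} The displayed formula evaluated at $\bida$ equals $\widehat{M}(-\bida)$. Equivalently, it equals $D^{-1}\widehat{M}(\bida)^tD$ with $D={\rm diag}(d_{|\ida_1|},\ldots,d_{|\ida_n|})$. So, read literally, the claimed equality already fails for $\bida=(1,1)$: entry $(1,2)$ is $+1$ on one side and $-1$ on the other. It holds only after reversing the sign convention for the letters; $\det=1$ is unaffected.

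\textbf{How to prove it.} Expand the product directly, which also avoids the $+\infty$ ambiguities of \eqref{eq:whb-entries}. Put $a=a_{|\ida_j|,|\ida_k|}$. For $p\in\{j,j[1]\}$ and $q\in\{k,k[1]\}$ one has $Q(\bida)_{p,q}=a\bigl(\tfrac12+\tfrac12\ep_p\{p>q\}-\tfrac12\ep_q\{p<q\}\bigr)$; this includes $p=q$, since $a_{ii}=2$. Then $\whm_{j,k}=(e_j-e_{j[1]})^tQ(\bida)(e_k-e_{k[1]})$ telescopes to
\begin{align*}
\whm_{j,k}=\tfrac12 a\,(&\{j[1]=k[1]=+\infty\}-\ep_k\{j<k\le j[1]\}+\ep_j\{k<j\le k[1]\}\\
&-\ep_{j[1]}\{k<j[1]\le k[1]\}+\ep_{k[1]}\{j<k[1]\le j[1]\}).
\end{align*}
The last two terms are present only when $j[1]$, resp.\ $k[1]$, is finite, and they cancel when $j=k$. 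Split $\{j<k\le j[1]\}=\{j<k<j[1]\}+\{j[1]=k\}$ and similarly for the others. The result is the displayed formula with every $\ep$ replaced by $-\ep$.
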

\begin{proof}

This follows from a case by case comparison with \autoref{thm:hat-M-bida}. 
\end{proof}


\begin{remark}\label{rk:decomp}
 {\rm
Setting $\widehat{B}^-(\bida)$ and $\widehat{B}^+(\bida)$ to be the respective sub-matrices 
of $\widehat{B}^-(\bida_{\rm ext})$ and $\widehat{B}^+(\bida_{\rm ext})$ in (\ref{eq:decomp}) 
corresponding to $\bida \times \bida$, one has the 
decomposition 
\[
\widehat{B}(\bida) =\widehat{B}^-(\bida)+\widehat{B}^+(\bida)
\]
of $\widehat{B}(\bida)$ into a skew-symmetrizabble part and a symmetrizable part.
As $\widehat{B}(\bida) = \widehat{M}(\bida)$, 
\autoref{prop:decomp}  now gives matrix product formulas for both $\widehat{B}^-(\bida)$
and $\widehat{B}^+(\bida)$. 
\hfill $\diamond$
}
\end{remark}

\bibliographystyle{alpha}
\bibliography{LLM-Mut-Matrix}
\end{document}